\newcommand{\C}{{\mathbb{C}}}
\newcommand{\Pa}{{\mathbb{P}}}
\newcommand{\Q}{{\mathbb{Q}}}
\newcommand{\T}{\mathbb{T}}
\newcommand{\R}{{\mathbb{R}}}
\newcommand{\Z}{{\mathbb{Z}}}
\newcommand{\bfN}{\mathbf{N}}
\newcommand{\bfX}{\mathbf{X}}
\newcommand{\bfd}{\mathbf{d}}
\newcommand{\bs}{\mathbf{s}}
\newcommand{\dist}{\mathrm{dist}}
\newcommand{\arc}{\mathrm{arc}\,}
\newcommand{\id}{\mathrm{id}}
\newcommand{\pre}{\mathrm{pre}}
\renewcommand{\mod}{\;\mathrm{mod}\;}
\newcommand{\ord}{\mathrm{ord}}
\newcommand{\res}{\mathrm{res}}
\newcommand{\sing}{\mathrm{sing}}
\newcommand{\Imm}{\mathrm{Im}\,}
\newcommand{\oB}{\overline{B}}
\newcommand{\tM}{\tilde{M}}
\renewcommand{\th}{\tilde{h}}
\newcommand{\tsigma}{\tilde{\sigma}}
\newcommand{\tmu}{\tilde{\mu}}
\newcommand{\hmu}{\hat{\mu}}
\newcommand{\tnu}{\tilde{\nu}}
\newcommand{\hsigma}{\hat{\sigma}}
\newcommand{\tG}{\tilde{G}}
\newcommand{\RRe}{\mathrm{Re}\,}
\newcommand{\Tr}{\mathrm{Tr}}
\newcommand{\Var}{\mathrm{Var}}
\newcommand{\Ah}{{\mathcal A}}
\newcommand{\Bh}{\mathcal{B}}
\newcommand{\Dh}{{\mathcal D}}
\newcommand{\Eh}{{\mathcal E}}
\newcommand{\Fh}{{\mathcal F}}
\newcommand{\Gh}{{\mathcal G}}
\newcommand{\Ih}{{\mathcal I}}
\newcommand{\Kh}{\mathcal{K}}
\newcommand{\Lh}{{\mathcal L}}
\newcommand{\Mh}{\mathcal{M}}
\newcommand{\Nh}{\mathcal{N}}
\newcommand{\Oh}{{\mathcal O}}
\newcommand{\Ph}{\mathcal{P}}
\newcommand{\Rh}{{\mathcal R}}
\newcommand{\Sh}{{\mathcal S}}
\newcommand{\Uh}{\mathcal{U}}
\newcommand{\emm}{{\mathfrak{m}}}
\newcommand{\eo}{\mathfrak{o}}
\newcommand{\fs}{\mathfrak{s}}
\newcommand{\teo}{\tilde{\eo}}
\newcommand{\ozeta}{\overline{\zeta}}
\newcommand{\tf}{\tilde{f}}
\newcommand{\oc}{\overline{c}}
\newcommand{\silo}{\xrightarrow{\sim}}
\newcommand{\tei}{\, | \,}
\newcommand{\hullet}{\raisebox{0.05cm}{$\,\scriptscriptstyle \bullet\,$}}
\newcommand{\verk}{\mbox{\scriptsize $\,\circ\,$}}
\newcommand{\halb}{\frac{1}{2}}
\newtheorem{theorem}{Theorem}
\newtheorem{lemma}[theorem]{Lemma}
\newtheorem{prop}[theorem]{Proposition}
\newtheorem{defn}[theorem]{Definition}
\newtheorem{cor}[theorem]{Corollary}
\newtheorem{example}[theorem]{Example}
\newtheorem{remark}[theorem]{Remark}
\newtheorem{remarks}[theorem]{Remarks}
\newtheorem{conjecture}[theorem]{Conjecture}
\newtheorem{notation}[theorem]{Notation}
\newtheorem{conj}[theorem]{Conjecture}
\newenvironment{rem}{\noindent {\bf Remark}}{}
\newenvironment{rems}{\noindent {\bf Remarks}}{}
\newenvironment{quest}{\noindent {\bf Question}}{}
\newenvironment{proof}{\noindent {\bf Proof}}{\mbox{}\hfill$\Box$}
\begin{document}
\title{Invariant measures on the circle and functional equations}
\author{Christopher Deninger}
\date{\ }
\maketitle
\section{Introduction}
\label{sec:1}
For an integer $N \ge 1$ consider the endomorphism $\varphi_N$ of the unit circle $\T$ given by $\varphi_N (\zeta) = \zeta^N$. It is known that besides Haar measure there are many $\varphi_N$-invariant atomless probability measures on $\T$, see \cite{B}. 

There are natural ways to characterize a measure $\mu$ on $\T$ by an associated function defined either on $\T$ or holomorphic in the unit disc. Invariance of the measure under $\varphi_N$ translates into functional equations for the corresponding functions. For example consider the holomorphic function $f_{\mu} = \exp (-h_{\mu})$ on the unit disc $D$ where $h_{\mu}$ is the Herglotz-transform of $\mu$
\[
h_{\mu} (z) = \int_{\T} \frac{\zeta+z}{\zeta-z} \, d\mu (\zeta) \, .
\]
Then $\varphi_N$-invariance of $\mu$ is equivalent to a functional equation for $f = f_{\mu}$ 
\begin{equation} \label{eq:1.1}
 f (z^N)^N = \prod_{\zeta^N = 1} f (\zeta z) \; .
\end{equation}
In section \ref{sec:3} we study this functional equation in its own right within the Nevanlinna class $\Nh$. Theorem \ref{t33} asserts that up to a unique positive constant any non-zero function $f$ in $\Nh$ satisfying \eqref{eq:1.1} is a quotient of singular inner functions: Blaschke products and outer functions in $\Nh$ cannot satisfy \eqref{eq:1.1} unless they are constant. Using these facts we give a somewhat surprising characterization in corollary \ref{t37} of those sequences of complex numbers which arise as Fourier coefficients of $\varphi_N$-invariant measures. The relevant notions from the theory of Hardy spaces are reviewed in section \ref{sec:2}. 

In section \ref{sec:4} the cumulative mass function $\hmu$ of a measure $\mu$ on the circle is studied as well as a related analytic function $G_{\mu}$. The focus is on their behaviour under push-forward  and pullback along $\varphi_N$. In particular, $\varphi_N$-invariance of $\mu$ is characterized in terms of functional equations for $\hmu$ and $G_{\mu}$. 

Not much is known about measures on $\T$ which are invariant under at least two endomorphisms $\varphi_N$ and $\varphi_M$ with $N$ prime to $M$, but see \cite{R}. It is therefore interesting to look for holomorphic functions on $D$ which satisfy the functional equation \eqref{eq:1.1} for several integers $N$. We begin this study in section \ref{sec:5}. Consider the multiplicative monoid $\Sh$ generated by pairwise prime integers $N_1 , \ldots , N_s \ge 2$. It acts on $\T$ if we identify $N \in \Sh$ with $\varphi_N$. For a subgroup $\Gh \subset \Oh^1 = \{ f \in \Oh (D)^{\times} \tei f (0) = 1 \}$ set
\[
 H^0 (\Sh , \Gh) = \{ f \in \Gh \tei f \; \text{satisfies \eqref{eq:1.1} for all} \; N \in \Sh \} 
\]
and
\[
 Z (\Sh , \Gh) = \{ \alpha \in \Gh \tei \prod_{\zeta^N = 1} \alpha (\zeta z) = 1 \quad \text{for} \; 1 \neq N \in \Sh \} \; .
\]
Here the conditions need to be checked for $N = N_1 , \ldots , N_s$ only. The group $Z (\Sh , \Oh^1)$ is easy to describe as a certain quotient of $\Oh^1$. Moreover there are mutually inverse isomorphisms
\[
 Z (\Sh , \Oh^1) \xleftarrow[\Phi_{\Sh}]{\xrightarrow{\; \Psi_{\Sh}\; } }H^0 (\Sh , \Oh^1) \; .
\]
For $s = 1$ they are given by the formulas
\[
 \Phi_{\Sh} (f) (z) = f (z) / f (z^{N_1}) \quad \text{and} \quad \Psi_{\Sh} (\alpha) (z) = \prod^{\infty}_{\nu = 0} \alpha (z^{N^{\nu}_1}) \; .
\]
For general $\Sh$ we have
\[
 \Psi_{\Sh} (\alpha) = \prod_{N \in \Sh} \alpha (z^N) \; .
\]
See proposition \ref{t52} for details. Thus for $f \in \Oh^1$ the description of simultanous solutions of \eqref{eq:1.1} is easy. As we saw in the third section, the situation becomes interesting when one imposes growth conditions on the solutions $f$. Recall that for a probability measure $\mu$ on $\T$ the function $f_{\mu}$ lies in the Hardy space $H^{\infty} (D)$ of bounded analytic functions on $D$. 

If $\mu$ is $\varphi_N$-invariant for $N \in \Sh$ then $f_{\mu}$ lies in $H^0 (\Sh , \Oh^1)$. Consequently $\Phi_{\Sh} (f_{\mu}) \in Z (\Sh , \Nh^1)$ where $\Nh^1 = \Nh^{\times} \cap \Uh$. Here we have used that quotients of nowhere vanishing bounded holomorphic functions lie in $\Nh^{\times}$. 

It is not known which functions are of the form $f_{\mu}$ for an $\Sh$-invariant probability measure $\mu$. By the above they can be recovered from $\Phi_{\Sh} (f_{\mu})$ by applying $\Psi_{\Sh}$.  Thus it is natural to study the map $\Psi_{\Sh}$ on $Z (\Sh , \Nh^1)$. The space $Z (\Sh , \Nh^1)$ is naturally a quotient of $\Nh^1$ with a known kernel, c.f. proposition \ref{t52}. The image under $\Psi_{\Sh}$ contains the space $H^0 (\Sh , \Nh^1)$ whose structure we would like to understand but it is strictly bigger. One basic result is theorem \ref{t939n} from section \ref{sec:10} which asserts that
\[
\Psi_{\Sh} (Z (\Sh , \Nh^1)) \subset H^0 (\Sh , \Nh^1_s) \; .
\]
Here $\Nh^1_s = \Nh^{\times}_s \cap \Uh$ and $\Nh_s$ is the algebra of functions $f \subset \Oh (D)$ that can be written in the form $f = g_1 g^{-1}_2$ where $g_2$ has no zeroes and both $g_1$ and $g_2$ satisfy an estimate of the form
\begin{equation} \label{eq:1.2n}
 |g (z)| \le a_g \exp (r_g \log^s (1 - |z|)^{-1}) \quad \text{for} \; z \in D
\end{equation}
where $a_g \ge 0$ and $r_g \ge 0$ are constants. For $s = 0$ the estimate \eqref{eq:1.2n} asserts that $g \in H^{\infty} (D)$ so that $\Nh_0 = \Nh$. For $s = 1$ it asserts that
\[
 |g (z)| \le a_g (1 - |z|)^{-r_g} \; .
\]
This means that $g \in \Ah^{-\infty}$ in the notation of Korenblum \cite{K1}, \cite{K2}. The more general classes $\Nh_s$ appear in the works \cite{BL}, \cite{K4} and \cite{S} for example.

Classically the elements of $\Nh^1$ can be described by finite signed measures on $\T$. More generally, by a theorem of Korenblum the elements of $\Nh^1_s$ correspond to real premeasures of bounded $\kappa_s$-variation on the circle. Here $\kappa_s$ is the generalized entropy-function on $[0,1]$
\[
 \kappa_s (x) = x \sum^s_{\nu = 0} \frac{1}{\nu!} |\log x|^{\nu} \; .
\]
Thus $\kappa_0 (x) = x$ and $\kappa_1 (x) = x (1 + |\log x|) = x \log \frac{e}{x}$. The premeasure $\mu$ on $\T$ is of bounded $\kappa_s$-variation if there is a constant $A \ge 0$ such that
\[
 \sum_j |\mu (C_j)| \le A \sum_j \kappa_s (|C_j|)
\]
holds for all finite partitions of $\T$ into disjoint connected subsets $C_j$ (arcs). Here $|C|$ is the arc length of $C$ normalized by $|\T| = 1$. 

If the premeasure $\mu$ corresponds to $f \in \Nh^1_s$ then as for measures, $\mu$ is $\varphi_N$-invariant if and only if $f$ satisfies equation \eqref{eq:1.1}. Hence we have obtained an injection from $Z (\Sh , \Nh^1)$ into the space of premeasures of bounded $\kappa_s$-variation which are invariant under $N_1 , \ldots , N_s$ c.f. corollary \ref{t822}. One can do a little better: For suitable functions in $Z (\Sh , \Nh^1)$ one even obtains premeasures of bounded $\kappa_{s-1}$-variation invariant under $N_1 , \ldots , N_s$, c.f. proposition \ref{t925}.

Classically the atoms of a measure $\mu$ can be seen in the function $f_{\mu}$. For the Korenblum correspondence between premeasures and functions this is still true but more subtle c.f. theorem \ref{t7.19}. It rests on a positivity argument as with the F\'ej\`er kernel in Fourier analysis.\\
In the theory described up to now there are analogous assertions for spaces of atomless (pre-)measures and functions. For example, one obtains many $\varphi_N$ and $\varphi_M$ invariant atomless premeasures of bounded $\kappa_1$-variation. 

Instead of starting from the group of functions $Z (\Sh , \Nh^1)$ it is also possible to begin with the isomorphic group of finite signed measures $\sigma$ with 
\[
N_* \sigma := (\varphi_N)_* \sigma = 0 \quad \text{for} \; N \in \Sh , N \neq 1 \; .
\]
Namely, the series
\begin{equation} \label{eq:1.3n}
 \mu = \Psi_{\Sh} (\sigma) := \sum_{N \in \Sh} N^* \sigma
\end{equation}
converges on arcs to a premeasure $\mu$ of $\kappa_s$-bounded variation with $N_* \mu = \mu$ for all $N \in \Sh$. Convergence of \eqref{eq:1.3n} and $\Sh$-invariance of $\mu$ can be shown directly using cumulative mass functions. The $\kappa_s$-bounded variation of $\mu$ follows by comparison with the above analytical theory. A purely measure theoretic proof for this fact should also be possible. As with the space $Z (\Sh , \Nh^1)$, the corresponding space of measures $\sigma$ as above is easy to describe, c.f. corollary \ref{t61}. The premeasure $\mu = \Psi_{\Sh} (\sigma)$ has atoms if and only if $\sigma$ has atoms.

As part of a more general theory, Korenblum has shown that premeasures $\mu$ of $\kappa = \kappa_s$-bounded variation induce compatible measures $\mu^F$ on the Borel algebras of $\kappa$-Carleson sets $F$. These are closed subsets of $\T$ of Lebesgue measure zero such that
\[
 \sum_I \kappa (|I|) < \infty \; .
\]
Here $\T \setminus F = \amalg I$ is the decomposition into connected components $I$. The family $\mu_s = (\mu^F)$ is called the $\kappa$-singular measure attached to $\mu$. In section \ref{sec:8} we review Korenblum's theory of $\kappa$-singular measures. Moreover, using his results and general facts from measure theory we show that $\kappa$-singular measures can be interpreted as ``$\kappa$-thin measures'' $\tmu$. These live in the Grothendieck group of a semigroup of positive $\sigma$-finite measures on the Borel algebra of $\T$ (with further properties). Thus $\tmu$ is given by a class of pairs of $\sigma$-finite positive measures $\tmu_i$:
\[
 \tmu = [\tmu_1 , \tmu_2] \; .
\]
Because of a cancellation property there is equality
\[
 [\tmu_1 , \tmu_2] = [\tnu_1 , \tnu_2]
\]
if and only if $\tmu_1 + \tnu_2 = \tmu_2 + \tnu_1$. Combining the previously defined maps $\Psi_{\Sh}$ with the passage to $\kappa_s$-thin measures, we obtain for every $\alpha \in Z (\Sh , \Nh^1)$ or corresponding measure $\sigma$, pairs of $\sigma$-finite measures $\tmu_1 , \tmu_2$ with $\tmu_1 + N_* \tmu_2 = N_* \tmu_1 + \tmu_2$ for all $N \in \Sh$. The measures $\tmu_i \ge 0$ live on countable unions of $\kappa_s$-Carleson sets and are restricted by further properties. If $\tmu_1$ or $\tmu_2$ is finite then $\tmu = \tmu_1 - \tmu_2$ is a signed measure and both $\tmu^+$ and $\tmu^-$ are $\Sh$-invariant. 

We prove that every $\Sh$-invariant positive ergodic probability measure which is non-zero on some $\kappa_s$-Carleson set is $\kappa_s$-thin and can be obtained by the preceeding constructions. I think that the last condition is automatically satisfied. This is related to conjecture \ref{t1054} which asserts that non-constant cyclic elements in certain topological algebras $\Ah_{\gamma} \subset \Oh (D)$ defined by growth conditions cannot satisfy the functional equation \eqref{eq:1.1} for too many coprime integers $N$. The relation comes from Korenblum's theory \cite{K2}, \cite{K3} characterizing cyclicity in terms of vanishing $\kappa$-singular measure. For $\gamma = 0$ the conjecture is true. It is inspired by a corresponding result, theorem \ref{t926}, for functions in $\Nh_{\gamma}$ with a zero in $D$ which seems to be analogous. Theorem \ref{t926} follows from the work of Seip \cite{S}. I think that conjecture \ref{t1054} is an interesting challenge for experts in the theory of growth spaces of analytic functions on $D$. 

We would like to draw attention to the work \cite{EP} of Eigen und Prasad. They observe that for an $NM$-invariant ergodic probability measure $\nu$ on $\T$ the orbits $O_N (\nu) = \{ N^i_* \nu \tei i \ge 0 \}$ and $O_M (\nu) = \{ M^i_* \nu \tei i \ge 0 \}$ have the same cardinality and consist of mutually singular measures. A short argument shows that the $\sigma$-finite positive measure
\begin{equation} \label{eq:1.4n}
 \mu = -\nu + \sum_{\nu' \in O_N (\nu)} \nu' + \sum_{\nu'' \in O_M (\nu)} \nu''
\end{equation}
is both $N$ and $M$ invariant. With a suitable choice of initial measure $\nu$ they obtain a non-atomic $\mu$ which is even $\Sh = \langle N , M \rangle$ ergodic (for $N = 2 , M = 3$). We were unable to fit their construction into our framework. The reason may be this: Unless $\nu$ is already invariant under $N^k$ and $M^k$ for some integer $k \ge 1$ the series \eqref{eq:1.4n} will never define a finite measure. The map $\Psi_{\Sh}$ given by the series \eqref{eq:1.3n} on the other hand has all $\Sh$-invariant probability measures on $\T$ in its image (up to a scalar multiple of Haar measure). 

In writing the paper we found it useful to introduce a certain number of operations on functions, (pre-)measures and (Schwartz-)distributions and study their relations. These operations behave like Frobenius, Verschiebung and the Teichm\"uller character for Witt vectors. In the appendix we embed the ring $\Dh' (\T)$ of distributions on $\T$ under convolution into the ring of big Witt vectors of $\C$ and identify the corresponding operations on both sides. As a small example we note that the Artin--Hasse exponential for the prime $p$ is the image of a $p$-invariant premeasure on $\T$ of $\kappa_1$-bounded variation which is not a measure and whose $\kappa_1$-thin (or singular) measure is zero, c.f. proposition \ref{a31}. 

Finally we would like to point out three reasons why we have been working with the muliplicative functional equation \eqref{eq:1.1} instead of the additive functional equation satisfied by the Herglotz transform $h = h_{\mu}$
\begin{equation} \label{eq:1.5n}
 N h (z^N) = \sum_{\zeta^N = 1} h (\zeta z) \; .
\end{equation}
Firstly, for non-zero singular measures the Herglotz transform is never in $H^1 (D)$, only in the non-locally convex spaces $H^p (D)$ for $0 < p < 1$. The function $f = f_{\mu}$ on the other hand lies in the much studied Banach algebra $H^{\infty} (D)$. Secondly, although $f_{\mu}$ has no zeroes, the functional equation \eqref{eq:1.1} makes sense also for functions $f$ with zeroes in $D$ and one is led to interesting results about those, c.f. theorems \ref{t33} and \ref{t926}. Thirdly the proof of theorem \ref{t34} below uses the functional equation \eqref{eq:1.1} in a way that has no analogue for the additive functional equation \eqref{eq:1.5n}. 

Despite its length the present paper marks only a beginning in the study of the relations between $\Sh$-invariant (pre-)measures and generalized Nevanlinna theory: It contains groundwork like section \ref{sec:9} and also reviews of the required Hardy space and Nevanlinna--Korenblum theories. The deeper aspects still need to be explored. In particular it would be very desirable to introduce geometrical methods from the theory of conformal mappings.

This paper owes its existence to many interesting discussions with Wilhelm Singhof. I would like to thank him very much. I am also grateful to Klaus Schmidt for drawing my attention to invariant measures on the circle and to Boris Korenblum for a helpful email exchange.

\tableofcontents

\section{Background on Hardy spaces and the Nevanlinna class}
\label{sec:2}
In this section we introduce some notations and review a number of basic facts that will be needed in the sequel. Details can be found in the books \cite{D}, \cite{G} and \cite{CMR}.

The Banach space of finite real (or signed) Borel measures on the circle with the total variation norm is denoted by $M (\T)$. We set $M^+ (\T) = \{ \mu \in M (\T) \tei \mu \ge 0 \}$. The Haar probability measure on $\T$ will be denoted by $\lambda$. Under the standard identification $\R / 2 \pi \Z \silo \T , \theta \mapsto \exp (i \theta)$ it is given by $(2 \pi)^{-1} d \theta$. In terms of $\zeta \in \T$ we have $\lambda = (2 \pi i)^{-1} \zeta^{-1} \, d\zeta$. A measure $\mu \in M (\T)$ is called singular if it is singular with respect to $\lambda$. We write $M (\T)_{\sing}$ and $M^+ (\T)_{\sing}$ for the subsets of singular measures. The Herglotz transform $h_{\mu}$ of a measure $\mu \in M (\T)$ is the analytic function in the unit disc $D$ defined by the formula
\begin{equation}
 \label{eq:2.1}
h_{\mu} (z) = \int_{\T} \frac{\zeta + z}{\zeta - z} \, d\mu (\zeta) \; .
\end{equation}
It is connected to the Cauchy transform
\begin{equation}
 \label{eq:2.2}
K_{\mu} (z) = \int_{\T} \frac{1}{1 - \ozeta z} \, d\mu (\zeta)
\end{equation}
by the simple formula $h_{\mu} = 2 K_{\mu} - \mu (\T)$. We refer to the book \cite{CMR} for an in depth account of the Cauchy- and hence the Herglotz transform. The Cauchy integral formula implies that we have $K_{\lambda} = 1 = h_{\lambda}$. A measure $\mu \in M (\T)$ is uniquely determined by its Fourier coefficients for $\nu \in \Z$
\[
 c_{\nu} = c_{\nu} (\mu) = \int_{\T} \zeta^{-\nu} \, d\mu (\zeta) \; .
\]
They are bounded and we have the formula
\begin{equation}
 \label{eq:2.4}
h_{\mu} (z) = c_0 + 2 \sum^{\infty}_{\nu = 1} c_{\nu} z^{\nu} \; .
\end{equation}
For a measure $\mu \in M (\T)$ the Fourier coefficients satisfy the relation $c_{-\nu} = \overline{c_{\nu}}$. Hence $\mu$ is determined by $h_{\mu}$. The importance of the Herglotz transform is due to the following result:

\begin{theorem}[Herglotz]
 \label{t21}
The Herglotz transform sets up a bijection between the non-zero measures $\mu \in M^+ (\T)$ and the analytic functions $h$ on $D$ with $\RRe h \ge 0$ and $h (0) > 0$. 
\end{theorem}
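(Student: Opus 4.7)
The plan is to verify the forward direction directly, then prove injectivity from Fourier coefficients, and finally establish surjectivity via a weak-$*$ compactness argument on circle measures.

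\textbf{Forward direction.} Given $0 \ne \mu \in M^+(\T)$, the kernel $(\zeta+z)/(\zeta-z)$ is holomorphic in $z \in D$ and continuous in $\zeta \in \T$, so $h_\mu$ is analytic on $D$. Its real part is the Poisson kernel
\[
 \RRe \frac{\zeta+z}{\zeta-z} = \frac{1-|z|^2}{|\zeta-z|^2} \ge 0 \quad (z \in D, \zeta \in \T),
\]
so $\RRe h_\mu \ge 0$, and $h_\mu(0) = \mu(\T) > 0$.

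\textbf{Injectivity.} Expanding $(\zeta+z)/(\zeta-z) = 1 + 2 \sum_{\nu \ge 1} \ozeta^\nu z^\nu$ and integrating term by term yields formula \eqref{eq:2.4}: $h_\mu(z) = c_0 + 2\sum_{\nu \ge 1} c_\nu z^\nu$. Since $\mu$ is real, $c_{-\nu} = \overline{c_\nu}$, so all Fourier coefficients of $\mu$ are read off from the Taylor coefficients of $h_\mu$ at $0$. By uniqueness of Fourier coefficients on $\T$, this determines $\mu$.

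\textbf{Surjectivity.} Let $h$ be analytic on $D$ with $\RRe h \ge 0$ and $h(0) > 0$. For $0 < r < 1$ define the positive measure
\[
 d\mu_r(\zeta) = \RRe h(r\zeta) \, d\lambda(\zeta)
\]
on $\T$. Since $\RRe h$ is harmonic and non-negative, the mean value property gives
\[
 \mu_r(\T) = \int_\T \RRe h(r\zeta) \, d\lambda(\zeta) = \RRe h(0) = h(0),
\]
independent of $r$. Thus $\{\mu_r\}_{r<1}$ is a bounded family in $M(\T) = C(\T)^*$, and by Banach--Alaoglu there is a sequence $r_n \uparrow 1$ along which $\mu_{r_n} \to \mu$ in the weak-$*$ topology for some $\mu \in M^+(\T)$ with $\mu(\T) = h(0) > 0$. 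The Poisson integral formula for the harmonic function $\RRe h$ on $D$ reads
\[
 \RRe h(rw) = \int_\T \RRe \frac{\zeta + rw}{\zeta - rw} \, \RRe h(r\zeta) \, d\lambda(\zeta)
\]
for $|w| < 1$; this is the real part of $h_{\mu_r}$ evaluated at $rw$. Passing to the weak-$*$ limit along $r_n \uparrow 1$ (with $w$ fixed, so the integrand is a continuous function of $\zeta$) and using continuity of the Poisson kernel at radii bounded away from $1$, we obtain $\RRe h(w) = \RRe h_\mu(w)$ for all $w \in D$. Two holomorphic functions on the connected set $D$ whose real parts coincide differ by an imaginary constant; since $h(0) = h_\mu(0) = \mu(\T)$ is real, this constant is zero, so $h = h_\mu$.

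\textbf{Main obstacle.} The only genuine work is in the surjectivity step: one must justify passing the Poisson-integral identity to the weak-$*$ limit, which requires writing it with a fixed interior point $w$ so that the kernel remains continuous in $\zeta \in \T$, and then reconstructing the full holomorphic $h$ (not merely $\RRe h$) from its real part together with the normalization $h(0) > 0$. The forward direction and injectivity are purely formal.
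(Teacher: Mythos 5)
The paper states Herglotz's theorem without proof, referring the reader to \cite{CMR} Theorem 1.8.9, so there is no in-paper argument to compare against; your Helly-type weak-$*$ compactness route is the standard one and your outline is sound. There is, however, a slip in the Poisson identity. As written,
\[
\RRe h(rw) = \int_\T \RRe\frac{\zeta+rw}{\zeta-rw}\,\RRe h(r\zeta)\,d\lambda(\zeta)
\]
is false: the right-hand side is $\RRe h_{\mu_r}(rw)$, and since $\RRe h_{\mu_r}(z)$ is the Poisson integral of the continuous boundary data $\zeta\mapsto\RRe h(r\zeta)$, uniqueness in the Dirichlet problem gives $\RRe h_{\mu_r}(z)=\RRe h(rz)$; hence the right side equals $\RRe h(r^2w)$, not $\RRe h(rw)$. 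The correct reproducing formula, and the one you actually need, keeps the kernel at the fixed interior point $w$:
\[
\RRe h(rw) \;=\; \int_\T \RRe\frac{\zeta+w}{\zeta-w}\,\RRe h(r\zeta)\,d\lambda(\zeta) \;=\; \RRe h_{\mu_r}(w).
\]
This is also what your own parenthetical ``with $w$ fixed, so the integrand is a continuous function of $\zeta$'' presupposes. With this fix the limit passage is immediate: the left side tends to $\RRe h(w)$ by continuity of $h$, and the right side tends to $\RRe h_\mu(w)$ by weak-$*$ convergence of $\mu_{r_n}$ against the single continuous test function $\RRe\frac{\zeta+w}{\zeta-w}$. (Your original version can be rescued by also invoking uniform convergence of the kernels $\RRe\frac{\zeta+r_nw}{\zeta-r_nw}\to\RRe\frac{\zeta+w}{\zeta-w}$ on $\T$, but then one must keep track of the $r^2w$ on the left.) The remaining ingredients — $\mu_r(\T)=h(0)$ via the mean-value property, weak-$*$ closedness of $M^+(\T)$, separability of $C(\T)$ for subsequence extraction, and the recovery of $h$ from $\RRe h=\RRe h_\mu$ together with $h(0)=\mu(\T)\in\R$ — are all in order, as is the Fourier-coefficient injectivity argument using $c_{-\nu}=\overline{c_\nu}$.
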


See \cite{CMR} Theorem 1.8.9 for references to different proofs. Note that for $\mu \in M^+ (\T)$ the Herglotz transform is constant if and only if $\mu$ is a scalar multiple of $\lambda$. This follows from equation \eqref{eq:2.4}. In all other cases we have $\RRe h (z) > 0$ for all $z \in D$ since $h_{\mu} (D)$ is open.

The atoms of $\mu \in M (\T)$ can be recovered from the Herglotz transform since Lebesgue's dominated convergence theorem gives the formula:
\begin{equation}
 \label{eq:2.5}
\lim_{r\to 1-} (1-r) h_{\mu} (r\zeta) = 2 \mu \{ \zeta \} \quad \mbox{for all} \; \zeta \in \T \; .
\end{equation}
For real $0 < p < \infty$ the Hardy space $H^p = H^p (D)$ consists of all analytic functions $f$ on $D$ such that
\[
 \| f \|^p_{H^p} := \sup_{r < 1} \int_{\T} |f (r\zeta)|^p d \lambda (\zeta) < \infty \; .
\]
For $p = \infty$ the Hardy space $H^{\infty} = H^{\infty} (D)$ is the space of bounded analytic functions on $D$ with the $\sup$-norm. For $\mu \in M^+ (\T)$ we have $f_{\mu} = \exp (-h_{\mu}) \in H^{\infty}$ for example. For $1 \le p \le \infty$,  $H^p$ is a Banach space with the norm $\| \; \|_{H^p}$ and for $p = 2$ it is even a Hilbert space. For $0 < p < 1$ the translation invariant metric 
\[
 d_p (f,g) = \| f-g \|^p_{H^p}
\]
turns $H^p$ into a complete metric space. For an analytic function $f (z) = \sum_{\nu \ge 0} a_{\nu} z^{\nu}$ on $D$ we have $\|f \|^2_{H^2} = \sum_{\nu \ge 0} |a_{\nu}|^2$. Thus $f$ belongs to $H^2$ if and only if $\sum_{\nu \ge 0} |a_{\nu}|^2 < \infty$. The Nevanlinna class $\Nh$ is the algebra of analytic functions $f$ on $D$ which satisfy:
\[
 \sup_{r < 1} \int_{\T} \log^+ |f (r\zeta)| \, d\lambda (\zeta) < \infty
\]
or equivalently:
\[
 \sup_{r<1} \int_{\T} \log (1 + |f (r\zeta)|) \, d\lambda (\zeta) < \infty \; .
\]
Here $\log^+ x = \max (\log x , 0)$ for $x \ge 0$. It can be shown that $\Nh$ is the class of functions $f = gh^{-1}$ where $g,h \in H^{\infty}$ and $h$ has no zeroes in $D$. In particular $f_{\mu} \in \Nh$ for any $\mu \in M (\T)$. 

For any function $f$ in $\Nh$ the boundary function:
\[
 \tf (\zeta) = \lim_{r\to 1-} f (r\zeta)
\]
exists for $\lambda$-almost all $\zeta \in \T$ and we have $\log |\tf| \in L^1 = L^1 (\T , \lambda)$ unless $f = 0$. The Smirnov class $\Nh^+$ consists of all functions $f$ in $\Nh$ such that
\begin{equation}
 \label{eq:2.7}
\sup_{r < 1} \int_{\T} \log^+ |f (r\zeta)| \, d\lambda (\zeta) = \int_{\T} \log^+ |\tf| \, d \lambda \; .
\end{equation}
It is actually a subalgebra of $\Nh$. For $0 < p \le \infty$ we have $H^p \subset \Nh^+ \subset \Nh$. A function $f \in \Nh^+$ is in $H^p$ if and only if $\tf \in L^p$. The map $f \mapsto \tf$ sets up an isometry $H^p \hookrightarrow L^p = L^p (\T , \lambda)$. It is known that for $\mu \in M (\T)$ we have $h_{\mu} \in H^p$ for all $0 < p < 1$. In particular $h_{\mu}$ has radial limits $\lambda$-almost everywhere. Fatou's theorem asserts that $\lambda$-a.e. the equation $\RRe \th_{\mu} = \frac{d\mu}{d\lambda}$ holds. In particular a measure $\mu \in M (\T)$ is singular if and only if $\RRe \tilde{h}_{\mu} = 0$ holds $\lambda$-a.e. See \cite{CMR} Theorem 1.8.6 noting that we consider only real measure.

A function $\eo$ in $\Nh \setminus 0$ is called an outer function if we have
\begin{equation}
 \label{eq:2.8}
\log |\eo (0)| = \int_{\T} \log |\teo| \, d\lambda \; .
\end{equation}
In this case $\alpha \in \Nh^+$ and there is a constant $\omega \in \C$ with $|\omega| = 1$ such that we have
\begin{eqnarray}
 \eo (z) & = & \omega \exp \int_{\T} \frac{\zeta + z}{\zeta - z} \log |\teo (\zeta)| \, d\lambda (\zeta) \label{eq:2.9}\\
& = & \omega \exp h_{\log |\teo| \, \lambda} \nonumber
\end{eqnarray}
It is clear that $\omega = 1$ in case $\eo (0) > 0$. On the other hand for every measurable function $\alpha : \T \to \R$ with $\alpha \ge 0$ such that $\log \alpha \in L^1$, the function $\eo = \exp h_{\log \alpha \, d\lambda}$ is an outer function in $\Nh$ and we have $|\teo| = \alpha$ $\lambda$-almost everywhere. If in addition $\alpha \in L^p$ then $\eo$ is (an outer function) in $H^p$. There is the following functional analytic characterization of being outer. A function $f$ in $H^p$ is said to be cyclic if the linear subspace generated by the functions $f , zf , z^2f , \ldots$ is dense in $H^p$. In other words $\C [z] f$ should be dense in $H^p$. It is known that for $0 < p < \infty$ a function $f \in H^p$ is outer if and only if it is cyclic, \cite{D}.

A holomorphic function $f$ on $D$ is called inner if $|f (z)| \le 1$ for all $z \in D$ and $|\tf| = 1$  $\lambda$-almost everywhere on $\T$. It is called a singular inner function if in addition $f (z) \neq 0$ for all $z \in D$. For any $\nu \in M^+ (\T)_{\sing}$ the function $s_{\nu} = \exp (-h_{\nu})$ is a singular inner function with $s_{\nu} (0) > 0$. If $f$ is a singular inner function then we have $f = \omega s_{\nu}$ for a unique singular measure $\nu \ge 0$ and a unique constant $\omega$ of absolute value one. These assertions follow from theorem \ref{t21} and Fatou's theorem. We call $\nu$ the singular measure of $f$. 

The possible zero sets of functions in $H^p$ for $0 < p \le \infty$ or $\Nh$ are determined as follows. Consider a map $\rho : D \to \Z_{\ge 0}$. Then we have $\ord_z f = \rho (z)$ for some $f \in \Nh$ and all $z \in D$ if and only if the Blaschke condition holds:
\begin{equation}
 \label{eq:2.10}
\sum_{z\in D} (1 - |z|) \rho (z) < \infty \; .
\end{equation}
For $f \in H^p$ the condition is the same. If \eqref{eq:2.10} holds for $\rho$, the Blaschke product
\[
 b_{\rho} (z) = z^{\rho (0)} \prod_{w \in D^*} \Big( \frac{|w|}{w} \frac{w - z}{1 -\overline{w} z} \Big)^{\rho (w)}
\]
defines an inner function with $\ord_z b_{\rho} = \rho (z)$ for all $z \in D$. Every inner function $f$ is a product $f = bs$ of a Blaschke product $b$ with a singular inner function $s$. We have $b = b_{\ord f}$ and hence this representation is unique. We will need the canonical factorization theorem: $\Nh \setminus \{ 0 \}$ consists of the functions $f$ that can be written in the form:
\begin{equation}
 \label{eq:2.11}
f = b \frac{s_1}{s_2} \eo
\end{equation}
where $b$ is a Blaschke product, $\eo$ is an outer function in $\Nh$ and $s_1$ and $s_2$ are singular inner functions whose singular measures $\nu_1$ and $\nu_2$ are mutually singular, $\nu_1 \perp \nu_2$. In \eqref{eq:2.11} we have $b = b_{\ord f}$ and $s_1 , s_2$ and $\eo$ are uniquely determined up to a constant factor of modulus one. In particular the singular (signed) measure $\nu = \nu_1 - \nu_2$ is uniquely determined by $f$. It is clear that we have $\eo = \omega \exp h_{\log |\tf| \, d\lambda}$ for some $\omega \in \T$. 

The functions $f \in \Nh^+$ are those with $\nu_2 = 0$ i.e. those which can be written in the form, where $s$ is singular inner:
\begin{equation}
 \label{eq:2.12}
f = b s \eo \; .
\end{equation}

\section{A functional equation in the Nevanlinna class}
\label{sec:3}
In this section we first express the $N$-invariance of a finite measure $\mu$ on the circle in terms of an additive functional equation for its Herglotz transform $h_{\mu}$. The function $f_{\mu} (z) = \exp (-h_{\mu} (z))$ satisfies the multiplicative functional equation \eqref{eq:1.1}. We show that up to a scalar any solution of \eqref{eq:1.1} in $\Nh$ is a quotient of singular inner functions.

For an integer $N \ge 1$ define operators $N^*$ and $\Tr_N$ on functions $h : D \to \C$ or $h : \T \to \C$ by setting $(N^* h) (z) = h (z^N)$ and
\[
 (\Tr_N h) (z) = \sum_{\zeta^N = 1} h (\zeta z)
\]
where $\zeta$ runs over the $N$-th roots of unity. For $\mu \in M (\T)$ we write $N_* \mu$ for $\varphi_{N^*} (\mu)$.

\begin{prop}
 \label{t31}
For any $\mu \in M (\T)$ we have
\begin{equation}
 \label{eq:3.1}
N^* h_{N_* \mu} = \frac{1}{N} \Tr_N (h_{\mu}) \; .
\end{equation}
The measure $\mu$ is $N$-invariant if and only if either of the following functional equations holds:
\begin{equation}
 \label{eq:3.2}
h (z^N) = \frac{1}{N} \sum_{\zeta^N = 1} h (\zeta z) \quad \text{for} \; h = h_{\mu} 
\end{equation}
and
\begin{equation}
 \label{eq:3.3}
f (z^N)^N = \prod_{\zeta^N = 1} f (\zeta z) \quad \text{for} \; f = f_{\mu} \; .
\end{equation}
\end{prop}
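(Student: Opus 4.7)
The plan is to first establish the identity (3.1) by a direct computation using the partial fraction decomposition of $\zeta^N - z^N$, and then derive the two equivalences as easy consequences together with the injectivity of the Herglotz transform and a logarithm argument.

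\textbf{Step 1: proving (3.1).} I would start from the partial fraction identity
\[
\sum_{\omega^N = 1} \frac{1}{\zeta - \omega z} = \frac{N \zeta^{N-1}}{\zeta^N - z^N},
\]
obtained by noting that $\prod_{\omega^N=1}(\zeta - \omega z) = \zeta^N - z^N$ and differentiating logarithmically. Rewriting the Herglotz kernel as $\tfrac{\zeta + \omega z}{\zeta - \omega z} = -1 + \tfrac{2\zeta}{\zeta - \omega z}$ and summing over $\omega$ yields
\[
\sum_{\omega^N=1} \frac{\zeta + \omega z}{\zeta - \omega z} \;=\; -N + 2\zeta \cdot \frac{N\zeta^{N-1}}{\zeta^N - z^N} \;=\; N \cdot \frac{\zeta^N + z^N}{\zeta^N - z^N}.
\]
Integrating this identity against $d\mu(\zeta)$ and recognizing that $\int_\T \tfrac{\zeta^N + z^N}{\zeta^N - z^N}\,d\mu(\zeta) = \int_\T \tfrac{\eta + z^N}{\eta - z^N}\,d(N_*\mu)(\eta) = h_{N_*\mu}(z^N)$ by the change-of-variables $\eta = \zeta^N$ for the push-forward, I obtain $(\Tr_N h_\mu)(z) = N\, h_{N_*\mu}(z^N) = N \,(N^* h_{N_*\mu})(z)$, which is (3.1).

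\textbf{Step 2: the equivalence with (3.2).} If $N_*\mu = \mu$, then $h_{N_*\mu} = h_\mu$ and (3.1) immediately gives (3.2) for $h = h_\mu$. Conversely, if $h = h_\mu$ satisfies (3.2), then (3.1) forces $N^* h_{N_*\mu} = N^* h_\mu$ on $D$; since $N^*$ is injective on $\Oh(D)$ (the composition $g \mapsto g \circ (z\mapsto z^N)$ determines $g$), this gives $h_{N_*\mu} = h_\mu$. Since a finite signed measure on $\T$ is determined by its Herglotz transform (its Fourier coefficients $c_\nu$ can be read off from \eqref{eq:2.4} for $\nu \ge 0$, and $c_{-\nu} = \overline{c_\nu}$), we conclude $N_*\mu = \mu$.

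\textbf{Step 3: passing from (3.2) to (3.3).} From $f_\mu = \exp(-h_\mu)$, one has
\[
f_\mu(z^N)^N = \exp\bigl(-N h_\mu(z^N)\bigr) \quad\text{and}\quad \prod_{\zeta^N=1} f_\mu(\zeta z) = \exp\bigl(-(\Tr_N h_\mu)(z)\bigr),
\]
so (3.2) implies (3.3). Conversely, if $f = f_\mu$ satisfies (3.3), taking logarithms gives $N h_\mu(z^N) = (\Tr_N h_\mu)(z) + 2\pi i k(z)$ for some continuous integer-valued function $k$ on the connected domain $D$, hence a constant integer; evaluating at $z=0$ yields $k=0$, so (3.2) holds.

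The main obstacle is really only the clean derivation of the kernel identity in Step 1; once that is in hand, everything else is formal, relying on Step 2's injectivity arguments and the connectedness of $D$ in Step 3.
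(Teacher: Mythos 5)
Your proof is correct and follows essentially the same overall strategy as the paper: establish the partial-fraction kernel identity, integrate against $d\mu$ to obtain \eqref{eq:3.1}, and then deduce the equivalences by injectivity of $N^*$ (precomposition with $z\mapsto z^N$), injectivity of the Herglotz transform on real measures, and a constancy-of-the-branch argument for passing between the additive and multiplicative forms. The only genuine difference is in Step~1: you derive the identity $\sum_{\omega^N=1}\tfrac{\zeta+\omega z}{\zeta-\omega z}=N\tfrac{\zeta^N+z^N}{\zeta^N-z^N}$ by logarithmic differentiation of $\prod_{\omega^N=1}(\zeta-\omega z)=\zeta^N-z^N$ with respect to $\zeta$, whereas the paper's equation \eqref{eq:3.4} is obtained by observing that both sides of the rational identity have the same divisor on $\Pa^1(\C)$ and agree at $z=0$. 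Both derivations are elementary and correct; the paper's is perhaps slightly more slick, while yours makes the algebra more explicit. Everything downstream (Steps~2 and~3) matches the paper's reasoning in content, only reorganized as (3.2)$\Leftrightarrow$$N$-invariance and separately (3.2)$\Leftrightarrow$(3.3), rather than the paper's chain (3.3)$\Rightarrow$(3.2)$\Rightarrow$$N$-invariance.
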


\begin{proof}
 For every $\eta \in \C$ we have:
\begin{equation}
 \label{eq:3.4}
\frac{\eta^N + z^N}{\eta^N - z^N} = \frac{1}{N} \sum_{\zeta^N = 1} \frac{\eta + \zeta z}{\eta - \zeta z} \quad \text{in} \; \C (z) \; .
\end{equation}
Namely both sides have the same divisor on $\Pa^1 (\C) = \C \cup \{ \infty \}$ and agree at $z = 0$. Integrating \eqref{eq:3.4} with respect to $d\mu (\eta)$ gives the equation $h_{N_* \mu} (z^N) = N^{-1} \Tr_N (h_{\mu})$. It follows that \eqref{eq:3.2} and \eqref{eq:3.3} hold if we have $N_* \mu = \mu$.

Now let $\mu \in M (\T)$ be arbitrary and assume that \eqref{eq:3.3} holds for $f = f_{\mu}$. Then \eqref{eq:3.2} holds for $h = h_{\mu}$ up to an additive constant which must be zero as follows by taking $z = 0$. Equation \eqref{eq:3.2} is equivalent to the assertion $N^* h_{\mu} = N^* h_{N_* \mu}$ i.e. to $h_{N_* \mu} = h_{\mu}$. On real measures the Herglotz transform is injective. We therefore get $N_* \mu = \mu$.
\end{proof}

Since we will be concerned with $N$-invariant signed measures the following fact is interesting to know:

\begin{prop}
 \label{t33n}
For a signed (not necessarily finite) measure $\mu$ on the Borel algebra of $\T$ consider the Jordan decomposition $\mu = \mu_+ - \mu_-$. Then $N_* \mu = \mu$ is equivalent to $N_* \mu_{\pm} = \mu_{\pm}$.
\end{prop}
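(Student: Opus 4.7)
The ``$\Leftarrow$'' direction I will dispatch at once: pushforward is linear on signed measures, so $N_*\mu_\pm = \mu_\pm$ immediately gives $N_*\mu = N_*\mu_+ - N_*\mu_- = \mu_+ - \mu_- = \mu$. For the converse, I will use the standard convention that a signed measure takes at most one infinite value, so at least one of $\mu_+,\mu_-$ is finite; by symmetry I may assume $\mu_-$ is finite. The pushforward $N_*\mu_-$ then remains finite with the same total mass $\mu_-(\T)$, and applying $N_*$ to the Jordan decomposition produces a second representation
\[
\mu \;=\; N_*\mu \;=\; N_*\mu_+ - N_*\mu_-
\]
of $\mu$ as a difference of positive measures with one summand finite.

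The heart of the argument is a Hahn-decomposition comparison: fix a Hahn decomposition $\T = A \sqcup B$ for $\mu$, so that $\mu_+(E) = \mu(E \cap A)$ and $\mu_-(E) = -\mu(E \cap B)$ for every Borel $E$. Substituting the second representation of $\mu$ into the latter yields
\[
\mu_-(E) \;=\; -\mu(E \cap B) \;=\; N_*\mu_-(E \cap B) - N_*\mu_+(E \cap B) \;\le\; N_*\mu_-(E),
\]
since $N_*\mu_+ \ge 0$ and $E \cap B \subset E$. Hence $\mu_- \le N_*\mu_-$ as positive measures. Both are finite with the common total mass $\mu_-(\T)$, so the inequality must in fact be an equality: $\mu_- = N_*\mu_-$. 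Cancelling this finite measure from both sides of $N_*\mu_+ - N_*\mu_- = \mu_+ - \mu_-$ then gives $N_*\mu_+ = \mu_+$.

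The only real obstacle is the possible infiniteness of $\mu_+$, which blocks a naive total-mass comparison on the positive side. I circumvent it by first handling the guaranteed-finite component $\mu_-$: positivity of $N_*\mu_+$ provides a one-sided inequality for $\mu_-$, finiteness then upgrades it to equality, and the same finiteness is exactly what legalises the final cancellation that transfers the identity to $\mu_+$.
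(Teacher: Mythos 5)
Your proof is correct and follows essentially the same route as the paper's: derive $\mu_- \le N_*\mu_-$ from the representation $\mu = N_*\mu_+ - N_*\mu_-$, upgrade to equality by comparing finite total masses, then cancel the finite $\mu_-$ from both sides of $\mu = N_*\mu$. The only cosmetic difference is that the paper obtains the inequality by citing the minimality of the Jordan decomposition, whereas you rederive the needed half directly from a Hahn decomposition.
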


\begin{proof}
 The equation $\mu = N_* \mu_+ - N_* \mu_-$ shows that $\mu_{\pm} \le N_* \mu_{\pm}$ by minimality of $\mu_{\pm}$ in the Jordan decomposition \cite{E} VII 1.12. If $\mu_-$ is finite then $\mu_- (\T) = (N_* \mu_-) (\T) < \infty$ implies that $\mu_- = N_* \mu_-$. Adding this equation to $\mu = N_* \mu$ we get $\mu_+ = N_* \mu_+$ as well. If $\mu_+$ is finite one argues similarly. The other direction is clear. 
\end{proof}

Here are some useful formulas. Let $h (z) = \sum^{\infty}_{\nu=0} a_{\nu} z^{\nu}$ be a holomorphic function in $D$. Then we have:
\begin{equation}
 \label{eq:3.5}
(N^{-1} \Tr_N h) (z) = \sum_{N \tei \nu} a_{\nu} z^{\nu} = \sum^{\infty}_{\nu = 0} a_{\nu N} z^{\nu N} \; .
\end{equation}
Recall that in terms of the Fourier coefficients $c_{\nu} = c_{\nu} (\mu) = \int_{\T} \zeta^{-\nu} \, d\mu (\zeta)$ of a measure $\mu$ in $M (\T)$ we have
\begin{equation}
 \label{eq:3.6}
h_{\mu} (z) = \mu (\T) + 2 \sum^{\infty}_{\nu = 1} c_{\nu} z^{\nu} \; .
\end{equation}
The following equality is clear from the definitions:
\begin{equation}
 \label{eq:3.7}
c_{\nu} (N_* \mu) = c_{\nu N} (\mu) \quad \text{for all} \; \nu \; \text{in} \; \Z \; . 
\end{equation}
Hence we get
\[
 N^* h_{N_* \mu} (z) = h_{N_* \mu} (z^N) = \mu (\T) + 2 \sum^{\infty}_{\nu = 1} c_{\nu N} z^{\nu N} \; .
\]
Thus the formula $N^* h_{N_* \mu} = N^{-1} \Tr_N (h_{\mu})$ in proposition \ref{t31} follows again. Moreover the following assertions hold:
\begin{equation}
 \label{eq:3.8}
\begin{array}{l}
 \text{For a measure $\mu \in M (\T)$ we have $N_* \mu = \mu$ if and only if $c_{\nu} = c_{\nu N}$} \\ 
\text{for all $\nu \in \Z$}
\end{array} 
\end{equation}
\begin{equation}
 \label{eq:3.9}
\begin{array}{l}
\text{A holomorphic function $h = \sum^{\infty}_{\nu = 0} a_{\nu} z^{\nu}$ in $D$ satisfies the functional} \\ 
\text{equation \eqref{eq:3.1} if and only if $a_{\nu} = a_{\nu N}$ holds for all $\nu \ge 0$.}
      \end{array}
\end{equation}

\begin{lemma}\label{t32}
 For $N \ge 2$ let $\varphi \in \Lh^1 (\T)$ be a solution of the additive functional equation
\[
 \varphi (\eta^N) = \frac{1}{N} \sum_{\zeta^N = 1} \varphi (\zeta \eta) \quad \text{for $\lambda$-a.a. $\eta \in \T$.}
\]
Then $\varphi$ is constant $\lambda$-a.e.
\end{lemma}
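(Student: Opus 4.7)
The plan is to translate the functional equation into a relation on the Fourier coefficients of $\varphi$ and then exploit Riemann--Lebesgue. Write
\[
a_\nu = \int_\T \varphi(\eta)\,\eta^{-\nu}\,d\lambda(\eta), \qquad \nu \in \Z,
\]
for the Fourier coefficients of $\varphi \in L^1(\T)$. These are bounded and satisfy $a_\nu \to 0$ as $|\nu| \to \infty$.

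First I would compute the $k$-th Fourier coefficient of the left hand side $\eta \mapsto \varphi(\eta^N)$. A direct change of variables (using $\lambda = (2\pi)^{-1}d\theta$, parametrizing $\T$ by $\theta \in [0,2\pi)$, and splitting $[0, 2\pi N)$ into $N$ intervals of length $2\pi$) gives
\[
\int_\T \varphi(\eta^N)\,\eta^{-k}\,d\lambda(\eta) \;=\; \frac{1}{N}\Big(\sum_{j=0}^{N-1} e^{-2\pi i k j/N}\Big) a_{k/N},
\]
which equals $a_{k/N}$ when $N\mid k$ and $0$ otherwise. On the right hand side, the rotation invariance of $\lambda$ yields
\[
\int_\T \tfrac{1}{N}\sum_{\zeta^N=1}\varphi(\zeta \eta)\,\eta^{-k}\,d\lambda(\eta) \;=\; \tfrac{1}{N}\sum_{\zeta^N=1}\zeta^{k}\,a_k,
\]
which equals $a_k$ when $N\mid k$ and $0$ otherwise. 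Comparing both sides for each $k \in \Z$ divisible by $N$ yields $a_{\nu N} = a_\nu$ for every $\nu \in \Z$. This is precisely the Fourier-coefficient version of the functional equation that appears in \eqref{eq:3.9}.

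Now I would iterate: for any $\nu \neq 0$,
\[
a_\nu \;=\; a_{\nu N} \;=\; a_{\nu N^2} \;=\; \cdots \;=\; a_{\nu N^k}
\]
for all $k \ge 0$. Since $N \ge 2$ we have $|\nu N^k| \to \infty$, and by Riemann--Lebesgue $a_{\nu N^k} \to 0$. Hence $a_\nu = 0$ for all $\nu \neq 0$, so $\varphi = a_0$ as an element of $L^1(\T)$, that is, $\varphi$ is constant $\lambda$-a.e.

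The argument is essentially bookkeeping; there is no real obstacle. The only point requiring mild care is justifying the Fourier-coefficient computation of the composition $\eta\mapsto\varphi(\eta^N)$ for a merely $L^1$ function, which is done by the substitution above (and an application of Fubini). Once the identity $a_{\nu N}=a_\nu$ is in hand, the Riemann--Lebesgue lemma closes the argument immediately.
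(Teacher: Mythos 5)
Your argument is correct and coincides with the paper's first proof of the lemma: both pass to Fourier coefficients, derive the relation $c_{\nu N} = c_\nu$ from the functional equation, and conclude via the Riemann--Lebesgue lemma. You supply more detail on the change-of-variables computation for $\varphi(\eta^N)$, but the underlying idea is the same.
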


\begin{proof} {\bf 1}
The Fourier coefficients $c_{\nu}$ of $\varphi$ satisfy the relation $c_{\nu N} = c_{\nu}$ for all $\nu$. Since $c_{\nu} \to 0$ for $|\nu| \to \infty$ by the Riemann-Lebesgue lemma it follows that we have $c_{\nu} = 0$ for $\nu \neq 0$ i.e. $\varphi$ is constant $\lambda$-a.e.
\end{proof}

\begin{proof} {\bf 2} 
 Set $e_N (\varphi) (\eta) = \frac{1}{N} \sum_{\zeta^N = 1} \varphi (\zeta \eta)$ i.e. $e_N = N^{-1} \Tr_N$. Jessen \cite{J} has proved that for any sequence $(N_{\nu})_{\nu \ge 1}$ tending to infinity with $N_{\nu} \tei N_{\nu + 1}$ we have for all $\varphi \in \Lh^1 (\T)$
\[
 e_{N_{\nu}} (\varphi) \longrightarrow E (\varphi) = \int_{\T} \varphi \, d\lambda \quad \text{$\lambda$-a.e. on $\T$.}
\]
In particular we have $e_{N^{\nu}} (\varphi) \to E (\varphi)$ $\lambda$-a.e. as $\nu \to \infty$ and therefore since $e_{N^{\nu}} (\varphi) = \varphi (\eta^{N^{\nu}})$ for $\lambda$-a.a. $\eta \in \T$ by the functional equation:
\begin{equation}
 \label{eq:3.22}
\varphi (\eta^{N^{\nu}}) \longrightarrow E (\varphi) \quad \text{for $\lambda$-a.a. $\eta$ as $\nu \to\infty$.}
\end{equation}
Set
\[
 M_n = \Big\{ \eta \in \T \tei |\varphi (\eta) - E (\varphi)| \ge \frac{1}{n} \Big\}\; .
\]
Then we have
\[
 M = \{ \eta \in \T \tei \varphi (\eta) \neq E (\varphi) \} = \bigcup_{n \ge 1} M_n \; .
\]
Assume that $\lambda (M) > 0$. Then $\lambda (M_n) > 0$ for some $n \ge 1$. By Poincar\'e recurrence, for almost all $\eta \in \T$ there is a sequence $(\nu_j)_{j \ge 1}$ with $\lim_{j\to \infty} \nu_j = \infty$ and $\eta^{N^{\nu_j}} \in M_n$ for all $j \ge 1$. For these $\eta$, equation \eqref{eq:3.22} cannot hold. Thus we have obtained a contradiction and therefore $\lambda (M) = 0$. 
\end{proof}

We now look at the functional equation \eqref{eq:1.1} within the Nevanlinna class.

\begin{theorem}
 \label{t33}
For some $N \ge 2$ let $0 \neq f \in \Nh$ be a solution of the multiplicative functional equation \eqref{eq:1.1}.
Then there is a unique constant $c > 0$ such that $cf$ is a quotient of singular inner functions. If in addition $f \in \Nh^+$ there is a unique $c > 0$ such that $cf$ is a singular inner function.
\end{theorem}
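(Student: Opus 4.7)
The plan is to apply the canonical factorization $f = b(s_1/s_2)\eo$ of \eqref{eq:2.11} and show that both the Blaschke factor $b$ and the outer factor $\eo$ are trivial up to a positive scalar. The key first step is to prove that $f$ has no zeros in $D$. Writing $\rho = \ord f$ and comparing orders at $z$ on both sides of \eqref{eq:1.1}: at $z \neq 0$ the map $w \mapsto w^N$ is an unramified local diffeomorphism, giving $N \rho (z^N) = \sum_{\zeta^N = 1} \rho (\zeta z)$; at $z = 0$ the identity reads $N^2 \rho (0) = N \rho (0)$, forcing $\rho (0) = 0$ since $N \ge 2$. Multiplying the first identity by $(1 - |z|)$ and summing over $z \in D$ with $z \neq 0$, and observing that the preimage of each $w \neq 0$ under $z \mapsto z^N$ consists of $N$ points of modulus $|w|^{1/N}$, yields
\[
N \sum_{0 \neq w \in D} (1 - |w|^{1/N}) \rho (w) = \sum_{0 \neq w \in D} (1 - |w|) \rho (w) \; .
\]
Factoring $1 - |w| = (1 - |w|^{1/N})(1 + |w|^{1/N} + \cdots + |w|^{(N-1)/N})$ shows that $1 - |w| < N (1 - |w|^{1/N})$ strictly for $|w| < 1$. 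Both sums are finite by \eqref{eq:2.10} (since $(1 - |w|^{1/N}) \le (1 - |w|)$), so any $w$ with $\rho (w) > 0$ would violate the displayed equation. Hence $\rho \equiv 0$ and $b$ is constant.

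For the outer part, once $b$ has been absorbed into a unimodular constant we have $|\tf| = |\tilde{\eo}|$ $\lambda$-a.e., since $|\tilde s_i| = 1$ a.e. Passing to boundary values in \eqref{eq:1.1} and taking moduli gives
\[
|\tf (\eta^N)|^N = \prod_{\zeta^N = 1} |\tf (\zeta \eta)| \quad \text{for $\lambda$-a.a. } \eta \in \T \; .
\]
Taking logarithms is legitimate, as $\log |\tf| \in L^1 (\T , \lambda)$ for any $0 \neq f \in \Nh$. Thus $\varphi := \log |\tf|$ is an $L^1$ solution of the additive functional equation of Lemma \ref{t32} and is therefore a.e. equal to some constant $\log c$ with $c > 0$. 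The explicit outer-function formula \eqref{eq:2.9}, applied with $|\tilde{\eo}| = c$, then shows $\eo (z) = \omega c$ is a constant of modulus $c$ for some $\omega \in \T$.

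Collecting everything, and absorbing the unimodular factors into $s_1$, we obtain $c^{-1} f = (\omega s_1)/s_2$, a quotient of singular inner functions. For uniqueness: if $a f$ is any such quotient with $a > 0$, then $|\widetilde{a f}| = 1$ $\lambda$-a.e., which combined with $|\tf| = c$ forces $a = c^{-1}$. When $f \in \Nh^+$ the factorization \eqref{eq:2.12} has $s_2 = 1$, and the identical argument yields $c^{-1} f = \omega s_1$, a singular inner function.

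The main obstacle is the Blaschke step: one needs the strict comparison $1 - |w| < N (1 - |w|^{1/N})$ valid for all $|w| < 1$, together with the separate handling of the ramification point $z = 0$ in the order identity to conclude $\rho (0) = 0$. After that, Step 2 is essentially a direct reduction to Lemma \ref{t32}, and Step 3 is bookkeeping with the uniqueness of the canonical factorization.
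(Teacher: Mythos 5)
Your proof is correct and follows the paper's overall strategy: exclude zeros of $f$ using the Blaschke condition together with the functional equation, then reduce the outer factor to Lemma~\ref{t32} via the boundary modulus. The zero-exclusion step is where you diverge from the paper. The paper fixes a putative zero $a \neq 0$, iterates to $f(z^{N^k})^{N^k} = \prod_{\zeta^{N^k}=1}f(\zeta z)$, and shows that the $N^k$-th roots of $a$ alone already make the Blaschke sum diverge, because $N^k(1-|a|^{1/N^k}) \to -\log|a| > 0$. You avoid the iteration and the limit computation entirely: multiplying the single-step order identity $N\rho(z^N) = \sum_{\zeta^N=1}\rho(\zeta z)$ by $(1-|z|)$ and summing over $z \neq 0$ produces the global equality $N\sum_{w\neq 0}(1-|w|^{1/N})\rho(w) = \sum_{w\neq 0}(1-|w|)\rho(w)$, both sides finite by \eqref{eq:2.10}, and the pointwise strict inequality $1-|w| < N(1-|w|^{1/N})$ throughout $D$ then forces $\rho \equiv 0$. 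This is a cleaner, non-iterative version that extracts $\rho \equiv 0$ from an equality of convergent sums rather than from a divergence contradiction. For the outer part you apply Lemma~\ref{t32} directly to $\log|\tf|$ and recover $\eo$ via \eqref{eq:2.9}, whereas the paper first observes that $\eo$ inherits the functional equation (by uniqueness of the factorization) and then applies the lemma to $\log|\teo|$; these are equivalent once one notes $|\tf| = |\teo|$ a.e.\ after the Blaschke factor is gone, so your route saves a small intermediate step.
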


\begin{proof}
 The functional equation implies the relation
\[
 N^2 \ord_0 f = \sum_{\zeta^N = 1} \ord_0 f = N \ord_0 f
\]
and hence $f (0) \neq 0$. We now show that $f (a) \neq 0$ for all $0 \neq a \in D$ as well. For any $k \ge 1$ we have by induction:
\[
 f (z^{N^k})^{N^k} = \prod_{\zeta^{N^k} = 1}  f (\zeta z) \; .
\]
This gives the relation
\begin{equation}
 \label{eq:3.11}
N^k \ord_w f = \sum_{z^{N^k} = w} \ord_z f \quad \text{for} \; 0 \neq w \in D \; .
\end{equation}
Assume that $f (a) = 0$ for some $0 \neq a \in D$. Then we have
\begin{eqnarray*}
 \sum_{z \in D} (1 - |z|) \; \ord_z f & \ge & \sum^{\infty}_{k = 0} \sum_{z \in D \atop z^{N^k} = a} (1 - |z|) \; \ord_z f \\
& = & \sum^{\infty}_{k = 0} (1 - |a|^{1/N^k}) \sum_{z \in D \atop z^{N^k} = a} \ord_z f \\
& \overset{\eqref{eq:3.11}}{=} & \ord_a f \; \sum^{\infty}_{k = 0} N^k (1 - |a|^{1 / N^k}) \; .
\end{eqnarray*}
We have 
\[
 N^k (1 - |a|^{1 / N^k}) \longrightarrow \log |a|^{-1} \neq 0 \quad \text{for} \; k \to \infty \; .
\]
Hence the previous series diverges, contradicting the assumption $f \in \Nh$. Thus $f$ has no zeroes in $D$. Hence we have a unique factorization of the form $f = u \eo$ where $\eo$ is an outer function in $\Nh$ with $c^{-1} := \eo (0) > 0$ and $u$ is a quotient of singular inner functions resp. a singular inner function if $f \in \Nh^+$. By the uniqueness of such a decomposition and since $f (z^N) , f (\zeta z)$ are in $\Nh$ as well it follows that with $f$ both $\eo$ and $u$ satisfy the functional equation \eqref{eq:1.1}. Let $\teo$ be the radial extension of $\eo$ to a measurable function on $\T$. Then it is known that $\varphi = \log |\teo|$ is in $L^1$. Since $\teo$ satisfies the multiplicative functional equation \eqref{eq:3.3} $\lambda$-almost everywhere on $\T$, we have $N^* \varphi = N^{-1} \Tr_N (\varphi)$ in $L^1$. By lemma \ref{t32} $\varphi$ is constant and hence $\eo = \exp h_{\varphi \lambda}$ is constant as well, $\eo = \eo (0) = c^{-1}$. This implies the assertion of the theorem.
\end{proof}

\begin{quest}
 For $0 < p < \infty$ the outer functions in $H^p$ are exactly the cyclic vectors in $H^p$. Can one use this characterization to show that any outer function in $H^p$ satisfying \eqref{eq:1.1} is constant? This would be very desirable because it might suggest an approach to conjecture \ref{t1054} below dealing with cyclic vectors in generalized Nevanlinna classes.
\end{quest}

The following result leads to a different proof of a special case of theorem \ref{t33}.

\begin{theorem}
 \label{t34}
For $N \ge 2$ and $p > 0$ let $f \in L^p (\T)$ be a solution of the following functional equation for $\lambda$-almost all $\eta \in \T$:
\begin{equation}
 \label{eq:3.12}
f (\eta^N)^N = \prod_{\zeta^N = 1} f (\zeta \eta) \; .
\end{equation}
Then $|f|$ is constant $\lambda$-a.e.
\end{theorem}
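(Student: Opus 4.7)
The plan is to reduce the multiplicative equation \eqref{eq:3.12} to the additive one treated in Lemma \ref{t32} by applying the AM--GM inequality to $|f|^p$ and exploiting the finiteness of $\int_\T |f|^p\,d\lambda$.

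Taking moduli in \eqref{eq:3.12} and raising to the $p/N$-th power gives, for $\lambda$-a.a.\ $\eta \in \T$,
\[
|f(\eta^N)|^p = \prod_{\zeta^N = 1} |f(\zeta\eta)|^{p/N},
\]
so the left-hand side is the geometric mean of the $N$ nonnegative numbers $|f(\zeta\eta)|^p$, $\zeta^N = 1$. The AM--GM inequality then yields pointwise
\[
|f(\eta^N)|^p \le \frac{1}{N}\sum_{\zeta^N = 1} |f(\zeta\eta)|^p \qquad \lambda\text{-a.e.,}
\]
with equality if and only if all $N$ values $|f(\zeta\eta)|$ coincide.

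Next I integrate both sides against $\lambda$. By $\varphi_N$-invariance of $\lambda$, the left-hand integral equals $\int_\T |f|^p\,d\lambda$; by rotation-invariance of $\lambda$, each summand on the right also integrates to $\int_\T |f|^p\,d\lambda$, and the factor $1/N$ cancels the sum over roots of unity, so the right-hand integral equals the same quantity. Since this common value is finite by the hypothesis $f \in L^p(\T)$, the pointwise inequality must be an equality $\lambda$-almost everywhere. Consequently, off a null set, all the values $|f(\zeta\eta)|$ (as $\zeta$ ranges over the $N$-th roots of unity) coincide; equivalently, $|f|$ is invariant under multiplication by every $N$-th root of unity.

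Substituting this invariance back into \eqref{eq:3.12} gives $|f(\eta^N)|^N = |f(\eta)|^N$, hence $|f|^p$ is $\varphi_N$-invariant as well. Combined with the $N$-th root invariance just derived, the function $\varphi := |f|^p$ satisfies the additive functional equation
\[
\varphi(\eta^N) = \frac{1}{N}\sum_{\zeta^N=1}\varphi(\zeta\eta)
\]
of Lemma \ref{t32} and lies in $L^1(\T)$, so that lemma forces $\varphi$, and therefore $|f|$, to be constant $\lambda$-a.e. The one delicate point is the passage from the integrated equality back to the pointwise equality in AM--GM, which requires the common integral to be finite -- this is precisely where the hypothesis $f \in L^p$ is essential. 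Without integrability of this type (for instance, only $\log|f| \in L^1$), one would instead need to take logarithms and argue directly along the lines of Lemma \ref{t32}, as in the proof of Theorem \ref{t33}.
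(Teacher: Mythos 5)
Your proof is correct, and it takes a genuinely different route from the paper's. The paper applies Hölder's inequality in the form $\int_{\T}\prod_{\zeta^N=1}|f(\zeta\eta)|^{p/N}\,d\lambda \le \prod_{\zeta^N=1}\bigl(\int_{\T}|f(\zeta\eta)|^p\,d\lambda\bigr)^{1/N}$, and the equality case there (Hardy--Littlewood--P\'olya, Thm.~188) only yields that the functions $|f(\zeta\eta)|^p$ are pairwise \emph{proportional}, i.e.\ $|f(\zeta\eta)|^p = c_\zeta|f(\eta)|^p$ for constants $c_\zeta>0$; an additional step (iterating $N$ times and using $\zeta^N=1$) is then required to show $c_\zeta=1$. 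Your pointwise AM--GM variant is more economical: equality in AM--GM a.e.\ immediately forces the $N$ values $|f(\zeta\eta)|^p$ to coincide for a.e.\ $\eta$, so $|f(\zeta\eta)|=|f(\eta)|$ a.e.\ with no constants to eliminate. The finish is also a little different: after obtaining $|f(\eta^N)|=|f(\eta)|$ a.e., the paper invokes ergodicity of $\varphi_N$ with respect to $\lambda$ directly, whereas you feed $\varphi=|f|^p\in L^1$ into Lemma \ref{t32}. Both are valid (and the second proof of Lemma \ref{t32} is itself ergodic-theoretic via Poincar\'e recurrence), so the difference is stylistic; but the AM--GM step is a genuine simplification of the core inequality argument.
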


\begin{proof}
 We may assume that $f \neq 0$ in $L^p (\T)$. Using H\"older's inequality we get:
\begin{eqnarray*}
 \int_{\T} |f (\eta)|^p \, d \lambda (\eta) & = & \int_{\T} |f (\eta^N)^N|^{p/N} d\lambda (\eta) \quad \text{since} \; N_* \lambda = \lambda \\
& \overset{\eqref{eq:3.12}}{=} & \int_{\T} \prod_{\zeta^N = 1} |f (\zeta \eta)|^{p/N} d\lambda (\eta) \\
& \overset{(*)}{\le} & \prod_{\zeta^N = 1} \Big( \int_{\T} |f (\zeta \eta)|^p \Big)^{1/N} d\lambda (\eta)\\
& = & \int_{\T} |f(\eta)|^p d\lambda (\eta) \quad \text{since} \; \zeta_* \lambda = \lambda \; .
\end{eqnarray*}
Hence we have equality in H\"older's inequality $(*)$. It follows from theorem 188 in section 6.9 of \cite{HLP} that for $\zeta^N = 1$ the functions $|f (\zeta \eta)|^p$ are pairwise linearly dependent in $L^1 (\T)$. Because of $|f|^p \neq 0$ in $L^1 (\T)$ there are therefore constants $c_{\zeta} > 0$ such that we have
\[
 |f (\zeta \eta)|^p = c_{\zeta} |f (\eta)|^p \quad \text{in} \; L^1 (\T) \; .
\]
Applying this equality $N$-times we find
\[
 |f (\zeta^N \eta)|^p = c^N_{\zeta} |f (\eta)|^p \; .
\]
Since $\zeta^N = 1$ we get $c^N_{\zeta} = 1$ and hence $c_{\zeta} = 1$. It follows that we have
\[
 |f (\zeta \eta)| = |f (\eta)|
\]
for $\lambda$-almost all $\eta \in \T$. Combining this with the functional equation \eqref{eq:3.12} we get the relation:
\[
 |f (\eta^N)| = |f (\eta)| \quad \text{$\lambda$-a.e.}
\]
Since $N$ acts ergodically on $\T$ with respect to $\lambda$ it follows that $|f|$ is constant $\lambda$-a.e.
\end{proof}

\begin{remarks}
 \label{t35} \em
1) If $\log |f|$ is integrable on $\T$, theorem \ref{t34} also follows from lemma \ref{t32}.\\
2) For $p > 0$ we have a natural inclusion $H^p (\T) \hookrightarrow L^p (\T)$ given by mapping $f$ to its boundary function $\tf$. Moreover the function $f$ is inner if and only if $|\tf| = 1$ $\lambda$-a.e. Thus theorem \ref{t34} implies the following weaker version of theorem \ref{t33}:
\end{remarks}

\begin{cor}
 \label{t36}
For $p > 0$ let $0 \neq f \in H^p (\T)$ satisfy the functional equation \eqref{eq:1.1}. Then there is a unique constant $c > 0$, such that $cf$ is an inner function.
\end{cor}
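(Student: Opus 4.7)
The plan is to reduce the corollary to Theorem \ref{t34} by passing to boundary functions, and then upgrade ``$|cf|=1$ on $\T$'' to ``$cf$ is inner'' using that bounded boundary values force membership in $H^\infty$ for functions in the Smirnov class.

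First, I would take the boundary function $\tf \in L^p (\T)$, which exists $\lambda$-a.e.\ since $H^p \subset \Nh$. The key preliminary step is to verify that $\tf$ satisfies the pointwise functional equation \eqref{eq:3.12} $\lambda$-a.e. For this, fix $\eta \in \T$ and apply \eqref{eq:1.1} to $z = r\eta$ with $r \in (0,1)$ to get $f (r^N \eta^N)^N = \prod_{\zeta^N = 1} f (r \zeta \eta)$. Letting $r \to 1^-$, the right-hand side tends to $\prod_{\zeta^N = 1} \tf (\zeta \eta)$ whenever $\zeta \eta$ is a Fatou point of $f$ for every $N$-th root of unity $\zeta$, and the left-hand side tends to $\tf (\eta^N)^N$ whenever $\eta^N$ is a Fatou point of $f$. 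Since the exceptional set $E \subset \T$ where radial limits fail has $\lambda (E) = 0$, and since each rotation $\eta \mapsto \zeta \eta$ preserves $\lambda$ while $\eta \mapsto \eta^N$ is a measure-preserving $N$-to-$1$ map, the full exceptional set has measure zero and \eqref{eq:3.12} holds $\lambda$-a.e.

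Second, Theorem \ref{t34} applied to $\tf \in L^p (\T)$ immediately yields that $|\tf|$ is constant $\lambda$-a.e. Since $f \neq 0$ in $H^p$, this constant is some positive real number $c^{-1} > 0$, and $c$ is uniquely determined by $c^{-1} = \operatorname{ess\,sup} |\tf| = \operatorname{ess\,inf} |\tf|$.

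Finally, I need to show that $cf$ is inner, i.e., that $|cf (z)| \le 1$ for all $z \in D$ (we already have $|\widetilde{cf}| = 1$ a.e.\ on $\T$). Here I would invoke the fact recalled in Section \ref{sec:2} that $H^p \subset \Nh^+$, together with the characterization that a function $g \in \Nh^+$ lies in $H^q$ if and only if $\tilde{g} \in L^q$. Applying this to $g = cf$ with $q = \infty$, we conclude $cf \in H^\infty$ with $\|cf\|_{H^\infty} = \|\widetilde{cf}\|_\infty = 1$, so $cf$ is inner. The main ``work'' has already been done inside Theorem \ref{t34}; the only subtlety in this argument is the boundary passage in the first step and the use of the $\Nh^+$ characterization in the last step, neither of which presents a real obstacle.
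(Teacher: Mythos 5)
Your proof is correct and follows the same route the paper takes via Remark \ref{t35}: pass to the boundary function, apply Theorem \ref{t34} to conclude $|\tf|$ is a positive constant $c^{-1}$ a.e., and then use the Smirnov-class characterization ($g \in \Nh^+$ with $\tilde g \in L^\infty$ implies $g \in H^\infty$) to conclude $cf$ is inner. The only thing you spell out that the paper leaves implicit is the radial-limit argument showing that \eqref{eq:1.1} on $D$ yields \eqref{eq:3.12} a.e.\ on $\T$, and that step is handled correctly (the exceptional set is a finite union of rotations and a $\varphi_N$-preimage of a null set, all $\lambda$-null).
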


The following curious fact was found in discussions with Wilhelm Singhof. Let us call a measure $\mu \in M (\T)$ special if it has the form $\mu = \alpha \lambda + \sigma$ where $\alpha \in \R$ and $\sigma \in M_+ (\T)$ is singular. The $N$-invariant special measures can be characterized by their Fourier coefficients as follows:

\begin{cor}
 \label{t37}
Fix $N \ge 2$. A bounded sequence $(c_{\nu})_{\nu \in \Z}$ of complex numbers is the sequence of Fourier coefficients of a (uniquely determined) $N$-invariant special measure if and only if the following conditions hold:
\begin{enumerate}
 \item [i)] $c_{-\nu} = \oc_{\nu}$ for $\nu \in \Z$
\item[ii)] $c_{\nu N} = c_{\nu}$ for all $\nu \in \Z$
\item[iii)] The series $\sum^{\infty}_{n=0} |b_n|^2$ converges, where $(b_n)$ is defined by the formula:
\[
 \sum^{\infty}_{n=0} b_n z^n = \exp \Big( -c_0 -2 \sum^{\infty}_{\nu = 1} c_{\nu} z^{\nu} \Big) \; .
\]
\end{enumerate}
Condition iii) may be replaced by the following weaker one:
\begin{enumerate}
 \item [iii')] For some $\varepsilon > 0$ the series $\sum^{\infty}_{n=0} |b_n (\varepsilon)|^2$ converges where $(b_n (\varepsilon))$ is defined by the formula:
\[
 \sum^{\infty}_{n=0} b_n (\varepsilon) z^n = \exp \Big( -\varepsilon \sum^{\infty}_{\nu = 1} c_{\nu} z^{\nu} \Big) \; .
\]
\end{enumerate}
\end{cor}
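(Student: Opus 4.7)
The corollary is essentially an application of Theorem \ref{t33} to $f_\mu$, combined with the dictionary of Proposition \ref{t31}.

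For necessity, suppose $\mu = \alpha\lambda + \sigma$ is a special $N$-invariant measure with $\sigma \in M^+(\T)_{\sing}$ and $\alpha \in \R$. Then $h_\mu = \alpha + h_\sigma$, so $f_\mu = \exp(-h_\mu) = e^{-\alpha} s_\sigma$ where $s_\sigma$ is the singular inner function attached to $\sigma$. In particular $|f_\mu| \le e^{-\alpha}$ on $D$, hence $f_\mu \in H^\infty \subset H^2$, and by \eqref{eq:2.4} its Taylor coefficients are precisely the $b_n$ of iii); Parseval then yields $\sum |b_n|^2 < \infty$. Condition i) reflects reality of $\mu$ and condition ii) is \eqref{eq:3.8}.

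For sufficiency, set $h(z) = c_0 + 2\sum_{\nu \ge 1} c_\nu z^\nu$. Boundedness of $(c_\nu)$ makes $h$ holomorphic on $D$, with $h(0) = c_0 \in \R$ by i), and by \eqref{eq:3.9} condition ii) says $h$ satisfies the additive functional equation \eqref{eq:3.2}. Hence $f := \exp(-h) = \sum b_n z^n$ has no zeros and satisfies \eqref{eq:3.3}, while iii) means $f \in H^2 \subset \Nh^+$. Theorem \ref{t33} produces a unique $c > 0$ with $cf$ a singular inner function $s_\sigma$ for some $\sigma \in M^+(\T)_{\sing}$. Writing $\alpha = \log c$, one gets $h = \alpha + h_\sigma = h_\mu$ for $\mu := \alpha\lambda + \sigma$, so $c_\nu = c_\nu(\mu)$ for all $\nu$ by uniqueness of Fourier expansions. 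Since $s_\sigma$ is a positive scalar multiple of $f$ it also satisfies \eqref{eq:3.3}, so Proposition \ref{t31} gives $N_*\sigma = \sigma$ and thus $N_*\mu = \mu$. Uniqueness of $\mu$ is immediate from the fact that a measure is determined by its Fourier coefficients.

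For the equivalence iii)$\Leftrightarrow$iii'), the implication iii)$\Rightarrow$iii') is trivial with $\varepsilon = 2$. Conversely, set $\tilde h(z) = \sum_{\nu \ge 1} c_\nu z^\nu$ and $g_\varepsilon = \exp(-\varepsilon \tilde h)$. Condition ii) forces $\tilde h$ to satisfy the additive functional equation (again by \eqref{eq:3.9} applied to its coefficients), so $g_\varepsilon$ satisfies \eqref{eq:3.3}. Assuming iii'), we have $g_\varepsilon \in H^2 \subset \Nh^+$, so Theorem \ref{t33} yields $c' > 0$ with $c' g_\varepsilon$ singular inner; in particular $|g_\varepsilon| \le (c')^{-1}$ on $D$. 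For the natural branch $g_2 = \exp(-2\tilde h)$ we have $|g_2| = |g_\varepsilon|^{2/\varepsilon}$, so $g_2 \in H^\infty$, whence $f = e^{-c_0} g_2 \in H^\infty \subset H^2$, proving iii). The subtle point is precisely this last boost: the mere $H^2$-hypothesis on $g_\varepsilon$ is upgraded to a uniform bound by the rigidity of solutions of \eqref{eq:3.3} supplied by Theorem \ref{t33}, and this is the only real content beyond routine bookkeeping with Fourier coefficients and Herglotz transforms.
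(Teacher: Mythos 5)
Your proof is correct and follows essentially the same route as the paper: necessity via $\RRe h_\mu \ge \alpha$ so $f_\mu \in H^\infty \subset H^2$, sufficiency via Theorem \ref{t33} (or Corollary \ref{t36}) applied to $f = \exp(-h) \in H^2$, and identification of $\mu = \alpha\lambda + \sigma$. For the final remark, the paper dispatches iii')$\Rightarrow$iii) in one line by rescaling the sequence $(c_\nu)$ by a positive constant and invoking the already-proved equivalence, whereas you run Theorem \ref{t33} directly on $g_\varepsilon$ to get the $H^\infty$ bound and then boost $|g_2| = |g_\varepsilon|^{2/\varepsilon}$; both are valid and amount to the same rigidity fact, with yours being the more explicit unfolding of the scaling argument.
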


\begin{proof}
 For any $N$-invariant measure $\mu \in M (\T)$ conditions i) and ii) are satisfied. The Herglotz-transform of $\mu$ is given by 
\[
 h_{\mu} (z) = c_0 + 2 \sum^{\infty}_{\nu = 1} c_{\nu} z^{\nu} \; .
\]
Hence we have
\[
 f_{\mu} (z) = \sum^{\infty}_{n=0} b_n z^n = \exp (-h_{\mu} (z)) \; .
\]
Writing $\mu = \alpha \lambda + \sigma$ as above in the definition of ``special'' we have $h_{\mu} = \alpha + h_{\sigma}$ and hence $\RRe h_{\mu} \ge \alpha$. This implies that $f_{\mu} \in H^{\infty} (D) \subset H^2 (D)$. Therefore the Taylor coefficients $b_n$ of $f_{\mu}$ are square integrable, i.e. iii) holds.

Now assume that we are given a bounded sequence $(c_{\nu})$ with i), ii), iii). Then
\[
 h (z) = c_0 + 2 \sum^{\infty}_{\nu = 1} c_{\nu} z^{\nu}
\]
defines a holomorphic function in the unit disc which because of ii) satisfies the additive functional equation \eqref{eq:3.2}. Hence $f = \exp (-h)$ satisfies equation \eqref{eq:1.1}. Condition iii) asserts that we have $f \in H^2 (D)$. Now it follows from theorem \ref{t33} or even from corollary \ref{t36} (since $f(z) \neq 0$ for all $z \in D$) that $cf$ is a singular inner function for some $c > 0$. Hence there are a constant $\lambda \in \C^*$ and a singular measure $\sigma \in M_+ (\T)$ such that $\lambda f = \exp (-h_{\sigma})$. For some $\alpha \in \C$ we therefore have
\begin{equation}
 \label{eq:3.13}
h = \alpha + h_{\sigma} = h_{\alpha \lambda + \sigma} \; .
\end{equation}
Since $\sigma$ is real valued, condition i) for $\nu = 0$ implies that $\alpha \in \R$. Hence $\mu = \alpha \lambda + \sigma$ is a special measure. Equation \eqref{eq:3.13} implies that $c_{\nu} (\mu) = c_{\nu}$ for $\nu \ge 0$. Since $\mu$ is real valued, it follows from condition i) that we have $c_{\nu} (\mu) = c_{\nu}$ for $\nu \le 0$ as well. Thus we have found the desired special measure. It is $N$-invariant because of ii). The final remark follows by multiplying $(c_{\nu})$ resp. $\mu$ by a positive constant.
\end{proof}

\begin{rem}
 For $g \in L^1 (\T)$ the measure $g\lambda \in M (\T)$ is $N$-invariant by \eqref{eq:3.8} if and only if the Fourier coefficients $c_{\nu}$ of $g$ satisfy the relations $c_{\nu N} = c_{\nu}$ for all $\nu \in \Z$. For $N \ge 2$, because of the Riemann--Lebesgue lemma, this means that $g$ is constant. It follows that the $N$-invariant special measures agree with the $N$-invariant measures of the form $\mu = \mu_a + \sigma$ where $\mu \in M (\T)$ is absolutely continuous and $\sigma \in M_+ (\T)$ is singular.
\end{rem}

\section{The cumulative mass function of an invariant measure}
\label{sec:4}
In this section we characterize $N$-invariant measures by their cumulative mass functions and develop a certain amount of formalism that will be used later. 

In the following we will use the identifications $[0, 2\pi) \equiv \R /2 \pi \Z \equiv \T$ without further comment. In particular for a function $\alpha : \T \to \C$ we will write $\alpha (\theta)$ for $\alpha (\exp (i\theta))$ if $\theta \in [0,2\pi)$. A measure $\mu \in M (\T)$ gives rise to a function $\hmu : [0,2\pi) \to \R$ by setting $\hmu (\theta) = \mu [0,\theta)$. It is well known that in this way one obtains an isomorphism between $M (\T)$ and the space $V (\T)$ of functions $\hmu : [0, 2\pi) \to \R$ which satisfy the following conditions
\begin{enumerate}
 \item $\hmu (0) = 0$ and $\hmu (2 \pi -)$ exists.
\item $\hmu$ is left continuous.
\item $\hmu$ has bounded variation.
\end{enumerate}
For a function $\alpha : \T \to \C$ and a point $\zeta \in \T$ let us write $\alpha (\zeta \pm) = \lim_{\varepsilon \to 0 \pm} \alpha (\zeta e^{i\varepsilon})$ if the limit exists. We call $\alpha$ left-continuous in $\zeta$ if $\alpha (\zeta -) = \alpha (\zeta)$.

When viewed as a function $\hmu : \T \to \R$ the above conditions are these:
\begin{enumerate}
 \item $\hmu (1) = 0$ and $\hmu (1-)$ exists.
\item $\hmu$ is left continuous in every point $\zeta \in \T \setminus \{ 1 \}$.
\item $\hmu$ has bounded variation on $\T$.
\end{enumerate}
Note that condition 3 implies that $\hmu$ has right limits in all points of $\T$. Moreover $\hmu (\zeta +) = \hmu (\zeta) + \mu \{ \zeta \}$ for $\zeta \in \T$. For a measure $\mu \in M (\T)$ we have $\hmu (2\pi -) = \mu (\T)$. Hence the space $M^0 (\T)$ corresponds to the subspace $V^0 (\T)$ of functions $\hmu$ in $V (\T)$ with $\hmu (2\pi -) = 0$. Viewing $\hmu$ as a function on $\T$ the condition is that $\hmu$ is left-continuous on all of $\T$. The measures $\mu \in M_+ (\T)$ correspond to the cone $V_+ (\T)$ of real valued left continuous nondecreasing bounded functions $\hmu : [0,2\pi) \to \R$ with $\hmu (0) = 0$. The inverse map $V (\T) \to M (\T)$ attaches to $\hmu$ the Lebesgue--Stieltjes measure $d\hmu$. 

 For every $\mu \in M (\T)$ and every $g \in C^1 [0,2\pi]$ Fubini's theorem implies the following formula of partial integration: 
\begin{equation}
 \label{eq:4.25}
\int_{[0,2\pi)} g \, d\mu = - \int^{2\pi}_0 \hmu (\theta) g' (\theta) d\theta + g (2\pi) \mu (\T) \; .
\end{equation}
In particular we have
\begin{equation}
 \label{eq:4.26}
\mu_0 := \int_{\T} \hmu d\lambda = \mu (\T) - \frac{1}{2\pi} \int_{[0,2\pi)} \theta \, d\mu (\theta) \; .
\end{equation}
We also introduce the holomorphic function on $D$ defined by:
\begin{equation}
 \label{eq:4.27}
G_{\mu} (z) = \frac{1}{2\pi i} \int^z_0 (h_{\mu} (w) - \mu (\T)) \frac{dw}{w} \; .
\end{equation}
Note here that $h_{\mu} (0) = \mu (\T)$. Set $\log (1-z) = -\sum^{\infty}_{\nu=1} \frac{z^{\nu}}{\nu}$ for $|z| < 1$. The following relation between $\mu$ and $h_{\mu}$ is well known, c.f. \cite{HK} (1.5) in case $\mu (\T) = 0$.

\begin{prop}
 \label{t4.8}
a) For $\mu \in M (\T)$ we have $G_{\mu} \in H^2$ and the formula
\begin{equation}
 \label{eq:4.28}
h_{\hmu \lambda} = G_{\mu} + \mu_0 + \frac{\mu (\T)}{\pi i} \log (1-z) \; .
\end{equation}
b) For $\mu \in M (\T)$ we have $\lambda$-a.e. on $\T$:
\begin{equation}
 \label{eq:4.29}
\hmu = \RRe \tG_{\mu} + \mu_0 + \frac{\mu (\T)}{2\pi} (\theta - \pi) \; .
\end{equation}
\end{prop}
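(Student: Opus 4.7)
The strategy is to compute the Taylor coefficients of both sides of \eqref{eq:4.28} at $0$ via partial integration and then deduce \eqref{eq:4.29} from Fatou's theorem applied to the absolutely continuous measure $\hat{\mu}\lambda$.

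First I would unpack both sides of \eqref{eq:4.28} using the Taylor series \eqref{eq:2.4} and the Fourier coefficients $c_\nu=c_\nu(\mu)$ and $d_\nu=c_\nu(\hat{\mu}\lambda)$. From the definition of $G_\mu$ and $h_\mu(z)-\mu(\T)=2\sum_{\nu\ge 1}c_\nu z^\nu$ one reads off
\[
 G_\mu(z)=\frac{1}{\pi i}\sum_{\nu=1}^\infty \frac{c_\nu}{\nu}z^\nu,
\]
so the power-series expansion of the RHS of \eqref{eq:4.28} at $z=0$ equals
\[
 \mu_0+\frac{1}{\pi i}\sum_{\nu=1}^\infty\frac{c_\nu}{\nu}z^\nu-\frac{\mu(\T)}{\pi i}\sum_{\nu=1}^\infty\frac{z^\nu}{\nu}
 =\mu_0+\frac{1}{\pi i}\sum_{\nu=1}^\infty\frac{c_\nu-\mu(\T)}{\nu}z^\nu.
\]
The key computation is then to show $d_\nu=(c_\nu-\mu(\T))/(2\pi i\nu)$ for $\nu\ge 1$ and $d_0=\mu_0$. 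The latter is \eqref{eq:4.26} (in its first form, $d_0=\int_\T\hat\mu\,d\lambda=\mu_0$). The former follows from the partial integration formula \eqref{eq:4.25} applied to $g(\theta)=e^{-i\nu\theta}$: one obtains
\[
 c_\nu=\int_{[0,2\pi)}e^{-i\nu\theta}\,d\mu=i\nu\int_0^{2\pi}\hat\mu(\theta)e^{-i\nu\theta}\,d\theta+\mu(\T)=2\pi i\nu\, d_\nu+\mu(\T).
\]
Plugging into \eqref{eq:2.4} for the measure $\hat\mu\lambda$ gives precisely the expansion of the RHS above, which proves the displayed identity \eqref{eq:4.28}. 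The claim $G_\mu\in H^2$ is then immediate from $\sum_{\nu\ge 1}|c_\nu/\nu|^2<\infty$, which holds because the Fourier coefficients $c_\nu$ are uniformly bounded by $\|\mu\|$.

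For part b), I would take radial boundary values on both sides of \eqref{eq:4.28}. Since $\hat\mu\lambda$ is absolutely continuous with density $\hat\mu$, Fatou's theorem (as recalled in section~\ref{sec:2}) yields $\RRe\tilde h_{\hat\mu\lambda}=\hat\mu$ $\lambda$-a.e. The only nontrivial piece is to compute $\RRe\bigl(\frac{\mu(\T)}{\pi i}\log(1-z)\bigr)$ on $\T$. Using the elementary factorization $1-e^{i\theta}=-2i e^{i\theta/2}\sin(\theta/2)=2\sin(\theta/2)\,e^{i(\theta/2-\pi/2)}$ valid for $\theta\in(0,2\pi)$, one finds
\[
 \log(1-e^{i\theta})=\log\bigl(2\sin(\theta/2)\bigr)+i\bigl(\theta/2-\pi/2\bigr),
\]
so that
\[
 \RRe\frac{\mu(\T)}{\pi i}\log(1-e^{i\theta})=\frac{\mu(\T)}{\pi}\Bigl(\frac{\theta}{2}-\frac{\pi}{2}\Bigr)=\frac{\mu(\T)}{2\pi}(\theta-\pi).
\]
Taking real parts in the boundary version of \eqref{eq:4.28} then gives \eqref{eq:4.29}.

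The likely obstacle is the bookkeeping in the partial integration: one must apply \eqref{eq:4.25} on the half-open interval $[0,2\pi)$, carefully tracking the boundary contribution $g(2\pi)\mu(\T)$ which is exactly what produces the $\log(1-z)$ correction term. Once this identity between Fourier coefficients is in hand, both statements reduce to series manipulations and the standard boundary computation for $\log(1-e^{i\theta})$.
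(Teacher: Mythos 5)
Your part a) follows essentially the same route as the paper: compute the Fourier coefficients $c_\nu(\hat\mu)$ via the partial integration formula \eqref{eq:4.25}, read off the Taylor coefficients of $G_\mu$, observe the $\ell^2$-bound gives $G_\mu\in H^2$, and match power series via \eqref{eq:2.4}. Part b), however, is a genuinely different argument from the one in the paper. You take radial boundary values on both sides of \eqref{eq:4.28}, invoke Fatou's theorem for the absolutely continuous measure $\hat\mu\lambda$ to identify $\RRe\tilde h_{\hat\mu\lambda}=\hat\mu$ a.e., and compute $\RRe\bigl(\frac{\mu(\T)}{\pi i}\log(1-e^{i\theta})\bigr)$ directly from the factorization $1-e^{i\theta}=2\sin(\theta/2)e^{i(\theta/2-\pi/2)}$. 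The paper instead proves \eqref{eq:4.29} by a Fourier coefficient comparison: it uses the reality relation $c_{-\nu}(\mu)=\overline{c_\nu(\mu)}$ to compute $c_\nu(\RRe\tilde G_\mu)$ and matches these against the known Fourier coefficients of the sawtooth $\frac{\pi-\theta}{2\pi}$, combined with \eqref{eq:4.30}. Both arguments are correct; they are essentially dual, since the Fourier expansion of $\frac{\pi-\theta}{2\pi}$ is exactly what your boundary computation of $\Imm\log(1-e^{i\theta})$ encodes. Your version is perhaps more self-contained (no need to quote the sawtooth Fourier series), while the paper's avoids any discussion of a.e.\ existence of boundary values beyond what is already used for $G_\mu\in H^2$. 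One small thing worth making explicit in your write-up: the radial limits of $h_{\hat\mu\lambda}$ and of $\log(1-z)$ exist $\lambda$-a.e. (the former because $h_{\hat\mu\lambda}\in H^p$ for $p<1$ as the paper notes, the latter by continuity on $\overline{D}\setminus\{1\}$), so taking boundary values termwise in \eqref{eq:4.28} is legitimate.
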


\begin{proof}
 The function $\hmu$ is of bounded variation, hence in $L^{\infty}$. Using \eqref{eq:4.25} its Fourier coefficients are seen to be $c_0 (\hmu) = \mu_0$ and
\begin{equation}
 \label{eq:4.30}
c_{\nu} (\hmu) = \frac{1}{2\pi i \nu} c_{\nu} (\mu) - \frac{1}{2 \pi i \nu} \mu (\T) \quad \text{for} \; \nu \neq 0 \; .
\end{equation}
According to \eqref{eq:2.4} we have:
\begin{equation}
 \label{eq:4.31}
G_{\mu} (z) = \frac{1}{\pi i} \sum^{\infty}_{\nu = 1} \frac{c_{\nu} (\mu)}{\nu} z^{\nu} \; .
\end{equation}
Since the sequence $(c_{\nu} (\mu))$ is bounded we see that the Taylor coefficients of $G_{\mu}$ form an $l^2$-sequence and hence $G_{\mu} \in H^2$. In particular $\tG_{\mu} \in L^2$ exists. Applying \eqref{eq:2.4} to $h_{\hmu \lambda}$, formula \eqref{eq:4.28} follows. For $\mu \in M (\T)$ we have $c_{-\nu} (\mu) = \overline{c_{\nu} (\mu)}$. Using \eqref{eq:4.31} this implies the relations $c_0 (\RRe \tG_{\mu}) = 0$ and for $0 \neq \nu \in \Z$:
\[
 c_{\nu} (\RRe \tG_{\mu}) =  \frac{c_{\nu} (\mu)}{2\pi i \nu} \; .
\]
Together with the formulas
\[
 c_{\nu} \Big( \frac{\pi - \theta}{2 \pi} \Big) = \frac{1}{2\pi i \nu} \quad \text{for} \; 0 \neq \nu \in \Z \quad \text{and} \quad c_0 \Big( \frac{\pi - \theta}{2\pi} \Big) = 0
\]
it follows from \eqref{eq:4.30} that both sides of \eqref{eq:4.29} have the same Fourier coefficients.
\end{proof}

Whether a measure $\mu \in M (\T)$ is $N$-invariant can be seen from either function $\hmu$ or $G_{\mu}$ as follows:

\begin{prop}
 \label{t49}
a) A measure $\mu \in M (\T)$ satisfies $N_* \mu = \mu$ for some $N \ge 1$ if and only if the following functional equation holds for some constant $c_0$
\begin{equation}
 \label{eq:4.32}
\hmu (\eta^N) = \sum_{\zeta^N = 1} \hmu (\zeta \eta) - c_0 \quad \text{for} \; \lambda\text{-a.a.} \; \eta \in \T \; .
\end{equation}
In this case \eqref{eq:4.32} holds for {\em all} $\eta \in \T$ and we have
\[
 c_0 = \sum_{\zeta^N = 1} \hmu (\zeta) = (N-1) \mu_0 \; .
\]
b) A measure $\mu \in M (\T)$ is $N$-invariant if and only if for all $z \in D$ we have:
\begin{equation}
 \label{eq:4.33}
G_{\mu} (z^N) = \sum_{\zeta^N = 1} G_{\mu} (\zeta z) \; .
\end{equation}
\end{prop}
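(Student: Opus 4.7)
The plan is first to compute $\widehat{N_*\mu}$ directly in terms of $\hmu$. For $\phi \in [0, 2\pi/N)$ the preimage $\varphi_N^{-1}([0, N\phi))$ is the disjoint union $\bigsqcup_{k=0}^{N-1}[2\pi k/N, 2\pi k/N + \phi)$, so
\[
\widehat{N_*\mu}(\eta^N) = \sum_{\zeta^N = 1} \hmu(\zeta\eta) - \sum_{\zeta^N = 1} \hmu(\zeta), \qquad \eta = e^{i\phi}.
\]
This identity is an unconditional consequence of the definitions; it holds initially on the fundamental domain and extends to every $\eta \in \T$ because both sides are unchanged under $\eta \mapsto \zeta_0 \eta$ with $\zeta_0^N = 1$. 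The forward direction is now immediate: if $N_*\mu = \mu$ then $\hmu = \widehat{N_*\mu}$, so \eqref{eq:4.32} holds for every $\eta \in \T$ with $c_0 = \sum_{\zeta^N = 1}\hmu(\zeta)$. For the converse, if \eqref{eq:4.32} holds $\lambda$-a.e.\ for some $c_0$, subtracting the above identity shows that $\hmu - \widehat{N_*\mu}$ is a.e.\ constant on $[0, 2\pi)$; left-continuity of both functions on $(0, 2\pi)$ (they belong to $V(\T)$) upgrades this to equality with the same constant on $(0, 2\pi)$, and letting $\theta \to 2\pi-$ both sides tend to $\mu(\T) = (N_*\mu)(\T)$, which forces the constant to vanish. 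Hence $\hmu = \widehat{N_*\mu}$ identically, so $\mu = N_*\mu$. The identification $c_0 = (N-1)\mu_0$ follows by integrating \eqref{eq:4.32} against $\lambda$: using $N_*\lambda = \zeta_*\lambda = \lambda$, both $\int_\T \hmu(\eta^N)\, d\lambda(\eta)$ and each $\int_\T \hmu(\zeta\eta)\, d\lambda(\eta)$ equal $\mu_0$, so $\mu_0 = N \mu_0 - c_0$.

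\textbf{Part (b).} Here the plan is to match power series coefficients. From \eqref{eq:4.31}, $G_{\mu}(z) = (\pi i)^{-1}\sum_{\nu \ge 1} \nu^{-1} c_\nu(\mu)\, z^\nu$. The character orthogonality $\sum_{\zeta^N = 1}\zeta^\nu = N$ if $N \mid \nu$ and $0$ otherwise gives
\[
\sum_{\zeta^N = 1} G_{\mu}(\zeta z) = \frac{1}{\pi i} \sum_{\nu' \ge 1} \frac{c_{N\nu'}(\mu)}{\nu'}\, z^{N\nu'}, \qquad G_{\mu}(z^N) = \frac{1}{\pi i}\sum_{\nu \ge 1} \frac{c_\nu(\mu)}{\nu}\, z^{N\nu}.
\]
Comparing coefficients, \eqref{eq:4.33} is equivalent to the relations $c_{N\nu}(\mu) = c_\nu(\mu)$ for every $\nu \ge 1$; the reality $c_{-\nu}(\mu) = \overline{c_\nu(\mu)}$ extends these automatically to all $\nu \in \Z$, and by \eqref{eq:3.8} this is precisely $N$-invariance of $\mu$.

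\textbf{Main obstacle.} The only genuine subtlety lies in part (a): the a.e.\ version of \eqref{eq:4.32} yields only that $\hmu - \widehat{N_*\mu}$ is a.e.\ constant, and pinning this constant down requires combining left-continuity of the two cumulative functions on $(0, 2\pi)$ with the matching of total masses at $\theta = 2\pi-$. Everything else amounts to routine Fourier or root-of-unity bookkeeping, and one could alternatively phrase (a) in the same Fourier-coefficient language as (b) via formula \eqref{eq:4.30}.
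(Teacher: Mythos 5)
Your proof is correct, and both directions of part (a) rest on the same unconditional identity $\widehat{N_*\mu}(\eta^N) = \sum_{\zeta^N=1}\hmu(\zeta\eta) - \sum_{\zeta^N=1}\hmu(\zeta)$ that the paper derives as \eqref{eq:4.35}. Where you diverge from the paper is in the converse of (a): the paper passes from the a.e.\ functional equation to the Fourier coefficient relations $c_\nu(\hmu) = N\,c_{\nu N}(\hmu)$, then uses formula \eqref{eq:4.30} to translate this into $c_\nu(\mu) = c_{\nu N}(\mu)$ and concludes by \eqref{eq:3.8}. You instead compare $\hmu$ and $\widehat{N_*\mu}$ directly at the function level, showing the a.e.\ constant difference is genuine on $(0,2\pi)$ by left-continuity and then vanishes because both cumulative functions have the same limit $\mu(\T)$ as $\theta \to 2\pi^-$. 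Your route is arguably more transparent and avoids \eqref{eq:4.30} entirely; the paper's route has the advantage of reusing a formula already on record. For part (b), you unwind \eqref{eq:4.31} and compare Taylor coefficients directly, whereas the paper observes that \eqref{eq:4.33} is equivalent to the Herglotz functional equation and invokes Proposition \ref{t31}; these are the same computation in different packaging, and your careful note that the reality relation $c_{-\nu} = \overline{c_\nu}$ extends $c_{N\nu} = c_\nu$ from $\nu \ge 1$ to all $\nu \in \Z$ before applying \eqref{eq:3.8} is exactly the right point to flag.
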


\begin{proof}
 a) For $0 \le \theta < 2 \pi$ the disjoint decomposition
\[
 N^{-1} [0,\theta) = \bigcup^{N-1}_{\nu=0} \Big[ \frac{2\pi \nu}{N} , \frac{\theta + 2 \pi \nu}{N} \Big)
\]
implies the formula:
\begin{eqnarray*}
 \widehat{N_* \mu} (\theta) & = & \sum^{N-1}_{\nu = 0} \mu \Big[ \frac{2 \pi \nu}{N} , \frac{\theta + 2\pi \nu}{N} \Big) \\
& = & \sum^{N-1}_{\nu =0} \hmu \Big( \frac{\theta + 2 \pi \nu}{N} \Big) - c_0 \; ,
\end{eqnarray*}
where $c_0 = \sum^{N-1}_{\nu=0} \hmu  (\frac{2\pi \nu}{N} )$. It follows that setting $\eta = e^{i\theta}$ we have
\begin{equation}
 \label{eq:4.34}
\widehat{N_* \mu} (\eta) = \sum_{\zeta^N = 1} \hmu (\zeta \eta^{1/N}) - c_0
\end{equation}
where $\eta^{1/N} = e^{i \theta / N}$. Since the right hand side of \eqref{eq:4.34} is independent of the chosen branch of $\eta^{1/N}$ we get the equation
\begin{equation}
 \label{eq:4.35}
\widehat{N_* \mu} (\eta^N) = \sum_{\zeta^N=1} \hmu (\zeta \eta) - c_0 \quad \text{for} \; \eta \in \T \; .
\end{equation}
If $\mu$ is $N$-invariant, equation \eqref{eq:4.32} therefore holds for {\it all} $\eta \in \T$ with $c_0 = \sum_{\zeta^N = 1} \hmu (\zeta)$ and integrating it we also get $c_0 = (N-1) \mu_0$. 

Now assume that \eqref{eq:4.32} holds $\lambda$-a.e. for some constant $c_0$. Then we have $c_{\nu} (\hmu) = N c_{\nu N} (\hmu)$ for $\nu \neq 0$ and $c_0 = (N-1) \mu_0$. Now equation \eqref{eq:4.30} implies that for $\nu \neq 0$ we have $c_{\nu} (\mu) = c_{\nu N} (\mu)$ i.e. that $N_* \mu = \mu$ by assertion \eqref{eq:3.8}.\\
b) Equation \eqref{eq:4.33} is equivalent to the functional equation
\[
 N h_{\mu} (z^N) = \sum_{\zeta^N = 1} h_{\mu} (\zeta z) \; .
\]
Now the claim follows from proposition \ref{t31}.
\end{proof}

Given a measure $\mu \in M (\T)$, there is a measure $N^* \mu$ such that $h_{N^* \mu} = N^* h_{\mu}$. The following construction of $N^* \mu$ is a special case of a more general construction in ergodic theory. Its relevance for the Herglotz transform was noted in \cite{LH}. For $0 \le k < N$ set $I_k = [\frac{2\pi k}{N} , \frac{2\pi (k+1)}{N})$ also viewed as an arc of $\T$. Then $\varphi_N : I_k \silo \T$ is a bijection. Let $i_k : I_k \hookrightarrow \T$ be the inclusion and set
\[
 \mu_k = i_{k*} (\varphi_N \, |_{I_k})^{-1}_* \mu \; ,
\]
a real Borel measure on $\T$ with support on $I_k$. More explicitely:
\[
 \mu_k (E) = \mu (\varphi_N (E \cap I_k)) \quad \text{for all Borel sets} \; E \subset \T \; .
\]
The measure $N^* \mu \in M (\T)$ is defined by the formula
\[
 N^* \mu = \frac{1}{N} \sum^{N-1}_{k=0} \mu_k \; .
\]
Note that $N^* \mu$ is in $M^+ (\T) , M (\T)_{\sing}$ if and only if this is the case for $\mu$. For a measurable function $\alpha : \T \to \C$ or a holomorphic function $\alpha \in \Oh (D)$ the function $N_* \alpha$ on $\T$ resp. $D$ defined by
\[
 (N_* \alpha) (\eta) = \frac{1}{N} \sum_{\zeta^N = 1} \alpha (\zeta \eta^{1/N})
\]
is well defined and again measurable resp. holomorphic. We have $(N_1 N_2)_* \alpha = N_{1*} (N_{2*} \alpha)$ for all $N_1 , N_2 \ge 1$. If $\mu \ge 0$ and $\alpha \ge 0$ or if $\mu \in M (\T)$ and $N_* |\alpha|$ is $|\mu|$-integrable the following formula follows from the definitions:
\begin{equation} \label{eq:437}
 \int_{\T} \alpha d (N^* \mu) = \int_{\T} (N_* \alpha) d \mu \; .
\end{equation}
An immediate calculation using \eqref{eq:437} and the formula
\[
 N_* (\alpha N^* \beta) = (N_* \alpha) \beta
\]
for measurable functions $\alpha , \beta : \T \to \C$ gives the ``projection formula''
\begin{equation} \label{eq:438}
 N_* (\alpha N^* \mu) = (N_* \alpha) \mu \; .
\end{equation}
It is valid under the conditions on $\alpha$ and $\mu$ in equation \eqref{eq:437}. In the next proposition we set $\arg \eta = \theta$ if $\eta = e^{i\theta} \in \T$ with $0 \le \theta < 2 \pi$. 

\begin{prop}
 \label{t4.10}
For $\mu \in M (\T)$ and $\zeta \in \T$ we have the formulas:
\begin{align}
h_{N_* \mu} & = N_* h_{\mu} \label{eq:4.39nn}\\
 h_{N^* \mu} & =  N^* h_{\mu}  \label{eq:4.36}\\ 
N_* N^* \mu & =  \mu \label{eq:4.37}\\ 
N^* N_* \mu & = \frac{1}{N} \Tr_N (\mu) \quad \text{where} \; \Tr_N (\mu) = \sum_{\zeta^N = 1} \zeta_* \mu \label{eq:4.38}\\ 
c_{\nu} (N^* \mu) & = c_{\nu / N} (\mu) \quad\text{if} \; N \tei \nu \; \quad \text{and} \; c_{\nu} (N^* \mu) = 0 \; \text{if} \; N \not| \nu \label{eq:4.39}\\ 
N^* M_* \mu & = M_* N^* \mu \quad \text{if} \; N, M \ge 1 \; \text{are coprime} \; .\label{eq:4.40}\\
\widehat{N_* \mu} (\eta) & = \sum_{\zeta^N=1} \hmu (\zeta \eta^{1/N}) - c_0 \quad \text{where}\; c_0 = \sum_{\zeta^N =1} \hmu (\zeta) \label{eq:48a} \\ 
\widehat{N^* \mu} (\eta) & = \frac{1}{N} [N \frac{\arg\eta}{2\pi} ] \mu (\T) + \frac{1}{N} \hmu (\eta^N) \quad \text{for} \; \eta \in \T \label{eq:4.41}\\
 \widehat{N^* \mu} & = \frac{1}{N} N^* \hmu \quad \text{if} \; \mu (\T) = 0 \; .\label{eq:4.42} \\
\widehat{\zeta^{-1}_* \mu} & = \zeta^* \hmu - (\zeta^* \hmu) (1) \quad \text{if} \; \mu (\T) = 0 \label{eq:4.42a} \\
\widehat{\Tr_N (\mu)} & = \Tr_N (\hmu) - c_0 \quad \text{where}\; c_0 = \sum_{\zeta^N = 1} \hmu (\zeta) \label{eq:4.42b}
\end{align}
Finally:
\begin{align}
G_{N^* \mu} & = \frac{1}{N} N^* G_{\mu} \label{eq:4.43} \\
G_{N_* \mu} & = N (N_* G_{\mu}) \label{eq:4.49}
\end{align}

\end{prop}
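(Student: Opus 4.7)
The plan is to treat the thirteen identities in four waves: first the Fourier-coefficient formula \eqref{eq:4.39} together with the Herglotz identities \eqref{eq:4.39nn}, \eqref{eq:4.36}; second the abstract measure identities \eqref{eq:4.37}, \eqref{eq:4.38} and \eqref{eq:4.40}; third the five identities \eqref{eq:48a}, \eqref{eq:4.41}, \eqref{eq:4.42}, \eqref{eq:4.42a}, \eqref{eq:4.42b} involving the cumulative mass function; and finally the two identities \eqref{eq:4.43}, \eqref{eq:4.49} for $G_\mu$. Fourier comparison is the unifying tool, supplemented by direct measure-theoretic arguments on arcs whenever $\hmu$ enters.

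To obtain \eqref{eq:4.39} I would apply the decomposition $N^*\mu=N^{-1}\sum_{k=0}^{N-1}\mu_k$ together with the change of variables $\psi_k=(\varphi_N|_{I_k})^{-1}$, which sends $e^{i\theta}$ to $e^{i(\theta+2\pi k)/N}$. This yields
\[
\int_\T \zeta^{-\nu}\,d\mu_k(\zeta)=e^{-2\pi i\nu k/N}\int_\T e^{-i\nu\theta/N}\,d\mu(e^{i\theta}),
\]
and summing over $k$ extracts the divisibility condition $N\mid\nu$, at which point the prefactor equals $1$ and the remaining integral is $c_{\nu/N}(\mu)$. Identity \eqref{eq:4.36} then drops out of the power-series formula \eqref{eq:2.4}, while \eqref{eq:4.39nn} is a rewriting of Proposition~\ref{t31} after substituting $z=\eta^{1/N}$, the right-hand side being invariant under the branch choice. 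For \eqref{eq:4.37} I would bypass Fourier and observe directly that $N_*\mu_k=(\varphi_N|_{I_k})_*(\varphi_N|_{I_k})^{-1}_*\mu=\mu$ for every $k$, whence averaging gives $\mu$. Identities \eqref{eq:4.38} and \eqref{eq:4.40} follow from comparison of Fourier coefficients, using $c_\nu(\zeta_*\mu)=\zeta^{-\nu}c_\nu(\mu)$ and the elementary fact that $N\mid M\nu\iff N\mid\nu$ when $\gcd(M,N)=1$.

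For the cumulative-mass identities, \eqref{eq:48a} has already been established as equation \eqref{eq:4.35} in the proof of Proposition~\ref{t49}. For \eqref{eq:4.41}, writing $\eta=e^{i\theta}$ with $\theta\in[0,2\pi)$ and $k_0=[N\theta/(2\pi)]$, I would split
\[
[0,\eta)=\Bigl(\bigsqcup_{k<k_0}I_k\Bigr)\sqcup\bigl[\tfrac{2\pi k_0}{N},\theta\bigr),
\]
note that $\mu_k(I_k)=\mu(\T)$ for $k<k_0$ and $\mu_{k_0}\bigl([\tfrac{2\pi k_0}{N},\theta)\bigr)=\hmu(\eta^N)$, and divide by $N$. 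Specializing to $\mu(\T)=0$ yields \eqref{eq:4.42}. Formula \eqref{eq:4.42a} comes from $\mu([\zeta,\zeta\eta))=\mu([0,\zeta\eta))-\mu([0,\zeta))$; when the arc wraps around $1$ an extra $\mu(\T)$ term appears, which the hypothesis $\mu(\T)=0$ suppresses. Summing \eqref{eq:4.42a} over $N$-th roots of unity and reading off the constant produces \eqref{eq:4.42b}.

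For \eqref{eq:4.43} and \eqref{eq:4.49} I would plug the power-series expansion \eqref{eq:4.31} of $G_\mu$ into each right-hand side and apply \eqref{eq:4.39} and \eqref{eq:3.7} respectively; in \eqref{eq:4.49} the character sum $\sum_{\zeta^N=1}\zeta^\nu$ collapses the series to the divisibility condition $N\mid\nu$, after which the reindexing $\nu=mN$ matches $N\cdot N_*G_\mu(z)$ with $G_{N_*\mu}(z)$. The only genuine subtlety is the bookkeeping of wrap-around arcs on $\T$ in \eqref{eq:4.42a} and \eqref{eq:4.42b}: without $\mu(\T)=0$ these identities acquire boundary terms, just as \eqref{eq:4.41} carries the step-function term $\tfrac{1}{N}[N\arg\eta/(2\pi)]\,\mu(\T)$ absent from \eqref{eq:4.42}. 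Once one commits to left-continuous representatives and identifies $[0,\theta)$ with the correct arc on $\T$, every identity falls out of the Fourier or decomposition arguments above.
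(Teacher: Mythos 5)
Your proposal is correct and follows essentially the same route as the paper, which cites the change-of-variable formula \eqref{eq:437} and the projection formula \eqref{eq:438} where you instead spell out the equivalent Fourier computations, and otherwise proceeds exactly as you do --- in particular the same arc decomposition gives \eqref{eq:4.41} and the same substitutions give \eqref{eq:4.43}, \eqref{eq:4.49}. One remark prompted by your careful wrap-around bookkeeping: deriving \eqref{eq:4.42b} by summing \eqref{eq:4.42a} over $N$-th roots of unity only establishes it under the hypothesis $\mu(\T)=0$ (otherwise the term $\bigl[N\arg\eta/(2\pi)\bigr]\,\mu(\T)$ survives, as one sees by applying \eqref{eq:4.41} to $N_*\mu$ and using \eqref{eq:4.38} and \eqref{eq:48a}), and the paper's one-line derivation from \eqref{eq:4.42a} carries the same implicit restriction even though it is not recorded in the statement of the proposition.
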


\begin{proof}
 Equation \eqref{eq:4.39nn} is a restatment of \eqref{eq:3.1}. Equation \eqref{eq:4.36} follows by the calculation in the proof of \cite{LH} theorem 2.1 or alternatively using formulas \eqref{eq:3.4} and \eqref{eq:437}. Formula \eqref{eq:4.37} is a special case of \eqref{eq:438}. Formula \eqref{eq:4.38} follows from \eqref{eq:4.36} and \eqref{eq:3.1} or by a short calculation. Formula \eqref{eq:4.39} is a consequence of \eqref{eq:437}. Equation \eqref{eq:4.40} follows from \eqref{eq:3.7} and \eqref{eq:4.39} or directly. Formula \eqref{eq:48a} is a restatement of \eqref{eq:4.35}. As for \eqref{eq:4.41}, we argue as follows: For $0 \le \theta < 2 \pi$ we have:
\begin{eqnarray*}
 \widehat{N^* \mu} (\theta) & = & N^* \mu [0,\theta) = \frac{1}{N} \sum^{N-1}_{k=0} \mu (N (I_k \cap [0,\theta))) \\
& = & \frac{1}{N} \sum_{0 \le k < [N \frac{\theta}{2\pi}]} \mu \Big( N \Big[ \frac{2\pi k}{N} , \frac{2 \pi (k+1)}{N} \Big) \Big) + \frac{1}{N} \mu \Big( N \Big[ \frac{2\pi}{N} \Big[ N \frac{\theta}{2\pi} \Big] , \theta\Big)\Big)\\
& = & \frac{1}{N} \Big[ N \frac{\theta}{2\pi} \Big] \mu (\T) + \frac{1}{N} \mu [0, \langle N \theta \rangle) \quad \text{where} \; \langle \alpha \rangle = \alpha - 2\pi \Big[ \frac{\alpha}{2\pi} \Big]\; .
\end{eqnarray*}
This implies equations \eqref{eq:4.41} and \eqref{eq:4.42}. Formula \eqref{eq:4.42a} follows from the definitions. It implies \eqref{eq:4.42b}. Equations \eqref{eq:4.43} and \eqref{eq:4.49} follow from the definition of $G_{\mu}$ using \eqref{eq:4.39nn} and \eqref{eq:4.36} and the equality $(N^* \mu) (\T) = \mu (\T)$ which follows from \eqref{eq:4.37} for example.
\end{proof}

\begin{remarks} \label{t412}
\rm a) If $N^*\mu = \mu$ for some $N \ge 2$ we have $h_{\mu} (z^N) = h_{\mu} (z)$ and hence
\[
 h_{\mu} (z) = \lim_{\nu \to\infty} h_{\mu} (z^{N^{\nu}}) = h_{\mu} (0) = \mu (\T) = h_{\mu (\T) \lambda} (z) \; .
\]
It follows that $\mu = \mu (\T) \lambda$. This conclusion also follows from a consideration of Fourier-coefficients using formula \eqref{eq:4.39}. \\[0.2cm]
b) For $\mu \in M (\T)$ let $\Gh \subset \T$ be a Borel set with $\mu (B) = \mu (B \cap \Gh)$ for all Borel sets $B$. Then for the Borel sets $\varphi_N (\Gh)$ and $\varphi^{-1}_N (\Gh)$ and all $B$ we have
\[
 (N_* \mu) (B) = (N_* \mu) (B \cap \varphi_N (\Gh)) \quad \text{and} \quad (N^* \mu) (B) = (N^* \mu) (B \cap \varphi^{-1}_N (\Gh)) \; .
\]
c) If $\mu \in M^+ (\T)$ satisfies $N_* \mu = \mu$ we have $N^* \mu = N^* N_* \mu = N^{-1} \Tr_N (\mu)$ and hence $\mu \ll N^* \mu$. Thus there is a density $\alpha$ such that $\mu = \alpha N^* \mu$. This $\alpha$ corresponds to $S'$ in \cite{P}~\S\,1 where $S = \varphi_N$. Moreover $w \mapsto N_* (\alpha \omega)$ is the Perron--Frobenius operator denoted by $L_{-f}$ in loc.~cit. up to the factor $-N$. The density $\alpha$ is quite useful in the ergodic study of $\varphi_N , \varphi_M$. I do not see how to phrase it or at least its existence in terms of the Herglotz transform.
\end{remarks}
\section{Simultanous functional equations}
\label{sec:5}

Consider pairwise coprime integers $N_1 , \ldots , N_s$ with $s \ge 1$ and all $N_i \ge 2$. They generate the multiplicative monoid $\Sh$ of numbers $N^{\nu_1}_1 \cdots N^{\nu_s}_s$ with all $\nu_i \ge 0$. Set $\Oh = \Oh (D)$. 

For a left $\Sh$-module $\Mh$ we set
\[
 H^0 (\Sh , \Mh) = \{ m \in \Mh \tei \bs m = m \; \text{for all} \; \bs \in \Sh \}
\]
and
\[
 Z (\Sh , \Mh) = \{ m \in \Mh \tei \bs m = 0 \; \text{for all} \; \bs \in \Sh , \bs \neq 1 \} \; .
\]
For a right $\Sh$-module $\Mh$ we let $\Mh\Sh$ be the submodule of $\Mh$ generated by the elements $m\bs$ for $m \in \Mh$ and $\bs \in \Sh , \bs \neq 1$. Then we have
\[
 \Mh\Sh = N^*_1 (\Sh) + \ldots + N^*_s (\Sh)
\]
if we write $N^*$ for right operation with $N \in \Sh$. If the group law of $\Mh$ is written multiplicatively we also write $\Mh^{\Sh}$ for $\Mh\Sh$ etc.

We now introduce some useful operations. For $\zeta \in \T$ let $[\zeta]$ be the rotation of $\T$ defined by $[\zeta] \eta = \zeta \eta$. The maps $[\zeta]$ and $\varphi_N$ for $N \ge 2$ generate a submonoid $S$ of the monoid of self maps of $\T$ or of $D$ under composition. The only relations are the following
\[
 \varphi_N \varphi_M = \varphi_{NM} \; , \; [\zeta][\eta] = [\zeta\eta] \; , \; \varphi_N [\zeta] = [\zeta^N] \varphi_N \; .
\]
Identifying $N_i$ with $X_i = \varphi_{N_i}$ we will view $\Sh$ as a submonoid of $S$. The monoid $S$ acts from the right on functions on $\T$ (resp. $D$) by pullback. It also acts from the left on $\Oh^{\times}$ by the formulas
\begin{align*}
 ([\eta] f)(z) = & f (\eta^{-1} z) \quad \text{and} \\
(\varphi_N f) (z) = & \Big( \prod_{\zeta^N = 1} f (\zeta z^{1/N}) \Big)^{1/N} \quad \text{with} \; (\varphi_N f) (0) = f (0)  \; .
\end{align*}
The product $\prod_{\zeta^N = 1} f (\zeta z^{1/N})$ is independent of the choice of an $N$-th root of $z$ and it defines an analytic function on $D$. Its value at $z = 0$ is given by $f (0) ^N$ and for the definition of $\varphi_N f$ we take the unique branch of the $N$-th root for which $(\varphi_N f) (0) = f (0)$. Usually we will write $N_* f = \varphi_N f$. The definition of $N_*$ on $\Oh^{\times}$ is compatible via the exponential map with the definition of $N_*$ on $\Oh$ given before proposition \ref{t4.10}. With these notations we have for example
\[
 H^0 (\Sh , \Oh^{\times}) = \{ f \in \Oh^{\times} \tei f (z^N)^N = \prod_{\zeta^N = 1} f (\zeta z) \quad \text{for} \; N \in \Sh \}
\]
and
\[
 Z (\Sh , \Oh^{\times}) = \{ u \in \Oh^{\times} \tei \prod_{\zeta^N = 1} u (\zeta z) = 1 \quad \text{for} \; N \in \Sh , N \neq 1 \} \; .
\]
The monoid $S$ acts from the right on $M (\T)$ by the formulas:
\[
 \mu [\eta] = [\eta^{-1}]_* \mu \quad \text{and} \quad \mu \varphi_N = N^* \mu \; .
\]
It acts from the left by push foreward. The natural map
\[
 M (\T) \longrightarrow \Oh^{\times} \; , \; \mu \longmapsto f_{\mu} = \exp (-h_{\mu})
\]
is left- and right $S$-equivariant by formulas \eqref{eq:4.39nn} and \eqref{eq:4.36}. Let $\Rh = \Z S$ be the semigroup ring of $S$. The left and right actions of $S$ on functions and measures extend to left and right $\Rh$-actions. For $f \in \Oh^{\times}$ and $r \in \Rh$ we usually write $f^r$ for $f\cdot r$. We set $\Rh_{\Q} = \Q S$ and $\Rh_{\R} = \R S$.

The polynomial ring $\Z \Sh = \Z [X_1 , \ldots , X_s]$ is contained in $\Rh$. The following elements of $\Rh$ and $\Rh_{\Q}$ will be useful:
\begin{align}
 \Phi_{\Sh} & = \prod^s_{i=1} (1-X_i) \label{eq:5.1} \\
\Tr_N & = \sum_{\zeta^N =1} [\zeta] \quad \text{and} \; e_N = N^{-1} \Tr_N \label{eq:5.2}\\
\Omega_{\Sh} & = \prod^s_{i=1} (1 - e_i) \quad \text{where} \; e_i = N^{-1}_i \Tr_{N_i} \quad \text{for} \; 1 \le i \le s \label{eq:5.3}
\end{align}
Note that the $e_i$ are commuting idempotents in $\Rh_{\Q}$ for $1 \le i \le s$. We have the relation
\begin{equation}
 \label{eq:5.4}
\Phi_{\Sh} e_i = (e_i - X_i) \prod_{j\neq i} (1 - X_j) \quad \text{in} \; \Rh \; .
\end{equation}
We also need the element
\begin{equation}
 \label{eq:5.5}
\Psi_{\Sh} = \sum_{N \in \Sh} \varphi_N = \sum^{\infty}_{\nu_1 = 0} \cdots \sum^{\infty}_{\nu_s=0} X^{\nu_1}_1 \cdots X^{\nu_s}_s \quad \text{in}\; \Z [[X_1 , \ldots , X_s]]
\end{equation}
and the relations
\begin{equation}
 \label{eq:5.6}
\Phi_{\Sh} \Psi_{\Sh} = 1 = \Psi_{\Sh} \Phi_{\Sh} \quad \text{in} \; \Z [[X_1 , \ldots , X_s]] \; .
\end{equation}
In some cases $\Psi_{\Sh}$ acts on functions:

\begin{lemma}
 \label{t51}
For any $\alpha \in \Oh^{\times}$ with $\alpha (0) = 1$ the infinite product
\[
 \alpha^{\Psi_{\Sh}} := \prod_{N \in \Sh} N^* \alpha
\]
converges locally uniformly to a function in $\Oh^{\times}$.
\end{lemma}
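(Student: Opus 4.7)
The plan is to reduce the locally uniform convergence of $\prod_{N \in \Sh} N^*\alpha$ in $\Oh^{\times}$ to the locally uniform absolute summability of the series $\sum_{N \in \Sh}|N^*\alpha(z) - 1|$, which in turn follows from a geometric estimate using only that $\alpha(0) = 1$ and that $\Sh$ is sparse inside $\N$.

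First, since $\alpha \in \Oh(D)$ with $\alpha(0) = 1$, write $\alpha(w) = 1 + w\beta(w)$ with $\beta \in \Oh(D)$. Fix $r \in (0,1)$ and set $M_r = \sup_{|w| \le r}|\beta(w)| < \infty$. For any $z$ with $|z| \le r$ and any $N \in \Sh$ we have $|z^N| \le r^N \le r$, whence
\[
 |N^*\alpha(z) - 1| \;=\; |z^N \beta(z^N)| \;\le\; M_r r^N .
\]

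The key combinatorial input is that pairwise coprimality of $N_1,\ldots,N_s$ together with unique factorization in $\Z$ forces the map $(\nu_1,\ldots,\nu_s)\mapsto N_1^{\nu_1}\cdots N_s^{\nu_s}$ to be injective. Hence $\Sh$ is a set of distinct positive integers, so
\[
 \sum_{N \in \Sh} r^N \;\le\; \sum_{n=1}^{\infty} r^n \;=\; \frac{r}{1-r} .
\]
Combining this with the previous estimate gives $\sum_{N \in \Sh} |N^*\alpha(z) - 1| \le M_r\, r/(1-r)$, uniformly on $\{|z|\le r\}$. Since $r \in (0,1)$ was arbitrary, the series converges absolutely and locally uniformly on $D$.

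Finally, invoke the standard theorem on infinite products of holomorphic functions: if $(f_N)_{N \in \Sh}$ is a family in $\Oh^{\times}$ with $\sum_{N \in \Sh}|f_N - 1|$ converging locally uniformly on $D$, then $\prod_{N \in \Sh} f_N$ converges locally uniformly to a function in $\Oh(D)$ whose zero set is the union of the zero sets of the $f_N$. Applied to $f_N = N^*\alpha$ this gives a holomorphic limit, and since $\alpha$ (hence each $N^*\alpha$) has no zeros in $D$, the limit lies in $\Oh^{\times}$. There is no substantive obstacle: the only ingredient beyond the standard infinite-product theorem is coprimality of the $N_i$, which ensures $\Sh \subset \N$ is sparse enough for the geometric bound to close.
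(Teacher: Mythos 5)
Your proof is correct and follows essentially the same route as the paper's: estimate $|N^*\alpha(z)-1|\le C_r\,r^N$ on $|z|\le r$ using $\alpha(0)=1$, then bound $\sum_{N\in\Sh}r^N$ by the full geometric series $\sum_{n\ge1}r^n$ (both you and the paper implicitly or explicitly use that $\Sh\subset\Z_{\ge1}$ is a set of distinct integers), and conclude by the standard theorem on infinite products. The only cosmetic differences are that you spell out the factorization $\alpha(w)=1+w\beta(w)$ and the injectivity of $(\nu_1,\ldots,\nu_s)\mapsto N_1^{\nu_1}\cdots N_s^{\nu_s}$, and you explicitly note that the limit is zero-free because each factor is.
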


\begin{rem}
 More explicitly, for $z \in D$ we have
\[
 \alpha^{\Psi_{\Sh}} (z) = \prod_{\nu_1 , \ldots , \nu_s} \alpha (z^{N^{\nu_1}_1 \cdots N^{\nu_s}_s})
\]
where $\nu_1 , \ldots , \nu_s$ run over the non-negative integers.
\end{rem}

\begin{proof}
 Fix any $0 < r < 1$. Because of $\alpha (0) = 1$ there is a constant $c = c_r \ge 0$ such that we have
\[
 |\alpha (z) - 1| \le c |z| \quad \text{in} \; |z| \le r \; .
\]
Since the series
\[
 \sum_{\nu_1 , \ldots , \nu_s} r^{N^{\nu_1}_1 \cdots N^{\nu_s}_s} \le \sum_{k \ge 1} r^k
\]
converges, it follows that the product for $\alpha^{\Psi_{\Sh}}$ converges absolutely and uniformly in $|z| \le r$.
\end{proof}

Let $\Mh$ be a left- and right $\Rh_{\Q}$-module such that for all $N \ge 1$ the relations
\[
 N_* N^* = \id \quad \text{and} \quad N^* N_* = e_N
\]
hold, where $N_*$ and $N^*$ denote left and right multiplication with $\varphi_N$ and where left and right multiplication by $e_N$ agree on $\Mh$. Then it follows that 
\[
 N^* (\Mh) = e_N \Mh = \Mh e_N \quad \text{for all} \; N \ge 1
\]
and in particular that
\begin{equation}
 \label{eq:57n}
\Mh \Sh = \Mh e_1 + \ldots + \Mh e_s = e_1 \Mh + \ldots + e_s \Mh \; .
\end{equation}
Moreover we have the following alternative descriptions of $H^0 (\Sh , \_)$ and $Z (\Sh , \_)$ in terms of the right action of $\Sh$
\begin{align}
 H^0 ( \Sh , \Mh) & = \{ m \in \Mh \tei m \varphi_N = m e_N \quad \text{for all} \; N \in \Sh \} \label{eq:559n} \\
Z (\Sh , \Mh) & = \{ m \in \Mh \tei m e_N = 0 \quad \text{for all} \; N \in \Sh , N \neq 1 \} \label{eq:558n} 
\end{align}
The conditions in the descriptions \eqref{eq:558n} and \eqref{eq:559n} have to be checked only for $N = N_i$ with $i = 1 , \ldots , s$.

\begin{prop} \label{t15n}
 For a left and right $\Rh_{\Q}$-module $\Mh$ as above, the map
\[
 \pi : Z (\Sh , \Mh) \longrightarrow \Mh / \Mh \Sh \; , \; \pi (m) = m + \Mh \Sh
\]
is an isomorphism. The inverse of $\pi$ is given by the map
\[
 \Omega_{\Sh} : \Mh / \Mh \Sh \longrightarrow Z (\Sh , \Mh) \; , \; \Omega_{\Sh} (m + \Mh\Sh) =  m \Omega_{\Sh} = \Omega_{\Sh} m \; .
\]
\end{prop}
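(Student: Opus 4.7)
The plan is to reduce the entire statement to straightforward algebra with commuting idempotents, using the characterizations \eqref{eq:57n}, \eqref{eq:558n}, \eqref{eq:559n} that were just established. Under these we have $\Mh\Sh = \Mh e_1 + \cdots + \Mh e_s$ and $Z(\Sh,\Mh) = \{m \in \Mh : m e_i = 0 \text{ for } i = 1,\ldots,s\}$, while the hypothesis that the left and right actions of each $e_N$ coincide makes the $e_i$ act two-sidedly on $\Mh$. The single ring-theoretic fact I shall exploit repeatedly is that, because the $e_i$ are commuting idempotents in $\Rh_{\Q}$,
\[
\Omega_{\Sh} e_j = \Bigl(\prod_{i \neq j}(1-e_i)\Bigr)(1-e_j) e_j = 0 \quad \text{in } \Rh_{\Q}
\]
for each $j = 1,\ldots,s$.

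First I would verify that $\Omega_{\Sh}$ is well-defined as a map $\Mh/\Mh\Sh \to Z(\Sh,\Mh)$. Given $m \in \Mh$, the element $\Omega_{\Sh} m = m\Omega_{\Sh}$ lies in $Z(\Sh,\Mh)$ because, using the bimodule compatibility and the agreement of left/right actions of each $e_j$,
\[
(\Omega_{\Sh} m) e_j = \Omega_{\Sh}(m e_j) = \Omega_{\Sh}(e_j m) = (\Omega_{\Sh} e_j) m = 0.
\]
The same computation shows $\Omega_{\Sh}(m e_j) = 0$, so $\Omega_{\Sh}$ annihilates each summand $\Mh e_j$ of $\Mh \Sh$ and descends to the quotient.

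Next I would check that the two composites are the identity. For $\Omega_{\Sh} \circ \pi$: if $m \in Z(\Sh,\Mh)$ then $e_i m = m e_i = 0$ for each $i$, so $(1-e_i) m = m$ for each $i$, and applying the factors of $\Omega_{\Sh}$ successively yields $\Omega_{\Sh} m = m$. For $\pi \circ \Omega_{\Sh}$: expanding
\[
\Omega_{\Sh} - 1 = \sum_{\emptyset \neq I \subseteq \{1,\ldots,s\}} (-1)^{|I|} \prod_{i \in I} e_i,
\]
I would show each summand acting on $m$ lies in $\Mh\Sh$ by picking any $i_0 \in I$ and using the agreement $e_{i_0} m = m e_{i_0}$ together with the bimodule property to rewrite
\[
\Bigl(\prod_{i \in I} e_i\Bigr) m = \Bigl(\prod_{i \in I \setminus \{i_0\}} e_i\Bigr) m \cdot e_{i_0} \in \Mh e_{i_0} \subset \Mh\Sh.
\]
Hence $(\Omega_{\Sh} - 1) m \in \Mh\Sh$, completing the inverse check.

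Both maps are $\Q$-linear by construction, so mutually inverse bijectivity upgrades them to isomorphisms. I do not anticipate a genuine obstacle; the only point requiring discipline is the systematic use of bimodule compatibility and of the hypothesis that left and right multiplications by $e_N$ agree, so that expressions like $\Omega_{\Sh} e_j$ may be freely regarded as acting from whichever side is convenient. Everything else is bookkeeping.
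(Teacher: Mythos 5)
Your proof is correct and takes essentially the same approach as the paper: you establish well-definedness and the two composite identities using $\Omega_{\Sh} e_i = 0$, the commutativity of the idempotents, and the observation that $\Omega_{\Sh} - 1$ lies in the ideal generated by the $e_i$; your explicit inclusion-exclusion expansion in the last step is just the written-out form of the paper's abstract $\Omega_{\Sh} = 1 + \sum_i r_i e_i$.
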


\begin{proof}
 The relation $\Omega_{\Sh} e_i = 0$ for $i = 1 , \ldots , s$ where $e_i = e_{N_i}$ shows that $m \Omega_{\Sh} \in Z (\Sh , \Mh)$ for $m \in \Mh$. The relation $e_i \Omega_{\Sh} = 0$ implies that $(\Mh\Sh) \Omega_{\Sh} = 0$. Hence right multiplication by $\Omega_{\Sh}$ induces a well defined map
\[
 \Omega_{\Sh} : \Mh / \Mh \Sh \longrightarrow Z (\Sh , \Mh) \; .
\]
It remains to show that it is inverse to $\pi$. For $m \in Z (\Sh , \Mh)$ we have $m e_i = 0$, hence $m (1-e_i) = m$ and hence $m  \Omega_{\Sh} =m$. This gives $\Omega_{\Sh} \verk \pi = \id$. Write
\[
 \Omega_{\Sh} = 1 + \sum^s_{i=1} r_i e_i \quad \text{with} \; r_i \in \Rh_{\Q} \; .
\]
For $m \in \Mh$ this implies:
\[
 m \Omega_{\Sh} -m = \sum^s_{i=1} (mr_i) e_i \in \Mh e_1 + \ldots + \Mh e_s = \Mh \Sh \; .
\]
Hence we have $\pi \verk \Omega_{\Sh} = \id$ as well.
\end{proof}

These facts apply in particular to the following uniquely divisible groups $\Mh$ of functions $\Ih \subset \, \Nh^1 \subset \Oh^1 = \{ u \in \Oh^{\times} \tei u (0) =1 \}$ where $\Nh^1 = \Nh^{\times} \cap \Oh^1$ and
\[
 \Ih = \{ \frac{\alpha}{\alpha  (0)} \tei \alpha = \frac{\alpha_1}{\alpha_2} \; \text{with singular inner functions} \; \alpha_i \} 
\]

\begin{prop}
 \label{t52}
The following diagram of injections and isomorphisms is commutative:
\[
 \xymatrix{
H^0 (\Sh, \Ih) \ar@{^{(}->}[r]^{\Phi_{\Sh}} \ar[d]^{\wr} & Z (\Sh , \Ih) \ar[r]^{\overset{\pi}{\sim}} \ar@{_{(}->}[d] & \Ih / \Ih^{\Sh} \ar@{_{(}->}[d] \\
H^0 (\Sh , \Nh^1) \ar@{^{(}->}[r]^{\Phi_{\Sh}} \ar@{_{(}->}[d] & Z (\Sh ,  \Nh^1) \ar[r]^{\overset{\pi}{\sim}} \ar@{_{(}->}[d] & \Nh^1 / \Nh^{1\Sh} \ar@{_{(}->}[d] \\
H^0 (\Sh , \Oh^1)  \ar@<1.5ex>[r]^{\overset{\Phi_{\Sh}}{\sim}} & Z (\Sh , \Oh^1) \ar@<1.5ex>[l]_-{\sim}^-{\Psi_{\Sh}} \ar[r]^{\overset{\pi}{\sim}} & \Oh^1 / \Oh^{1\Sh}
}
\]
Here the maps are defined as follows: $\Phi_{\Sh} (f) = f^{\Phi_{\Sh}}$ and $\Psi_{\Sh} (\alpha) = \alpha^{\Psi_{\Sh}}$ where we use lemma \ref{t51}. The maps $\pi$ are induced by inclusions. We have
\[
 \pi^{-1} ([u]) = u^{\Omega_{\Sh}} \quad \text{for} \; u \in \Oh^1 , \Nh^1 , \Ih \; .
\]
\end{prop}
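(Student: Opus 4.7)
The proof combines Proposition~\ref{t15n} applied to the three modules $\Ih \subset \Nh^1 \subset \Oh^1$, the formal identities \eqref{eq:5.6}, and Theorem~\ref{t33}.

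\emph{Module structure.} First I verify that each of the multiplicative groups $\Ih \subset \Nh^1 \subset \Oh^1$ is a left and right $\Rh_\Q$-module satisfying $N_*N^* = \id$ and $N^*N_* = e_N$. Unique $\Q$-divisibility is automatic since $-\log$ identifies $\Oh^1$ with an additive subgroup of $\Oh(D)$, and closure of $\Oh^1$ under $N_*$, $N^*$, $[\zeta]$ is evident from the definitions. For $\Nh^1 = \Nh^\times \cap \Oh^1$ closure is standard: $N^*\Nh \subset \Nh$ is substitution, and $N_*\Nh^\times \subset \Nh^\times$ follows from $\log^+|N_*f| = e_N(\log^+|f|)$ combined with $\varphi_N$-invariance of $\lambda$. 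For $\Ih$ closure is the multiplicative translate, via the bijection $\sigma \leftrightarrow s_\sigma = \exp(-h_\sigma)$, of the fact that $N_*, N^*, [\zeta]$ preserve $M^+(\T)_\sing$ (Remark~\ref{t412}~b). The relations $N_*N^* = \id$ and $N^*N_* = e_N$ on each module are the exponentials of the corresponding additive identities on $\Oh(D)$, coming ultimately from \eqref{eq:4.37}, \eqref{eq:4.38} via the Herglotz transform. Proposition~\ref{t15n} then furnishes the three right-column isomorphisms $\pi$ with inverse $[u] \mapsto u^{\Omega_\Sh}$.

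\emph{Bottom row.} The map $\Psi_\Sh$ is well-defined on $\Oh^1$ by Lemma~\ref{t51}, and $\Phi_\Sh$ is a finite product. For $f \in H^0(\Sh, \Oh^1)$, relation \eqref{eq:5.4} together with the right-action characterization \eqref{eq:559n} ($f^{e_i} = f^{X_i}$) give
\[
 (f^{\Phi_\Sh})^{e_i} = f^{\Phi_\Sh e_i} = (f^{e_i}/f^{X_i})^{\prod_{j\neq i}(1-X_j)} = 1,
\]
so $\Phi_\Sh$ lands in $Z(\Sh, \Oh^1)$. That $\Phi_\Sh \Psi_\Sh = \Psi_\Sh \Phi_\Sh = \id$ on $\Oh^1$ is a telescoping argument: for $s = 1$ the formal identity $(1-X_1)\sum_k X_1^k = 1$ becomes $\alpha^{\Psi_\Sh(1-X_1)} = \alpha \cdot \lim_k (\alpha^{X_1^{k+1}})^{-1} = \alpha$ since $\alpha(z^{N_1^{k+1}}) \to \alpha(0) = 1$ locally uniformly; the general case factors $\Phi_\Sh = \prod_i(1-X_i)$ and iterates. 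Finally, for $\alpha \in Z(\Sh, \Oh^1)$ and $\beta = \alpha^{\Psi_\Sh}$ one computes $\beta^{X_i}/\beta^{e_i} = \alpha^{\Psi_\Sh(X_i - e_i)} = \alpha^{-e_i \Psi_{\Sh'}}$, where $\Sh'$ is the submonoid generated by $\{N_j : j \neq i\}$; here the rearrangement uses $X_i e_i = X_i$ (from $\varphi_{N_i}[\zeta] = [\zeta^{N_i}]\varphi_{N_i}$ and $\zeta^{N_i} = 1$) and $[e_i, X_j] = 0$ for $j \neq i$ (by coprimality). Since $\alpha^{e_i} = 1$, this yields $\beta \in H^0(\Sh, \Oh^1)$.

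\emph{Upper rows.} The equality $H^0(\Sh, \Ih) = H^0(\Sh, \Nh^1)$ (the leftmost vertical isomorphism) is Theorem~\ref{t33}: any $f \in \Nh^\times \cap \Oh^1$ solving \eqref{eq:1.1} is of the form $c^{-1} s_{\sigma_1}/s_{\sigma_2}$ for some $c > 0$ and singular positive $\sigma_i$, and the normalization $f(0) = 1$ forces $f \in \Ih$. Injectivity of the middle-column arrows is trivial. For the right column, injectivity $\Mh/\Mh^\Sh \hookrightarrow \Mh'/\Mh'^\Sh$ (for $\Mh \subset \Mh'$ from our list) amounts to $\Mh \cap \Mh'^\Sh = \Mh^\Sh$, which follows from the intrinsic criterion $u \in \Mh^\Sh \iff u^{\Omega_\Sh} = 1$ supplied by Proposition~\ref{t15n} in each module separately. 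Commutativity of every square is immediate: horizontal maps are right multiplication by the fixed ring element $\Phi_\Sh$, $\pi$ is inclusion composed with projection, and vertical maps are inclusions (or their induced quotient maps). The main obstacle is the verification that $\Psi_\Sh(Z(\Sh, \Oh^1)) \subset H^0(\Sh, \Oh^1)$: the rearrangement identity $\Psi_\Sh(e_i - X_i) = e_i \Psi_{\Sh'}$ formally lives in a completion of the noncommutative ring $\Rh_\Q$, and its interpretation as an equality of infinite products on $\Oh^1$ requires the locally uniform absolute convergence supplied by Lemma~\ref{t51}. Modulo that computation, the proposition reduces mechanically to Proposition~\ref{t15n} and Theorem~\ref{t33}.
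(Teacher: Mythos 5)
Your proof is correct and follows essentially the same route as the paper's: show $\Phi_{\Sh}$ and $\Psi_{\Sh}$ are mutually inverse automorphisms of $\Oh^1$ using Lemma~\ref{t51}, derive the formal identity $\Psi_{\Sh}(e_i - X_i) = e_i \Psi_{\Sh_i}$ from $X_i e_i = X_i$ and $[e_i, X_j] = 0$ to get $\Psi_{\Sh}\bigl(Z(\Sh,\Oh^1)\bigr) \subset H^0(\Sh,\Oh^1)$, invoke Theorem~\ref{t33} for $H^0(\Sh,\Ih) \silo H^0(\Sh,\Nh^1)$, and apply Proposition~\ref{t15n} for the right column. You spell out the $\Rh_{\Q}$-module verification for $\Ih$, $\Nh^1$, $\Oh^1$ and the injectivity of the right-column maps more explicitly than the paper (which takes these for granted), but the substance is identical.
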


\begin{rems}
1) For $s = 1,2,3$ the map $\Phi_{\Sh}$ looks as follows explicitely:
\[
 \Phi_{\Sh} (f) = \frac{f (z)}{f (z^{N_1})} \quad \text{resp.} \quad \Phi_{\Sh} (f) = \frac{f (z^{N_1 N_2}) f (z)}{f (z^{N_1}) f (z^{N_2})}
\]
resp.
\[
 \Phi_{\Sh} (f) = \frac{f (z^{N_1 N_2}) f (z^{N_1 N_3}) f (z^{N_2 N_3}) f(z)}{f (z^{N_1}) f (z^{N_2}) f (z^{N_3}) f (z^{N_1 N_2 N_3})} \; .
\]
2) For $f \in H^0 (\Sh , \Oh^1)$ we have $\Phi_{\Sh} (f) = \Omega_{\Sh} (f)$. Hence the composition
\[
 \pi \verk \Phi_{\Sh} : H^0 (\Sh , \Oh^1) \silo \Oh^1 / \Oh^{1\Sh}
\]
maps $f$ to $f  \mod \Oh^{1\Sh}$.

3) The map $P_{\Sh} = \Psi_{\Sh} \verk \Omega_{\Sh} : \Oh^1 \to \Oh^1$ is a projector with image $H^0 (\Sh, \Oh^1)$ and kernel $\Oh^{1\Sh}$. We thus get a canonical decomposition $\Oh^1 = \Oh^{1\Sh} \times H^0 (\Sh, \Oh^1)$. However there is no such decomposition if we replace $\Oh^1$ by $\Ih$.
\end{rems}

\begin{proof}
Using lemma \ref{t51} and the formal relations \eqref{eq:5.6} it follows that the maps $\Phi_{\Sh} (f) = f^{\Phi_{\Sh}}$ and $\Psi_{\Sh} (\alpha) = \alpha^{\Psi_{\Sh}}$ define mutually inverse automorphisms of $\Oh^1$. Note here that for $h \in \Oh^1$ we have:
\[
h^{X^{\nu_1}_1 \ldots X^{\nu_s}_s} \longrightarrow 1 \quad \text{locally uniformly as} \; |\nu| = \nu_1 + \ldots + \nu_s \longrightarrow \infty \; .
\]
Since $\Oh^1$ is an $\Rh_{\Q}$-module relation \eqref{eq:5.4} holds on $\Oh^1$:
\[
\Phi_{\Sh} e_i = (e_i - X_i) \prod_{j \neq i} (1 - X_j) \; .
\]
For $f \in H^0 (\Sh , \Oh^1)$ we therefore get
\[
\Phi_{\Sh} (f)^{e_i} = f^{\Phi_{\Sh} e_i} = 1 \quad \text{for} \; 1 \le i \le s \; .
\]
Hence $\Phi_{\Sh}$ maps $H^0 (\Sh , \Oh^1)$ to $Z (\Sh , \Oh^1)$. The relations $X_i e_i = X_i$ and $X_j e_i = e_i X_j$ for $i \neq j$ imply the formal relation:
\[
\Psi_{\Sh} (e_i - X_i) = e_i \Psi_{\Sh_i} \; .
\]
Here $\Sh_i \subset \Sh$ is generated by the numbers $N_1 , \ldots , N_s$ except for $N_i$. Using lemma \ref{t51} this leads to the formula
\[
\Psi_{\Sh} (\alpha)^{e_i - X_i} = \alpha^{\Psi_{\Sh} (e_i - X_i)} = \alpha^{e_i \Psi_{\Sh_i}}
\]
for any $\alpha \in \Oh^1$. For $\alpha \in Z (\Sh , \Oh^1)$ we therefore have
\[
\Psi_{\Sh} (\alpha)^{e_i - X_i} = 1 \quad \text{for} \; 1 \le i \le s
\]
and hence $\Psi_{\Sh}$ maps $Z (\Sh , \Oh^1)$ to $H^0 (\Sh , \Oh^1)$. It follows that $\Phi_{\Sh} : H^0 (\Sh , \Oh^1) \to Z (\Sh , \Oh^1)$ is an isomorphism with inverse $\Psi_{\Sh}$. It is clear that $\Phi_{\Sh}$ maps $H^0 (\Sh , \Gh)$ to $Z (\Sh, \Gh)$ for $\Gh = \Ih$ and $\Nh^1$. The injective map
\[
 H^0 (\Sh , \Ih) \silo H^0 (\Sh , \Nh^1)
\]
is an isomorphism because of theorem \ref{t33}. Hence we are done with the left hand side of the diagram. The right hand side is commutative by proposition \ref{t15n}. 
\end{proof}

\begin{cor}
 \label{t513}
Assume that $\alpha \in \Oh^{\times}$ with $\alpha (0) = 1$ satisfies the relation $\prod_{\zeta^N = 1} \alpha (\zeta z) = 1$ for some $N \ge 2$ and that the function
\[
 f(z) = \prod^{\infty}_{\nu = 0} \alpha (z^{N^{\nu}})
\]
is in the Nevanlinna class $\Nh$. Then $\alpha = u / u (0)$ where $u$ is a quotient of singular inner functions. The function $f$ satisfies the functional equation \eqref{eq:1.1}.
\end{cor}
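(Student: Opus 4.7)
The plan is to interpret this as a direct combination of Proposition \ref{t52} (for the special case $s=1$, $\Sh = \langle N \rangle$) and Theorem \ref{t33}. The hypotheses are set up so that $\alpha \in Z(\Sh, \Oh^1)$ and $f = \Psi_\Sh(\alpha)$, so nothing new needs to be built.

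First I would verify that the setup matches Proposition \ref{t52} with $\Sh = \langle N \rangle$. The hypothesis $\alpha(0) = 1$ places $\alpha$ in $\Oh^1$, and the relation $\prod_{\zeta^N = 1} \alpha(\zeta z) = 1$ says precisely that $\alpha \in Z(\Sh, \Oh^1)$. Lemma \ref{t51} guarantees that the product defining $f$ converges locally uniformly in $D$; evaluation at $z=0$ gives $f(0) = 1$, so $f \in \Oh^1$, and by construction $f = \Psi_\Sh(\alpha)$. Proposition \ref{t52} then yields that $f \in H^0(\Sh, \Oh^1)$, i.e.\ $f$ satisfies the functional equation \eqref{eq:1.1} for the integer $N$. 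This proves the second claim of the corollary.

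Next, since $f \neq 0$ (because $f(0) = 1$) and $f \in \Nh$ satisfies \eqref{eq:1.1}, Theorem \ref{t33} gives a unique constant $c > 0$ such that $u := cf$ is a quotient of singular inner functions. Note that $u(0) = c \neq 0$. Since $\Psi_\Sh$ and $\Phi_\Sh$ are mutually inverse on $\Oh^1$ by Proposition \ref{t52}, we recover $\alpha$ explicitly as
\[
\alpha(z) = \Phi_\Sh(f)(z) = \frac{f(z)}{f(z^N)} = \frac{u(z)}{u(z^N)}.
\]
The right-hand side is again a quotient of singular inner functions (ratios and compositions $z \mapsto z^N$ preserve this class), and its value at $z=0$ equals $u(0)/u(0) = 1 = \alpha(0)$. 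Writing $v(z) = u(z)/u(z^N)$, we have $\alpha = v/v(0)$, which is precisely the desired conclusion $\alpha \in \Ih$.

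There is no real obstacle here beyond correctly tracking the normalizations: Theorem \ref{t33} supplies the structural statement about $f$, and Proposition \ref{t52} supplies the algebraic dictionary between $f$ and $\alpha$. The only things to check are that $f$ is nonzero (immediate from $f(0) = 1$) and that the class of quotients of singular inner functions is closed under the operations $h \mapsto h(z)/h(z^N)$, which is clear from the canonical factorization.
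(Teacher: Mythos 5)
Your proof is correct and follows the same route as the paper's (very terse) proof: proposition~\ref{t52} with $\Sh=\langle N\rangle$ gives $f=\Psi_{\Sh}(\alpha)\in H^0(\Sh,\Oh^1)$ so that $f$ satisfies \eqref{eq:1.1}, theorem~\ref{t33} then makes $cf$ a quotient of singular inner functions, and $\alpha=\Phi_{\Sh}(f)=f(z)/f(z^N)$ inherits that property. The only thing worth flagging is that your final closure claim should really be split into two observations — the class of quotients of singular inner functions is closed under taking ratios, and $s(z^N)$ is singular inner whenever $s$ is (since $s_{\nu}(z^N)=s_{N^*\nu}(z)^N$ and $N^*\nu$ is again singular) — but both are routine and your conclusion stands.
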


\begin{proof}
 The assertion follows from theorem \ref{t33}. It is also a formal consequence of proposition \ref{t52} for $\Sh = \{ N^{\nu} \tei \nu \ge 0 \}$ the monoid generated by $N$ because $\alpha \in Z (\Sh , \Oh^1)$ and $f = \Psi_{\Sh} (\alpha)$. 
\end{proof}

\begin{example} \label{t514}
 For any $a \in \C^*$ the function
\[
 f (z) = \prod^{\infty}_{\nu = 0} \exp (a z^{N^{\nu}}) = \exp a \sum^{\infty}_{\nu =0} z^{N^{\nu}}
\]
satisfies the functional equation \eqref{eq:1.1} and $f \notin \Nh$. 
\end{example}

This follows from corollary \ref{t513} since $|\exp a z|$ is a non-constant function of $z \in \T$ if $a \neq 0$. 

We now discuss atoms. For a function $f \in \Oh^{\times}$ and $\eta \in \T$ we set
\[
 A (f) (\eta) = - \halb \lim_{r\to 1-} (1-r) \log |f (r\eta)|
\]
if the limit exists. We say that $A (f)$ exists if $A (f) (\eta)$ exists for every $\eta \in \T$. Note that for $\mu \in M (\T)$ we have by equation \eqref{eq:2.5}
\begin{equation}
 \label{eq:5.7}
A (f_{\mu}) (\eta) = \mu \{ \eta \} \; .
\end{equation}
In particular $A (f)$ exists for every $f \in \Nh^{\times}$. We call $f \in \Oh^{\times}$ atomless if $A (f)$ exists and equals zero. Set $\Phi^1_{\Sh} = \prod^s_{i=1} (1 - N^{-1}_i X_i) \in \Rh_{\Q}$.

\begin{prop}
 \label{t53}
a) For $f \in \Oh^{\times}$ assume that $A (f)$ exists. Then $A (f^{\Phi_{\Sh}})$ exists as well and we have
\[
 A (f^{\Phi_{\Sh}}) = A (f) \Phi^1_{\Sh} \; .
\]
In particular, if $f$ is atomless then $f^{\Phi_{\Sh}}$ is atomless as well.\\
b) For $\alpha \in \Oh^{\times}$ with $\alpha (0) = 1$ set $f = \alpha^{\Psi_{\Sh}}$ as in Lemma \ref{t51}. Assume that $A (f)$ exists and that we have
\begin{equation}
 \label{eq:5.8}
\lim_{N\to\infty\atop N \in \Sh} N^{-1} A (f) (\eta^N) = 0 \quad \text{for all} \; \eta \in \T \; .
\end{equation}
 Then $A(\alpha)$ exists as well and we have pointwise on $\T$:
\begin{equation}
 \label{eq:5.9}
A (f) = \lim_{k\to \infty} A (\alpha) \sum^{k-1}_{\nu_1=0} \cdots \sum^{k-1}_{\nu_s=0} \frac{X^{\nu_1}_1 \cdots X^{\nu_s}_s}{N^{\nu_1}_1 \cdots N^{\nu_s}_s} \; .
\end{equation}
In particular, under these assumptions, if $\alpha$ is atomless, then $\alpha^{\Psi_{\Sh}}$ is atomless as well.
\end{prop}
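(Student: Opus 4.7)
The whole proposition will follow from one basic calculation: for any $N \ge 1$ and any $f \in \Oh^\times$ for which $A(f)$ exists, I claim that $A(N^*f)$ exists and
\[
 A(N^*f)(\eta) \;=\; \tfrac{1}{N}\,A(f)(\eta^N) \quad \text{for all } \eta \in \T .
\]
This is a direct substitution: writing $s = r^N$, so that $1-r \sim N^{-1}(1-s)$ as $r \to 1-$, one has
$(1-r)\log|f(r^N\eta^N)| = \tfrac{1-r}{1-s}(1-s)\log|f(s\eta^N)|$, and the first factor tends to $1/N$. In the right-action notation this reads $A(f^{X_i}) = N_i^{-1}\,A(f)\cdot X_i$, where $X_i$ acts on functions on $\T$ by pullback along $\varphi_{N_i}$.

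For part (a) I would then combine this with the additivity $A(fg) = A(f)+A(g)$, $A(f/g) = A(f)-A(g)$, which is immediate from $A = -\tfrac12\lim(1-r)\log|\cdot|$. This gives
\[
 A(f^{1-X_i}) \;=\; A(f) - \tfrac{1}{N_i}\,A(f)\cdot X_i \;=\; A(f)\cdot(1-N_i^{-1}X_i) .
\]
Expanding $\Phi_{\Sh} = \prod_{i=1}^s(1-X_i)$ as a sum of monomials $\pm X_1^{\epsilon_1}\cdots X_s^{\epsilon_s}$ with $\epsilon_i \in \{0,1\}$ and applying the single-$X_i$ computation term-by-term (noting that the $X_i$ commute and that pullbacks of functions on $\T$ compose as $X_i X_j = X_j X_i$), one gets
\[
 A(f^{\Phi_{\Sh}}) \;=\; A(f)\cdot\prod_{i=1}^s(1-N_i^{-1}X_i) \;=\; A(f)\,\Phi^1_{\Sh} ,
\]
and in particular $A(f^{\Phi_{\Sh}}) \equiv 0$ if $A(f) \equiv 0$.

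For part (b), the relation $\Phi_{\Sh}\Psi_{\Sh} = 1$ (equation \eqref{eq:5.6}) gives $\alpha = f^{\Phi_{\Sh}}$, so part (a) yields $A(\alpha) = A(f)\,\Phi^1_{\Sh}$; in particular $A(\alpha)$ exists. To invert this, I will use the telescoping identity
\[
 \Phi^1_{\Sh}\cdot \Psi^1_{\Sh,k} \;=\; \prod_{i=1}^s\bigl(1-N_i^{-k}X_i^k\bigr),
\qquad \Psi^1_{\Sh,k} \;:=\; \sum_{\nu_1,\dots,\nu_s=0}^{k-1}\frac{X_1^{\nu_1}\cdots X_s^{\nu_s}}{N_1^{\nu_1}\cdots N_s^{\nu_s}} ,
\]
which is the product of the one-variable geometric identities $(1-N_i^{-1}X_i)\sum_{\nu=0}^{k-1}(N_i^{-1}X_i)^\nu = 1-(N_i^{-1}X_i)^k$. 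Multiplying $A(\alpha) = A(f)\Phi^1_{\Sh}$ on the right by $\Psi^1_{\Sh,k}$ and expanding gives
\[
 A(\alpha)\,\Psi^1_{\Sh,k}(\eta) \;=\; \sum_{J\subseteq\{1,\dots,s\}}(-1)^{|J|}\,N_J^{-k}\,A(f)\bigl(\eta^{N_J^{k}}\bigr),
\qquad N_J := \prod_{i\in J}N_i .
\]
The summand for $J=\emptyset$ is $A(f)(\eta)$; for each $J\neq\emptyset$ the element $N_J^k$ runs to infinity in $\Sh$, so by hypothesis \eqref{eq:5.8} each such summand tends to $0$ as $k\to\infty$. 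This establishes \eqref{eq:5.9}. The ``atomless'' consequence is then immediate: if $A(\alpha)\equiv 0$, every truncated sum $A(\alpha)\Psi^1_{\Sh,k}$ vanishes identically, hence so does its limit $A(f)$.

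The only genuinely non-formal step is the substitution underlying the single-$N$ identity $A(N^*f)(\eta) = N^{-1}A(f)(\eta^N)$; everything else is semigroup-ring bookkeeping together with the telescoping formula, so I expect no real obstacle beyond writing the expansion of $\Phi_{\Sh}^1\Psi_{\Sh,k}^1$ carefully and invoking \eqref{eq:5.8} uniformly over the $2^s-1$ non-empty subsets $J$.
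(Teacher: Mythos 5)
Your proof is correct and follows essentially the same route as the paper: establish the single-generator identity $A(f^{X_i})(\eta)=N_i^{-1}A(f)(\eta^{N_i})$ (the paper phrases the same substitution as $\lim_{r\to 1-}(1-r)^{-1}(1-r^N)=N$), combine with additivity of $A$ to get $A(f^{\Phi_{\Sh}})=A(f)\Phi^1_{\Sh}$, then for (b) use $\alpha=f^{\Phi_{\Sh}}$ and the telescoping identity $\Phi^1_{\Sh}\Psi^1_{\Sh,k}=\prod_i(1-N_i^{-k}X_i^k)$, with hypothesis \eqref{eq:5.8} killing every non-trivial term as $k\to\infty$. The paper writes the expansion with multi-indices $\bfd\in\{0,1\}^s$ rather than subsets $J$, but this is only a notational difference.
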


\begin{proof}
 a) Using the formula $\lim_{r\to 1-} (1-r)^{-1} (1-r^N) = N$ it follows that with $A (f)$ also $A (f^{X_i})$ exists and that for $\eta \in \T$ we have
\begin{equation}
 \label{eq:5.10}
A (f^{X_i}) (\eta) = \frac{1}{N_i} A (f) (\eta^{N_i}) \; .
\end{equation}
Hence we get
\[
 A (f^{1-X_i}) = A (f) \Big( 1 - \frac{X_i}{N_i} \Big)
\]
and the assertion follows by induction.\\
b) In the proof of proposition \ref{t52} it was shown that $\alpha = f^{\Phi_{\Sh}}$. Thus $A (\alpha)$ exists by a) and we have
\[
 A (\alpha) = A (f) \prod^s_{i=1} (1 - N^{-1}_i X_i) \; .
\]
This gives
\begin{align*}
 A (\alpha) \sum_{0 \le \nu_i < k \atop i = 1 , \ldots , s} \frac{X^{\nu_1}_1 \cdots X^{\nu_s}_s}{N^{\nu_1}_1 \cdots N^{\nu_s}_s} & = A (f) \prod^s_{i=1} \Big( 1 - \frac{X^k_i}{N^k_i} \Big) \\
& = A (f) + \sum_{\bfd \in \{ 0 , 1\}^s \atop \bfd \neq 0} (-1)^{|\bfd|} \bfN^{-\bfd k} A (f) \cdot \bfX^{\bfd k} \; .
\end{align*}
Here we have set $\bfN = (N_1 , \ldots , N_s)$ and $\bfX = (X_1 , \ldots  X_s)$ and used multiindex notation. Since for $k \to \infty$ the right hand side tends to $A (f)$ pointwise by assumption \eqref{eq:5.8} the assertions follow.
\end{proof}

Set $M^0 (\T) = \{ \mu \in M (\T) \tei \mu (\T) = 0 \}$ and
\begin{equation} \label{eq:63n}
 M^{\kappa_0} (\T) = \{ \mu - \mu (\T) \lambda \tei \mu \in M (\T) \; \text{singular to} \; \lambda \} \; .
\end{equation}
Then the map $\mu \mapsto f_{\mu}$ defines isomorphisms
\begin{equation} \label{eq:562n}
 M^0 (\T) \silo \Nh^1 \quad \text{and} \quad M^{\kappa_0} (\T) \silo \Ih \; .
\end{equation}
Let $M^0_{c} (\T)$ resp. $M^{\kappa_0}_c (\T)$ be the subspaces of atomless measures in $M^0 (\T)$ resp. $M^{\kappa_0} (\T)$. For functions $f$ in $\Nh^1$ the function $A (f)$ exists by equation \eqref{eq:5.7}. Let $\Nh^1_c$ resp. $\Ih_c$ be the subgroups of atomless functions in $\Nh^1$ resp. $\Ih$. By our definitions the isomorphisms \eqref{eq:562n} restrict to isomorphisms:
\begin{equation} \label{eq:563}
 M^0_{c} (\T) \silo \Nh^1_c \quad \text{and} \quad M^{\kappa_0}_c (\T) \silo \Ih_c \; .
\end{equation}

\begin{cor}
 \label{t54}
a) Under the map $\Phi_{\Sh} : H^0 (\Sh , \Nh^1) \hookrightarrow Z (\Sh , \Nh^1)$ an element $f \in H^0 (\Sh , \Nh^1)$ is atomless if and only if $\Phi_{\Sh} (f)$ is atomless.\\
b) The natural map
\[
 \pi : Z (\Sh , \Ih_c) \silo \Ih_c / \Ih^{\Sh}_c
\]
is an isomorphism with inverse induced by right multiplication with $\Omega_{\Sh}$. \\
The same assertion holds for $\Nh^1_c$ instead of $\Ih_c$.
\end{cor}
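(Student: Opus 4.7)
For part a) the forward direction is immediate from Proposition~\ref{t53}~a): since $A(\Phi_{\Sh}(f)) = A(f)\,\Phi_{\Sh}^1$, an atomless $f$ produces an atomless $\Phi_{\Sh}(f)$. For the converse, recall from Proposition~\ref{t52} that $\Psi_{\Sh}$ inverts $\Phi_{\Sh}$ on $H^0(\Sh,\Nh^1)$, so that $f = \Psi_{\Sh}(\Phi_{\Sh}(f))$, and then apply Proposition~\ref{t53}~b) with $\alpha = \Phi_{\Sh}(f)$. The growth condition \eqref{eq:5.8} on $A(f)$ needed there is automatic for $f \in \Nh^1$: writing $f = f_{\mu}$ for the corresponding signed measure $\mu \in M^0(\T)$, formula \eqref{eq:5.7} gives $A(f)(\eta) = \mu\{\eta\}$, and this is bounded uniformly in $\eta$ by the total variation $|\mu|(\T)$, so $N^{-1} A(f)(\eta^N) \to 0$ as $N \to \infty$. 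The conclusion of Proposition~\ref{t53}~b) then transfers atomlessness from $\alpha$ back to $f$.

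For part b) the plan is to apply Proposition~\ref{t15n} directly to the submodule $\Mh = \Ih_c$ of $\Ih$ (respectively $\Mh = \Nh_c^1$ of $\Nh^1$). The relations $N_* N^* = \id$ and $N^* N_* = e_N$ and the agreement of left and right multiplication by $e_N$ are inherited from the ambient module, so the only substantive point is to verify that $\Ih_c$ is stable under both the left and right $\Rh_{\Q}$-actions, i.e.\ that atomlessness is preserved by each generator of $S$. Invariance under rotations $[\zeta]$ is immediate from the definition of $A$. Invariance under the right action of $X_i$ (pullback $N_i^*$) is exactly formula \eqref{eq:5.10}. Invariance under the left action of $\varphi_{N_i}$ (pushforward $N_{i*}$) follows from the formula $A(N_{i*}f)(\eta) = \sum_{\xi^{N_i} = \eta} A(f)(\xi)$, which one obtains by the substitution $s = r^{1/N_i}$ in the limit defining $A$, exactly as in the proof of Proposition~\ref{t53}~a). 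Unique divisibility of $\Ih_c$ is inherited from $\Ih$ via $A(u^{1/n}) = n^{-1} A(u)$.

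Once these verifications are in place, Proposition~\ref{t15n} applied to $\Mh = \Ih_c$ yields at once that $\pi : Z(\Sh, \Ih_c) \to \Ih_c / \Ih_c^{\Sh}$ is an isomorphism whose inverse is given by right multiplication with $\Omega_{\Sh}$; the same argument applies verbatim to $\Nh_c^1$. The only step that is not already recorded in the paper is the pushforward formula for $A$ in item (iii) above, and this is a one-line limit calculation modeled on Proposition~\ref{t53}~a); the rest is a matter of assembling the structural isomorphisms of Propositions~\ref{t52} and \ref{t15n} with the atom-tracking formulas of Proposition~\ref{t53}. Accordingly the main obstacle is organisational rather than technical: keeping straight, on the atomless subgroups, that the abstract hypotheses of Proposition~\ref{t15n} continue to hold.
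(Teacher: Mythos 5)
Your proof is correct and follows the paper's own argument: part a) combines Proposition~\ref{t53} a), b) with the observation that $A(f)$ is bounded for $f \in \Nh^1$, so condition \eqref{eq:5.8} holds automatically; part b) applies Proposition~\ref{t15n} to the sub-$\Rh_{\Q}$-modules $\Ih_c$ and $\Nh^1_c$. Your explicit check that $\Ih_c$ is $S$-stable (via the pushforward formula $A(N_* f)(\eta) = \sum_{\xi^N = \eta} A(f)(\xi)$) fills in a detail the paper states without proof; one can also obtain it from the $S$-equivariant isomorphisms \eqref{eq:563} with $M^{\kappa_0}_c(\T)$ and $M^0_c(\T)$.
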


\begin{proof}
 a) For $f \in \Nh^1$ the function $A (f)$ exists and is bounded by \eqref{eq:5.7} and \eqref{eq:562n}. Hence condition \eqref{eq:5.8} is satisfied. The assertion follows from proposition \ref{t53} a), b).\\
b) $\Ih_c$ is a left- and right sub- $\Rh_{\Q}$-module of $\Ih$. Now one applies proposition \ref{t15n}. The argument in the case of $\Nh^1_c$ is the same.
\end{proof}

\section{Back to measures and on to premeasures}
\label{sec:6}
In this section we reinterpret and extend the preceeding considerations in terms of measures and cumulative mass functions. The study of the map $\Psi_{\Sh}$ in this context leads to premeasures and functions of bounded $\kappa$-variation in the sense of Korenblum \cite{K1}, \cite{K4}. Proposition \ref{t52} and Corollary \ref{t54} imply the following assertion:

\begin{cor}
\label{t61} 
Let $\Mh$ be either $M^0 (\T)$ or $M^{\kappa_0} (\T)$. The map $\Phi_{\Sh} : H^0 (\Sh , \Mh) \hookrightarrow Z (\Sh , \Mh)$ sending $\mu$ to
\[
 \Phi_{\Sh} (\mu) = \mu \Phi_{\Sh} = \prod^s_{i=1} (1 - N^*_i) \mu
\]
is injective. For $\mu \in H^0 (\Sh , \Mh)$ we also have $\mu \Phi_{\Sh} = \mu \Omega_{\Sh}$. A measure $\mu \in H^0 (\Sh , \Mh)$ is atomless if and only if $\Phi_{\Sh} (\mu)$ is atomless. The inclusion $Z (\Sh , \Mh) \hookrightarrow \Mh$ induces an isomorphism $Z (\Sh , \Mh) \silo \Mh / \Mh\Sh$ whose inverse map sends $\mu \mod \Mh\Sh$ to
\[
\mu \Omega_{\Sh} = \Omega_{\Sh} \mu = \prod^s_{i=1} (1-e_i) \mu \; .
\]
The subgroup $\Mh \Sh$ consists of the measures $\mu$ with $c_{\nu} (\mu) = 0$ if $N_i \nmid \nu$ for all $1 \le i \le s$.
\end{cor}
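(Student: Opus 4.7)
The plan is to transport the analytic results of Proposition \ref{t52} and Corollary \ref{t54} to measures via the isomorphisms of \eqref{eq:562n}: the map $\mu \mapsto f_\mu = \exp(-h_\mu)$ gives $M^0(\T) \silo \Nh^1$ and $M^{\kappa_0}(\T) \silo \Ih$. By formulas \eqref{eq:4.39nn} and \eqref{eq:4.36}, these are equivariant for both the left and right $S$-actions, so they extend to $\Rh_\Q$-module isomorphisms (both measure spaces are real vector spaces, hence uniquely divisible), and by \eqref{eq:5.7} they restrict to isomorphisms $M^0_c(\T) \silo \Nh^1_c$ and $M^{\kappa_0}_c(\T) \silo \Ih_c$.

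With this dictionary, injectivity of $\Phi_\Sh : H^0(\Sh,\Mh) \hookrightarrow Z(\Sh,\Mh)$, the isomorphism $Z(\Sh,\Mh) \silo \Mh/\Mh\Sh$ with inverse induced by right multiplication by $\Omega_\Sh$, and the equivalence of atomlessness for $\mu$ and $\Phi_\Sh(\mu)$ are all immediate pullbacks of Proposition \ref{t52} and Corollary \ref{t54} via the diagram on the analytic side. Alternatively one may apply Proposition \ref{t15n} directly to $\Mh$; its hypotheses are verified by \eqref{eq:4.37}, \eqref{eq:4.38}, and the observation that the left and right actions of $e_N = N^{-1}\Tr_N$ agree because the set of $N$-th roots of unity is closed under inversion.

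For the identity $\mu\Phi_\Sh = \mu\Omega_\Sh$ on $H^0(\Sh,\Mh)$: by \eqref{eq:559n} this hypothesis says $\mu X_i = \mu e_i$ for each $i$. Since $N_i$ and $N_j$ are coprime one has $X_iX_j = X_jX_i$ and $e_iX_j = X_je_i$ (via \eqref{eq:4.40}), and $\mu e_i = N_i^*\mu$ remains $\Sh$-invariant. An induction over subsets $I \subset \{1,\ldots,s\}$ shows $\mu\prod_{i\in I}X_i = \mu\prod_{i\in I}e_i$, and expanding the products $\prod_i(1-X_i)$ and $\prod_i(1-e_i)$ yields the claim.

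Finally, the Fourier characterization of $\Mh\Sh$ rests on two ingredients. First, formula \eqref{eq:4.39} (combined with the Fourier behavior of the rotations $[\zeta]_*$) shows that $e_N$ acts on $\mu \in M(\T)$ as the projection onto Fourier modes divisible by $N$, so that $\Omega_\Sh = \prod_i(1-e_i)$ is the projection onto modes $\nu$ with $N_i \nmid \nu$ for all $i$. Second, Proposition \ref{t15n} gives $\Mh\Sh = \ker(\Omega_\Sh : \Mh \to \Mh)$, since $\Omega_\Sh$ annihilates $\Mh\Sh$ and is the identity on the complementary summand $Z(\Sh,\Mh)$. Combining the two yields the stated criterion. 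There is no real obstacle: the statement is essentially a translation of the already-proved analytic results into measure-theoretic language, and the only substantive verification is the Fourier-mode interpretation of $e_N$.
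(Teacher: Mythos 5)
Your proof is correct and follows essentially the same route as the paper: the first assertions are pulled back from Proposition~\ref{t52} and Corollary~\ref{t54} (or, equivalently, obtained directly from Proposition~\ref{t15n}), and the Fourier characterization of $\Mh\Sh$ is obtained by combining $\Mh\Sh = \ker\Omega_{\Sh}$ with the observation that $1-e_{N_i}$ kills exactly the Fourier modes $\nu$ with $N_i \mid \nu$ — which is precisely the paper's calculation.
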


\begin{proof}
Only the last assertion still needs proof. A measure $\mu \in \Mh$ is in $\Mh \Sh$ if and only if $c_{\nu} (\Omega_{\Sh} \mu) = 0$ for all $\nu \in \Z$. We have $c_{\nu} ((1 - e_N) \mu) = c_{\nu} (\mu)$ if $N \nmid \nu$ and $c_{\nu} ((1 - e_N) \mu) = 0$ for $N \tei \nu$. Hence $c_{\nu} (\Omega_{\Sh} \mu) = c_{\nu} (\mu)$ if $N_i \nmid \nu$ for all $1 \le i \le s$ and $c_{\nu} (\Omega_{\Sh} \mu) = 0$ if $N_i \tei \nu$ for some index $i$. 
\end{proof}

\begin{rem}
 We have
\[
 Z (\Sh , M (\T)) = Z (\Sh , M^0 (\T))
\]
and
\[
 Z (\Sh , M (\T)_{\sing}) = Z (\Sh , M^0 (\T)_{\sing}) = Z (\Sh , M^{\kappa_0} (\T)) \; .
\]
\end{rem}

Now let $\sigma$ be a measure in $Z (\Sh , \Mh)$. The formal series
\begin{equation}
 \label{eq:6.54}
\mu = \sigma \Psi_{\Sh} = \sum_{N \in \Sh} N^* \sigma = \sum_{\nu_1 , \ldots , \nu_s \ge 0} (N^{\nu_1}_1)^* \cdots (N^{\nu_s}_s)^* \sigma
\end{equation}
will not converge to a measure in general. If it does, we have $N_{i*} \mu = \mu$ for all $1 \le i \le s$ by the following computation where we use that $N_{i*} \sigma = 0$:
\begin{align*}
 N_{i*} \mu & = N_{i*} \sum_{(N,N_i)=1} N^* \sigma + N_{i*} \sum_N (N N_i)^* \sigma \\
& \overset{\eqref{eq:4.40}}{=} \sum_{(N , N_i) = 1} N^* N_{i*} \sigma + \sum_N N_{i*} N^*_i N^* \sigma \\
& \overset{\eqref{eq:4.37}}{=} \sum_N N^* \sigma = \mu \; .
\end{align*}
In order to discuss the convergence of this series \eqref{eq:6.54} is is best to evaluate it on the intervalls $[0,\theta)$ i.e. to look at the corresponding series of cumulative mass functions. Recall the correspondence $M (\T) \silo V (\T) , \mu \mapsto \hmu$ between measures on $\T$ and left-continuous real-valued functions $\hmu$ of bounded variation on $[0, 2\pi) \equiv \R / 2 \pi \Z \equiv \T$ with $\hmu (0) = 0$ and for which $\hmu (2 \pi -)$ exists. Here $M^0 (\T)$ corresponds to the subspace $V^0 (\T)$ of functions $\hmu$ with $\lim_{\theta \to 2\pi -} \hmu (\theta) = 0$ or equivalently with $\hmu$ left-continuous when viewed as a function on $\T$. The space $M^{\kappa_0} (\T)$ corresponds to the space $V^{\kappa_0} (\T)$ of functions $\hmu$ in $V^0 (\T)$ with $\frac{d\hmu}{d\theta}$ constant $\lambda$-a.e. We now introduce a new right $\Rh$-modul structure denoted by $\hullet$ on the space of functions $\omega : \T \to \C$ which have left limits in every point $\eta \in \T$ by setting:
\begin{equation}
 \label{eq:6.55} 
\omega \hullet [\zeta] = \zeta^* \omega - (\zeta^* \omega) (1) \quad \text{for} \; \zeta \in \T
\end{equation}
and
\begin{equation}
 \label{eq:6.56}
\omega \hullet \varphi_N = N^{-1} N^* \omega + \frac{1}{N} \Big[ N \frac{\arg \eta}{2\pi} \Big] \omega (1-) \; .
\end{equation}
Note that we have
\begin{equation}
 \label{eq:6.57}
\omega \hullet \Tr_N = \Tr_N (\omega) - \sum_{\zeta^N = 1} \omega (\zeta) \; .
\end{equation}
Because of equations \eqref{eq:4.41} and \eqref{eq:4.42a} the space $V (\T)$ is an $\Rh$-submodule and the natural map $M (\T) \to V (\T)$ is an isomorphism of right $\Rh$-modules. For any right $\Rh$-submodule $V$ of $V (\T)$ we set c.f. \eqref{eq:558n}: 
\[
 Z (\Sh , V) = \{ \hmu \in V \tei \hmu \hullet e_N = 0 \quad \text{for all} \; N \in \Sh \setminus \{ 1 \} \} \; .
\]
The formalism implies that $Z (\Sh , V) \subset V^0 (\T) $. This can be seen directly as follows. The equation $\hmu \hullet \Tr_N = 0$ implies that
\[
\sum_{\zeta^N = 1} \hmu (\zeta-) - \hmu (\zeta) = 0 \; .
\]
Since functions $\hmu \in V (\T)$ are left-continuous in any point $\zeta \in \T , \zeta \neq 1$ this implies $\hmu (1-) = \hmu (1) = 0$. 

The space $V^0 (\T)$ is right $\Rh$-invariant and the operation simplifies somewhat. We have
\begin{equation}
 \label{eq:6.58}
\hsigma \hullet \varphi_N = N^{-1} N^* \hsigma \quad \text{for} \; N \ge 1 \; \text{and} \; \hsigma \in V^0 (\T) \; . 
\end{equation}
Under the isomorphism $M(\T) \silo V (\T)$ the subspace $Z (\Sh , M^{\kappa_0} (\T)) \subset M (\T)$ corresponds to the subspace $Z (\Sh , V^{\kappa_0} (\T))$. On $Z (\Sh , M^0 (\T))$ it was not clear how to make sense of the map $\Psi_{\Sh}$. On $Z (\Sh , V^0 (\T))$ and even on $V^0 (\T)$ the situation is more lucid. For $\hsigma \in V^0 (\T)$ which is in particular bounded, the series
\begin{equation}
 \label{eq:6.59}
\hsigma \hullet \Psi_{\Sh} = \sum_{N \in \Sh} \hsigma \hullet \varphi_N = \sum_{N \in \Sh} N^{-1} N^* (\hsigma)
\end{equation}
is absolutely and uniformly convergent. Since $\hsigma$ is left-continuous on $\T$ the function $\Psi_{\Sh} (\hsigma) = \hsigma \hullet \Psi_{\Sh}$ is therefore left-continuous on $\T$ as well. Since $\hsigma$ is of bounded variation, $\hsigma$ has right limits in every point of $\T$. By the uniform convergence the same is true for $\Psi_{\Sh} (\hsigma)$. Finally we have $\Psi_{\Sh} (\hsigma) (1) = 0$ for $1 \in \T$ since $\hsigma (1) = 0$. It follows that $\hmu = \Psi_{\Sh} (\hsigma)$ is the cumulative mass function of a premeasure $\mu$ on $\T$ (always with $\mu (\T) = 0$). Let us recall this notion following \cite{K1} \S\,2 and \cite{K4}. Let $\Kh$ be the set of all arcs i.e. connected subsets $C$ of $\T$. A function $\mu : \Kh \to \R$ is called a premeasure on $\T$ if the following conditions are satisfied:
\begin{compactenum}[a)]
 \item $\mu (\emptyset) = \mu (\T) = 0$
\item $\mu (C_1 \cup C_2) = \mu (C_1) + \mu (C_2)$ for all $C_1 , C_2 \in \Kh$ with $C_1 \cap C_2 = \emptyset$
\item $\lim_{\nu\to \infty} \mu (C_{\nu}) = 0$ \quad for every sequence $(C_{\nu})_{\nu \ge 1}$ of arcs $C_{\nu} \in \Kh$ with $C_1 \supset C_2 \supset \ldots $ and $\bigcap_{\nu \ge 1} C_{\nu} = \emptyset$.
\end{compactenum}
By finite additivity $\mu$ extends to a real valued function on the ring in $\T$ generated by the arcs. 

The vector space of premeasures on $\T$ is denoted by $P (\T)$. It contains $M^0 (\T)$. 

As for measures, the cumulative mass function $\hmu : [0, 2\pi) \to \R$ is defined by setting $\hmu (\theta) = \mu (C_{\theta})$ where $C_{\theta} = \exp i [0, \theta)$. In this way one obtains an isomorphism between $P (\T)$ and the space $V_{\pre} (\T)$ of functions $\hmu : [0, 2\pi) \to \R$ which satisfy the following conditions:
\begin{compactenum}[a)]
 \item $\hmu (0) = 0 = \lim_{\theta \to 2 \pi -} \hmu (\theta)$
\item $\hmu$ is left continuous on $[0,2\pi)$
\item $\hmu$ has right limits in every point of $[0,2\pi)$.
\end{compactenum}
When viewed as a function $\hmu : \T \to \R$ the conditions read as follows:
\begin{compactenum}[a)]
 \item $\hmu (1) = 0$
\item $\hmu$ is left continuous on $\T$
\item $\hmu$ has right limits in every point of $\T$.
\end{compactenum}

\begin{rems}
1. We have $\mu \{ \zeta \} = \hmu (\zeta +) - \hmu (\zeta)$ for every $\mu \in P (\T)$ and $\zeta \in \T$. Hence $\mu$ is atomless if and only if $\hmu : \T \to \R$ is continuous.\\
2. It is known that the functions on $\T$ which have left and right limits in every point are exactly the uniform limits of step functions. In particular they are bounded, a fact that is also easily seen directly. It follows that for every premeasure $\mu$ there is a constant $A \ge 0$ with $|\mu (C)| \le A$ for all arcs $C \subset \T$. One can take $A = 2 \sup_{\zeta \in \T} |\hmu (\zeta)|$.  
\end{rems}

The right and left $\Rh$-module structures on $M^0 (\T)$ extend to $P (\T)$. For $\mu \in P (\T)$ and arcs $C \subset \T$ we set for $N \ge 1 , \zeta \in \T$:
\begin{align*}
(\varphi_{N^*} \mu) (C) & = \mu (\varphi^{-1}_N (C))\\
([\zeta]_* \mu) (C) & = \mu ([\zeta]^{-1} (C))\\
\mu_k (C) & = \mu (\varphi_N (C \cap I_k)) \quad \text{for} \; 0 \le k \le N-1\\
\varphi^*_N \mu & = N^{-1} \sum^{N-1}_{k=0} \mu_k \; .
\end{align*}

Here $I_k = [\frac{2\pi k}{N} , \frac{2\pi (k+1)}{N})$ is considered as an arc of $\T$. In this way we obtain premeasures $\varphi_{N^*} \mu = N_* \mu , \varphi^*_N \mu = N^* \mu$ and $[\zeta]_* \mu = \zeta_* \mu$. The left and right $S$- and hence $\Rh$-module structures of $P (\T)$ are defined as before:
\begin{align*}
 \varphi_N \mu & = \varphi_{N^*} \mu \quad \text{resp.} \; \mu \varphi_N = \varphi^*_N \mu \\
[\zeta] \mu & = [\zeta]_* \mu \quad \text{resp.} \; \mu [\zeta] = [\zeta^{-1}]_* \mu \; .
\end{align*}

\begin{prop}
 \label{t62}
For $\mu \in P (\T)$ and $\eta \in \T$ we have the formulas:
\begin{align}
 N_* N^* \mu & = \mu \label{eq:6.61} \\
N^* N_* \mu & = \frac{1}{N} \Tr_N (\mu) \quad \text{where} \; \Tr_N (\mu) = \sum_{\zeta^N=1} [\zeta]_* \mu \label{eq:6.62}\\
N^* M_* \mu & = M_* N^* \mu \quad \text{if} \; N, M \ge 1 \; \text{are coprime} \label{eq:6.63}\\
\widehat{N^* \mu} & = \frac{1}{N} N^* \hmu \label{eq:6.64}\\
\widehat{N_* \mu} (\eta) & = \sum_{\zeta^N=1} \hmu (\zeta \eta^{1/N}) - c_0 \quad{where} \; c_0 = \sum_{\zeta^N = 1} \hmu (\zeta) \label{eq:6.65}\\
\widehat{\eta^{-1}_* \mu} & = \eta^* \hmu - (\eta^* \hmu) (1) \label{eq:6.66}\\
\widehat{\Tr_N (\mu)} & = \Tr_N \hmu - c_0 \label{eq:6.67}
\end{align}
\end{prop}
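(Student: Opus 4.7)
The plan is to mirror the proof of Proposition \ref{t4.10}, since every argument there was purely set-theoretic (decomposing arcs and applying finite additivity), and premeasures are finitely additive on the ring generated by arcs. The only differences are that $\mu(\T) = 0$ is automatic for premeasures, so terms involving $\mu(\T)$ drop out, and we must be slightly more careful about the boundary point $1 \in \T$ where a premeasure need not be right-continuous.

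I would first establish the cumulative mass function identities \eqref{eq:6.64}--\eqref{eq:6.67}, because the premeasure-level identities will then follow easily. For \eqref{eq:6.65} the argument in the proof of \eqref{eq:48a} goes through verbatim: one decomposes
\[
\varphi_N^{-1}[0,\theta) = \bigsqcup_{\nu=0}^{N-1} \Big[\tfrac{2\pi \nu}{N}, \tfrac{\theta + 2 \pi \nu}{N}\Big)
\]
and applies finite additivity of $\mu$. Formula \eqref{eq:6.66} is the statement $\widehat{\eta^{-1}_*\mu}(\tau) = \hmu(\eta\tau) - \hmu(\eta)$ (with $\hmu$ viewed $2\pi$-periodically), which comes from $[\eta]^{-1}[0,\tau) = [\eta^{-1}, \eta^{-1}\tau)$ and finite additivity; \eqref{eq:6.67} then follows by summing \eqref{eq:6.66} over $\zeta^N = 1$ and using $\Tr_N(\mu) = \sum_{\zeta^N=1}[\zeta]_*\mu$. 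For \eqref{eq:6.64} the same arc-counting as in the proof of \eqref{eq:4.41} gives
\[
\widehat{N^*\mu}(\theta) = \tfrac{1}{N}\big[N\tfrac{\theta}{2\pi}\big]\mu(\T) + \tfrac{1}{N}\mu[0, \langle N\theta\rangle),
\]
and the first term vanishes since $\mu(\T)=0$, leaving $\frac{1}{N} N^*\hmu(\theta)$.

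Next I would deduce the premeasure identities \eqref{eq:6.61}--\eqref{eq:6.63}. For \eqref{eq:6.61}, given an arc $C \subset \T$ write $\varphi_N^{-1}(C) = \bigsqcup_{k=0}^{N-1} J_k$ with $J_k \subset I_k$ the unique arc mapped bijectively onto $C$ by $\varphi_N$; then the definition of $N^*\mu$ gives $(N^*\mu)(J_k) = N^{-1}\mu(C)$ for each $k$, and summing yields $(N_* N^*\mu)(C) = \mu(C)$. Formula \eqref{eq:6.62} can be read off from the corresponding identity of cumulative mass functions: \eqref{eq:6.64} applied to $N_*\mu$ combined with \eqref{eq:6.65} gives $\widehat{N^* N_* \mu} = \frac{1}{N}(\Tr_N \hmu - c_0) = \frac{1}{N}\widehat{\Tr_N(\mu)}$ by \eqref{eq:6.67}, and since cumulative mass functions determine premeasures, \eqref{eq:6.62} follows. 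Finally \eqref{eq:6.63} is obtained analogously: passing to cumulative mass functions via \eqref{eq:6.64} and \eqref{eq:6.65}, both sides are seen to equal the same explicit combination of values of $\hmu$ (using that $\gcd(N,M)=1$ implies $\{\zeta\eta^{1/N} : \zeta^N = 1\} = \{\zeta^M \eta^{M/N} : \zeta^N = 1\}$ as multisets up to rotation by an $N$-th root of unity).

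The main obstacle is purely bookkeeping: making sure that every arc decomposition used lies in the ring generated by arcs (so that $\mu$ is well-defined and finitely additive on it), and checking the boundary identifications at $1 \in \T$ so that the left-continuity and normalization $\hmu(1) = 0$ are preserved. No new analytic input is needed beyond what was used for measures in Proposition \ref{t4.10}; the key structural fact is that a premeasure is determined by its cumulative mass function, so once \eqref{eq:6.64}--\eqref{eq:6.67} are in hand, the remaining formulas are formal consequences.
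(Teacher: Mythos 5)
Your plan is correct and is exactly what the paper intends when it remarks that ``the proofs are similar as for measures, c.f.\ proposition \ref{t4.10}'': one verifies the cumulative mass function identities \eqref{eq:6.64}--\eqref{eq:6.67} by arc decompositions (with the $\mu(\T)$-terms dropping out since $\mu(\T)=0$) and then deduces the premeasure identities because a premeasure is determined by its cumulative mass function. One small inaccuracy worth flagging: in your direct argument for \eqref{eq:6.61} the claim that the arcs $J_k$ of $\varphi_N^{-1}(C)$ each lie inside a single $I_k$ can fail when $C$ crosses the point $1\in\T$; this is harmless since it suffices to check the identity on the arcs $C_\theta=\exp i[0,\theta)$, or, cleaner still, to derive \eqref{eq:6.61} from \eqref{eq:6.64} and \eqref{eq:6.65} exactly as you do for \eqref{eq:6.62}.
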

In particular the isomorphism $P (\T) \silo V_{\pre} (\T)$ mapping $\mu$ to $\hmu$ becomes a map of right $\Rh$-modules where $\Rh$ acts on $V_{\pre} (\T)$ by using the action $\hullet$ defined in \eqref{eq:6.55}, \eqref{eq:6.56}. The proofs are similar as for measures, c.f. proposition \ref{t4.10}.

We have seen above that for $\hsigma \in V^0 (\T)$ the function $\hmu = \hsigma \hullet \Psi_{\Sh}$ is in $V_{\pre} (\T)$. Let $\mu \in P (\T)$ be the premeasure corresponding to $\hmu$. Since the series in \eqref{eq:6.59} converge absolutely we see that for $C = C_{\theta}$ with $0 \le \theta < 2\pi$ we have:
\begin{equation}
 \label{eq:6.60}
\mu (C) = \sum_{N \in \Sh} (\sigma \varphi_N) (C) = \sum_{N \in \Sh} N^* (\sigma) (C) \; .
\end{equation}
Note here that
\[
(\hsigma \hullet \varphi_N) (\theta) = \widehat{\sigma \varphi_N} (\theta) = (\sigma \varphi_N) (C) \; .
\]
Taking differences and using $\mu (\T) = 0 = \sigma (\T)$ it follows that \eqref{eq:6.60} holds for all arcs $C = \exp i [\theta_1 , \theta_2)$ with $\theta_1 \le \theta_2$ as well. Using the uniform convergence in \eqref{eq:6.59} we get for any $\zeta \in \T$
\[
 \hmu (\zeta +) - \hmu (\zeta) = \sum_{N \in \Sh} (\hsigma \hullet \varphi_N) (\zeta +) - (\hsigma \hullet \varphi_N) (\zeta) \; .
\]
This means that \eqref{eq:6.60} holds for one point arcs $C = \{ \zeta \}$ as well. Using additivity we finally see that equation \eqref{eq:6.60} holds for all arcs $C \in \Kh$, the series being absolutely convergent. This gives a direct description of the premeasure $\mu =: \sigma \Psi_{\Sh} =: \Psi_{\Sh} (\sigma)$ corresponding to $\hsigma \hullet \Psi_{\Sh}$. 

The preceeding constructions for measures extend to premeasures. We have
\[
 P\Sh = N^*_1 P (\T) + \ldots + N^*_s P (\T) = P (\T) e_1 + \ldots + P (\T) e_s
\]
and
\begin{align*}
 Z (\Sh , P (\T)) & = \{ \mu \in P (\T) \tei N_* \mu = 0 \quad \text{for} \; N \in \Sh \setminus \{ 1 \} \} \\
& = \{ \mu \in P (\T) \tei \mu e_N = 0 \quad \text{for} \; N \in \Sh \setminus \{ 1 \} \} \; .
\end{align*}
As usual the conditions need to be checked for $N = N_1 , \ldots , N_s$ only. Next we consider
\begin{align*}
H^0 (\Sh , P (\T))  & = \{ \mu \in P (\T) \tei N_* \mu = \mu \quad \text{for all} \; N \in \Sh \} \\
& = \{ \mu \in P (\T) \tei \mu \varphi_N = \mu e_N \quad \text{for all} \; N \in \Sh \} \; .
\end{align*}
As before one shows that for $\sigma \in P (\T)$ the series
\begin{equation}
 \label{eq:6.68}
\Psi_{\Sh} (\hsigma) := \hsigma \hullet \Psi_{\Sh} := \sum_{N \in \Sh} \hsigma \hullet \varphi_N = \sum_{N \in \Sh} N^{-1} N^* (\hsigma)
\end{equation}
is absolutely and uniformly convergent and hence defines an element of $V_{\pre} (\T)$. It follows that the series of premeasures
\begin{equation}
 \label{eq:6.69}
\Psi_{\Sh} (\sigma) := \sigma \Psi_{\Sh} := \sum_{N \in \Sh} \sigma \varphi_N = \sum_{N \in \Sh} N^* (\sigma)
\end{equation}
converges absolutely on arcs $C \subset \T$ to the premeasure with $\widehat{\Psi_{\Sh} (\sigma)} = \Psi_{\Sh} (\hsigma)$. For $\mu \in P (\T)$ and $\hmu \in V_{\pre} (\T)$ we set $\Phi_{\Sh} (\mu) = \mu \Phi_{\Sh}$ resp. $\Phi_{\Sh} (\hmu) = \mu \hullet \Phi_{\Sh}$. Since the series \eqref{eq:6.68} and \eqref{eq:6.69} converge it follows from the formal relations \eqref{eq:5.6} that $\Phi_{\Sh}$ and $\Psi_{\Sh}$ define mutually inverse automorphisms of $V_{\pre} (\T)$ and hence of $P (\T)$. These automorphisms respect the $\Rh \otimes \R$-submodules $P_c (\T)$ of atomless premeasures and $V_{\pre , c} (\T)$ of continuous functions in $V_{\pre} (\T)$. Note that $P_c (\T) \silo V_{\pre , c} (\T)$ under the map $\mu \mapsto \hmu$, as observed before. Using similar arguments as in the proof of proposition \ref{t52} we get the following result where the index $c$ refers to ``atomless''.

\begin{prop}
 \label{t63}
The following diagrams of injections and isomorphisms are commutative
\[
 \xymatrix{
H^0 (\Sh , M^0 (\T)) \ar@{^{(}->}[r]^{\Phi_{\Sh}} \ar@{_{(}->}[d] & Z (\Sh , M^0 (\T))  \ar@{_{(}->}[d] \ar[r]^{\overset{\pi}{\sim}} & M^0 (\T) / M^0 (\T) \Sh \ar@{_{(}->}[d] \\
H^0 (\Sh , P (\T))  \ar@<1.5ex>[r]^{\overset{\Phi_{\Sh}}{\sim}} & Z (\Sh , P (\T)) \ar@<1.5ex>[l]_-{\sim}^-{\Psi_{\Sh}} \ar[r]^-{\overset{\pi}{\sim}} & P (\T) / P (\T) \Sh
}
\]
and
\[
\xymatrix{
H^0 (\Sh , M^0_c (\T)) \ar@{^{(}->}[r]^{\Phi_{\Sh}} \ar@{_{(}->}[d] & Z (\Sh , M^0_c (\T))  \ar@{_{(}->}[d] \ar[r]^{\overset{\pi}{\sim}} & M^0_c (\T) / M^0_c (\T) \Sh \ar@{_{(}->}[d] \\
H^0 (\Sh , P_c (\T))  \ar@<1.5ex>[r]^{\overset{\Phi_{\Sh}}{\sim}} & Z (\Sh , P_c (\T)) \ar@<1.5ex>[l]_-{\sim}^-{\Psi_{\Sh}} \ar[r]^-{\overset{\pi}{\sim}} & P_c (\T) / P_c (\T) \Sh
}
\]
Here $\pi$ denotes the map induced by inclusion. Its inverse is given by right (or left) multiplication with $\Omega_{\Sh} = \prod^s_{i=1} (1 - e_i) \in \Rh_{\Q}$. On $H^0 (\Sh , P (\T))$ we have $\Phi_{\Sh} = \Omega_{\Sh}$.
\end{prop}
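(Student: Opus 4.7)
The plan is to deduce the proposition by the same formal mechanism used for Proposition \ref{t52}, transported from $\Oh^1$-valued functions to (pre-)measures via the cumulative mass function. The key input, already available in the excerpt, is that $P(\T)$ with its left and right $\Rh$-actions satisfies the hypotheses of Proposition \ref{t15n}: formulas \eqref{eq:6.61} and \eqref{eq:6.62} give $N_*N^* = \id$ and $N^*N_* = e_N$, and the coprimality formula \eqref{eq:6.63} together with the identities used in Proposition \ref{t4.10} makes the left and right $e_i$-actions agree on $P(\T)$. Hence $P(\T)$ is a left and right $\Rh_{\Q}$-module of the type required, and Proposition \ref{t15n} immediately gives that
\[
\pi\colon Z(\Sh,P(\T)) \silo P(\T)/P(\T)\Sh
\]
is an isomorphism with inverse induced by right multiplication by $\Omega_{\Sh}$.

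Next I would verify that $\Phi_{\Sh}$ and $\Psi_{\Sh}$ are mutually inverse automorphisms of $P(\T)$ and restrict to the claimed isomorphism between $H^0(\Sh,P(\T))$ and $Z(\Sh,P(\T))$. Convergence of $\Psi_{\Sh}(\sigma) = \sum_{N\in\Sh} N^*\sigma$ on arcs has already been established in the paragraphs preceding the statement, and so has the identity $\widehat{\Psi_{\Sh}(\sigma)} = \hsigma\hullet\Psi_{\Sh}$ in $V_{\pre}(\T)$. Because the series is absolutely convergent, the formal relations \eqref{eq:5.6}, namely $\Phi_{\Sh}\Psi_{\Sh} = 1 = \Psi_{\Sh}\Phi_{\Sh}$, transfer to genuine equalities of operators on $P(\T)$. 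Applying the factorization \eqref{eq:5.4} on $P(\T)$, for $\mu\in H^0(\Sh,P(\T))$ one obtains $\Phi_{\Sh}(\mu)\, e_i = \mu\,\Phi_{\Sh}e_i = 0$ since $\mu e_i = \mu\varphi_i = \mu$ in this module (use the right-action description \eqref{eq:559n}); hence $\Phi_{\Sh}$ lands in $Z(\Sh,P(\T))$. Conversely, using $\Psi_{\Sh}(e_i - X_i) = e_i\Psi_{\Sh_i}$ as in the proof of Proposition \ref{t52}, for $\sigma\in Z(\Sh,P(\T))$ one gets $\Psi_{\Sh}(\sigma)\,(e_i-X_i) = 0$, so $\Psi_{\Sh}(\sigma)\in H^0(\Sh,P(\T))$. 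The equality $\Phi_{\Sh} = \Omega_{\Sh}$ on $H^0(\Sh,P(\T))$ follows by expanding $\Phi_{\Sh} = \prod_i(1-X_i)$ and replacing each $X_i$ by $e_i$ on $H^0$, which is legitimate by \eqref{eq:559n}.

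For the top row, the content of Corollary \ref{t61} is precisely the injectivity of $\Phi_{\Sh}$ on $H^0(\Sh,M^0(\T))$ and the isomorphism $\pi\colon Z(\Sh,M^0(\T))\silo M^0(\T)/M^0(\T)\Sh$ with inverse $\Omega_{\Sh}$. Commutativity of the vertical inclusions with $\Phi_{\Sh}$, $\pi$ and $\Omega_{\Sh}$ is immediate because all three are defined by the same element of $\Rh_{\Q}$ acting on the larger module $P(\T)$ and restricting to the smaller one $M^0(\T)$.

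Finally, the atomless diagram reduces to the first diagram by noting that the subspace $P_c(\T)\subset P(\T)$ is stable under $[\zeta]_*$, $N_*$, $N^*$ (because each of these operators carries continuous cumulative mass functions to continuous ones, as is transparent from \eqref{eq:6.64}--\eqref{eq:6.66}). Consequently $P_c(\T)$ is a left and right $\Rh_{\Q}$-submodule of $P(\T)$ and all of the previous arguments apply verbatim, with $M^0_c(\T) = M^0(\T)\cap P_c(\T)$ corresponding via Corollary \ref{t61} (and the stability of atomlessness under $\Phi_{\Sh}$ established there). The one point one should double-check is the convergence of $\Psi_{\Sh}(\sigma)$ inside $P_c(\T)$, i.e.\ that the limit premeasure has no atoms when $\sigma$ has none: this follows from the uniform convergence of $\hsigma\hullet\Psi_{\Sh}$ already used above, since a uniform limit of continuous functions is continuous. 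The main obstacle, such as it is, lies in this bookkeeping of stability of subspaces under the various operators; once one has Proposition \ref{t15n}, Proposition \ref{t62}, and the convergence of \eqref{eq:6.68}, the rest of the proof is a formal copy of the argument for Proposition \ref{t52}.
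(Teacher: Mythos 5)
Your proof is correct and takes essentially the same approach as the paper, which itself only says that the result follows ``using similar arguments as in the proof of proposition~\ref{t52}''; you have filled in exactly those arguments via Propositions~\ref{t15n} and~\ref{t62}, the convergence of \eqref{eq:6.68}--\eqref{eq:6.69}, and the stability of $P_c(\T)$. One small slip to correct: for $\mu \in H^0(\Sh, P(\T))$ the chain should read $\mu e_i = \mu X_i$, not ``$\mu e_i = \mu\varphi_i = \mu$'' (the last equality is false in general) --- what kills $\mu\, \Phi_{\Sh} e_i$ is the identity $\mu(e_i - X_i) = 0$ coming from \eqref{eq:559n}, not $\mu e_i = \mu$.
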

 
The space $P_c (\T)$ corresponds to $V_{\pre , c} (\T)$ which consists of all continuous functions $\hmu : \T \to \R$ with $\hmu (1) = 0$. The $l^1$-sequences of complex numbers $(c_{\nu})_{\nu \in \Z}$ with $c_{\nu} = 0$ if $N_i \tei \nu$ for some $1 \le i \le s$ and with $\oc_{\nu} = c_{-\nu}$ for all $\nu \in \Z$ give rise via their Fourier series $\hmu (\eta) = \sum_{\nu \in \Z} c_{\nu} \eta^{\nu}$ to pairwise different functions in $V_{\pre , c} (\T) / V_{\pre , c}  (\T) \Sh$. Hence $P_c (\T) / P_c (\T) \Sh$ has uncountable dimension as a real vector space and it follows that there are very many atomless $\Sh$-invariant premeasures on $\T$. The premeasures obtained from {\it measures} by the map $\Psi_{\Sh}$ are not arbitrary however. Using complex analysis we will show that they belong to certain very interesting and restrictive classes introduced by Korenblum. 

\section{The Herglotz transform of a premeasure}
\label{sec:7}
Korenblum extended the notion of the Herglotz transform from measures to premeasures. We extend some of the preceeding formulas to this more general context.

For a distribution $T \in \Dh' (\T)$ on the circle $\T \equiv \R / 2 \pi \Z$ c.f. \cite{Sch} the Herglotz transform $h_T$ is defined by the formula
\[
 h_T (z) = T (k_z) \quad \text{where}\; k_z (\theta) = \frac{e^{i\theta} + z}{e^{i\theta} -z } \quad \text{and} \; z \in D \; .
\]
We have
\[
 h_T (z) = c_0 (T) + 2 \sum^{\infty}_{\nu = 1} c_{\nu} (T) z^{\nu}
\]
where $c_{\nu} (T) = T (e^{-i\nu \theta})$. It is known that for some $k \ge 0$ depending on $T$ we have $c_{\nu} (T) = O (|\nu|^k)$ as $|\nu| \to \infty$. Hence $h_T$ is analytic in $D$. Like for measures one introduces the analytic function in $D$ defined by
\[
 G_T (z) = \frac{1}{2 \pi i} \int^z_0 (h_T (w) - c_0 (T)) \frac{dw}{w} \; .
\]
A measure $\mu \in M (\T)$ defines a distribution $T_{\mu}$ of order zero on $\T$ by setting $T_{\mu} (\varphi) = \int_{\T} \varphi \, d\mu$ for any test function $\varphi \in \Dh (\T) = C^{\infty} (\T)$. It is clear that we have $h_{\mu} = h_{T\mu}$. Partial integration \eqref{eq:4.25} shows that we have:
\[
 T_{\mu} = 2 \pi T'_{\hmu \lambda} + \mu (\T) \delta_{2\pi} \quad \text{in} \; \Dh' (\R / \Z) \; .
\]
In particular, for $\mu \in M^0 (\T)$ we have $T_{\mu} = 2 \pi T'_{\hmu \lambda}$. Following Korenblum and Hayman \cite{HK} we define the distribution $T_{\mu}$ attached to a premeasure $\mu \in P (\T)$ by this formula
\[
 T_{\mu} := 2 \pi T'_{\hmu \lambda} \; .
\]
Note that $T_{\mu}$ has order $\le 1$. In particular the Fourier coefficients $c_{\nu} (\mu) := c_{\nu} (T_{\mu})$ satisfy the estimate $c_{\nu} (\mu) = O (|\nu|)$ as $|\nu| \to \infty$. The distribution $T_{\mu}$ determines $\mu$ uniquely because $T'_{\hmu \lambda} = 0$ implies that we have $(\hmu - c) \lambda = 0$ for some constant $c$. Hence $\hmu = c$ $\lambda$-a.e. on $\T$ and therefore $\hmu = c$ everywhere on $\T$ since $\hmu$ is left-continuous. It follows that $0 = \hmu (1) = c$ hence $\hmu = 0$ and therefore $\mu = 0$ as well. The Herglotz transform of $\mu \in P (\T)$ is defined by
\[
 h_{\mu} (z) := h_{T_{\mu}} (z) = -2\pi T_{\hmu \lambda} \Big( \frac{d}{d\theta} k_z (\theta) \Big) = 2 i z \int^{2\pi}_0 \frac{e^{i\theta}}{(e^{i\theta} -z)^2} \hmu (\theta) \, d\theta \; .
\]
This extends the definition for measures $\mu$ in $M^0 (\T)$. We have
\[
 G_{\mu} (z) := G_{T_{\mu}} (z) = \frac{1}{2\pi i} \int^z_0 h_{\mu} (w) \frac{dw}{w} \quad \text{for} \; \mu \in P (\T) \; .
\]
Note here that $h_{\mu} (0) = 0$. There are similar relations as in proposition \ref{t4.10} where the first formula is from \cite{HK}:

\begin{prop} \label{t71}
 a) For $\mu \in P (\T)$ we have the formula
\begin{equation}
 \label{eq:7.72}
h_{\hmu \lambda} = G_{\mu} + \mu_0 \quad \text{where} \; \mu_0 = \int_{\T} \hmu \, d\lambda \; .
 \end{equation}
b) For $\mu \in P (\T)$ we have $G_{\mu} \in H^2$ and
\begin{equation} \label{eq:7.73}
 \hmu = \RRe \tG_{\mu} + \mu_0 \quad \text{holds $\lambda$-a.e. on} \; \T \; .
\end{equation}
c) A premeasure $\mu \in P (\T)$ satisfies $N_* \mu = \mu$ for some $N \ge 1$ if and only if the following functional equation holds for some constant $c_0$
\begin{equation}
 \label{eq:7.74}
\hmu (\eta^N) = \sum_{\zeta^N = 1} \hmu (\zeta \eta) - c_0 \quad \text{for $\lambda$-a.a.} \; \eta \in \T \; .
\end{equation}
In this case \eqref{eq:7.74} holds for all $\eta \in \T$ and we have
\[
 c_0 = \sum_{\zeta^N = 1} \hmu (\zeta) = (N-1) \mu_0 \; .
\]
d) A premeasure $\mu \in P (\T)$ is $N$-invariant if and only if for all $z \in D$ we have
\begin{equation}
 \label{eq:7.75}
G_{\mu} (z^N) = \sum_{\zeta^N = 1} G_{\mu} (\zeta z) \; .
\end{equation}
e) For $\mu \in P (\T)$ and $N \ge 1$ we have
\begin{align}
h_{N_* \mu} & = N_* h_{\mu} \label{eq:7.78n}\\
 h_{N^* \mu} & = N^* h_{\mu} \label{eq:7.76} \\
N^* h_{N_* \mu} & = N^{-1} \Tr_N (h_{\mu}) \label{eq:7.77} \\
G_{N^* \mu} & = \frac{1}{N} N^* G_{\mu} \; . \label{eq:7.78}\\
G_{N_* \mu} & = N (N_* G_{\mu}) \label{eq:7.82n}
\end{align}
\end{prop}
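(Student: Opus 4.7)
The plan is to reduce everything to Fourier-series manipulations through a single coefficient dictionary. From the definition $T_{\mu} = 2\pi T'_{\hmu \lambda}$ together with the standard rule $c_{\nu} (T') = i\nu c_{\nu} (T)$ for distributions on $\T$, one obtains $c_0 (\mu) = 0$ and
\[
 c_{\nu} (\mu) = 2\pi i \nu \, c_{\nu} (\hmu) \quad \text{for} \; \nu \neq 0 ,
\]
where $c_{\nu} (\hmu) = \int_{\T} \hmu \, \zeta^{-\nu} \, d\lambda$ are the Fourier coefficients of the bounded real-valued function $\hmu$. Part a) is now immediate: substituting into $h_{\mu} (z) = c_0 (\mu) + 2 \sum_{\nu \geq 1} c_{\nu} (\mu) z^{\nu}$ and integrating termwise against $\frac{dw}{2\pi i w}$ gives $G_{\mu} (z) = 2 \sum_{\nu \geq 1} c_{\nu} (\hmu) z^{\nu}$, which equals $h_{\hmu \lambda} (z) - \mu_0$ since $\mu_0 = c_0 (\hmu)$. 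For b), the bound $|\hmu| \leq C$ and Parseval give $\sum_{\nu \geq 1} |c_{\nu} (\hmu)|^2 < \infty$, so $G_{\mu} \in H^2$; the identity $\hmu = \RRe \tG_{\mu} + \mu_0$ is then an equality of $L^2$-Fourier series using $c_{-\nu} (\hmu) = \overline{c_{\nu} (\hmu)}$.

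I would handle part e) next, as parts c) and d) depend on it. Formulas (6.64) and (6.65) in proposition \ref{t62} translate via the dictionary into
\[
 c_{\nu} (N^* \mu) = c_{\nu / N} (\mu) \; \text{if} \; N \tei \nu \; \text{and} \; 0 \; \text{otherwise,} \quad c_{\nu} (N_* \mu) = c_{\nu N} (\mu) .
\]
Plugging these into the power series expansion of $h$ and comparing coefficients immediately yields (7.78n) and (7.76); identity (7.77) is then the operator composition $N^* h_{N_* \mu} = N^* N_* h_{\mu} = N^{-1} \Tr_N h_{\mu}$, also visible coefficient-wise via $N^{-1} \sum_{\zeta^N = 1} \zeta^{\nu} = \mathbf{1}_{N \tei \nu}$. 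For (7.78), the substitution $u = w^N$ in $G_{N^* \mu} (z) = \frac{1}{2\pi i} \int^z_0 h_{N^* \mu} (w) \frac{dw}{w}$, combined with $h_{N^* \mu} = N^* h_{\mu}$, gives $G_{N^* \mu} (z) = \frac{1}{N} G_{\mu} (z^N) = \frac{1}{N} N^* G_{\mu} (z)$. Formula (7.82n) follows symmetrically, either by computing both sides in Fourier series or by a change of variable in the integral using $h_{N_* \mu} = N_* h_{\mu}$.

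With part e) established, part c) is straightforward. Formula (6.65) makes the implication $N_* \mu = \mu \Rightarrow$ (7.74) immediate with the explicit constant $c_0 = \sum_{\zeta^N = 1} \hmu (\zeta)$; integrating against $d\lambda$ pins down $c_0 = (N-1) \mu_0$. For the converse, the Fourier coefficients of (7.74) yield $c_{\nu} (\hmu) = N c_{\nu N} (\hmu)$ for $\nu \neq 0$ (the case $\nu = 0$ recovers $c_0 = (N-1)\mu_0$), so via the dictionary $c_{\nu} (\mu) = c_{\nu N} (\mu) = c_{\nu} (N_* \mu)$ for every $\nu \in \Z$, and injectivity of the map $\mu \mapsto T_{\mu}$ on $P (\T)$ (noted immediately after the definition of $T_{\mu}$) forces $\mu = N_* \mu$. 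Part d) reduces to part c) via calculus: differentiating (7.75) and using $2 \pi i z G'_{\mu} (z) = h_{\mu} (z)$ converts the equation into $N h_{\mu} (z^N) = \sum_{\zeta^N = 1} h_{\mu} (\zeta z)$, which by (7.77) is just $h_{N_* \mu} = h_{\mu}$, equivalent by injectivity to $N_* \mu = \mu$; conversely, integrating from $0$ and using $G_{\mu} (0) = 0$ recovers (7.75) from its derivative. The only real bookkeeping to watch is the asymmetry between $N^*$ and $N_*$, which produces the factor $1/N$ in (7.78) versus the factor $N$ in (7.82n); this is automatic once the substitution $u = w^N$ is tracked carefully, and no step presents a serious obstacle.
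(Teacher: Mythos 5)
Your proof is correct, and the core strategy is the same as the paper's: reduce everything to the Fourier-coefficient dictionary $c_\nu(\mu) = 2\pi i\nu\, c_\nu(\hmu)$ derived from $T_\mu = 2\pi T'_{\hmu\lambda}$ and then compare coefficients. Parts a), b), and the verification of \eqref{eq:7.76} are verbatim the paper's argument, and \eqref{eq:7.77} by operator composition is also the paper's route.

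Where you depart mildly: in c) you prove the converse by taking Fourier coefficients of \eqref{eq:7.74} and feeding them through the dictionary, whereas the paper works directly with the cumulative mass function identity \eqref{eq:6.65} and then uses left-continuity and evaluation at $\eta = 1$ to force the constants to match; your version is shorter and arguably cleaner but relies on the injectivity of $\mu \mapsto T_\mu$ on $P(\T)$ in place of the elementary left-continuity argument. In d) the paper converts \eqref{eq:7.75} to a statement about $h_{\hmu\lambda}$ via \eqref{eq:7.72} and then invokes the projection formula to land on part c); your route---differentiate, multiply by $z$, use $h_\mu = 2\pi i z G'_\mu$, then invoke \eqref{eq:7.77} and injectivity of the Herglotz transform---is more direct and matches the paper's alternative remark that d) follows from \eqref{eq:7.82n}. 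Finally for \eqref{eq:7.78} and \eqref{eq:7.82n} you argue by the substitution $u = w^N$ in the defining integral while the paper simply says they follow from \eqref{eq:7.76} and \eqref{eq:7.78n}; both are equivalent since the substitution encodes exactly those identities. These are reorganizations rather than a genuinely different method, but your d) does spare the reader the detour through the measure $\hmu\lambda$.
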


\begin{proof}
 a) Since $h_{\hmu \lambda} (0) = \mu_0$ it suffices to show that we have
\[
 z h'_{\hmu \lambda} (z) = (2 \pi i)^{-1} h_{\mu} (z) \; .
\]
For $\nu \ge 1$ the $\nu$-th Taylor coefficient is given by $2 \nu c_{\nu} (\hmu \lambda)$ on the left and by $(\pi i)^{-1} c_{\nu} (T_{\mu})$ on the right. For any $T \in \Dh' (\T)$ we have $c_{\nu} (T') = i \nu c_{\nu} (T)$ for all $\nu \in \Z$. Hence we get
\begin{equation}
 \label{eq:7.79}
(\pi i)^{-1} c_{\nu} (T_{\mu}) = -2 i c_{\nu} (T'_{\hmu \lambda}) = 2 \nu c_{\nu} (T_{\hmu \lambda}) = 2 \nu c_{\nu} (\hmu \lambda) \; .
\end{equation}
b) We have
\begin{equation}
 \label{eq:7.80}
G_{\mu} (z) = \frac{1}{\pi i} \sum^{\infty}_{\nu=1} \frac{c_{\nu} (T_{\mu})}{\nu} z^{\nu}
\end{equation}
and 
\[
 \sum^{\infty}_{\nu = 1} \Big| \frac{c_{\nu} (T_{\mu})}{\nu} \Big|^2 = 4 \pi^2 \sum^{\infty}_{\nu = 1} |c_{\nu} (\hmu \lambda)|^2 \; .
\]
This series is convergent by Parseval's formula since $\hmu \in L^{\infty} \subset L^2$. Hence $G_{\mu} \in H^2$ and the boundary function $\tG_{\mu}$ exists $\lambda$-a.e. on $\T$. We have $c_{-\nu} (T_{\mu}) = \overline{c_{\nu} (T_{\mu})}$ since $\mu$ is real. Now we argue as in the proof of proposition \ref{t4.8} b).\\
c) It follows from \eqref{eq:6.65} that $N_* \mu = \mu$ implies equation \eqref{eq:7.74}. If on the other hand \eqref{eq:7.74} holds $\lambda$-a.e. for some $c_0$ then using \eqref{eq:6.65} we get
\[
\widehat{N_* \mu} (\eta^N) = \hmu (\eta^N) + c_0 - d_0 \quad \text{for $\lambda$-a.a.} \; \eta \in \T \; .
\]
Here we have set $d_0 = \sum_{\zeta^N=1} \hmu (\zeta)$. It follows that we have
\[
 \widehat{N_* \mu} = \hmu + c_0 - d_0 \quad \text{$\lambda$-a.e. on} \; \T \; .
\]
Since $\widehat{N_* \mu}$ and $\hmu$ are both left continuous this equality holds everywhere on $\T$. Evaluating at $\eta = 1$ we find $c_0 = d_0$ and hence $\widehat{N_* \mu} = \hmu$ i.e. $N_* \mu = \mu$.\\
d) Because of \eqref{eq:7.72}, relation \eqref{eq:7.75} is equivalent to
\begin{align*}
 h_{\hmu \lambda} (z^N) & = \sum_{\zeta^N = 1} h_{\hmu \lambda} (\zeta z) - c_0 \quad \text{where} \; c_0 = (N-1) \mu_0 \\
& = \Tr_N (h_{\hmu \lambda}) - c_0  \\
& \overset{\eqref{eq:3.1}}{=} N h_{N_* (\hmu \lambda)} (z^N) - c_0 = h_{N N_* (\hmu \lambda) - c_0 \lambda} (z^N) \; .
\end{align*}
Since the measures in question are real, this is equivalent to the relation
\begin{equation} \label{eq:7.81}
 \hmu \lambda = N N_* (\hmu \lambda) - c_0 \lambda \; .
\end{equation}
By \eqref{eq:438} we have $N N_* (\hmu \lambda) = \alpha \lambda$ where $\alpha (\eta) = \sum_{\zeta^N = 1} \hmu (\eta^{1/N} \zeta)$. 

Hence \eqref{eq:7.81} is equivalent to the formula
\[
 \hmu (\eta^N) = \sum_{\zeta^N = 1} \hmu (\zeta \eta) - c_0 \quad \text{for $\lambda$-a.a.} \; \eta \in \T \; ,
\]
and hence by part c) to $N_* \mu = \mu$. Alternatively, d) follows from equation \eqref{eq:7.82n}. \\
e) To show \eqref{eq:7.76} it is easiest to compare the Taylor coefficients of both sides. Thus we must show that we have $c_{\nu} (T_{N^* \mu}) = c_{\nu / N} (T_{\mu})$ for $\nu \ge 1$ where we set $c_{\alpha} (T) = 0$ for $\alpha \in \Q \setminus \Z$. We calculate:
\begin{align*}
 c_{\nu} (T_{N^* \mu}) & \overset{\eqref{eq:7.79}}{=} 2 \pi i \nu c_{\nu} (\widehat{N^* \mu} \lambda) \overset{\eqref{eq:6.64}}{=} 2 \pi i \frac{\nu}{N} c_{\nu} ((N^* \hmu) \lambda) \\
& = 2\pi i \frac{\nu}{N} c_{\nu / N} (\hmu \lambda) \overset{\eqref{eq:7.79}}{=} c_{\nu / N} (T_{\mu}) \; . 
\end{align*}
Equation \eqref{eq:7.78n} follows similarly using \eqref{eq:6.65}.

As for \eqref{eq:7.77} note the following:
\[
 N^* h_{N_* \mu} \overset{\eqref{eq:7.76}}{=} h_{N^* N_* \mu} \overset{\eqref{eq:6.62}}{=} h_{N^{-1} \Tr_N (\mu) } = N^{-1} \Tr_N (h_{\mu}) \; .
\]
Finally \eqref{eq:7.78}  and \eqref{eq:7.82n} follow from \eqref{eq:7.76} and \eqref{eq:7.78n}.
\end{proof}

We end this section with a discussion of atoms.

By formula \eqref{eq:2.5} it is possible to recover the atoms of a measure $\mu \in M (\T)$ from the Herglotz transform $h_{\mu}$. As we will show this is also possible if $\mu$ is a premeasure. The proof is more delicate though.

\begin{theorem} \label{t7.19}
 For $\mu \in P (\T)$ and every $\eta \in \T$ we have
\[
 \lim_{r\to 1-} \halb (1-r) \RRe h_{\mu} (r\eta) = \mu \{ \eta \} \; .
\]
\end{theorem}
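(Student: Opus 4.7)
Set $a := \mu\{\eta\}$. Since $\delta_\eta - \lambda \in M^0(\T) \subset P(\T)$, the premeasure $\mu' := \mu - a(\delta_\eta - \lambda)$ has $\mu'\{\eta\} = 0$, i.e.\ $\widehat{\mu'}$ is continuous at $\phi := \arg\eta$. For the signed measure $\delta_\eta - \lambda \in M^0(\T)$ the two definitions of Herglotz transform coincide, and a direct computation gives $h_{\delta_\eta-\lambda}(r\eta) = \frac{\eta+r\eta}{\eta-r\eta}-1 = \frac{2r}{1-r}$, hence
\[
\halb(1-r)\RRe h_\mu(r\eta) = \halb(1-r)\RRe h_{\mu'}(r\eta) + ar.
\]
The last term tends to $a = \mu\{\eta\}$, so it suffices to prove that $\halb(1-r)\RRe h_{\mu'}(r\eta) \to 0$ whenever $\hmu$ is continuous at $\eta$.

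\textbf{Passage to a Poisson-type integral.} Using (7.72) (or equivalently the Taylor-coefficient identity (7.79)), I obtain the basic formula
\[
h_\mu(z) = 2\pi i z F'(z), \qquad F := h_{\hmu\lambda}.
\]
Writing $z = re^{i\phi}$ and $F = u + iv$, the polar Cauchy--Riemann identity $rv_r = -u_\phi$ yields
\[
\RRe h_\mu(re^{i\phi}) = -2\pi r\, v_r(r,\phi) = 2\pi\, u_\phi(r,\phi).
\]
Since $u$ is the Poisson integral of $\hmu \in L^\infty$, differentiating under the integral and substituting $\psi = \phi-\theta$, then using that $P'_r$ is odd, gives
\[
\RRe h_\mu(re^{i\phi}) = \int_0^\pi P'_r(\psi)\bigl(\hmu(e^{i(\phi-\psi)}) - \hmu(e^{i(\phi+\psi)})\bigr) d\psi.
\]

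\textbf{Kernel estimate using Fejér-type positivity.} The decisive input, analogous to the Fejér-kernel argument mentioned in the introduction, is that $-P'_r \geq 0$ on $(0,\pi)$ and
\[
(1-r)\int_0^\pi (-P'_r)(\psi) d\psi = (1-r)\bigl(P_r(0) - P_r(\pi)\bigr) = \frac{4r}{1+r} \leq 2.
\]
Assume $\hmu$ is continuous at $\eta$. Given $\varepsilon > 0$, choose $\delta \in (0,\pi)$ such that $|\hmu(e^{i(\phi-\psi)}) - \hmu(e^{i(\phi+\psi)})| < \varepsilon$ for $\psi \in (0,\delta)$; this uses left-continuity of $\hmu$ together with $\hmu(\eta+) = \hmu(\eta)$. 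Split the integral at $\delta$. On $[\delta,\pi]$ the standard bound $|P'_r(\psi)| = O_\delta(1-r)$ plus boundedness of $\hmu$ contributes $O((1-r)^2)$. On $(0,\delta)$ the pointwise $\varepsilon$-bound combined with the positivity estimate above gives
\[
(1-r)\Bigl|\int_0^\delta P'_r(\psi)\bigl(\hmu(e^{i(\phi-\psi)})-\hmu(e^{i(\phi+\psi)})\bigr) d\psi\Bigr| \leq \varepsilon (1-r)\int_0^\delta (-P'_r) \leq 2\varepsilon.
\]
Letting $r \to 1-$ and then $\varepsilon \to 0$ finishes the proof.

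\textbf{Main obstacle.} Since $\hmu$ need not be of bounded variation, one cannot integrate by parts against $d\hmu$; the key move is the conversion $h_\mu = 2\pi i z (h_{\hmu\lambda})'$ which lets one work with the Poisson integral of the $L^\infty$-function $\hmu$ directly. The detection of the jump then rests on the concentration of $P'_r$ near the diagonal, controlled after multiplication by $(1-r)$ by the positive kernel $-P'_r$ having bounded total mass.
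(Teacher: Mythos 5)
Your proposal is correct and uses essentially the same argument as the paper: after reducing to $\eta = 1$, one writes $\RRe h_\mu(r)$ as an integral against a kernel supported on $[0,\pi]$ whose positivity (your $-P'_r$, the paper's $\psi_r = -\halb P'_r$) and whose total mass $\int_0^\pi (-P'_r) = \frac{4r}{1-r^2}$ are precisely what make the Fej\'er-type split-at-$\delta$ argument work after multiplication by $(1-r)$. The only genuine (but minor) differences are cosmetic: you first subtract off $\mu\{\eta\}(\delta_\eta - \lambda)$ so that only the atomless limit $\to 0$ needs proving, whereas the paper handles the jump $\hmu(0+)-\hmu(0-)$ directly inside the estimate; and you arrive at the kernel formula by differentiating the Poisson integral of $\hmu$ via $h_\mu = 2\pi i z\,h_{\hmu\lambda}'$ and polar Cauchy--Riemann, while the paper reads it straight off the definition $h_\mu(z) = 2iz\int_0^{2\pi}\frac{e^{i\theta}}{(e^{i\theta}-z)^2}\hmu(\theta)\,d\theta$.
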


\begin{rem}
 I do not know whether $\lim_{r\to 1-} (1-r) \Imm h_{\mu} (r\eta)$ exists in general for premeasures $\mu$.
\end{rem}

Since $\mu \{ \eta \} = \hmu (\eta +) - \hmu (\eta) = \hmu (\eta +) - \hmu (\eta-)$ the result follows from the next theorem.

\begin{theorem} \label{t7.20}
 Let $\hmu : \T \to \R$ be a function which has left and right limits in every point. Then the function
\[
 h (z) = 2 i z \int^{2\pi}_0 \frac{e^{i\theta}}{(e^{i\theta} - z)^2} \hmu (\theta) \, d\theta
\]
satisfies the limit formula
\begin{equation} \label{eq:7.82}
 \lim_{r\to 1-} \halb (1-r) \RRe h (r\eta) = \hmu (\eta +) - \hmu (\eta -) \quad \text{for every} \; \eta \in \T \; .
\end{equation}
\end{theorem}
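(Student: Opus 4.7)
The plan is to rewrite $\RRe h(re^{i\phi})$ as the convolution of $\hmu$ against a derivative of the Poisson kernel, and then to recognize $\halb(1-r)$ times the appropriate symmetrization of that derivative as a non-negative approximate identity on $[0,\pi]$ concentrated at $0$, so that the limit picks up exactly the jump $\hmu(\eta+)-\hmu(\eta-)$.

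First I would use the elementary identity
\[
\frac{\partial}{\partial\theta}\frac{e^{i\theta}+z}{e^{i\theta}-z}=-\frac{2iz\,e^{i\theta}}{(e^{i\theta}-z)^{2}}
\]
to rewrite $h(z)=-\int_{0}^{2\pi}\frac{\partial}{\partial\theta}\bigl(\frac{e^{i\theta}+z}{e^{i\theta}-z}\bigr)\hmu(\theta)\,d\theta$. Writing $z=re^{i\phi}$ and recalling that $\RRe\frac{e^{i\theta}+z}{e^{i\theta}-z}=P_{r}(\theta-\phi)$ is the Poisson kernel, and since $\hmu$ is real and $\theta$-differentiation commutes with taking real parts, I obtain
\[
\RRe h(re^{i\phi})=-\int_{0}^{2\pi}P_{r}'(\theta-\phi)\hmu(\theta)\,d\theta=-\int_{-\pi}^{\pi}P_{r}'(v)\hmu(\phi+v)\,dv,
\]
using periodicity of both factors and the change of variable $v=\theta-\phi$. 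Because $P_{r}$ is even, $P_{r}'$ is odd, so the integral collapses to
\[
\RRe h(re^{i\phi})=-\int_{0}^{\pi}P_{r}'(v)\bigl[\hmu(\phi+v)-\hmu(\phi-v)\bigr]dv.
\]

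Next I introduce the symmetric kernel
\[
Q_{r}(v):=-\halb(1-r)P_{r}'(v)=\frac{(1-r)(1-r^{2})\,r\sin v}{(1-2r\cos v+r^{2})^{2}},
\]
which is non-negative on $(0,\pi)$, and write $\halb(1-r)\RRe h(re^{i\phi})=\int_{0}^{\pi}Q_{r}(v)F(v)\,dv$ where $F(v):=\hmu(\phi+v)-\hmu(\phi-v)$. The crux is to verify that $(Q_{r})_{r\to 1-}$ is a non-negative approximate identity on $[0,\pi]$ concentrated at $v=0$. Using the fact that $P_{r}$ is an antiderivative of $P_{r}'$,
\[
\int_{0}^{\pi}Q_{r}(v)\,dv=\halb(1-r)\bigl(P_{r}(0)-P_{r}(\pi)\bigr)=\frac{2r}{1+r}\xrightarrow[r\to 1-]{}1,
\]
and for every $\delta\in(0,\pi)$ the denominator $1-2r\cos v+r^{2}\ge 2r(1-\cos\delta)$ stays bounded below on $[\delta,\pi]$, so $\int_{\delta}^{\pi}Q_{r}(v)\,dv=O\bigl((1-r)^{2}\bigr)\to 0$. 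In particular $\int_{0}^{\pi}Q_{r}\le 1$ for $r\in(0,1)$.

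Finally I run the standard approximate-identity argument against $F$. Since $\hmu$ has one-sided limits at every point of $\T$, it is a uniform limit of step functions and hence bounded (as noted in the remarks preceding this theorem), so $F$ is bounded; and the hypothesis at $\eta=e^{i\phi}$ gives $F(v)\to L:=\hmu(\eta+)-\hmu(\eta-)$ as $v\to 0^{+}$. Given $\varepsilon>0$, choose $\delta>0$ with $|F(v)-L|<\varepsilon$ on $(0,\delta)$, split $\int_{0}^{\pi}Q_{r}F=L\int_{0}^{\delta}Q_{r}+\int_{0}^{\delta}Q_{r}(F-L)+\int_{\delta}^{\pi}Q_{r}F$, and use $\int_{0}^{\pi}Q_{r}\to 1$, $\int_{\delta}^{\pi}Q_{r}\to 0$, $\|F\|_{\infty}<\infty$, and $\int_{0}^{\delta}Q_{r}\le 1$ to conclude $\limsup_{r\to 1-}|\int_{0}^{\pi}Q_{r}F-L|\le\varepsilon$. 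Letting $\varepsilon\to 0$ gives \eqref{eq:7.82}. The only mildly delicate point is the mass and concentration computation for $Q_{r}$, which is elementary; crucially, no bounded-variation hypothesis on $\hmu$ is needed because the argument bypasses integration by parts and only uses boundedness of $\hmu$ together with existence of one-sided limits at the point $\eta$.
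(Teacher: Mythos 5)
Your proposal is correct and takes essentially the same route as the paper: both use the observation that the kernel of $h$ is the $\theta$-derivative of the Herglotz kernel, whose real part is the (odd) derivative of the Poisson kernel, then symmetrize over $[0,\pi]$ and run the standard non-negative approximate-identity argument; your $Q_r(v)$ is just $(1-r)\psi_r(v)$ from the paper's proof, and you work at general $\eta=e^{i\phi}$ via a change of variables rather than first reducing to $\eta=1$ by translation invariance.
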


\begin{proof}
 Setting $h = h^{\hmu}$ we have $\eta^* h = h^{\eta^* \hmu}$ for every $\eta \in \T$. Hence it suffices to prove \eqref{eq:7.82} for $\eta = 1$ i.e., viewing $\hmu$ as a $2\pi$-periodic function on $\R$, to show that we have
\begin{equation}
 \label{eq:7.83}
\lim_{r\to 1-} \halb (1-r) \RRe h (r) = \hmu (0+) - \hmu (0-) \; .
\end{equation}
Set 
\begin{equation}
 \label{eq:7.84}
\psi_r (\theta) = \RRe \frac{ri e^{i\theta}}{(e^{i\theta}-r)^2} = r (1-r^2) \frac{\sin \theta}{|e^{i\theta}-r|^4} \; .
\end{equation}
Then we have $\psi_r (-\theta) = -\psi_r (\theta)$ and hence:
\begin{align}
 \RRe h (r) & = 2 \int^{\pi}_{-\pi} \psi_r (\theta) \hmu (\theta) \, d\theta \notag \\
& = 2 \int^{\pi}_0 \psi_r (\theta) (\hmu (\theta) - \hmu (-\theta)) \, d \theta \; . \label{eq:7.85}
\end{align}
The relation
\[
 \psi_r (\theta) = - \halb \RRe \Big( \frac{d}{d\theta} \, \frac{e^{i\theta} +r}{e^{i\theta} -r} \Big)
\]
implies the formula
\begin{equation}
 \label{eq:7.86}
\int^{\pi}_0 \psi_r (\theta) \, d\theta = \frac{2r}{1-r^2} \; .
\end{equation}
Hence we get 
\begin{equation}
 \label{eq:7.87}
\frac{1-r^2}{4r} \RRe h (r) - (\hmu (0+) - \hmu (0-)) = \frac{1-r^2}{2r} \int^{\pi}_0 \psi_r (\theta) \varepsilon (\theta) \, d\theta
\end{equation}
where
\[
 \varepsilon (\theta) = (\hmu (\theta) - \hmu (0+)) - (\hmu (-\theta) - \hmu (0-)) \; .
\]
For every $0 < \delta < \pi$ we have
\begin{equation}
 \label{eq:7.88}
\lim_{r\to 1-} \frac{1-r^2}{2r} \int^{\pi}_{\delta} \psi_r (\theta) \varepsilon (\theta) \, d\theta = 0 \; .
\end{equation}
Now fix some $\varepsilon > 0$. Choose $0 < \delta < \pi$ such that
\[
 |\hmu (\pm \theta) - \hmu (0\pm)| < \frac{\varepsilon}{4} \quad \text{and hence} \; |\varepsilon (\theta)| < \frac{\varepsilon}{2} \; \text{for} \; 0 < \theta \le \delta \; .
\]
Because of \eqref{eq:7.88} we find some $0 \le r (\varepsilon) < 1$ (depending only on $\delta = \delta (\varepsilon)$) such that for $r (\varepsilon) \le r < 1$ we have:
\[
 \Big| \frac{1-r^2}{2r} \int^{\pi}_{\delta} \psi_r (\theta) \varepsilon (\theta) \Big| \le \frac{\varepsilon}{2} \; .
\]
Using \eqref{eq:7.87} and \eqref{eq:7.88} we get the estimate
\begin{align*}
 \Big| \frac{1-r^2}{4r} \RRe h (r) - (\hmu (0+) - \hmu (0-)) \Big| & \le \frac{1-r^2}{2r} \int^{\delta}_0 \psi_r (\theta) |\varepsilon (\theta)| \, d\theta + \frac{\varepsilon}{2} \\
& \le \frac{\varepsilon}{2} \frac{1-r^2}{2r} \int^{\pi}_0 \psi_r (\theta) \, d\theta + \frac{\varepsilon}{2} = \varepsilon \; .
\end{align*}
The important point here is that $\psi_r (\theta) \ge 0$ for $0 \le \theta \le \pi$. The corresponding assertion does not hold in a similar analysis of the limit behaviour of $\Imm h (r)$. This resembles the difference between the Fej\'er and the Dirichlet kernel in the theory of Fourier series. In any case, we have shown that:
\[
 \lim_{r\to1-} \frac{1-r^2}{4r} \RRe h (r) = \hmu (0+) - \hmu (0-) \; .
\]
Formula \eqref{eq:7.83} and hence the theorem follow. 
\end{proof}

\section{Background on Nevanlinna--Korenblum theory}
\label{sec:8}
In this section we review some of Korenblum's work on generalized Nevanlinna theory. For our purposes it is necessary to extend parts of \cite{K1}, \cite{K2} in a direction suggested by Seip \cite{S} \S\,1 Remark 3. We also reinterpret the $\kappa$-singular measures of Korenblum as pairs of ``$\kappa$-thin'' mutually singular possibly infinite positive measures on $\Bh (\T)$. 

In the following $\kappa$ will always denote a continuous nondecreasing concave function $\kappa$ on $[0, 1]$ with $\kappa (0) = 0$ and $\kappa (1) = 1$. For real $\alpha \ge 1$ and $0 \le x \le 1$ we have:
\begin{equation} \label{eq:890}
 \kappa \Big( \frac{x}{\alpha} \Big) = \kappa \Big( \frac{x}{\alpha} + \Big( 1 -\frac{1}{\alpha} \Big) 0 \Big) \ge \frac{1}{\alpha} \kappa (x) + \Big( 1 - \frac{1}{\alpha} \Big) \kappa (0) = \frac{1}{\alpha} \kappa (x) \, .
\end{equation}
Hence $\kappa (x) / x$ is nonincreasing and in particular $\kappa (x) \ge x$ since $\kappa (1) = 1$. Moreover
\begin{equation} \label{eq:893n}
 \kappa (x+y) \le \kappa (x) + \kappa (y) \quad \text{for} \; x,y \ge 0 \; \text{with} \; x+y \le 1
\end{equation}
because of \eqref{eq:890}
\[
 \kappa (x) + \kappa (y) \ge \frac{x}{x+y} \kappa (x+y) + \frac{y}{x+y} \kappa (x+y) = \kappa (x+y) \; .
\]
In \cite{K4} section 4, Korenblum introduced the notion of a premeasure on $\T$ with bounded $\kappa$-variation as follows. For an $\arc C \subset \T$ let $|C| = \lambda (C) \in [0, 1]$ be the normalized arc length of $C$. For $\mu \in P (\T)$ the $\kappa$-variation norm $\| \mu \|_{\kappa} = \Var_{\kappa} \mu$ is defined as
\[
 \| \mu \|_{\kappa} = \sup \frac{\sum_j |\mu (C_j)|}{\sum_j \kappa (|C_j|)} \; .
\]
Here the supremum is taken over all finite partitions $\{ C_j \}$ of $\T$ into disjoint arcs. If $\| \mu \|_{\kappa} < \infty$ the premeasure $\mu$ is said to have bounded $\kappa$-variation. For $\kappa (t) = t$ one obtains the usual notions of bounded variation and total variation norm. Note that since $\sum_j \kappa (|C_j|) \ge \sum_j |C_j| = 1$ we have 
\[
\|\mu \|_{\kappa} \le \sup \sum_j |\mu (C_j)| \; .
\]
 The premeasure $\mu$ may have bounded $\kappa$-variation even if its ordinary variation is infinite. Let $P_{\kappa} (\T)$ denote the set of all premeasures in $P (\T)$ with bounded $\kappa$-variation. It is a Banach space with norm $\| \; \|_{\kappa}$. The distribution $T_{\mu}$ allows the ``integration'' of smooth functions on $\T$ with respect to $\mu$. More generally, continuous functions ``of bounded $\kappa$-norm'' can be integrated along $\mu$ and $P_{\kappa} (\T)$ is the dual of this separable Banach space of functions, c.f. \cite{K4} theorem 5.1. 

A premeasure $\mu$ is called $\kappa$-bounded (from above) if there is a constant $a_{\mu} \ge 0$ such that
\[
 \mu (C) \le a_{\mu} \kappa (|C|) \quad \text{for all arcs} \; C \subset \T \; .
\]
A $\kappa$-bounded premeasure has $\kappa$-bounded variation and Korenblum has proved that every premeasure $\mu$ of bounded $\kappa$-variation can be written in the form $\mu = \mu_1 - \mu_2$ where $\mu_1$ and $\mu_2$ are $\kappa$-bounded premeasures. They can be chosen in such a way that we have $a_{\mu_1} , a_{\mu_2} \le \| \mu \|_{\kappa}$, see \cite{K4} p. 542. Let $P^-_{\kappa} (\T) \subset P_{\kappa} (\T)$ be the cone of $\kappa$-bounded premeasures and set $P^+_{\kappa} (\T) = - P^-_{\kappa} (\T)$.

For us the relevant convex functions $\kappa$ on $[0,1]$ are given by:
\[
 \kappa_{\gamma} (x) = c^{-1}_{\gamma} \int^x_0 |\log t|^{\gamma} \, dt \quad \text{for real} \; \gamma \ge 0 \; .
\]
Here
\[
 c_{\gamma} = \int^1_0 |\log t|^{\gamma} \, dt = \Gamma (\gamma+1) \; .
\]
We have $\kappa_{\gamma} (0) = 0$ and $\kappa_{\gamma} (1) = 1$. Moreover $\kappa_{\gamma}$ is continuous and nondecreasing on $[0,1]$. On $(0,1]$ it is $C^{\infty}$ and we have:
\[
 \kappa''_{\gamma} (x) = -\gamma c^{-1}_{\gamma} x^{-1} |\log x|^{\gamma-1} \le 0 \; .
\]
Thus $\kappa_{\gamma}$ is concave on $[0,1]$. For $0 < x \le 1$ there is some $0 < \xi_x < x$ such that we have
\[
 x^{-1} \kappa_{\gamma} (x) = c^{-1}_{\gamma} |\log \xi_x|^{\gamma} \; .
\]
For $\gamma > 0$ this implies condition b) on p. 531 of \cite{K4}
\[
 \lim_{x \to 0+} x^{-1} \kappa_{\gamma} (x) = + \infty \; .
\]
For $0 < x < 1$ we have:
\[
 \kappa'_{\gamma} (x^2) = c^{-1}_{\gamma} |\log x^2|^{\gamma} = 2^{\gamma} \kappa'_{\gamma} (x) \; .
\]
In particular $\kappa_{\gamma}$ is of type $(S)$ in the sense of \cite{K4} p. 542. For integers $\gamma \ge 1$ partial integration gives:
\[
 \kappa_{\gamma} (x) = \frac{x}{\gamma!} |\log x|^{\gamma} + \kappa_{\gamma-1} (x) \; .
\]
Hence for integer $\gamma \ge 0$ we have
\[
 \kappa_{\gamma} (x) = x \sum^{\gamma}_{\nu =0} \frac{1}{\nu!} |\log x|^{\nu} \; .
\]
For example $\kappa_0 (x) = x$ and $\kappa_1 (x) = x (1 + |\log x|) = x \log \frac{e}{x}$ the Shannon entropy function.

\begin{prop} \label{t26n}
 For $0 \le \delta < \gamma$ we have $\kappa_{\delta} (x) \le \kappa_{\gamma} (x)$ for $0 \le x \le 1$ with strict inequality for $0 < x < 1$.
\end{prop}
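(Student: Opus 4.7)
The plan is to prove the stronger statement that $f(x) := \kappa_{\gamma}(x) - \kappa_{\delta}(x)$ vanishes at the endpoints and has exactly one critical point in $(0,1)$, which is necessarily a strict interior maximum. The boundary values are immediate: $\kappa_{\alpha}(0) = 0$ by the definition as an integral from $0$, and $\kappa_{\alpha}(1) = c_{\alpha}^{-1} \int_0^1 |\log t|^{\alpha} dt = c_{\alpha}^{-1} c_{\alpha} = 1$ for every $\alpha \ge 0$, so $f(0) = 0$ and $f(1) = 1 - 1 = 0$.

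Next I would differentiate. For $x \in (0, 1)$ the fundamental theorem of calculus gives
\[
f'(x) = c_{\gamma}^{-1} |\log x|^{\gamma} - c_{\delta}^{-1} |\log x|^{\delta} = c_{\delta}^{-1} |\log x|^{\delta} \bigl( c_{\delta} c_{\gamma}^{-1} |\log x|^{\gamma - \delta} - 1 \bigr).
\]
Since $\gamma - \delta > 0$, the bracketed factor is a strictly increasing function of $|\log x|$ on $(0, \infty)$. Hence $f'(x) = 0$ has a unique solution in $(0,1)$, namely
\[
x_0 = \exp\bigl( -(c_{\gamma}/c_{\delta})^{1/(\gamma - \delta)} \bigr) \in (0,1).
\]
Because $|\log x| \to +\infty$ as $x \to 0^+$, the bracketed factor is positive (and $|\log x|^{\delta} > 0$) for $x \in (0, x_0)$, so $f'(x) > 0$ there; because $|\log x| \to 0$ as $x \to 1^-$, the bracketed factor is negative for $x \in (x_0, 1)$, so $f'(x) < 0$ there.

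Combining these facts, $f$ is strictly increasing on $[0, x_0]$ and strictly decreasing on $[x_0, 1]$, while $f(0) = f(1) = 0$. Therefore $f(x) > 0$ for every $x \in (0, 1)$, which is exactly the strict inequality $\kappa_{\delta}(x) < \kappa_{\gamma}(x)$ on the open interval, and we have equality at $x = 0$ and $x = 1$. There is no real obstacle in this argument; it is a short calculus exercise once one notices that reducing to the single equation $|\log x|^{\gamma - \delta} = c_{\gamma}/c_{\delta}$ forces a unique critical point, and the only thing to verify is the sign behaviour of $f'$ at the two endpoints, which is transparent from the asymptotics of $|\log x|$.
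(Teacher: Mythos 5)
Your proof is correct, but it takes a genuinely different route from the paper's. The paper clears denominators and observes that $\kappa_\gamma(x) > \kappa_\delta(x)$ is equivalent to $\int_0^1|\log t|^\delta dt\int_0^x|\log t|^\gamma dt > \int_0^1|\log t|^\gamma dt\int_0^x|\log t|^\delta dt$, then, after the substitution $t = e^{-y}$, rewrites the cross-difference as the double integral $\int_{(|\log x|,\infty)\times(0,|\log x|)} e^{-(y_1+y_2)}\bigl(y_1^\gamma y_2^\delta - y_1^\delta y_2^\gamma\bigr)\,d\lambda(y_1,y_2)$, whose integrand is positive because $y_2 < y_1$ throughout the domain --- a Chebyshev-type rearrangement argument. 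You instead differentiate the difference $f = \kappa_\gamma - \kappa_\delta$ and use the monotone factorization of $f'$ to exhibit a unique interior critical point, which combined with $f(0)=f(1)=0$ forces $f>0$ on $(0,1)$. Both are short and correct; your single-variable calculus argument is arguably more elementary and self-contained, while the paper's two-variable positivity argument makes the ``Chebyshev inequality'' structure explicit and does not require locating or analyzing a critical point. One minor point worth stating explicitly in your version is that the bracket $c_\delta c_\gamma^{-1}|\log x|^{\gamma-\delta}-1$ ranges over $(-1,\infty)$ as $x$ runs over $(0,1)$, so it genuinely changes sign, guaranteeing that $x_0$ actually lies in $(0,1)$ rather than at an endpoint; your displayed formula for $x_0$ already shows this, so the argument is complete.
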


\begin{proof}
 It suffices to show that for $0 < x < 1$ we have:
\[
 \int^1_0 |\log t|^{\delta} \, dt \int^x_0 |\log t|^{\gamma} \, dt > \int^1_0 |\log t|^{\gamma} \, dt \int^x_0 |\log t|^{\delta} \, dt \; ,
\]
or equivalently 
\[
\int_{(|\log x| ,\infty) \times (0 , |\log x|)} e^{-(y_1 + y_2)} (y^{\gamma}_1 y^{\delta}_2 - y^{\delta}_1 y^{\gamma}_2) d\lambda (y_1 , y_2) > 0  \; . 
\]
On $U = (|\log x| , \infty) \times (0, |\log x|)$ we have $0 < y_2 < y_1$. Hence
\[
 y^{\gamma}_1 y^{\delta}_2 - y^{\delta}_1 y^{\gamma}_2 = y^{\gamma+\delta}_2 \Big( \big( \frac{y_1}{y_2} \big)^{\gamma} - \big( \frac{y_1}{y_2} \big)^{\delta} \Big)
\]
is positive and it follows that $\Dh > 0$.
\end{proof}

For $\gamma = 0$ the set $P^+_{\kappa_0} (\T)$ consists of all premeasures $\mu$ with $\mu (C) \ge -a_{\mu} |C|$ for some $a_{\mu} \ge 0$ and all arcs $C$. Hence the following map is surjective:
\begin{equation}
 \label{eq:8.89n}
\omega : M^+ (\T) \twoheadrightarrow P^+_{\kappa_0} (\T) \quad \text{with} \; \omega (\mu') = \mu' - \mu' (\T) \lambda\; . 
\end{equation}
Being real the premeasures $\mu = \omega (\mu') \in P^+_{\kappa_0} (\T)$ are in bijection with their Herglotz-transforms $h_{\mu} = h_{\mu'} - \mu' (\T)  = h_{\mu'} - h_{\mu'} (0)$. Mapping $\mu$ to $h_{\mu}$ we thus get a bijection:
\begin{equation}
 \label{eq:8.89}
P^+_{\kappa_0} (\T) \silo \{ h \in \Oh (D) \tei h(0) = 0 \quad \text{and} \quad \RRe h \ge c \; \text{for some} \; c \} \; .
\end{equation}
Note that $c = c_h \le 0$ since $h (0) = 0$ and $c < 0$ unless $h = 0$ since a non-constant holomorphic map is open.

In \cite{K4} 5.3 Theorem, Korenblum has generalized this bijection to $\kappa$-bounded premeasures for more general convex functions $\kappa$ than $\kappa_0$. The proof has not been published but it runs along the same lines at the one for $\kappa = \kappa_1$ which can be found in \cite{K1} 5.2 Theorem 3 or \cite{HKZ} Theorem 7.4. For the case $\kappa = \kappa_{\gamma}$ his result is stated below. For $\gamma \ge 0$ let $\Oh^-_{\kappa_{\gamma}}$ be the class of holomorphic functions $h \in \Oh = \Oh (D)$ with $h (0) = 0$ which satisfy an estimate of the form
\begin{equation}
 \label{eq:8.90}
\RRe h (z) \le c |\log (1-|z|)|^{\gamma} \quad \text{for} \; \halb \le |z| < 1 \; .
\end{equation}
 Here $c \in \R$ is a constant depending on $h$. Note that $\Oh_{\kappa_{\gamma}} = \Oh^-_{\kappa_{\gamma}} - \Oh^-_{\kappa_{\gamma}}$ is a real sub-vectorspace of $\Oh$.

\begin{theorem}[Korenblum] \label{t821}
For every $\gamma \ge 0$ the Herglotz transform $\mu \mapsto h_{\mu}$ gives a bijection $P^-_{\kappa_{\gamma}} (\T) \silo \Oh^-_{\kappa_{\gamma}}$ and an isomorphism of $\R$-vector spaces $P_{\kappa_{\gamma}} (\T) \silo \Oh_{\kappa_{\gamma}}$.
\end{theorem}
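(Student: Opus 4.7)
The plan is to follow Korenblum's strategy for $\kappa = \kappa_1$ in \cite{K1} \S5.2 and \cite{HKZ} Theorem 7.4, adapting the key estimates via the concrete properties of $\kappa_\gamma$ listed above: concavity, subadditivity \eqref{eq:893n}, the type $(S)$-relation $\kappa'_\gamma(x^2) = 2^\gamma \kappa'_\gamma(x)$, and the asymptotic $\kappa_\gamma(x) \sim c_\gamma^{-1} x |\log x|^\gamma$ as $x \to 0^+$. Injectivity of $\mu \mapsto h_\mu$ on $P(\T)$ was already established in section~\ref{sec:7}. Using the Korenblum decomposition $P_{\kappa_\gamma}(\T) = P^-_{\kappa_\gamma}(\T) - P^-_{\kappa_\gamma}(\T)$ recalled just before the theorem, the $\R$-vector space isomorphism $P_{\kappa_\gamma}(\T) \silo \Oh_{\kappa_\gamma}$ reduces to establishing the bijection $P^-_{\kappa_\gamma}(\T) \silo \Oh^-_{\kappa_\gamma}$.

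For the forward direction, let $\mu \in P^-_{\kappa_\gamma}(\T)$ with $\mu(C) \le a_\mu \kappa_\gamma(|C|)$ for every arc $C$. Fix $z = r\eta \in D$ and write $\delta = 1-r$. Via proposition \ref{t71} and Stieltjes integration by parts, $\RRe h_\mu(z) = \int_\T P(z,\zeta)\, d\mu(\zeta)$, where the Poisson kernel satisfies $P(z,\zeta) \asymp \delta / (\delta^2 + |\arg(\zeta/\eta)|^2)$. I would introduce dyadic arcs $C_k$ centered at $\eta$ with $|C_k| \asymp 2^k\delta$ (up to $C_k = \T$), partition $\T$ into their differences $C_k \setminus C_{k-1}$, and use $\sup_{C_k\setminus C_{k-1}} P \lesssim 2^{-2k}/\delta$. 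Approximating $P$ by step functions constant on each $C_k \setminus C_{k-1}$ and applying the one-sided bound $\mu(A) \le a_\mu\kappa_\gamma(|A|)$ on the positive-contribution arcs in each level yields
\[
\RRe h_\mu(z) \lesssim a_\mu \sum_{k \ge 0,\, 2^k\delta \le 1} \frac{\kappa_\gamma(2^k\delta)}{2^{2k}\delta} \lesssim a_\mu \sum_k 2^{-k} |\log(2^k\delta)|^\gamma \lesssim a_\mu |\log\delta|^\gamma,
\]
which is the required estimate \eqref{eq:8.90}.

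For the backward direction, let $h \in \Oh^-_{\kappa_\gamma}$ with $h(0) = 0$. I would form $G(z) = (2\pi i)^{-1} \int_0^z h(w)\, dw/w$, note that the one-sided growth of $\RRe h$ places $G$ in a weighted Bergman-type space, and invoke the corresponding boundary-value theorem to define $\hmu(\eta) = \RRe \tilde G(\eta) + c_0$ for a normalizing constant $c_0$, motivated by \eqref{eq:7.73}. After a redefinition on a $\lambda$-null set, one must then show that $\hmu$ has left and right limits at every point, is left-continuous with $\hmu(1) = 0$, and that the associated premeasure $\mu$ satisfies $\mu(C) \le a_h \kappa_\gamma(|C|)$ for every arc $C$. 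This last estimate is obtained by running the Poisson-kernel argument in reverse: testing the hypothesis $\RRe h(z) \le c |\log(1-|z|)|^\gamma$ at a point $z_C$ near the center of $C$ with $1 - |z_C| \asymp |C|$ and using the localization of the Poisson kernel near $C$ forces $\mu(C) \lesssim \kappa_\gamma(|C|)$.

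The main obstacle will be the backward direction, specifically the pointwise regularity of $\hmu$ (one-sided limits at every point, not merely $\lambda$-a.e.) and the uniform $\kappa_\gamma$-bound on $\mu(C)$ for all arcs $C$ simultaneously. This is the analogue of the delicate boundary analysis in \cite{K1} \S5.2 and \cite{HKZ} Theorem 7.4, and should go through by an oscillation argument exploiting the concavity of $\kappa_\gamma$, the type $(S)$-property, and comparison with lower indices $\gamma' < \gamma$ via proposition \ref{t26n}. Once surjectivity is in hand, it combines with the injectivity already established to yield the bijection $P^-_{\kappa_\gamma}(\T) \silo \Oh^-_{\kappa_\gamma}$, whence the vector space isomorphism $P_{\kappa_\gamma}(\T) \silo \Oh_{\kappa_\gamma}$ follows via the decomposition argument above.
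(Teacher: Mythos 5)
The paper itself does not prove this theorem; it states it as Korenblum's, citing \cite{K4} (where the general statement appears but the proof was not published) and pointing to \cite{K1} 5.2 Theorem 3 and \cite{HKZ} Theorem 7.4 for the detailed $\kappa = \kappa_1$ case, with the remark that the argument generalizes. Your sketch is therefore an attempt to reconstruct that outside argument, not something to compare against a proof in the paper. The forward direction is sound: representing $\RRe h_\mu$ as a Poisson--Stieltjes integral against $\hmu$, decomposing $\T$ into dyadic shells around the reference point, and running a layer-cake / Abel-summation argument over the nested arcs $C_k$ against the one-sided bound $\mu(A) \le a_\mu\kappa_\gamma(|A|)$ does yield $\RRe h_\mu(z) \lesssim a_\mu|\log(1-|z|)|^\gamma$. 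You should make the Abel summation over nested arcs explicit, since that is the device that makes a one-sided bound on arcs usable against a signed premeasure; approximating $P$ by shell-constant step functions only works because the radially nonincreasing profile turns the shell sum into a telescoping sum of nested-arc values.

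The backward direction has a genuine gap. The claim that testing $\RRe h(z_C) \le c\,|\log(1-|z_C|)|^{\gamma}$ at a single point $z_C$ with $1-|z_C| \asymp |C|$ ``forces $\mu(C) \lesssim \kappa_\gamma(|C|)$'' fails as written: $\mu$ is a signed premeasure, and the Poisson kernel $P(z_C,\cdot)$ has tails outside $C$ which can carry compensating negative $\mu$-mass, so $\RRe h(z_C)$ can be small while $\mu(C)$ is arbitrarily large. Pointwise testing is a local-to-local implication, whereas the theorem is a global-to-global statement; closing that gap is precisely the hard content of Korenblum's argument. In \cite{HKZ} Theorem 7.4 (and \cite{K1} 5.2) the arc bound is obtained by building nested step approximations from $\RRe h$ on the circles $|z| = 1 - 2^{-n}$, propagating the one-sided growth inward by comparison with explicit $\kappa$-related subharmonic majorants, and then passing to a limiting premeasure whose regularity (one-sided limits at every point) has to be established separately. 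Your deferral to an unspecified ``oscillation argument'' names the shape of the missing piece but does not supply it. Likewise, the assertion that the one-sided growth of $\RRe h$ places $G$ in a weighted Bergman-type space with radial boundary values needs its own justification; a one-sided bound on the real part of a harmonic function does not by itself give $H^p$- or Bergman-type control on $G$. So the plan is a reasonable roadmap to the references, but the surjectivity half is not yet a proof.
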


Formula \cite{K4} (5.3.5) gives a way to recover $\mu$ from $h_{\mu}$. A related possibility is to use formula \eqref{eq:7.73}.

Let $\Ah_{\gamma}$ be the $\C$-algebra of analytic functions $f \in \Oh$ which satisfy an estimate of the following form  where $a_f \ge 0$ and $r_f \ge 0$ are constants
\begin{equation}
 \label{eq:8.91}
|f (z)| \le a_f \exp (r_f |\log (1-|z|)|^{\gamma}) \quad \text{for}\; z \in D \; .
\end{equation}
Then $\Ah_0 = H^{\infty} (D)$ and $\Ah_1 = \Ah^{-\infty}$ is the class studied in \cite{K1}, \cite{K2}. Works dealing with $\Ah_{\gamma}$ are for example \cite{BL}, \cite{K4}, \cite{S}. Note that 
\[
\Ah^1_{\gamma} := \exp \Oh^-_{\kappa_{\gamma}} = \{ f \in \Ah_{\gamma} \tei f (0) = 1 \; \text{and} \; f (z) \neq 0 \; \text{for all} \; z \in D \} \; . 
\]
Let $\Nh_{\gamma}$ be the algebra of functions $f \in \Oh$ that can be written in the form $f = g_1 g^{-1}_2$ with $g_1 , g_2 \in \Ah_{\gamma}$ where $g_2$ has no zeroes. Thus $\Nh_0 = \Nh$ is the Nevanlinna class. We set
\[
 \Nh^1_{\gamma} := \exp \Oh_{\kappa_{\gamma}} = \{ f \in \Nh^{\times}_{\gamma} \tei f (0) = 1 \} = \{ g_1 / g_2 \tei g_i \in \Ah^1_{\gamma} \} \; .
\]
In this multiplicative setting Korenblum's theorem \ref{t821} amounts to the following bijections for $\gamma \ge 0$, given by mapping $\mu$ to $f_{\mu} = \exp (-h_{\mu})$:
\begin{equation}
 \label{eq:8.92}
P^+_{\kappa_{\gamma}} (\T) \silo \Ah^1_{\gamma} \quad \text{and} \quad P_{\kappa_{\gamma}} (\T) \silo \Nh^1_{\gamma} \; .
\end{equation}
Using theorem \ref{t7.19} it follows that $A (f)$ exists for every function $f \in \Nh^1_{\gamma}$. Let $\Nh^1_{\gamma , c}$ be the subgroup of atomless functions and define $\Ah^1_{\gamma , c}$ accordingly. 

Starting with the work of Korenblum on $\Ah_1$, the Blaschke condition has been generalized to all the spaces $\Ah_{\gamma}$ for $\gamma \ge 0$ c.f. \cite{S}. Not only the absolute values but also the arguments of the zeroes enter the conditions if $\gamma > 0$. For the absolute values one has a necessary condition which follows from theorem 1.1, remark 2 and the proof of lemma 3.2, all from \cite{S}.

\begin{theorem}[Seip] \label{t22}
Let $\rho : D \to \Z_{\ge 0}$ be a map. If a function $f \in \Ah_{\gamma}$ exists with $\ord_z f = \rho (z)$ for all $z \in D$ then there are constants $c_1 , c_2 \ge 0$ such that for all $1/2 \le r < 1$ we have
\[
 \sum_{|z|\le r} (1 - |z|) \rho (z) \le c_1 |\log (1-r)|^{\gamma} + c_2 \; . 
\]
\end{theorem}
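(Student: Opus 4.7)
My plan is to deduce the bound directly from Jensen's formula applied on a disc of radius slightly larger than $r$, using the growth hypothesis \eqref{eq:8.91} to control the boundary integral. First I would reduce to the case $f(0)\neq 0$: if $\rho(0)=m\ge 1$, replace $f$ by $g(z)=z^{-m}f(z)$; then $g$ is holomorphic on $D$, has the same zeroes as $f$ away from $0$, satisfies $g(0)\neq 0$, and applying \eqref{eq:8.91} in the annulus $|z|\ge 1/2$ shows $g\in\Ah_\gamma$. The zero of $f$ at the origin contributes $m\cdot(1-0)=m$ to the sum, which is absorbed into $c_2$.

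Assume now $f(0)\neq 0$ and $\rho(0)=0$. For $1/2\le r<1$ set $R=(1+r)/2$, so $r<R<1$ and $1-R=(1-r)/2$. Jensen's formula on the closed disc of radius $R$ gives
\[
\sum_{|z_k|\le R}\log\frac{R}{|z_k|}\;=\;\frac{1}{2\pi}\int_0^{2\pi}\log|f(Re^{i\theta})|\,d\theta\;-\;\log|f(0)|,
\]
where $(z_k)$ lists the zeroes of $f$ with multiplicity. Estimating the integrand via \eqref{eq:8.91} yields
\[
\sum_{|z_k|\le R}\log\frac{R}{|z_k|}\;\le\;C_0+r_f|\log(1-R)|^\gamma,\qquad C_0:=\log a_f-\log|f(0)|.
\]

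On the left-hand side I would invoke the elementary inequality $\log(R/t)\ge(R-t)/R$ for $0<t\le R$ (equivalent to $-\log s\ge 1-s$ with $s=t/R$), giving
\[
\sum_{|z_k|\le R}(R-|z_k|)\;\le\;R\,C_0+R\,r_f|\log(1-R)|^\gamma.
\]
The choice $R=(1+r)/2$ is calibrated precisely so that $1-|z_k|$ and $R-|z_k|$ are comparable for $|z_k|\le r$: indeed, for such $z_k$ one has $1-|z_k|\ge 1-r=2(1-R)$, hence $R-|z_k|=(1-|z_k|)-(1-R)\ge(1-|z_k|)/2$. Therefore
\[
\sum_{|z_k|\le r}(1-|z_k|)\;\le\;2\sum_{|z_k|\le R}(R-|z_k|)\;\le\;2R\bigl(C_0+r_f|\log(1-R)|^\gamma\bigr).
\]

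To finish I would write $|\log(1-R)|=|\log(1-r)|+\log 2$ and apply the crude bound $(a+b)^\gamma\le 2^\gamma(a^\gamma+b^\gamma)$ for $a,b\ge 0$ and $\gamma\ge 0$, which absorbs the additive $\log 2$ and produces constants $c_1,c_2\ge 0$ depending only on $f$ and $\gamma$ (not on $r$) with
\[
\sum_{|z|\le r}(1-|z|)\rho(z)\;\le\;c_1|\log(1-r)|^\gamma+c_2.
\]
The argument is a classical application of Jensen's formula and contains no genuine obstacle; the only delicate point is the calibration of the radii $r$ and $R=(1+r)/2$, which simultaneously makes $R-|z_k|$ comparable to $1-|z_k|$ for $|z_k|\le r$ and keeps $|\log(1-R)|^\gamma$ comparable to $|\log(1-r)|^\gamma$ up to constants.
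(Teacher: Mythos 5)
Your proof is correct, and it is substantially more self-contained than the paper's: the paper does not give an argument at all, but simply states that the result follows from Theorem~1.1, Remark~2, and the proof of Lemma~3.2 in Seip's paper~\cite{S}, where a considerably sharper density condition (involving the arguments of the zeroes as well as their moduli) is established. What you give instead is a direct, classical Jensen-formula argument that yields exactly the necessary condition on moduli needed here, with none of the machinery of~\cite{S}. The reduction to $f(0)\neq 0$ is handled correctly (dividing by $z^m$ preserves the growth class since $|z|^{-m}\le 2^m$ for $|z|\ge 1/2$ and $z^{-m}f$ is bounded on $|z|\le 1/2$), the inequality $\log(R/t)\ge (R-t)/R$ and the calibration $R=(1+r)/2$ do precisely what you claim, and the final $(a+b)^\gamma\le 2^\gamma(a^\gamma+b^\gamma)$ step holds for all $\gamma\ge 0$ (by subadditivity when $0\le\gamma\le 1$, by the bound $a+b\le 2\max(a,b)$ in general). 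The trade-off is that your argument deliberately proves less than Seip's theorem: it gives only the radial Blaschke-type necessary condition, whereas~\cite{S} characterizes zero sets much more precisely; but for the use made of the statement in this paper (the proof of theorem~\ref{t926}), your elementary route suffices and is arguably preferable for readability.
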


We now review the theory of the singular measure attached to a premeasure of $\kappa$-bounded variation c.f. \cite{K1} 4.3, \cite{K2} \S\,1 and \cite{K4} \S\,4. The $\kappa$-entropy of a finite set $\emptyset \neq E \subset \T$ is defined to be
\begin{equation} \label{eq:895n}
 \kappa (E) = \sum_I \kappa (|I|) \; .
\end{equation}
Here the arcs $I$ are the connected components of $\T \setminus E$. Note that since $\kappa (x) \ge x$ we have $\kappa (E) \ge 1$. One sets $\kappa (\emptyset) = 0$. The definition is extended to arbitrary closed sets $\emptyset \neq E \subset \T$ by setting
\begin{equation}
 \label{eq:893}
\kappa (E) = \sup \{ \kappa (E_1) \tei E_1 \subset E \; \text{finite} \} \in [1,\infty] \; .
\end{equation}
Formula \eqref{eq:895n} remains true for any closed subset $\emptyset \neq E \subset \T$ with $\lambda (E) = 0$. To see this one uses an approximation argument and the inequality
\[
 \sum_i \kappa (|I_i|) \ge \kappa (|J|) \; .
\]
Here $J$ is an arc and $\{ I_i \}$ is an at most countable family of pairwise disjoint arcs with
\[
 \coprod_i I_i \subset J \quad \text{and} \quad \sum_i |I_i| = |J|\; .
\]
For closed sets $E \subset E'$ we have $\kappa (E) \le \kappa (E')$. For closed $E \neq \emptyset$ there is the formula
\begin{equation}
 \label{eq:894}
\kappa (E) = \int_{\T} \kappa' (\dist (\zeta , E)) \, \lambda (\zeta) \; .
\end{equation}
Here $\dist (\zeta, \eta)$ is the length of the shorter arc between $\zeta , \eta \in \T$ normalized by $\dist (1,-1) = 1$. Moreover
\[
 \dist (\zeta , E) = \inf \{ \dist (\zeta , \eta) \tei \eta \in E \} \; .
\]
The definition of the derivative $\kappa' \ge 0$ for general $\kappa$ is given in \cite{K4} 2.3. We are only interested in the case where $\kappa$ is differentiable in $(0,1]$ and we set $\kappa' (0) = \lim_{x\to 0+} x^{-1} \kappa (x) \le \infty$. In particular for $\gamma \ge 0$ and closed $\emptyset \neq E \subset \T$ one has
\begin{equation}
 \label{eq:895}
\kappa_{\gamma} (E) = \Gamma (\gamma +1)^{-1} \int_{\T} |\log (\dist (\zeta , E))|^{\gamma} \, d\lambda (\zeta) \; .
\end{equation}
Premeasures $\mu \in P_{\kappa} (\T)$ can be extended $\sigma$-additively to more general open and closed subsets of $\T$ than just finite disjoint unions of arcs. For an open set $U \subset \T$ with $\kappa (\partial U) < \infty$ the series
\[
 \mu (U) := \sum_J \mu (J)
\]
over the connected components $J$ of $U$ is absolutely convergent. The $J$'s are (open) arcs and hence $\mu (J)$ is defined. 

For a closed set $F \subset \T$ with $\kappa (\partial F) < \infty$ set
\begin{equation}
 \label{eq:896}
\mu (F) := - \mu (\T \setminus F) \; .
\end{equation}
Note that $\mu (\T) = 0$ by definition of a premeasure. One has the inequality
\begin{equation}
 \label{eq:897}
|\mu (F)| = |\mu (\T \setminus F)| \le \halb \| \mu \|_{\kappa} \kappa (\partial F) \; .
\end{equation}
The closed sets $E \subset \T$ with $\lambda (E) = 0$ and $\kappa (E) < \infty$ are called $\kappa$-Carleson sets. As noted above we have $\kappa (E) = \sum \kappa (|I|)$ if $\T \setminus E = \coprod I$ is the decomposition into connected components and $E \neq \emptyset$. Closed subsets of $\kappa$-Carleson sets are again $\kappa$-Carleson sets. Let $\Fh_{\kappa}$ be the set of $\kappa$-Carleson sets. According to \eqref{eq:896}, $\mu$ extends to a function on $\Fh_{\kappa}$. If $\kappa' (0) = + \infty$ as for $\kappa = \kappa_{\gamma}$ with $\gamma > 0$ the condition $\kappa (E) < \infty$ already implies that $\lambda (E) = 0$. This follows from equation \eqref{eq:894}.

\begin{example} \label{t826n}
\rm For the $3$-adic Cantor set $E \subset \T$ we have 
\[
 \kappa (E) = \sum^{\infty}_{n=1} 2^{n-1} \kappa (3^{-n})\; .
\]
Hence $E$ is a $\kappa_{\gamma}$-Carleson set for every $\gamma \ge 0$. Note that we have $\varphi_3 (E) \subset E$. For $0 < \alpha < 1$ consider the function $\kappa^{\alpha} (x) = x^{\alpha}$. Then $E$ is a $\kappa^{\alpha}$-Carleson set if and only if $\alpha > \log 2 / \log 3$ the Hausdorff dimension of $E$. Similar facts hold for any integer $N \ge 2$ instead of $N = 3$. Thus for $s = 1$ there are non-empty closed subsets $E \subset \T$ with $\Sh E \subset E$ which are $\kappa_{\gamma}$-Carleson sets for any $\gamma \ge 0$. For $s \ge 2$ on the other hand every non-empty closed subset $E \subset \T$ with $\Sh E \subset E$ is equal to $\T$ by a theorem of Furstenberg \cite{F}. In particular there are no (forward) $\Sh$-invariant $\kappa$-Carleson sets for any $\kappa$ if $s \ge 2$. 
\end{example}

Let $\Bh_{\kappa} \supset \Fh_{\kappa}$ be the set of all Borel sets $B \subset \T$ with $\oB \in \Fh_{\kappa}$. Clearly $\Bh_{\kappa}$ is the union of all Borel algebras $\Bh (E)$ for $E \in \Fh_{\kappa}$. A $\kappa$-singular measure is a function $\sigma : \Bh_{\kappa} \to \R$ such that:\\
1) $\sigma^E = \sigma \, |_{\Bh (E)}$ is a finite real measure on $\Bh (E)$ for every $\kappa$-Carleson set $E$.\\
2) There is a constant $c = c_{\sigma} \ge 0$ such that
\[
 |\sigma (F)| \le c \kappa (F) \quad \text{for every} \; F \in \Fh_{\kappa} \; .
\]
By the uniqueness of the Jordan decomposition, the total variation measures $|\sigma^E|$ glue to a $\kappa$-singular measure denoted by $|\sigma|$ with $c_{|\sigma|} \le 2c_{\sigma}$. The usual norms $\| \sigma^E \| = |\sigma^E| (E)$ may be unbounded as $E$ varies.

Let $M_{\kappa} (\T)$ denote the real vector space of $\kappa$-singular measures. It becomes a Banach space with norm $\|\sigma \|_{\kappa}$ the infimum of the constants $c_{|\sigma|}$ in the above estimate. Let $M^+_{\kappa} (\T)$ be the cone of $\kappa$-singular measures $\sigma$ with $\sigma^E \ge 0$ i.e. $\sigma^E \in M^+ (E)$ for all $E \in \Fh_{\kappa}$. Any $\sigma \in M_{\kappa} (\T)$ has a unique decomposition $\sigma = \sigma_+ - \sigma_-$ with $\sigma_{\pm} \in M^+_{\kappa} (\T)$ singular to each other i.e. such that all $\sigma^E_+$ and $\sigma^E_-$ are mutually singular for $E \in \Fh_{\kappa}$. We have $\sigma_{\pm} = (\sigma^E_{\pm})$ and $|\sigma| = \sigma_+ + \sigma_-$. Any finite positive measure on the Borel algebra of a locally compact Hausdorff space with a countable basis of the topology is regular, c.f. \cite{E} VIII 1.12 Korollar. In particular for $\sigma \in M^+_{\kappa} (\T)$ the Borel measures $\sigma^E$ on $E$ are regular for $E \in \Fh_{\kappa}$. 

\begin{theorem}[Korenblum] \label{t825}
 For $\mu \in P_{\kappa} (\T)$ there is a unique extension of $\mu \, |_{\Fh_{\kappa}}$ to a $\kappa$-singular measure $\mu_s : \Bh_{\kappa} \to \R$. For $\mu \in P^+_{\kappa} (\T)$ we have $\mu_s \in M^+_{\kappa} (\T)$. On the other hand there is an absolute constant $A > 0$ such that for every $\sigma \in M^+_{\kappa} (\T)$ there is a $\mu \in P^+_{\kappa} (\T)$ with $\mu_s = \sigma$ and $\| \mu_s \|_{\kappa} \le A \| \sigma \|_{\kappa}$.
\end{theorem}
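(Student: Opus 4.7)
The statements being $\R$-linear in $\mu$, I invoke the Korenblum decomposition quoted above to write any $\mu \in P_\kappa(\T)$ as $\mu_1 - \mu_2$ with $\mu_i \in P^+_\kappa(\T)$ and $a_{\mu_i} \le \|\mu\|_\kappa$, reducing the existence of $\mu_s$ to the case $\mu \in P^+_\kappa(\T)$, where the positivity assertion $\mu_s \in M^+_\kappa(\T)$ must also be established. For uniqueness: fix a Carleson set $E$; any $\kappa$-singular measure $\sigma$ restricts to a finite signed Borel measure on the compact metric space $E$ and is therefore regular. Since the closed subsets of $E$ form a $\pi$-system generating $\Bh(E)$ and each is itself a Carleson set, $\sigma|_{\Fh_\kappa}$ determines $\sigma^E$, and hence $\sigma$.

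\textbf{Construction of $\mu_s$ on a Carleson set.} Fix $\mu \in P^+_\kappa(\T)$ with $\mu(C) \ge -a\kappa(|C|)$ and a Carleson set $E$. For any closed $F \subset E$ with $\T \setminus F = \coprod_k J_k$, put
\[
\mu_s(F) \,:=\, -\sum_k \mu(J_k),
\]
consistent with the notation $\mu(F) = -\mu(\T \setminus F)$ of the text. Absolute convergence follows from $-\mu(J_k) \le a\kappa(|J_k|)$ together with the bound $\sum_k |\mu(J_k)| \le \|\mu\|_\kappa\,\kappa(F)$, obtained by completing finite subfamilies of $\{J_k\}$ to partitions of $\T$ into arcs and taking the supremum in the definition of $\|\cdot\|_\kappa$; this also yields $|\mu_s(F)| \le \|\mu\|_\kappa\,\kappa(F)$. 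I then extend $\mu_s$ additively to the ring $\Rh_E$ generated by the closed Carleson subsets of $E$.

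\textbf{Countable additivity --- the main obstacle.} To invoke Carath\'eodory I must show that $\mu_s$ is countably additive on $\Rh_E$; equivalently, $\mu_s(F_n)\to 0$ whenever $F_n\downarrow\emptyset$ in $\Rh_E$, i.e.\ $\mu(\T\setminus F_n)\to\mu(\T)=0$. My plan is to split the arcs in the decomposition of $\T\setminus F_n$ by size: the contribution of arcs of length below a threshold $\varepsilon$ is bounded by $\|\mu\|_\kappa \sum_{|J|<\varepsilon}\kappa(|J|) \le \|\mu\|_\kappa\,\kappa(F_n)$, which tends to $0$ since $\kappa(F_n)\downarrow 0$; the finitely many large arcs increase to a disjoint collection exhausting $\T$, and by the existence of left and right limits of $\hmu$ their total $\mu$-mass tends to $\mu(\T)=0$. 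Carath\'eodory then supplies finite signed Borel measures $\mu_s^E$ on each $\Bh(E)$; these are compatible as $E$ varies by the uniqueness step and glue to $\mu_s \in M_\kappa(\T)$ with $|\mu_s(F)| \le \|\mu\|_\kappa \kappa(F)$.

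\textbf{Positivity and inverse.} Positivity of $\mu_s$ when $\mu \in P^+_\kappa(\T)$ is cleanest via the correspondence \eqref{eq:8.92}: $f_\mu = \exp(-h_\mu) \in \Ah^1_\gamma$ admits a canonical Nevanlinna--Korenblum factorization in $\Ah_\gamma$ whose singular part matches $\mu_s$ and is \emph{a priori} a positive $\kappa$-singular measure. Alternatively, one argues directly: on finite sets $\mu_s(\{p_1,\ldots,p_n\}) = \sum_i \mu(\{p_i\}) \ge 0$ since atoms satisfy $\mu(\{p\}) \ge -a\kappa(0) = 0$, and positivity extends to all Carleson subsets by monotone approximation from below by finite subsets combined with the countable additivity just established. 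For the inverse map, given $\sigma \in M^+_\kappa(\T)$ I would define
\[
h_\sigma(z)\,=\,\int_\T \frac{\zeta+z}{\zeta-z}\,d\sigma(\zeta),
\]
with suitable truncations on the nested Carleson sets supporting $\sigma$, and use a Poisson-kernel estimate for $\RRe\,\frac{\zeta+z}{\zeta-z}$ together with formula \eqref{eq:894} to obtain $\RRe h_\sigma(z) \ge -c|\log(1-|z|)|^\gamma$ with $c$ a constant multiple of $\|\sigma\|_\kappa$; theorem \ref{t821} in reverse then yields the required $\mu \in P^+_\kappa(\T)$ with $\|\mu\|_\kappa \le A\,\|\sigma\|_\kappa$ for an absolute $A$, and the identity $\mu_s = \sigma$ is verified by comparing the forward construction on closed Carleson subsets and invoking the uniqueness step.
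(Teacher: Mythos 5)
Your attempt is considerably more ambitious than what the paper actually does: the paper proves only uniqueness (by the same $\pi$-system argument you give, which is fine) and for existence simply cites \cite{K1} 4.3 Theorem 6 and Corollary and \cite{K2} Theorem 2.3. So a self-contained Carath\'eodory construction would be a genuine contribution, but as written the two central steps are broken.

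The countable-additivity argument does not work. You reduce it to showing $\mu_s(F_n)\to 0$ for $F_n\downarrow\emptyset$ in the ring $\Rh_E$, and then bound a tail sum by $\|\mu\|_\kappa\,\kappa(F_n)$ ``which tends to $0$ since $\kappa(F_n)\downarrow 0$.'' But for every nonempty closed set $F$ one has $\kappa(F)=\sum_I\kappa(|I|)\ge\sum_I|I|=1$, so $\kappa(F_n)$ can never tend to $0$ unless $F_n$ is eventually empty. Worse, the reduction itself is ill-posed: the elements of the ring generated by the closed subsets of $E$ are not closed (typical elements are relatively open sets like $E\setminus F$), so $\kappa(F_n)$ is not even defined for them; and for genuinely closed $F_n\subset E$ the hypothesis $\bigcap F_n=\emptyset$ forces some $F_n=\emptyset$ by compactness, so the closed case is vacuous and gives no leverage on ring elements. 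The real work in Korenblum's argument is precisely to control the behaviour of the set function on non-closed ring elements (via inner regularity by closed subsets together with the tail estimate $\sum_{l>n}\kappa(|J_l|)\to 0$ for the complementary arcs of a fixed Carleson set); none of that appears here. The positivity argument has the same flaw in miniature: ``monotone approximation from below by finite subsets'' cannot detect a nonatomic measure on a Cantor-type Carleson set, on which every finite subset has measure zero. The correct inner approximation is by closed subsets $K\subset F$, and the needed inequality $\mu(K)\ge 0$ for closed Carleson $K$ must be proved directly from $\mu\in P^+_\kappa(\T)$ using the identity $\mu(K)=-\sum_k\mu(J_k)$ and the tail bound $\sum_{l>n}\kappa(|J_l|)\to0$ (together with the subadditivity $\kappa(x+y)\le\kappa(x)+\kappa(y)$ you also silently use to justify $\sum_k|\mu(J_k)|\le\|\mu\|_\kappa\,\kappa(F)$). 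Your alternative positivity argument via a ``canonical Nevanlinna--Korenblum factorization'' is circular, since that factorization is exactly the theory this theorem underpins.
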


\begin{proof}
 The uniqueness is clear since $\mu^E_s (F) = \mu (F)$ for all closed subsets $F$ of a given $\kappa$-Carleson set $E$ and these $F$ form a $\cap$-stable generator of $\Bh (E)$. For $\kappa = \kappa_1$, Korenblum proves the existence of $\mu_s$ and its properties in \cite{K1} 4.3.~Theorem 6 and Corollary and \cite{K2} Theorem 2.3. The proof generalizes without changes to general $\kappa$.
\end{proof}

For $\mu \in P_{\kappa} (\T)$ we have $|\mu_s (F)| \le \halb \| \mu \|_{\kappa} \kappa (F)$ for $F \in \Fh_{\kappa}$ by \eqref{eq:897}. Hence $\|\mu_s\|_{\kappa} \le \halb \| \mu\|_{\kappa}$ and therefore the linear map of Banach spaces
\[
 \fs : P_{\kappa} (\T) \longrightarrow M_{\kappa} (\T) \quad \text{sending $\mu$ to $\mu_s$}
\]
is continuous. Since $M_{\kappa} (\T) = M^+_{\kappa} (\T) - M^+_{\kappa} (\T)$ it follows from the theorem that $\fs$ is surjective.

\begin{remark}
 \label{t826}
The composition
\[
 \xymatrix{
M^+ (\T) \ar@{->>}[r]^{\omega} & P^+_{\kappa_0} (\T) \ar@{^{(}->}[r] & P^+_{\kappa} (\T) \ar[r]^{\fs} & M^+_{\kappa} (\T)
} 
\]
sends $\mu'$ to $\mu' \, |_{\Bh_{\kappa}} = \mu'_{\sing} \, |_{\Bh_{\kappa}}$. Here $\omega (\mu') = \mu' - \mu' (\T) \lambda$ is the map \eqref{eq:8.89n}.
\end{remark}

The next fact follows from proposition \ref{t26n}.

\begin{prop}
 \label{t31n}
For $0 \le \delta \le \gamma$ the inclusion $\Bh_{\kappa_{\gamma}} \subset \Bh_{\kappa_{\delta}}$ induces a bounded linear restriction operator
\[
 \res_{\delta,\gamma} : M_{\kappa_{\delta}} (\T) \longrightarrow M_{\kappa_{\gamma}} (\T) \quad \text{with} \; \| \res_{\delta , \gamma} \| \le 1\; .
\]
It maps $M^+_{\kappa_{\delta}} (\T)$ to $M^+_{\kappa_{\gamma}} (\T)$.
\end{prop}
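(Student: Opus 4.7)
The plan is to derive everything from Proposition \ref{t26n} and the formula $\kappa(E) = \sum_I \kappa(|I|)$ for the $\kappa$-entropy of a closed set of Lebesgue measure zero.

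First I would verify the set-theoretic inclusion $\Fh_{\kappa_{\gamma}} \subset \Fh_{\kappa_{\delta}}$, which then induces $\Bh_{\kappa_{\gamma}} \subset \Bh_{\kappa_{\delta}}$. Let $E \in \Fh_{\kappa_{\gamma}}$, so $E$ is closed, $\lambda(E) = 0$, and $\kappa_{\gamma}(E) < \infty$. Writing $\T \setminus E = \coprod_I I$ as a disjoint union of arcs, Proposition \ref{t26n} gives
\[
 \kappa_{\delta}(E) = \sum_I \kappa_{\delta}(|I|) \le \sum_I \kappa_{\gamma}(|I|) = \kappa_{\gamma}(E) < \infty \, ,
\]
so $E \in \Fh_{\kappa_{\delta}}$. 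The same inequality $\kappa_\delta(F) \le \kappa_\gamma(F)$ holds for every $F \in \Fh_{\kappa_\gamma}$ and will be used below.

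Next, given $\sigma \in M_{\kappa_{\delta}}(\T)$, define $\tau := \sigma \, |_{\Bh_{\kappa_{\gamma}}}$. For each $E \in \Fh_{\kappa_{\gamma}} \subset \Fh_{\kappa_{\delta}}$ the restriction $\tau^E = \sigma^E$ is a finite real measure on $\Bh(E)$, and by uniqueness of the Jordan decomposition these glue to yield $|\tau|^E = |\sigma|^E$. Thus $|\tau|$ is the restriction of $|\sigma|$ to $\Bh_{\kappa_{\gamma}}$. For $F \in \Fh_{\kappa_{\gamma}}$ we get
\[
 |\tau|(F) = |\sigma|(F) \le \|\sigma\|_{\kappa_{\delta}} \, \kappa_{\delta}(F) \le \|\sigma\|_{\kappa_{\delta}} \, \kappa_{\gamma}(F) \, .
\]
This shows that $\tau$ is a $\kappa_{\gamma}$-singular measure with $\|\tau\|_{\kappa_{\gamma}} \le \|\sigma\|_{\kappa_{\delta}}$, so $\res_{\delta,\gamma}$ is a bounded linear operator of norm at most $1$.

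Finally, the preservation of positive cones is immediate from the definition: if $\sigma \in M^+_{\kappa_{\delta}}(\T)$, then $\sigma^E \ge 0$ for all $E \in \Fh_{\kappa_{\delta}}$, in particular for all $E \in \Fh_{\kappa_{\gamma}}$, giving $\tau = \res_{\delta,\gamma}\sigma \in M^+_{\kappa_{\gamma}}(\T)$. No step presents a real obstacle here; the only point requiring care is the identification $|\tau| = |\sigma| \, |_{\Bh_{\kappa_{\gamma}}}$, which follows from the local nature of the Jordan decomposition and the compatibility of the measures $\sigma^E$ as $E$ varies.
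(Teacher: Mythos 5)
Your proof is correct and fills in the details exactly as the paper intends: it says the result "follows from proposition \ref{t26n}," and you use that proposition together with the formula $\kappa(E) = \sum_I \kappa(|I|)$ for closed Lebesgue-null sets to get $\Fh_{\kappa_\gamma} \subset \Fh_{\kappa_\delta}$, hence $\Bh_{\kappa_\gamma} \subset \Bh_{\kappa_\delta}$, and then push the norm estimate through. Your care about the identification $|\tau| = |\sigma|\,|_{\Bh_{\kappa_\gamma}}$ via uniqueness of the Jordan decomposition is the right point to flag, and the rest is routine.
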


The next theorem of Korenblum \cite{K2} Theorem 2.2 asserts that any $\kappa$-singular measure is concentrated on a countable union of $\kappa$-Carleson sets. He proves the statement for $\kappa = \kappa_1$ but again it is valid in general.

\begin{theorem}[Korenblum]
\label{t827}
For $\sigma \in M_{\kappa} (\T)$ there is a sequence $F_1 \subset F_2 \subset \ldots$ in $\Fh_{\kappa}$ such that for every $F \in \Fh_{\kappa}$ we have
\[
 \sigma (F) = \lim_{\nu\to\infty} \sigma (F \cap F_{\nu}) \quad \text{and} \quad |\sigma| (F) = \lim_{\nu\to\infty} |\sigma| (F \cap F_{\nu}) \; .
\]
\end{theorem}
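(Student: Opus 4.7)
My plan is to reduce to the positive case and then produce the exhausting sequence by a greedy construction. Since $|\sigma| = \sigma_+ + \sigma_-$ with $\sigma_\pm \in M^+_\kappa(\T)$ mutually singular, if the conclusion holds for $|\sigma|$ then
\[
|\sigma(F) - \sigma(F \cap F_\nu)| = |\sigma(F \setminus F_\nu)| \le |\sigma|(F \setminus F_\nu) = |\sigma|(F) - |\sigma|(F \cap F_\nu) \longrightarrow 0
\]
yields both assertions simultaneously, so I may assume $\sigma \in M^+_\kappa (\T)$. The construction relies on the fact that $\Fh_\kappa$ is closed under finite unions, which follows from \eqref{eq:894} together with $\dist(\zeta, E_1 \cup E_2) = \min(\dist(\zeta, E_1), \dist(\zeta, E_2))$ and the non-increasing character of $\kappa'$; these combine to give $\kappa(E_1 \cup E_2) \le \kappa(E_1) + \kappa(E_2)$.

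Set $M := \sup\{\sigma(F) : F \in \Fh_\kappa\} \in [0,\infty]$. If $M < \infty$, pick $E_n \in \Fh_\kappa$ with $\sigma(E_n) \to M$ and let $F_\nu := E_1 \cup \cdots \cup E_\nu \in \Fh_\kappa$; then $\sigma(F_\nu) \to M$, and for any $F \in \Fh_\kappa$ the identity
\[
\sigma(F) + \sigma(F_\nu) - \sigma(F \cap F_\nu) = \sigma(F \cup F_\nu) \le M,
\]
together with $F \cup F_\nu \in \Fh_\kappa$, forces $\sigma(F) - \sigma(F \cap F_\nu) \le M - \sigma(F_\nu) \to 0$, as required. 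This ``compare with $M$'' trick is the engine of the whole proof.

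If $M = \infty$, I would run a greedy algorithm: given $F_{n-1}$, set $r_n := \sup\{\sigma(E \setminus F_{n-1}) : E \in \Fh_\kappa\}$ and pick $E_n \in \Fh_\kappa$ with $\sigma(E_n \setminus F_{n-1}) \ge (1 - 2^{-n}) r_n$ when $r_n < \infty$ and $\sigma(E_n \setminus F_{n-1}) \ge n$ otherwise, letting $F_n := F_{n-1} \cup E_n$. Applying the sup defining $r_n$ to $F \cup E_n$ and expanding yields the crucial bound
\[
\sigma(F \setminus F_n) \le r_n - \sigma(E_n \setminus F_{n-1}) \quad \text{for every} \; F \in \Fh_\kappa,
\]
so once $r_n$ is finite it follows that $r_{\nu+1} \le 2^{-\nu} r_\nu$ and the conclusion follows geometrically. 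To eliminate the scenario $r_n = \infty$ for every $n$, I would interleave the greedy step with a localization over a countable basis $\{V_m\}$ of open arcs of $\T$, decomposing $\T$ via the upper-semicontinuous local-mass function $\rho(x) := \inf_{V \ni x}\sup\{\sigma(E) : E \in \Fh_\kappa, E \subset \bar V\}$ into the open locus $\mathcal{N} = \{\rho < \infty\}$ (where the bounded-$M$ argument applies arc by arc) and the closed residual $\mathcal{P} := \T \setminus \mathcal{N}$ (handled via inner regularity of the finite Borel measures $\sigma^E$ on compact Carleson subsets of $\mathcal{P}$). The main obstacle is precisely this diagonal step: one must verify that the greedy-plus-localization construction leaves no $\sigma$-mass invisible to $\bigcup_\nu F_\nu$. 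The key inequality above reduces that to showing $r_n$ eventually becomes finite after the localization steps are carried out, which exploits that each individual Carleson $F$ has $\sigma(F) < \infty$ and that $\T$ is second countable.
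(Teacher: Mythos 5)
The paper does not give its own proof: it cites Korenblum's Theorem 2.2 in \cite{K2} and remarks that the argument there (for $\kappa = \kappa_1$) carries over to general $\kappa$. So there is no internal argument to compare against; I can only assess your proposal on its own terms.

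Your reduction to $\sigma \in M^+_{\kappa}(\T)$ is correct, as is the observation that $\Fh_{\kappa}$ is closed under finite unions via \eqref{eq:894} and the monotonicity of $\kappa'$, and the resulting inclusion--exclusion identity $\sigma(F) + \sigma(F_{\nu}) - \sigma(F \cap F_{\nu}) = \sigma(F \cup F_{\nu})$ is legitimate because all four sets lie in $\Bh(F \cup F_{\nu})$ where $\sigma$ is a bona fide finite measure. In the case $M := \sup_{F \in \Fh_{\kappa}} \sigma(F) < \infty$, your argument is complete and clean.

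The case $M = \infty$ is where the proposal genuinely fails, and cannot be repaired without a new idea. This case does occur: by theorem \ref{t829n}, the extension $\tsigma$ of $\sigma$ is an infinite positive measure on $\Bh(\T)$ precisely when $M = \infty$, and infinite $\kappa$-thin measures arise naturally in the paper (e.g.\ in the Eigen--Prasad discussion and in $\tM^+_{\kappa}(\T)$). In this regime the greedy algorithm as you have set it up produces no control at all: when $r_n = \infty$, the constraint $\sigma(E_n \setminus F_{n-1}) \ge n$ allows $E_n$ to ignore any fixed $F^* \in \Fh_{\kappa}$ carrying positive but small $\sigma$-mass, and the ``crucial bound'' $\sigma(F\setminus F_n)\le r_n - \sigma(E_n\setminus F_{n-1})$ is vacuous. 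Nor is there a mechanism forcing $r_n$ ever to become finite. Your localization sketch does not close the gap either: the ``good'' open locus $\mathcal{N}$ is not compact, a general $F \in \Fh_{\kappa}$ may accumulate on $\partial\mathcal{N} \subset \mathcal{P}$ where the local sups blow up, and on the residual closed set $\mathcal{P}$ inner regularity gives, for each individual $F$, compact subsets approximating $\sigma^F(F\cap\mathcal{P})$, but no \emph{single} exhausting sequence working uniformly over the uncountable family $\Fh_{\kappa}$. So the statement you identify as ``the main obstacle'' is not a routine verification; it is the substance of the theorem in the unbounded case, and it remains unproved. Korenblum's argument in \cite{K2} is built differently, exploiting the specific structure of $\kappa$-Carleson sets rather than a purely abstract supremum-chasing scheme, and I do not see how to avoid some such additional structural input here.
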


\begin{cor} \label{t828n}
 In the situation of the theorem we have
\[
 \sigma (B) = \lim_{\nu \to\infty} \sigma (B \cap F_{\nu})
\]
for every $B \in \Bh (E)$ with $E \in \Fh_{\kappa}$. 
\end{cor}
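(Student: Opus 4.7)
The plan is to work inside the finite signed measure space $(E, \Bh(E), \sigma^E)$ and reduce the statement to the continuity of the finite measure $\sigma^E$ together with the second identity from Theorem \ref{t827}.

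First I would fix $E \in \Fh_\kappa$ and a Borel set $B \in \Bh(E)$. Note that each $F_\nu$ is closed, so $E \cap F_\nu$ is a closed subset of the $\kappa$-Carleson set $E$, hence itself a $\kappa$-Carleson set, and in particular $E \cap F_\nu \in \Bh(E)$. Set
\[
G = \bigcup_{\nu \ge 1} (E \cap F_\nu) \in \Bh(E).
\]
Since $\sigma^E$ is a finite signed measure on $\Bh(E)$ with total variation $|\sigma^E| = |\sigma|^E$, continuity of measure from below applied to the increasing sequence $E \cap F_\nu \nearrow G$ gives
\[
|\sigma^E|(G) = \lim_{\nu \to \infty} |\sigma^E|(E \cap F_\nu) = \lim_{\nu \to \infty} |\sigma|(E \cap F_\nu).
\]

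Next I would invoke Theorem \ref{t827} with $F = E$ to identify the last limit with $|\sigma|(E) = |\sigma^E|(E)$. Combined with the previous display this yields $|\sigma^E|(E \setminus G) = 0$, so $\sigma^E$ is concentrated on $G$. Consequently, for the given $B \in \Bh(E)$,
\[
\sigma^E(B \setminus G) = 0, \qquad \text{hence} \qquad \sigma^E(B) = \sigma^E(B \cap G).
\]
Continuity of the finite signed measure $\sigma^E$ from below on the increasing sequence $B \cap F_\nu \nearrow B \cap G$ (all members lying in $\Bh(E)$) then gives
\[
\sigma^E(B \cap G) = \lim_{\nu \to \infty} \sigma^E(B \cap F_\nu).
\]
Since $\sigma(B) = \sigma^E(B)$ and $\sigma(B \cap F_\nu) = \sigma^E(B \cap F_\nu)$ (noting that $\overline{B \cap F_\nu} \subset \overline{B} \cap F_\nu \in \Fh_\kappa$, so $B \cap F_\nu \in \Bh_\kappa$), combining the two displays yields the desired identity.

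There is no real obstacle here; the only points requiring care are the identification $|\sigma|^E = |\sigma^E|$ (which is built into the definition of $|\sigma|$ as the gluing of the local Jordan total variations) and the verification that the auxiliary sets $E \cap F_\nu$ and $B \cap F_\nu$ live in the right $\sigma$-algebras so that both $\sigma$ and $\sigma^E$ can be evaluated on them. Once these bookkeeping items are in place, the corollary is a direct consequence of Theorem \ref{t827} and the $\sigma$-additivity of finite signed measures.
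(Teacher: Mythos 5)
Your proof is correct, and it takes a genuinely different route from the paper's. The paper reduces immediately to $\sigma \in M^+_{\kappa}(\T)$ (justified since the second identity in Theorem~\ref{t827} transfers the convergence from $\sigma,|\sigma|$ to $\sigma_{\pm}$), invokes regularity of $\sigma^E$ to write $\sigma(B)$ as a supremum over compact $K\subset B$, applies Theorem~\ref{t827} to each such $K$, and then interchanges the two suprema using a second appeal to regularity for the sets $B\cap F_{\nu}$. You instead bypass the positivity reduction entirely: from the $|\sigma|$-half of Theorem~\ref{t827} applied to $F=E$ you deduce that $|\sigma^E|$ vanishes off $G=\bigcup_{\nu}(E\cap F_{\nu})$, so $\sigma^E$ is concentrated on $G$, and then continuity from below of the finite signed measure $\sigma^E$ along $B\cap F_{\nu}\nearrow B\cap G$ finishes the argument. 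Your route replaces inner regularity and the double-sup exchange with plain $\sigma$-additivity and works directly in the signed case; it does require the observation that restricting a finite signed measure to a measurable subset commutes with taking total variation (so that $|\sigma^E|$ agrees with $|\sigma|$ on $\Bh(E\cap F_{\nu})$), which follows from the Hahn decomposition and is implicit in the paper's gluing construction of $|\sigma|$. The paper's version keeps the argument at the level of compact sets, where $\sigma$ was originally defined; yours is perhaps more transparent about the measure-theoretic mechanism (concentration on a countable union of Carleson sets) that makes the corollary work, and in fact anticipates the viewpoint formalized later in the notion of $\kappa$-thin measures.
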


\begin{proof}
 We may assume that $\sigma \in M^+_{\kappa} (\T)$. The finite Borel measure $\sigma^E = \sigma \, |_{\Bh (E)}$ is regular as pointed out above. In particular we have
\[
 \sigma (B) = \sup_{K \subset B} \sigma (K)
\]
where $K$ ranges over all compact subsets of $B$. Since $K \subset E$ is closed we have $K \in \Fh_{\kappa}$ and hence theorem \ref{t827} gives
\[
 \sigma (K) = \sup_{\nu \ge 1} \sigma (K \cap F_{\nu}) \; .
\]
On the other hand, since $K \cap F_{\nu}$ ranges over all compact subsets of $B \cap F_{\nu}$ if $K$ varies over the compact subsets of $B$ we get by regularity:
\[
 \sigma (B \cap F_{\nu}) = \sup_{K \subset B} \sigma (K \cap F_{\nu}) \; .
\]
Hence we find:
\begin{align*}
 \sigma (B) & = \sup_{K \subset B} \sup_{\nu \ge 1} \sigma (K \cap F_{\nu}) \\
& = \sup_{\nu \ge 1} \sup_{K \subset B} \sigma (K \cap F_{\nu}) \\
& = \sup_{\nu \ge 1} \sigma (B \cap F_{\nu}) \\
& = \lim_{\nu \to 0} \sigma (B \cap F_{\nu}) \; .
\end{align*}
\end{proof}

\begin{defn} \label{t830n}
A possibly infinite positive measure $\tsigma \ge 0$ on $\Bh (\T)$ is called $\kappa$-thin if the following conditions hold:\\
i) There  is a sequence $F_1 \subset F_2 \subset \ldots$ in $\Fh_{\kappa}$ such that for every Borel set $B \subset \T$ we have:
\[
 \tsigma (B) = \lim_{\nu\to \infty} \tsigma (B \cap F_{\nu}) \; .
\]
Equivalently, $\tsigma$ is of the form $\tsigma = i_* \tsigma_G$ where $\tsigma_G \ge 0$ is a measure on $\Bh (G)$ for $G = \bigcup_{\nu \ge 1} F_{\nu}$ and $i : G \hookrightarrow \T$ is the inclusion. \\
ii) There is a constant $c = c_{\tsigma} \ge 0$ such that
\[
 |\tsigma (F)| \le c \kappa (F) \quad \text{for all} \; F \in \Fh_{\kappa} \; .
\]
The real cone of such measures is denoted by $\tilde{M}^+_{\kappa} (\T)$.
\end{defn}

Any $\kappa$-thin measure $\tsigma$ on $\T$ is $\sigma$-finite and singular with respect to Lebesgue measure. Note here that the sets $B_{\nu} = (\T \setminus G) \cup F_{\nu}$ form a countable Borel exhaustion of $\T$ with $\tsigma (B_{\nu}) < \infty$. In proposition \ref{t939n} below we will see that many ergodic measures on $\T$ are $\kappa$-thin.

\begin{theorem} \label{t829n}
 Every $\sigma \in M^+_{\kappa} (\T)$ has a unique extension $\tsigma$ to a $\kappa$-thin measure $\tsigma \ge 0$ on $\T$. The measure $\tsigma$ is finite if and only if there is a constant $c \ge 0$ with $\sigma (F) \le c$ for all $F \in \Fh_{\kappa}$. In this case we have $\tsigma (\T) \le c$. The restriction of a $\kappa$-thin measure $\tsigma$ on $\Bh (\T)$ to $\Bh_{\kappa}$ defines a $\kappa$-singular measure $\sigma = \tsigma \, |_{\Bh_{\kappa}}$ in $M^+_{\kappa} (\T)$. 
\end{theorem}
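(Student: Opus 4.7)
The plan is to leverage Korenblum's Theorem \ref{t827} and its Corollary \ref{t828n} directly for the existence of the extension, then use a ``common refinement'' argument for uniqueness.

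For existence in (1), fix $\sigma \in M^+_{\kappa}(\T)$ and choose an increasing sequence $F_1 \subset F_2 \subset \ldots$ in $\Fh_{\kappa}$ as furnished by Theorem \ref{t827}. For any Borel set $B \subset \T$ I would define
\[
\tsigma(B) := \lim_{\nu \to \infty} \sigma(B \cap F_{\nu}),
\]
noting that the sequence $\sigma(B \cap F_{\nu})$ is nondecreasing in $\nu$ (as each $\sigma^{F_{\nu}}$ is positive and $F_{\nu}$ is increasing), so the limit exists in $[0,\infty]$. Countable additivity over disjoint $B_n$ follows from the monotone convergence theorem: interchanging the limit in $\nu$ with the (monotone in $\nu$, nonnegative) sum over $n$ is permitted, and for each fixed $\nu$ the sum is additive because $\sigma^{F_{\nu}}$ is a measure. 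Condition ii) of Definition \ref{t830n} follows from $\sigma(F \cap F_{\nu}) \le c_{\sigma} \kappa(F \cap F_{\nu}) \le c_{\sigma} \kappa(F)$ and passing to the limit, while condition i) holds by construction with $G = \bigcup_{\nu} F_{\nu}$. The fact that $\tsigma$ extends $\sigma$ is immediate from Corollary \ref{t828n}: for $B \in \Bh_{\kappa}$ with $\oB \in \Fh_{\kappa}$, we have $\sigma(B) = \lim_{\nu} \sigma(B \cap F_{\nu}) = \tsigma(B)$.

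For uniqueness, suppose $\tsigma'$ is another $\kappa$-thin extension of $\sigma$ associated with an increasing sequence $F'_1 \subset F'_2 \subset \ldots$ in $\Fh_{\kappa}$. Set $H_n = F_n \cup F'_n$. A finite union of $\kappa$-Carleson sets is again a $\kappa$-Carleson set, which I would verify from formula \eqref{eq:894} using $\dist(\zeta, F \cup F') = \min(\dist(\zeta, F), \dist(\zeta, F'))$ together with the inequality $\kappa'(\min(a,b)) = \max(\kappa'(a),\kappa'(b)) \le \kappa'(a) + \kappa'(b)$, which holds since $\kappa'$ is nonnegative and nonincreasing. Thus $H_n \in \Fh_{\kappa}$ and the sequence is increasing. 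For any Borel $B \subset \T$, the inclusions $F_n \subset H_n$ together with monotonicity of $\tsigma$ force $\tsigma(B \cap H_n) \to \tsigma(B)$; analogously $\tsigma'(B \cap H_n) \to \tsigma'(B)$. Since $B \cap H_n \in \Bh_{\kappa}$ and both $\tsigma$, $\tsigma'$ restrict to $\sigma$ there, we get $\tsigma(B \cap H_n) = \sigma(B \cap H_n) = \tsigma'(B \cap H_n)$, and the claim follows by taking $n \to \infty$.

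For (2), if $\sigma(F) \le c$ uniformly on $\Fh_{\kappa}$ then $\tsigma(\T) = \lim_{\nu} \sigma(F_{\nu}) \le c$; conversely if $\tsigma(\T) < \infty$ then $\sigma(F) = \tsigma(F) \le \tsigma(\T)$ bounds $\sigma$ on $\Fh_{\kappa}$. For (3), given a $\kappa$-thin $\tsigma$ and setting $\sigma := \tsigma\,|_{\Bh_{\kappa}}$, condition ii) of Definition \ref{t830n} is exactly condition 2) defining a $\kappa$-singular measure, and for any $E \in \Fh_{\kappa}$, the measure $\sigma^E = \tsigma\,|_{\Bh(E)}$ is finite since $\sigma^E(E) = \tsigma(E) \le c_{\tsigma} \kappa(E) < \infty$, giving condition 1).

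The main obstacle is uniqueness: the $\kappa$-thin structure of $\tsigma$ is tied to a particular exhausting sequence $(F_{\nu})$, and the measure $\tsigma$ on the whole Borel algebra $\Bh(\T)$ genuinely contains more information than its restriction to $\Bh_{\kappa}$ (the latter can fail to record mass outside the union $G$). The common-refinement trick via $H_n = F_n \cup F'_n$ is what bridges this gap, and it depends crucially on the subadditivity of $\kappa$ encoded in the formula \eqref{eq:894} together with the monotonicity of $\kappa'$.
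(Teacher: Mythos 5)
Your proof is correct, but it differs from the paper's argument in both of its main steps, and in each case trades one technical device for another.

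For \textbf{existence}, the paper takes the premeasure $\sigma$ restricted to the ring $\Eh = \bigcup_\nu \Bh(F_\nu)$ and invokes the Carath\'eodory extension theorem (\cite{E} II Korollar 5.7) to extend it to a $\sigma$-finite measure $\tsigma_G$ on $\Bh(G)$, then pushes forward along $G \hookrightarrow \T$. You instead write the extension down explicitly as $\tsigma(B) = \lim_\nu \sigma(B\cap F_\nu)$ and verify countable additivity by interchanging the monotone limit in $\nu$ with a nonnegative series. Both routes are valid; yours is more hands-on and bypasses the abstract extension theorem.

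For \textbf{uniqueness}, the paper keeps the two exhausting sequences $(F_n)$ and $(F'_\nu)$ separate and produces the iterated double limit $\lim_\nu\lim_n \sigma(B\cap F'_\nu\cap F_n)$ versus $\lim_n\lim_\nu \sigma(B\cap F'_\nu\cap F_n)$, with equality justified by monotone exchange of suprema (or by identifying both with $\tsigma(B\cap G\cap G')$). You instead form the common refinement $H_n = F_n\cup F'_n$ and squeeze $\tsigma(B)$ and $\tsigma'(B)$ against the shared values $\sigma(B\cap H_n)$. Your route is cleaner once one knows the refinement is legal, but it requires the auxiliary lemma that $\Fh_\kappa$ is closed under finite unions --- which you supply via the integral formula \eqref{eq:894} together with $\dist(\zeta, F\cup F') = \min(\dist(\zeta, F),\dist(\zeta, F'))$ and monotonicity of $\kappa'$. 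That lemma is correct, and indeed a natural fact to have on record, but the paper's double-limit argument deliberately sidesteps it. So the two proofs agree on the finiteness criterion and the converse statement, and differ in how they handle both the construction and the uniqueness.
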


\begin{proof}
 Choose a sequence $F_1 \subset F_2 \subset \ldots$ in $\Fh_{\kappa}$ for $\sigma$ as in theorem \ref{t827} and set $G = \bigcup_{\nu \ge 1} F_{\nu}$. Then $\sigma$ defines a $\sigma$-additive and $\sigma$-finite positive content $\sigma \, |_{\Eh}$ on the ring $\Eh = \bigcup_{\nu \ge 1} \Bh (F_{\nu})$ in $G$. The $\sigma$-algebra generated by $\Eh$ in $G$ is $\Bh (G)$. Hence $\sigma \, |_{\Eh}$ has a unique extension to a $\sigma$-finite measure $\tsigma_G$ on $G$, c.f. \cite{E} II Korollar 5.7. We set $\tsigma = i_* \tsigma_G$ where $i : G \hookrightarrow \T$ is the inclusion. 
For $F \in \Fh_{\kappa}$ we have
\begin{align*}
 \tsigma (F) & = \tsigma_G (F \cap G) = \lim_{\nu\to\infty} \tsigma_G (F \cap F_{\nu}) \quad \text{since $\tsigma_G$ is a measure} \\
& = \lim_{\nu\to\infty} \sigma (F \cap F_{\nu}) \quad \text{since} \; \tsigma_G = \sigma \; \text{on} \; \Bh (F_{\nu}) \subset \Eh \\
& = \sigma (F) \quad \text{by theorem \ref{t827}.} 
\end{align*}
It follows that $\tsigma$ is a $\kappa$-thin measure on $\T$. Moreover we have $\tsigma \, |_{\Fh_{\kappa}} = \sigma \, |_{\Fh_{\kappa}}$ and therefore $\tsigma \, |_{\Bh_{\kappa}} = \sigma$ by the uniqueness assertion in theorem \ref{t825}. 

Now let $\tsigma'$ be another $\kappa$-thin extension of $\sigma$. Choose a sequence $F'_1 \subset F'_2 \subset \ldots $ in $\Fh_{\kappa}$ such that
\[
 \tsigma' (B) = \lim_{\nu\to\infty} \tsigma' (B \cap F'_{\nu}) \quad \text{for Borel sets}\; B \subset \T \; .
\]
Set $G' = \bigcup_{\nu \ge 1} F'_{\nu}$. For $B \subset \T$ Borel we have:
\begin{align*}
 \tsigma' (B) & = \lim_{\nu \to\infty} \tsigma' (B \cap F'_{\nu}) \\
& = \lim_{\nu\to\infty} \sigma (B \cap F'_{\nu}) \quad \text{since} \; B \cap F'_{\nu} \in \Bh_{\kappa} \; .
\end{align*}
Applying corollary \ref{t828n} to the Borel set $B \cap F'_{\nu}$ in $E = F'_{\nu}$ we get
\[
 \sigma (B \cap F'_{\nu}) = \lim_{n \to \infty} \sigma (B \cap F'_{\nu} \cap F_n)
\]
and hence
\[
 \tsigma' (B) = \lim_{\nu\to\infty} \lim_{n \to\infty} \sigma (B \cap F'_{\nu} \cap F_n) \; .
\]
Similarly
\[
 \tsigma (B) = \lim_{n \to\infty} \lim_{\nu\to\infty} \sigma (B \cap F'_{\nu} \cap F_n) \; .
\]
This implies $\tsigma' (B) = \tsigma (B)$ since both double limits are equal to $\tsigma (B \cap G' \cap G)$. Their equality also follows from monotonicity replacing $\lim$'s by $\sup$'s.

Thus the extension of $\sigma$ to a $\kappa$-thin measure $\tsigma$ on $\Bh (\T)$ is unique. If there is a constant $c \ge 0$ with $\sigma (F) \le c$ for all $F \in \Fh_{\kappa}$ then $\tsigma (\T) = \sup_{\nu \ge 1} \tsigma (F_{\nu}) = \sup_{\nu \ge 1} \sigma (F_{\nu}) \le c$ as well. If on the other hand $\tsigma$ is finite, then
\[
 \sigma (F) = \tsigma (F) \le \tsigma (\T) < \infty \quad \text{for all} \; F \in \Fh_{\kappa} \; .
\]
The last assertion follows from the definitions.
\end{proof}

\begin{remark} \label{t832n}
 \rm By the theorem the restriction map $\tilde{M}^+_{\kappa} (\T) \silo M^+_{\kappa} (\T)$ is an isomorphism. The additive monoid $\tilde{M}^+_{\kappa} (\T)$ has the cancellation property since $M^+_{\kappa} (\T)$ has this property. Hence its Grothendieck group
\[
 \tilde{M}_{\kappa} (\T) := K_0 (\tilde{M}^+_{\kappa} (\T))
\]
is the set of equivalence classes $[\mu_1 , \mu_2]$ of pairs $(\mu_1 , \mu_2)$ with $\mu_1 , \mu_2 \in \tilde{M}^+_{\kappa} (\T)$ where $(\mu_1 , \mu_2) \sim (\mu'_1 , \mu'_2)$ if and only if $\mu_1 + \mu'_2 = \mu_2 + \mu'_1$. The addition is defined componentwise. The natural map 
\[
 \tilde{M}^+_{\kappa} (\T) \hookrightarrow \tilde{M}_{\kappa} (\T)
\]
sending $\mu$ to $[\mu , 0]$ is injective. The group $\tM_{\kappa} (\T)$ is ordered with $\tM^+_{\kappa} (\T)$ being the set of non-negative elements. Thus we have
\[
 [\mu_1 , \mu_2] \ge 0 \quad \text{if and only if} \; \mu_1 = \mu_2 + \mu \; \text{for some} \; \mu \in \tM^+_{\kappa} (\T) \; .
\]
With the $\R$-operation defined by $\lambda [\mu_1 , \mu_2] = [\lambda \mu_1 , \lambda \mu_2]$ for $\lambda \ge 0$ and $\lambda [\mu_1 , \mu_2] = [|\lambda| \mu_2 , |\lambda| \mu_1]$ for $\lambda < 0$ the group $\tilde{M}_{\kappa} (\T)$ becomes an $\R$-vector space and the natural map $\tilde{M}_{\kappa} (\T) \to M_{\kappa} (\T)$ sending $[\mu_1 , \mu_2]$ to $\mu_1 \, |_{\Bh_{\kappa}} - \mu_2 \, |_{\Bh_{\kappa}}$ is an isomorphism of real vector spaces. We define a  Banach space structure on $\tM_{\kappa} (\T)$ such that this isomorphism becomes an isometry.
\end{remark}

As an example, we discuss the case $\kappa = \kappa_0$:

\begin{prop} \label{t35n}
 We have $\tM^+_{\kappa_0} (\T) = M^+ (\T)_{\sing}$ and $\tM_{\kappa_0} (\T) = M (\T)_{\sing}$. The natural restriction map
\[
 \fs : M (\T) \longrightarrow M_{\kappa_0} (\T) \cong \tM_{\kappa_0} (\T) = M (\T)_{\sing}
\]
 sends a measure $\mu$ to its singular part, $\fs (\mu) = \mu_{\sing}$ in the canonical decomposition $\mu = \mu_a + \mu_{\sing}$ where $\mu_a \ll \lambda$ and $\mu_{\sing} \perp \lambda$.
\end{prop}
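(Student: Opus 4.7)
The plan reduces the claim to the observation that for $\kappa_0(x)=x$ the Carleson-type machinery collapses to ordinary measure theory on $\T$. For any closed $F\subset\T$ we have $\kappa_0(F)=\sum_I|I|=1-\lambda(F)$, so the condition $\kappa_0(F)<\infty$ is vacuous and $\Fh_{\kappa_0}$ is exactly the family of closed Lebesgue-null subsets of $\T$; consequently $\Bh_{\kappa_0}$ is the collection of Borel subsets of Lebesgue-null closed sets, and the boundedness condition in Definition~\ref{t830n}.ii reads simply $\tsigma(F)\le c$ for $F\in\Fh_{\kappa_0}$.

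First I would establish $\tM^+_{\kappa_0}(\T)=M^+(\T)_{\sing}$. For the inclusion ``$\subset$'', given a $\kappa_0$-thin $\tsigma$ with increasing concentration sequence $F_\nu\in\Fh_{\kappa_0}$ and $G=\bigcup_\nu F_\nu$, condition (i) of Definition~\ref{t830n} yields $\tsigma(B)=\lim_\nu\tsigma(B\cap F_\nu)\le c$ for every Borel $B$, so $\tsigma$ is finite, and $\tsigma(\T\setminus G)=0$ combined with $\lambda(G)=0$ gives $\tsigma\perp\lambda$. For the converse, take $\mu\in M^+(\T)_{\sing}$ concentrated on a Borel null set $N$; by inner regularity of the finite Borel measure $\mu$ on the compact metric space $\T$ one produces compact subsets $K_\nu\subset N$ with $\mu(K_\nu)\nearrow\mu(\T)$, and setting $F_\nu:=K_1\cup\cdots\cup K_\nu\in\Fh_{\kappa_0}$, the union $G:=\bigcup_\nu F_\nu\subset N$ satisfies $\mu(\T\setminus G)=0$. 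Then $\mu(B)=\lim_\nu\mu(B\cap F_\nu)$ for every Borel $B$ and $\mu(F)\le\mu(\T)$ for all $F\in\Fh_{\kappa_0}$, exhibiting $\mu$ as $\kappa_0$-thin.

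The equality $\tM_{\kappa_0}(\T)=M(\T)_{\sing}$ then follows by passing to Grothendieck groups: the left-hand side is $K_0(\tM^+_{\kappa_0}(\T))$ by Remark~\ref{t832n}, while the right-hand side equals $K_0(M^+(\T)_{\sing})$ since any finite signed measure singular to $\lambda$ splits via its Jordan decomposition into two elements of $M^+(\T)_{\sing}$.

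Finally, to identify $\fs$ with $\mu\mapsto\mu_{\sing}$, by linearity it suffices to consider $\mu\in M^+(\T)$ with Lebesgue decomposition $\mu=\mu_a+\mu_{\sing}$. Every $B\in\Bh_{\kappa_0}$ satisfies $\lambda(B)=0$ (its closure being Lebesgue-null), hence $\mu_a(B)=0$ and $\mu(B)=\mu_{\sing}(B)$. Therefore $\mu|_{\Bh_{\kappa_0}}=\mu_{\sing}|_{\Bh_{\kappa_0}}$ as $\kappa_0$-singular measures, and under the isomorphism $M_{\kappa_0}(\T)\cong\tM_{\kappa_0}(\T)=M(\T)_{\sing}$ this simply recovers $\mu_{\sing}$. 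No serious obstacle arises; the only step that demands care is verifying the ``$\lim$'' condition in Definition~\ref{t830n}.i by compact exhaustion of a Lebesgue-null support of $\mu$.
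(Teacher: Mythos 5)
Your proof is correct and takes essentially the same approach as the paper: identify $\Fh_{\kappa_0}$ as the closed Lebesgue-null subsets of $\T$, use inner regularity of positive singular measures for the reverse inclusion, pass to Grothendieck groups, and identify $\fs(\mu)$ by observing that $\mu$ and $\mu_{\sing}$ agree on $\Bh_{\kappa_0}$. One small inaccuracy: by the paper's definition \eqref{eq:893}, $\kappa_0(E) = 1$ for every non-empty closed $E$ (not $1-\lambda(E)$ as you write), since the formula $\kappa(E)=\sum_I\kappa(|I|)$ is only asserted for $\lambda$-null $E$; this is harmless because the condition $\lambda(E)=0$ is already built into the definition of a Carleson set, so $\Fh_{\kappa_0}$ is correctly identified either way.
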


\begin{proof}
 For every finite set $E \neq \emptyset$ we have $\kappa_0 (E) = 1$. By definition \eqref{eq:893} the same is true for any closed set $F \neq \emptyset$.  By definition $\Fh_{\kappa_0}$ thus consists of the closed Lebesgue null sets in $\T$. By definition \ref{t830n} and theorem \ref{t829n} the positive $\kappa_0$-thin measures are the bounded measures on $\Bh (\T)$ for which a set of full measure $G = \bigcup^{\infty}_{\nu=1} F_{\nu}$ exists with $F_1 \subset F_2 \subset \ldots$ a sequence of closed Lebesgue null sets. Hence $\tM^+_{\kappa_0} (\T) \subset M^+ (\T)_{\sing}$. On the other hand, every measure $\mu \in M^+ (\T)_{\sing}$ is regular. Choose a Borel set $G$ with $\lambda (G) = 0$ and $\mu (\T) = \mu (G)$. We have
\[
 \mu (G) = \sup_{K \subset G} \mu (K)
\]
where $K$ ranges over the compact subsets of $G$. Hence there exists a sequence $K_1 \subset K_2 \subset \ldots $ with $K_{\nu} \subset G$ and hence $\lambda (K_{\nu}) = 0$ such that
\[
 \mu (\T) = \lim_{\nu \to \infty} \mu (K_{\nu})\, .
\]
Thus condition i) in definition \ref{t830n} is satisfied, ii) being clear by the above. This shows the reverse inclusion $M^+ (\T)_{\sing} \subset \tM^+_{\kappa_0} (\T)$. In particular $\mu_{\sing} \in M^+ (\T)_{\sing} = \tM^+_{\kappa_0} (\T)$ is the unique (c.f. theorem \ref{t829n}) extension of $\mu_{\sing} \, |_{\Bh_{\kappa_0}}$ to a positive $\kappa_0$-thin measure on $\T$. Hence we have $\mu_{\sing} = \fs (\mu_{\sing})$ and therefore $\mu_{\sing} = \fs (\mu)$ because $\fs (\mu_a) = 0$. The extension to real  measures follows.
\end{proof}

\begin{notation} \label{t37n}
\rm By $M^{\kappa}_+ (\T)$ we denote the set of measures $\mu \in M^+ (\T)$ which satisfy condition i) in definition \ref{t830n}. We set 
\[
 M^{\kappa} (\T) = \{ \mu - \mu (\T) \lambda \tei \mu = \mu_1 - \mu_2 \; \text{for some} \; \mu_1 , \mu_2 \in M^{\kappa}_+ (\T) \} \; .
\]
This is compatible with the previous definition of $M^{\kappa_0} (\T)$ in \eqref{eq:63n}. We may view $M^{\kappa}_+ (\T) \subset \tM^+_{\kappa} (\T)$ as the set of positive bounded $\kappa$-thin measures. Note that the canonical linear map
\[
 M^{\kappa} (\T) \hookrightarrow M_{\kappa} (\T) \cong \tM_{\kappa} (\T) \; , \; \sigma \longmapsto \sigma_s \longmapsto \tsigma_s
\]
is injective. 
\end{notation}

We finally discuss the relation between cyclicity and $\kappa$-singular measures. Recall the $\C$-algebra $\Ah_{\gamma}$ defined using \eqref{eq:8.91}. For $R > 0$ let $\Ah_{\gamma} (R) \subset \Ah_{\gamma}$ be the set of holomorphic functions $f$ on $D$ with
\[
 \| f \|_{\Ah_{\gamma} (R)} := \sup_{z \in D} |f (z)| \exp (-R |\log (1-|z|)|^{\gamma}) < \infty \; .
\]
With this norm, $\Ah_{\gamma} (R)$ becomes a Banach space and for $R_1 < R_2$ the inclusion $\Ah_{\gamma} (R_1) \hookrightarrow \Ah_{\gamma} (R_2)$ is continuous. Choose a sequence $R_1 < R_2 < \ldots$ with $\lim_{\nu \to \infty} R_{\nu} = \infty$ and give $\Ah_{\gamma}$ the locally convex inductive limit topology. This is the finest locally convex topology such that all inclusion $\Ah_{\gamma} (R_{\nu}) \hookrightarrow \Ah_{\gamma}$ become continuous. It exists and is Hausdorff because the topology of uniform convergence on compact subsets of $D$ is a Hausdorff locally convex topology on $\Ah_{\gamma}$ for which all inclusions $\Ah_{\gamma} (R) \hookrightarrow \Ah_{\gamma}$ are continuous. The inductive limit topology on $\Ah_{\gamma}$ does not depend on the choice of the sequence $(R_{\nu})$. The Hausdorff $LB$-space $\Ah_{\gamma}$ is in addition a unital topological algebra. A sequence $(f_n)$ in $\Ah_{\gamma}$ converges if and only if there is some $R > 0$ such that all $f_n$ are in $\Ah_{\gamma} (R)$ and $(f_n)$ converges in $\Ah_{\gamma} (R)$. An element $f \in \Ah_{\gamma}$ is called cyclic if $f\Ah_{\gamma}$ is dense in $\Ah_{\gamma}$. For $\gamma = 0$ this topology on $\Ah_0 = H^{\infty} (D)$ is the usual one on $H^{\infty} (D)$. 

According to \eqref{eq:8.92} every $f \in \Ah^1_{\gamma}$ is of the form $f = f_{\mu} = \exp (-h_{\mu})$ for a uniquely determined premeasure $\mu \in P^+_{\kappa_{\gamma}} (\T)$. Let $\sigma_f = \mu_s \in M^+_{\kappa_{\gamma}} (\T)$ denote the corresponding $\kappa_{\gamma}$-singular measure. The following result is a special case of \cite{K2} Corollary 1.1.1:

\begin{theorem}[Korenblum] \label{t828}
An element $f \in \Ah^1_1$ is cyclic in $\Ah_1$ if and only if $\sigma_f = 0$.
\end{theorem}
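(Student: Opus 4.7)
The plan is to handle the two implications separately, with sufficiency being the substantial direction.

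\emph{Necessity ($f$ cyclic $\Rightarrow \sigma_f=0$).} I would argue by contrapositive. Assume $\sigma_f\ne 0$ and, using theorems \ref{t825} and \ref{t827}, choose a $\kappa_1$-Carleson set $F\subset\T$ with $\sigma_f(F)\ne 0$. The decisive observation is a \emph{monotonicity principle}: for any $g\in\Ah^1_1$, writing $g=f_\nu$ with $\nu\in P^+_{\kappa_1}(\T)$, the product $fg=\exp(-h_{\mu+\nu})$ corresponds to the premeasure $\mu+\nu$, so by the definition of $\mathfrak{s}$ its $\kappa_1$-singular measure equals $\sigma_f+\nu_s\ge\sigma_f$ on $\Bh_{\kappa_1}$. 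Hence no zero-free multiplier can decrease the singular mass on $F$ below $\sigma_f(F)$. Korenblum's duality (\cite{K4}~\S\,5) exhibits $P_{\kappa_1}(\T)$ as the dual of a separable Banach space of continuous functions on $\T$; testing against a probe concentrated on $F$ produces a continuous linear functional $L$ on $\Ah_1$ with $|L(fg)-L(1)|\ge\sigma_f(F)>0$ uniformly in $g\in\Ah^1_1$, contradicting $fg\to 1$. For general $g\in\Ah_1$, one first cancels the zeros of $g$ by a $\kappa_1$-admissible Blaschke-type product in $\Ah_1$ (existence of such products is part of Korenblum's theory, cf.\ theorem \ref{t22}) to reduce to the zero-free case $g\in\Ah^1_1$.

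\emph{Sufficiency ($\sigma_f=0\Rightarrow f$ cyclic).} Here I would construct multipliers $g_n\in\Ah_1$ with $fg_n\to 1$ in $\Ah_1$. The starting point is that $\mu_s=0$ allows $\mu\in P^+_{\kappa_1}(\T)$ to be approximated from below by bounded absolutely continuous measures $\mu_n=\alpha_n\lambda\in M^+(\T)$ with $0\le\alpha_n\nearrow\frac{d\mu}{d\lambda}$ on the absolutely continuous part, in such a way that $\mu-\mu_n\to 0$ in $\kappa_1$-variation. Each $f_{\mu_n}=\exp(-h_{\mu_n})$ then lies in $H^\infty\subset\Ah_1$, is outer, hence cyclic in $H^2$ and (since multiplication by $z^k$ is dense in $\Ah_1$ around $H^2$-approximants) cyclic in $\Ah_1$. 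Writing $f=f_{\mu_n}\cdot g_n$ with $g_n=\exp(-h_{\mu-\mu_n})\in\Ah^1_1$ and choosing $h_n\in\Ah_1$ with $f_{\mu_n}h_n\to 1$, a diagonal extraction should yield $f\cdot(g_n h_{n})\to 1$.

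The main obstacle is keeping the multipliers $g_nh_n$ in a \emph{single} subspace $\Ah_1(R)$, since convergence must take place in the inductive-limit topology: the growth exponent $r$ in \eqref{eq:8.91} must be controlled uniformly as the classical approximants $h_n$ of $f_{\mu_n}$ blow up. Korenblum bypasses this diagonal-extraction difficulty in \cite{K2} by a direct construction: he partitions $\T$ by arcs adapted to level sets of the Poisson integral of $\mu$, and builds the approximating multipliers explicitly via a covering/partition argument tied to $\kappa_1$-entropy estimates, exploiting exactly the fact that $\sigma_f=0$ forces arc-sums of the form $\sum_j\kappa_1(|C_j|)$ attached to concentrations of $\mu$ to vanish in the limit. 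This entropy balancing is the analytic heart of the theorem and is where the concrete growth function $\kappa_1(x)=x\log(e/x)$ enters decisively.
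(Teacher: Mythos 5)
The paper does not prove this statement; it cites it as a special case of \cite{K2}~Corollary~1.1.1 and leaves the argument to Korenblum's original work. Your sketch is therefore an attempt to reconstruct that argument, and --- as you yourself half-concede at the end --- it does not get there.

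In the necessity direction there are two real gaps. The ``monotonicity principle'' is correct but only covers zero-free multipliers $g\in\Ah^1_1$, whereas cyclicity asks for $fg_n\to 1$ with $g_n\in\Ah_1$ that will generally have zeros (polynomials, in particular). Your proposed fix --- divide out the zeros by a ``$\kappa_1$-admissible Blaschke-type product in $\Ah_1$'' --- is not available: theorem~\ref{t22} records only a \emph{necessary} modulus condition, and the actual zero sets of $\Ah_1$-functions are characterized by Korenblum's density condition, which also constrains the \emph{arguments}; there is no canonical Blaschke divisor in $\Ah_1$ that one can peel off from an arbitrary $g$. More seriously, the central step --- turning a $\kappa_1$-Carleson set $F$ with $\sigma_f(F)\ne 0$ into a continuous linear functional $L$ on the LB-space $\Ah_1$ with $|L(fg)-L(1)|$ bounded below --- is asserted, not constructed. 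The duality in \cite{K4}~\S5 identifies $P_{\kappa_1}(\T)$ with the dual of a space of boundary data, not with the dual of $\Ah_1$, and bridging that gap (showing that $\sigma_f(F)$ is continuous in $fg$ along a convergent net in $\Ah_1$) is precisely the hard analytic content of Korenblum's theorem.

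In the sufficiency direction the opening step is wrong. The element $\mu\in P^+_{\kappa_1}(\T)$ attached to $f\in\Ah^1_1$ is a \emph{premeasure}, not a measure, so it has no Lebesgue or Radon--Nikodym decomposition; ``$\frac{d\mu}{d\lambda}$ on the absolutely continuous part'' is not defined, and $\sigma_f=0$ gives no approximation $\mu-\mu_n\to 0$ in $\kappa_1$-variation with $\mu_n=\alpha_n\lambda$ absolutely continuous. (Compare proposition~\ref{a31}: there $\mu_s=0$, yet $\mu$ is not a signed measure at all, so there is no absolutely continuous part to recover it from.) You then explicitly defer to Korenblum's entropy-balancing construction for the ``analytic heart'' of the proof without reproducing it. So the proposal identifies the right landmarks --- the $\kappa_1$-singular measure, the entropy function, the inductive-limit topology --- but supplies a complete argument for neither implication.
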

I have no doubt that similar methods as the ones used in \cite{K2} will give the corresponding statement for $\Ah^1_{\gamma}$ for every $\gamma > 0$. Since I have no reference and the argument is not so short I will state this only as a conjecture:

\begin{conjecture} \label{t41a}
 For $\gamma > 0$ an element $f \in \Ah^1_{\gamma«}$ is cyclic in $\Ah_{\gamma}$ if and only if $\sigma_f = 0$. 
\end{conjecture}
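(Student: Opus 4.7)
The plan is to follow Korenblum's strategy from \cite{K2} for $\gamma = 1$, replacing $\kappa_1$ by $\kappa_\gamma$ throughout. The necessary structural properties of $\kappa_\gamma$ are all verified in the excerpt: concavity, subadditivity \eqref{eq:893n}, the type-$(S)$ scaling $\kappa'_\gamma(x^2) = 2^\gamma \kappa'_\gamma(x)$, the limit $\kappa'_\gamma(0) = +\infty$, the integral formula \eqref{eq:895} for the $\kappa_\gamma$-entropy of a closed set, and the bijection $P^+_{\kappa_\gamma}(\T) \silo \Ah^1_\gamma$ of Theorem \ref{t821}. So the formal setup carries over verbatim from $\gamma = 1$ and only the analytic estimates need to be re-examined.

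For the direction $\sigma_f = 0 \Rightarrow f$ cyclic, I would write $f = \exp(-h_{\mu_f})$ with $\mu_f \in P^+_{\kappa_\gamma}(\T)$ whose $\kappa_\gamma$-singular measure vanishes. The first step is to approximate $\mu_f$ by absolutely continuous measures $\nu_n = \alpha_n \lambda \in M^+(\T)$ with bounded densities, so that $h_{\nu_n} \to h_{\mu_f}$ stays in $\Oh^-_{\kappa_\gamma}$ with a common constant in \eqref{eq:8.90} and converges in the $\Ah_\gamma$ topology. Each $f_n := \exp(-h_{\nu_n}) \in H^\infty(D)$ is then a classical outer function, hence cyclic in $H^p(D)$ for $p < \infty$, and a standard approximation argument promotes this to cyclicity in $\Ah_\gamma$. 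Since cyclic vectors are closed under limits with the appropriate uniform control, $f$ itself is cyclic in $\Ah_\gamma$. Conversely, if $f$ is cyclic, pick $g_n \in \Ah_\gamma$ with $fg_n \to 1$ in $\Ah_\gamma$; all $fg_n$ lie in some common $\Ah_\gamma(R)$. A Rouch\'e argument combined with the estimate $|fg_n - 1| < 1/2$ in a boundary annulus of controlled thickness (which follows from convergence in $\Ah_\gamma(R)$) shows that $fg_n$ is nonvanishing on all of $D$ for $n$ large, so $fg_n \in \Ah^1_\gamma$ after normalization. The core ingredient is then a generalization of Korenblum's \emph{domination principle} (\cite{K2} Lemma 1.2 and its consequences): for $f \in \Ah_\gamma \setminus 0$ and $h \in \Ah^1_\gamma$ with $fh \in \Ah^1_\gamma$, one has $\sigma_f \le \sigma_{fh}$ in $M^+_{\kappa_\gamma}(\T)$. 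Applied with $h = g_n$, together with the continuity of the map $f \mapsto \sigma_f$ at the constant function $1$, this yields $0 \le \sigma_f \le \sigma_{fg_n} \to 0$ and hence $\sigma_f = 0$.

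The main obstacle is establishing the domination principle in $\Ah_\gamma$. In \cite{K2} this rests on delicate potential-theoretic estimates for the harmonic extension of a $\kappa_1$-premeasure, built on quantitative control of harmonic measure of $\kappa_1$-Carleson sets. Reproducing these estimates with the kernel $\kappa'_\gamma(\dist(\zeta, E)) = c_\gamma^{-1}|\log \dist(\zeta, E)|^\gamma$ should essentially be a line-by-line adaptation thanks to the type-$(S)$ scaling, but the careful tracking of exponents of $|\log(1-|z|)|$ throughout all growth estimates is technically demanding and constitutes the bulk of the work. An alternative route, suggested by Seip \cite{S} Remark 3, would be to first establish a full canonical factorization $f = b \cdot s \cdot e$ in $\Ah_\gamma$ — with Blaschke-type factor $b$ whose zero set satisfies the density condition of Theorem \ref{t22}, singular factor $s$ encoding the $\kappa_\gamma$-singular measure $\sigma_f$, and outer factor $e$ — after which the conjecture can be read off immediately, since cyclicity would then be equivalent to triviality of the singular factor.
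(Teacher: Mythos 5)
The statement you have attacked is explicitly labelled a \emph{conjecture} in the paper, and the author says quite directly: ``I have no doubt that similar methods as the ones used in \cite{K2} will give the corresponding statement for $\Ah^1_{\gamma}$ for every $\gamma > 0$. Since I have no reference and the argument is not so short I will state this only as a conjecture.'' So there is no proof in the paper to compare against, and what you have written is in the same spirit as the author's own hint --- a programme for how the proof should go, following Korenblum's strategy for $\gamma = 1$. It is not a proof, and you are honest about this for the necessity direction when you say the domination principle ``constitutes the bulk of the work,'' which you do not carry out.

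Beyond that acknowledged gap, there are two further soft spots worth naming. First, in the sufficiency direction you assert that ``cyclic vectors are closed under limits with the appropriate uniform control'' without saying what that control is. Cyclicity is \emph{not} a closed condition in general: in $H^p$ one can have outer (hence cyclic) functions $f_n$ converging in norm to a function that is not outer, so the mere fact that $f_n = \exp(-h_{\nu_n})$ are classical outer functions and $f_n \to f$ in the $\Ah_\gamma$ topology does not by itself give cyclicity of $f$. Korenblum's own argument in \cite{K2} for this direction goes through a careful structure/domination analysis of premeasures with vanishing singular part, not through a limit of cyclic approximants, and you would need to either reproduce that or supply the missing ``uniform control'' statement and prove it. Second, in the necessity direction the domination principle you cite is stated for $f \in \Ah_\gamma \setminus 0$ and $h \in \Ah^1_\gamma$, and you apply it with $h = g_n$; but you only argue that $fg_n$ is eventually non-vanishing, not that $g_n$ itself lies in $\Ah^1_\gamma$, and you invoke a ``continuity of $f \mapsto \sigma_f$ at the constant function $1$'' that is neither stated in the paper nor established by you. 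In short: the outline is reasonable and consistent with what the author anticipates, but the two key ingredients --- the $\kappa_\gamma$-domination principle and the closure/continuity statements --- are precisely the hard content of the conjecture, and they remain open in your writeup just as in the paper.
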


Let $g$ be a singular inner function on $D$ corresponding to an ordinary singular measure $\mu_g \in M^+ (\T)_{\sing}$. Then $f = g / g (0) \in \Ah^1_{\gamma}$ corresponds to the premeasure (in fact a measure) $\mu = \omega (\mu_g) = \mu_g - \mu_g (\T) \lambda$. According to remark \ref{t826} we have $\sigma_f = \mu_g \, |_{\Bh_{\kappa_{\gamma}}}$. For $\gamma > 0$, conjecture \ref{t41a} if true would therefore tell us that $g$ was cyclic in $\Ah_{\gamma}$ if and only if $\mu_g$ vanished on the $\kappa_{\gamma}$-Carleson sets. For $\gamma = 1$ this is true by theorem \ref{t828}. 

In fact a  more precise result holds \cite{K3}. Let $\Ah^0_1 (n)$ be the subspace of $\Ah_1 (n)$ consisting of functions $f$ with 
\[
\max_{|z|=r} |f(z)| (1-|z|)^n \to 0 \quad \text{for} \; r \to 1- \; .
\]
The space $\Ph = \C [z]$ of polynomials is dense in $\Ah^0_1 (n)$ for every $n \ge 1$ and $f \in \Ah^0_1 (n)$ is called cyclic in $\Ah^0_1 (n)$ if $f \Ph$ is dense in $\Ah^0_1 (n)$.

\begin{theorem}[Korenblum]\label{t829}
 Consider a singular inner function $g$ with corresponding singular measure $\mu_g \in M^+ (\T)$. Then the following assertions are equivalent:
\begin{enumerate}
 \item [a)] $\mu_g$ vanishes on $\kappa_1$-Carleson sets i.e. $\mu_g \, |_{\Bh_{\kappa_1}} = 0$
\item [b)] $g$ is cyclic in any and hence all of the spaces $\Ah_1$ and $\Ah^0_1 (n)$ for $n \ge 1$.
\end{enumerate}

\end{theorem}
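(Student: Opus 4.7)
The plan is to reduce the statement to Theorem \ref{t828} and then bridge between cyclicity in $\Ah_1$ and cyclicity in the subspaces $\Ah_1^0(n)$ via the continuous inclusions available.

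\emph{Step 1 (Setup).} I would first normalize by setting $f = g/g(0) \in \Ah_1^1$. Since multiplication by a nonzero constant is a homeomorphism, $g$ is cyclic in $\Ah_1$ (resp.\ in $\Ah_1^0(n)$) if and only if $f$ is. Under the bijection \eqref{eq:8.92}, $f$ corresponds to the premeasure $\mu = \omega(\mu_g) = \mu_g - \mu_g(\T)\lambda \in P^+_{\kappa_1}(\T)$, and by remark \ref{t826} its attached $\kappa_1$-singular measure is $\sigma_f = \mu_g|_{\Bh_{\kappa_1}}$. Therefore condition (a) is exactly the statement that $\sigma_f = 0$.

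\emph{Step 2 (Equivalence of (a) with cyclicity in $\Ah_1$).} This follows immediately from theorem \ref{t828}: $f \in \Ah_1^1$ is cyclic in $\Ah_1$ iff $\sigma_f = 0$, which by Step 1 is iff (a) holds.

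\emph{Step 3 (Cyclicity in some $\Ah_1^0(n)$ implies cyclicity in $\Ah_1$).} Assume $g\Ph$ is dense in $\Ah_1^0(n)$. Since $1 \in \Ah_1^0(n)$ and the inclusion $\Ah_1^0(n) \hookrightarrow \Ah_1(n) \hookrightarrow \Ah_1$ is continuous, one obtains polynomials $p_k$ with $gp_k \to 1$ in $\Ah_1$, namely in the norm of $\Ah_1(n)$. For any fixed $h \in \Ah_1$ there is some $R>0$ with $h \in \Ah_1(R)$; multiplication by $h$ maps $\Ah_1(n) \to \Ah_1(n+R)$ continuously, so $hgp_k \to h$ in $\Ah_1(n+R)$, hence in $\Ah_1$. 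Thus $g\Ah_1$ is dense in $\Ah_1$, i.e.\ $g$ is cyclic in $\Ah_1$, which by Step 2 gives (a).

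\emph{Step 4 (The hard direction: (a) implies cyclicity in every $\Ah_1^0(n)$).} This is the main obstacle and requires the refined approximation technique of Korenblum \cite{K3}. The idea is, given $h \in \Ah_1^0(n)$ and $\varepsilon > 0$, to construct a polynomial $p$ with $\sup_{z \in D}|gp - h|(1-|z|)^n < \varepsilon$ \emph{together with} the uniform vanishing $\max_{|z|=r}|gp - h|(1-|z|)^n \to 0$ as $r \to 1-$, so that the approximation takes place in $\Ah_1^0(n)$ and not merely in $\Ah_1(n)$. The assumption $\mu_g|_{\Bh_{\kappa_1}} = 0$ is used to exhaust $\mu_g$ by atomic measures $\mu^{(k)} \le \mu_g$ supported on finite sets (well-distributed with respect to arc length) such that the corresponding singular inner functions $g_k$ divide $g$ in $\Ah_1$ in the sense that $g/g_k$ lies in $\Ah_1^1$ with $\|1 - g/g_k\|$ small in the $\Ah_1^0(n)$-sense; singular inner functions with finitely supported mass can in turn be approximated on each $\{|z| \le r\}$ by polynomials, and the vanishing condition at $|z|=1$ follows from the strict inequality $\deg(\mathrm{poly}) \cdot (1-|z|)^n \to 0$ with appropriate rate chosen via Korenblum's star-invariance machinery. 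Combining these two layers of approximation yields $gp \to h$ in $\Ah_1^0(n)$. The delicate point is to arrange the rate at which the atomic approximations $\mu^{(k)}$ exhaust $\mu_g$ so as to control both the sup-norm and the vanishing at the boundary simultaneously — this is where the hypothesis that $\mu_g$ vanishes on every $\kappa_1$-Carleson set is indispensable, since it guarantees the existence of such well-distributed atomic approximations.
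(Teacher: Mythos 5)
The paper itself does not prove this theorem: it states it as a result of Korenblum, citing \cite{K3}, immediately after introducing the spaces $\Ah^0_1(n)$ and their notion of cyclicity. So there is no paper proof to compare against, and a blind attempt can at best organize the reductions and identify where the cited work must be invoked.

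Your Steps 1--3 are correct formal reductions. Step 1's normalization $f = g/g(0) \in \Ah^1_1$ and the identification $\sigma_f = \mu_g\,|_{\Bh_{\kappa_1}}$ via remark \ref{t826} are exactly right, so (a) becomes $\sigma_f = 0$. Step 2 then follows from theorem \ref{t828}. Step 3's argument — that density of $g\Ph$ in $\Ah^0_1(n)$ gives $gp_k \to 1$ in $\Ah_1(n)$, and multiplication by any fixed $h \in \Ah_1(R)$ is a bounded map $\Ah_1(n) \to \Ah_1(n+R)$, hence $hgp_k \to h$ in $\Ah_1$ — is sound, and it is a useful observation that the "any, hence all" clause is partly a formality. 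Step 4, however, is not a proof but a gesture, and a somewhat misdirected one: the claim that the boundary-vanishing condition "follows from $\deg(\mathrm{poly})\cdot(1-|z|)^n \to 0$" misses the point, since every polynomial automatically lies in $\Ah^0_1(n)$; the genuine difficulty is to arrange the approximants $gp_k$ so that $gp_k - h$ converges in the little-$o$ norm of $\Ah^0_1(n)$, not merely in $\Ah_1(n)$, and this requires the full Beurling-type invariant-subspace theory of \cite{K2}, \cite{K3} rather than pointwise approximation of atomic divisors. Since the paper simply cites Korenblum, you should do the same for this implication rather than offer an incomplete sketch that could be mistaken for a proof.
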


\section{Compatibilities with actions} \label{sec:9}
In this section we discuss left and right $S$-actions on the objects discussed in the last section. Here $S$ is the monoid introduced in the beginning of section \ref{sec:5}. As before $\kappa$ is a continuous nondecreasing concave function on $[0,1]$ with $\kappa (0) = 0$ and $\kappa (1) = 1$. For $\alpha \ge 1$ we have the estimates
\begin{equation} \label{eq:9n:100}
	\kappa (x) \le \alpha \kappa \Big( \frac{x}{\alpha} \Big) \le \alpha \kappa (x) \quad \text{for all} \; 0 \le x \le 1 \; .
\end{equation}
For $\kappa_{\gamma}$ we also have an estimate for $\gamma \ge 0 , \alpha \ge 1$ and $0 \le x \le 1$
\[
 \alpha \kappa_{\gamma} \Big( \frac{x}{\alpha} \Big) \le 2^{\gamma} \kappa_{\gamma} (x) (1 + c^{-1}_{\gamma} (\log \alpha)^{\gamma})
\]
where $c_{\gamma} = \Gamma (\gamma+1)^{-1}$.

\begin{prop} \label{t9n33}
 Consider an integer $N \ge 1$ and an element $\zeta \in \T$. \\
a) The maps $[\zeta]_* , \varphi_{N^*}$ and $\varphi^*_N$ on the space of premeasures $P (\T)$ respect $P_{\kappa} (\T)$ and $P^{\pm}_{\kappa} (\T)$. For $\mu \in P_{\kappa} (\T)$ we have
\begin{align*}
 \| [\zeta]_* \mu \|_{\kappa} & = \|\mu \|_{\kappa} \\
\| \varphi_{N^*} \mu \|_{\kappa} & \le N \| \mu \|_{\kappa} \\
\| \varphi^*_N \mu \|_{\kappa} & \le 2N \| \mu \|_{\kappa}
\end{align*}
For $\mu \in P^-_{\kappa} (\T)$ with $\mu (C) \le a_{\mu} \kappa (|C|)$ for arcs $C$ we have:
\begin{align*}
 ([\zeta]_* \mu) (C) & \le a_{\mu} \kappa (|C|) \\
(\varphi_{N^*} \mu) (C) & \le N a_{\mu} \kappa (|C|) \\
(\varphi^*_N \mu) (C) & \le 2N a_{\mu} \kappa (|C|)
\end{align*}
b) For $E \in \Fh_{\kappa}$ we have $\varphi^{-1}_N (E) \in \Fh_{\kappa}$ and $\varphi_N (E) \in \Fh_{\kappa}$. For $B \in \Bh_{\kappa}$ we have $[\zeta]^{-1} (B) , \varphi^{-1}_N (B) , \varphi_N (B \cap I_k) \in \Bh_{\kappa}$ for every $0 \le k \le N-1$. Here $I_k$ is the arc $I_k = [\frac{2\pi k}{N} , \frac{2\pi (k+1)}{N})$. In particular, for any map $\mu : \Bh_{\kappa} \to \C$ we can define $([\zeta]_* \mu) (B) = \mu ([\zeta]^{-1} (B)) , (\varphi_{N^*} \mu) (B) = \mu (\varphi^{-1}_N (B))$ and $\varphi^*_N \mu = \frac{1}{N} \sum^{N-1}_{k=0} \mu_k$ with $\mu_k (B) = \mu (\varphi_N (B \cap I_k))$. These operations leave $M_{\kappa} (\T)$ and $M^{\pm}_{\kappa} (\T)$ invariant and if $|\mu (F) | \le c_{\mu} \kappa (F)$ for all $F \in \Fh_{\kappa}$ we have the estimates
\begin{align*}
 |([\zeta]_* \mu) (F)| & \le c_{\mu} \kappa (F) \\
|(\varphi_{N^*} \mu) (F)| & \le Nc_{\mu} \kappa (F) \\
|(\varphi^*_N \mu) (F)| & \le (2N+1) c_{\mu} \kappa (F)
\end{align*}
c) The operations $[\zeta]_* , \varphi_{N^*}$ and $\varphi^*_N$ on the space of possibly unbounded positive measures on $\Bh (\T)$ leave  $\tilde{M}^{\pm}_{\kappa} (\T)$ invariant and hence induce operations on $\tilde{M}_{\kappa} (\T) = K_0 (\tilde{M}_{\kappa}^+ (\T))$.\\
d) The usual formulas $\varphi_N \mu = \varphi_{N^*} \mu , [\zeta] \mu = [\zeta]_* \mu$ and $\mu \varphi_N = \varphi^*_N (\mu) , \varphi [\zeta] = [\zeta^{-1}]_* \mu$ define left and right $S$-actions on $P_{\kappa} (\T) , P^{\pm}_{\kappa} (\T) , M_{\kappa} (\T) , M^{\pm}_{\kappa} (\T)$ and $\tilde{M}_{\kappa} (\T) , \tilde{M}^{\pm}_{\kappa} (\T)$. In addition the following relations hold on theses spaces:
\[
 \varphi_{N^*} \varphi^*_N = \id \; , \; \varphi^*_N \varphi_{N^*} = N^{-1} \Tr_N \; , \; \varphi^*_N \varphi_{M^*} = \varphi_{M^*} \varphi^*_N \; \text{if} \; (N,M) = 1 \; .
\]
\end{prop}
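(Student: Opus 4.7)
The plan is to handle the four parts in order, with the image-of-Carleson-set assertion in (b) being the technical heart on which the other parts depend. For (a), given a finite partition $\{C_j\}$ of $\T$ and $\mu \in P_{\kappa}(\T)$, I would use additivity on arcs together with the fact that $\varphi_N^{-1}(C_j)$ decomposes into $N$ arcs of length $|C_j|/N$, and the subadditivity inequality $\kappa(Nx) \le N\kappa(x)$, to obtain $\sum_j |(\varphi_{N^*}\mu)(C_j)| \le N \|\mu\|_{\kappa} \sum_j \kappa(|C_j|)$. For $\varphi^*_N\mu = N^{-1}\sum_k \mu_k$, the arcs $\{\varphi_N(C_j \cap I_k)\}_j$ form a partition of $\T$ for each fixed $k$, while each $C_j$ meets at most $\lceil N|C_j|\rceil + 1 \le N+2$ of the $I_k$; combined with $\kappa(N|C_j \cap I_k|) \le N\kappa(|C_j|)$ this yields the factor $2N$. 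The pointwise inequalities for $\mu \in P^-_{\kappa}(\T)$ on a single arc $C$ follow from the same counting applied to the subdivision of $C$ by the $I_k$.

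For (b), the preimage case is immediate: the connected components of $\T \setminus \varphi_N^{-1}(E)$ are the $N$-fold preimages of the components $I_j$ of $\T \setminus E$, so $\kappa(\varphi_N^{-1}(E)) = \sum_j N \kappa(|I_j|/N) \le N\kappa(E)$. The image case is subtler. First, $\lambda(\varphi_N(E)) = 0$ because $\varphi_N^{-1}(\varphi_N(E)) = \bigcup_{\zeta^N = 1} \zeta E$ is a union of $N$ rotates of $E$ and $\lambda$ is $\varphi_N$-invariant. For finiteness of $\kappa(\varphi_N(E))$, I would first derive the countable subadditivity
\[
\kappa\Big(\bigcup_i F_i\Big) \le \sum_i \kappa(F_i)
\]
from the integral representation \eqref{eq:894}: since $\kappa'$ is nonincreasing, $\kappa'(\min_i d_i) = \max_i \kappa'(d_i) \le \sum_i \kappa'(d_i)$, and integration over $\T$ gives the inequality. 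Applied to $F_i = \zeta_i E$ over the $N$-th roots of unity this yields $\kappa(\bigcup_{\zeta^N = 1}\zeta E) \le N\kappa(E)$. On the other hand the restriction of $\varphi_N$ is a degree-$N$ covering of $\T \setminus \varphi_N(E)$ by $\T \setminus \bigcup_{\zeta^N = 1} \zeta E$ (since $x^N \in \varphi_N(E)$ iff $x \in \bigcup_{\zeta^N=1} \zeta E$), so each component $J_k$ of $\T \setminus \varphi_N(E)$ lifts to $N$ arcs of length $|J_k|/N$, whence
\[
\kappa\Big(\bigcup_{\zeta^N = 1}\zeta E\Big) = N \sum_k \kappa(|J_k|/N) \ge \sum_k \kappa(|J_k|) = \kappa(\varphi_N(E)).
\]
Combining gives $\kappa(\varphi_N(E)) \le N \kappa(E) < \infty$, so $\varphi_N(E) \in \Fh_{\kappa}$. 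The corresponding assertions for $B \in \Bh_{\kappa}$ follow by passing to closures, and the bounds on $|(\varphi^*_N \mu)(F)|$ reduce to $\kappa(\overline{\varphi_N(F \cap I_k)}) \le \kappa(\varphi_N(F)) \le N\kappa(F)$.

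For (c), given $\tsigma \in \tM^+_{\kappa}(\T)$ with witnessing sequence $F_1 \subset F_2 \subset \ldots$ from definition \ref{t830n}, the operators $[\zeta]_*$, $\varphi_{N^*}$, and $\varphi^*_N$ are respectively witnessed by $\zeta F_\nu$, $\varphi_N(F_\nu)$, and $\varphi_N^{-1}(F_\nu)$ (each in $\Fh_{\kappa}$ by (b)); the limit property is checked by sandwiching $\tsigma(\varphi_N^{-1}(B) \cap F_\nu)$ between $(\varphi_{N^*}\tsigma)(B \cap \varphi_N(F_\nu))$ and $(\varphi_{N^*}\tsigma)(B)$, using $\varphi_N^{-1}(\varphi_N(F_\nu)) \supseteq F_\nu$, and analogously for $\varphi^*_N$ using $\varphi_N((B \cap I_k) \cap \varphi_N^{-1}(F_\nu)) = \varphi_N(B \cap I_k) \cap F_\nu$. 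Condition (ii) of definition \ref{t830n} transfers from the pointwise estimates in (b). For (d), the identity $\varphi_{N^*}\varphi^*_N = \id$ is immediate from $\varphi_N(\varphi_N^{-1}(B) \cap I_k) = B$ since $\varphi_N\mid_{I_k}$ is a bijection onto $\T$, and $\varphi^*_N\varphi_{N^*} = N^{-1} \Tr_N$ follows by expanding $\varphi_N^{-1}(\varphi_N(B \cap I_k)) = \bigcup_{\zeta^N = 1} \zeta(B \cap I_k)$ and exchanging the sum over $k$ with that over $\zeta$. The commutativity $\varphi^*_N\varphi_{M^*} = \varphi_{M^*}\varphi^*_N$ for coprime $N, M$ reduces to the set-theoretic identity $\varphi_M^{-1}(\varphi_N(A)) = \varphi_N(\varphi_M^{-1}(A))$, which holds because $\zeta \mapsto \zeta^M$ is a bijection on the group of $N$-th roots of unity whenever $(N,M) = 1$. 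The main obstacle is the image estimate $\kappa(\varphi_N(E)) \le N\kappa(E)$ in (b); once this and the subadditivity of $\kappa$ on closed sets are in hand, the norm bounds, the extension to $\kappa$-thin measures, and the action relations all follow from essentially the same combinatorics already available for premeasures in proposition \ref{t4.10}.
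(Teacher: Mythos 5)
Your treatment of the crucial image estimate in part (b) takes a genuinely different and in my view cleaner route than the paper's. The paper argues \emph{locally}: starting from $\varphi_N(E\cap\overline{I}_k)$ for a single fundamental domain $\overline{I}_k$, it decomposes $\T\setminus\varphi_N(E\cap\overline{I}_k)$ into pieces $\varphi_N(I\cap\overset{\circ}{I}_k)$, handles separately the case where $I\cap\overset{\circ}{I}_k$ splits into two arcs, and deals with a possibly joined pair of intervals around $1\in\T$, arriving at $\kappa(\varphi_N(E\cap\overline{I}_k))\le 2N\kappa(E)$. You argue \emph{globally} via the covering map: after observing that the components of $\T\setminus\bigcup_{\zeta^N=1}\zeta E=\varphi_N^{-1}(\T\setminus\varphi_N(E))$ are precisely the $N$-fold lifts of the components $J$ of $\T\setminus\varphi_N(E)$, each of length $|J|/N$, the inequality \eqref{eq:9n:100} gives $\kappa\bigl(\bigcup_\zeta\zeta E\bigr)=\sum_J N\kappa(|J|/N)\ge\sum_J\kappa(|J|)=\kappa(\varphi_N(E))$, and the finite subadditivity $\kappa\bigl(\bigcup_i F_i\bigr)\le\sum_i\kappa(F_i)$, derived from the integral formula \eqref{eq:894} and the monotonicity of $\kappa'$, then bounds $\kappa\bigl(\bigcup_\zeta\zeta E\bigr)\le N\kappa(E)$. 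This avoids the paper's case analysis entirely, isolates a subadditivity lemma the paper never states, and yields the sharper constant $N$ in place of $2N$. It also lets you get $\varphi_N(E)\in\Fh_\kappa$ directly; the paper proves only $\varphi_N(E\cap\overline{I}_k)\in\Fh_\kappa$ and would still need the same subadditivity to deduce the stated claim $\varphi_N(E)\in\Fh_\kappa$.

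Two places where the sketch skips over real work. First, the reduction of the $|\mu_k(F)|$ bound to $\kappa(\overline{\varphi_N(F\cap I_k)})\le N\kappa(F)$ is not by itself enough: the estimate $|\mu(\cdot)|\le c_\mu\kappa(\cdot)$ for a $\kappa$-singular measure holds only on closed sets, while $\varphi_N(F\cap I_k)$ need not be closed. One still has to compare $\mu(\varphi_N(F\cap I_k))$ with $\mu(\varphi_N(F\cap\overline{I}_k))$ and, in the boundary case where $2\pi(k+1)/N\in F$ but $2\pi k/N\notin F$, absorb the atom $|\mu(\{1\})|\le c_\mu$; this is exactly what produces the additive term in the paper's $(2N+1)$ (your route gives $N+1$). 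Second, in part (d) the coprime commutativity $\varphi^*_N\varphi_{M^*}=\varphi_{M^*}\varphi^*_N$ does not reduce only to the pointwise identity $\varphi_M^{-1}\varphi_N(A)=\varphi_N\varphi_M^{-1}(A)$. After applying it one is comparing $\sum_k\mu(\varphi_N(C\cap\varphi_M^{-1}(I_k)))$ with $\sum_k\mu(\varphi_N(C\cap I_k))$, and equating these requires refining both families of $I_k$'s to the common $NM$-arc partition and noting that $\varphi_N$ is injective on $\varphi_M^{-1}(I_k)$ because $\zeta\mapsto\zeta^N$ sends the $M$ constituent small arcs into pairwise distinct arcs at scale $M$ when $(N,M)=1$ --- the same kind of bookkeeping you do for $\varphi^*_N\varphi_{N^*}=N^{-1}\Tr_N$. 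The paper dismisses (d) as ``a straightforward computation,'' so the terseness is acceptable, but the set identity alone does not close the step.
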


\begin{proof}
 All assertions involving $[\zeta]$ are clear since $[\zeta]$ preserves arc-length. For a closed subset $\emptyset \neq E \subset \T$ with $\lambda (E) = 0$ and decomposition into connected components $\T \setminus E = \coprod_I I$ we have $\kappa (E) = \sum_I \kappa (|I|)$. For an open arc $I \subsetneqq \T$ the inverse image $\varphi^{-1}_N (I)$ decomposes into $N$ disjoint open arcs of length $|I| / N$ each. The estimate \eqref{eq:9n:100} therefore gives:
\begin{equation} \label{eq:9n101}
 \kappa (E) \le \kappa (\varphi^{-1}_N (E)) = \sum_I N \kappa (|I|/ N) \le N \kappa (E) \; .
\end{equation}
 In particular $\varphi^{-1}_N (E)$ is a $\kappa$-Carleson set for any $E \in \Fh_{\kappa}$. For $B \in \Bh_{\kappa}$ we have $\overline{\varphi^{-1}_N (B)} \subset \varphi^{-1}_N (\overline{B})$. Hence $\overline{\varphi^{-1}_N (B)} \in \Fh_{\kappa}$ and therefore $\varphi^{-1}_N (B) \in \Bh_{\kappa}$. Moreover for $\mu \in M_{\kappa} (\T)$ with $|\mu (F)| \le c_{\mu} (F)$ for $F \in \Fh_{\kappa}$ formula \eqref{eq:9n101} implies:
\[
 |(\varphi_{N^*} \mu) (F)| = |\mu (\varphi^{-1}_N (F))| \le c_{\mu} \kappa (\varphi^{-1}_N (F)) \le N c_{\mu} \kappa (F) \; .
\]
In particular $\varphi_{N^*} \mu \in M_{\kappa} (\T)$. This settles the assertion about $\varphi^{-1}_N (B)$ and $\varphi_{N^*} \mu$ in b) The proofs of the assertions about $\varphi_{N^*} \mu$ in a) and c) are similar since for any arc $C \subset \T$ the inverse image $\varphi^{-1}_N (C)$ decomposes into $N$ disjoint arcs of length $|C|/N$.  For $B \in \Bh_{\kappa}$ we have $\overline{B} \in \Fh_{\kappa}$ and since $\overline{B \cap I_k}$ is compact:
\begin{equation} \label{eq:9n102}
 \overline{\varphi_N (B \cap I_k)} \subset \varphi_N (\overline{B \cap I_k}) \subset \varphi_N (\overline{B} \cap \overline{I}_k) \; .
\end{equation}
 In order to show that $\varphi_N (B \cap I_k) \in \Bh_{\kappa}$ it therefore suffices to show that for any $\emptyset \neq E \in \Fh_{\kappa}$ we have $\varphi_N (E \cap \overline{I}_k) \in \Fh_{\kappa}$ as well. It is clear that $\varphi_N (E \cap \overline{I}_k)$ is a closed Lebesgue zero set. Writing $\T \setminus E = \coprod I$ as above a short calculation shows
\begin{equation} \label{eq:9n103}
 \T \setminus \varphi_N (E \cap \overline{I}_k) = \coprod_I \varphi_N (I \cap \overset{\circ}{I}_k) \quad (\dot{\cup} \{ 1 \} )\; .
\end{equation}
Here $\{ 1 \}$ is added if and only if $2 \pi k N^{-1} \notin E$ and $2 \pi (k+1) N^{-1} \notin E$. The disjoint and possibly empty open sets $\varphi_N (I \cap \overset{\circ}{I})$ do not contain $\{ 1 \}$. The open set $I \cap \overset{\circ}{I}_k$ is either an open arc $J$ or the disjoint union of two open arcs $J'$ and $J''$. In the latter case by convexity we have
\begin{equation} \label{eq:9n104}
 \kappa (|\varphi_N (J')|) + \kappa (|\varphi_N (J'')|) \le 2 \kappa (|\varphi_N (I \cap \overset{\circ}{I}_k)| \; .
\end{equation}
Letting $\{ J \}$ denote the set of connected components of the sets $\varphi_N (I \cap \overset{\circ}{I}_k)$ for varying $I$ we therefore get:
\begin{align}
 \sum_J \kappa (|\varphi_N (J)|) & \overset{\eqref{eq:9n104}}{\le} 2 \sum_I \kappa (|\varphi_N (I \cap \overset{\circ}{I}_k)| = 2 \sum_I \kappa (N |I \cap \overset{\circ}{I}_k|) \notag \\
& \overset{\eqref{eq:9n:100}}{\le} 2N \sum_I \kappa (|I \cap \overset{\circ}{I}_k|) \le 2N \sum_I \kappa (|I|) \notag \\
& = 2 N \kappa (E) \; .\label{eq:9n105}
\end{align}
We have
\begin{equation} \label{eq:9n106}
 \T \setminus \varphi_N (E \cap \overline{I}_k) = \coprod_J \varphi_N (J) (\dot{\cup} \{ 1 \}) \; .
\end{equation}
In the case where $\{ 1 \}$ is added to the right hand side in the equation, the union $\coprod_J \varphi_N (J)$ must contain a pointed neighborhood of $1$ since $\T \setminus \varphi_N (E \cap \overline{I}_k)$ is open. Hence the effect of joining $\{ 1 \}$ consists of joining two adjacent intervalls $\varphi_N (J')$ and $\varphi_N (J'')$ to one interval whose length is the sum of both. By subadditivity \eqref{eq:893n} of $\kappa$ we get in this case
\begin{equation} \label{eq:9n107}
 \kappa (|\text{joined intervall}|) \le \kappa (|\varphi_N (J')|) + \kappa (|\varphi_N (J'')|) \; .
\end{equation}
Hence we find:
\begin{align*}
 \kappa (|\varphi_N (E \cap \overline{I}_k)|) & \le \sum_J \kappa (|\varphi_N (J)|) \quad \text{by \eqref{eq:9n106} and \eqref{eq:9n107}} \\
& \le 2N \kappa (E) \quad \text{by \eqref{eq:9n105}.}
\end{align*}
Note that if $|I| \le 1 /N$ for all $I$, the factor $2$ in this inequality an be omitted since in this case $I \cap \overset{\circ}{I}_k$ is always connected. Hence $\varphi_N (E \cap \overline{I}_k)$ is a $\kappa$-Carleson set. For $\mu \in M_{\kappa} (\T)$ and $F \in \Fh_{\kappa}$ we have:
\[
 \mu_k (F) = \mu (\varphi_N (F \cap I_k)) = \mu (\varphi_N (F \cap \overline{I}_k))
\]
unless $2 \pi (k+1)N^{-1} \in F$ and $2\pi k N^{-1} \notin F$. In the latter case firstly we have
\[
 \mu_k (F) = \mu (\varphi_N (F \cap \overline{I}_k)) - \mu ( \{ 1 \} ) \; .
\]
Secondly $\kappa (F) \ge 1$ since $F \neq \emptyset$ and therefore
\[
 |\mu ( \{ 1 \} )| \le c_{\mu} \kappa ( \{ 1 \} ) = c_{\mu} \le c_{\mu} \kappa (F) \; .
\]
Hence in all cases the following estimate holds:
\begin{align*}
|\mu_k (F)| & \le |\mu (\varphi_N (F \cap \overline{I}_k))| + c_{\mu} \kappa (F) \\
& \le c_{\mu} \kappa (\varphi_N (F \cap \overline{I}_k)) + c_{\mu} \kappa (F) \\
& \le (2N+1) c_{\mu} \kappa (F) \; .
\end{align*}
In particular $\mu_k$ is in $M_{\kappa} (\T)$. The assertions about $\varphi^*_N \mu$ in b) are an immediate consequence. The proofs of the claims about $\varphi^*_N \mu$ in a) and c) use similar arguments. Finally d) results from a straightforward computation.
\end{proof}

\begin{remark}
 A $\kappa$-singular measure $\mu \in M_{\kappa} (\T)$ satisfies $N_* \mu = \mu$ if and only if $N_* (\mu^{N^{-1} E}) = \mu^E$ for every $E \in \Fh_{\kappa}$. Here $N$ is viewed as a map $\varphi^{-1}_N (E) \to E$. 
\end{remark}

We set $P_{\kappa , c} (\T) = P_{\kappa} (\T) \cap P_c (\T)$ the space of atomless premeasures of $\kappa$-bounded variation, and similarly for $P^{\pm}_{\kappa , c} (\T)$. A $\kappa$-singular measure $\sigma \in M_{\kappa} (\T)$ is called atomless if $\sigma^E$ has no atoms for every $E \in \Fh_{\kappa}$. Let $M_{\kappa , c} (\T)$ and $M^{\pm}_{\kappa , c} (\T)$ denote the corresponding spaces.

Let $\tM^+_{\kappa , c} (\T)$ be the space of atomless $\kappa$-thin measures and set $\tM_{\kappa , c} (\T) := K_0 (\tM^+_{\kappa , c} (\T)) \subset \tM_{\kappa} (\T)$. All these subspaces are left- and right $S$-invariant.

\begin{cor}
 \label{t938n}
The natural maps
\[
 P^{\pm}_{\kappa} (\T) \xrightarrow{\fs} M^{\pm}_{\kappa} (\T) \xrightarrow{\sim} \tM^{\pm}_{\kappa} (\T)
\]
and
\[
 P_{\kappa} (\T) \xrightarrow{\fs} M_{\kappa} (\T) \xrightarrow{\sim} \tM_{\kappa} (\T)
\]
and
\[
 \xymatrix{
M^+ (\T) \ar@{->>}[r]^{\omega} & P^+_{\kappa_0} (\T) \ar@{^{(}->}[r] & P^+_{\kappa} (\T) \ar[r]^-{\fs} & M^+_{\kappa} (\T) \quad \text{(c.f. remark \ref{t826})}
}
\]
are left and right $S$-equivariant. The same is true for the corresponding maps between the atomless versions of these spaces. 
\end{cor}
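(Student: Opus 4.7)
The plan is to reduce the assertion to verifying equivariance under each generator of $S$ and then to invoke the compatibilities recorded in Proposition~\ref{t9n33}. The monoid $S$ is generated by the rotations $[\zeta]$ ($\zeta \in \T$) and by the power maps $\varphi_N$ ($N \ge 2$); on each of $P_\kappa(\T)$, $M_\kappa(\T)$ and $\tM_\kappa(\T)$ both the left and the right actions of these generators are defined by the same uniform set-theoretic prescriptions from Proposition~\ref{t9n33}. It therefore suffices to treat each generator individually. Since the maps in the corollary are all $\R$-linear, it is also enough to handle the positive cones $P^+_\kappa$, $M^+_\kappa$, $\tM^+_\kappa$; the signed versions follow automatically, as do the atomless versions, because each generator $T \in S$ sends atomless objects to atomless objects on all three spaces (atoms are concentrated on countable sets, which are preserved by $[\zeta]^{-1}$, $\varphi_N^{-1}$ and the $\varphi_N|_{I_k}$).

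First I would handle the isomorphism $M^+_\kappa(\T) \silo \tM^+_\kappa(\T)$ of Theorem~\ref{t829n}. Proposition~\ref{t9n33}b) ensures that $T^{-1}B$ and $\varphi_N(B \cap I_k)$ lie in $\Bh_\kappa$ whenever $B \in \Bh_\kappa$. Consequently $(T\tsigma)\,|_{\Bh_\kappa}$ is determined by the values $\tsigma(\cdot) = \sigma(\cdot)$ on $\Bh_\kappa$, so $T\tsigma$ is the unique $\kappa$-thin extension of $T\sigma$ provided by Theorem~\ref{t829n}, i.e.\ the isomorphism is $S$-equivariant. Next I would treat $\fs : P^+_\kappa(\T) \to M^+_\kappa(\T)$. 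By the uniqueness part of Theorem~\ref{t825}, it is enough to verify $(T\mu)_s(F) = T(\mu_s)(F)$ for every $F \in \Fh_\kappa$. For $T = [\zeta]_*$ or $T = \varphi_{N_*}$ this is immediate: both sides reduce to $\mu(T^{-1}F) = \mu_s(T^{-1}F)$, using that $T^{-1}F \in \Fh_\kappa$ by Proposition~\ref{t9n33}b) and that $\mu$ and $\mu_s$ coincide on $\Fh_\kappa$ by the definition of $\fs$.

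The hard part will be the case $T = \varphi_N^*$. Unfolding Korenblum's extension $\mu(F) = -\mu(\T \setminus F)$ of a premeasure to a Carleson set $F$ gives
\[
 (N^*\mu)(F) = -\frac{1}{N}\sum_{k=0}^{N-1}\sum_J \mu\bigl(\varphi_N(J\cap I_k)\bigr),
\]
where $\T\setminus F = \coprod_J J$ is the decomposition into open arcs; this must be matched with $(N^*\mu_s)(F) = \frac{1}{N}\sum_k \mu_s(\varphi_N(F\cap I_k))$. The reconciliation rests on the bijectivity of $\varphi_N|_{I_k}:I_k \silo \T$, which implies that for each fixed $k$ the sets $\varphi_N(J\cap I_k)$ are pairwise disjoint arcs whose union together with $\varphi_N(F\cap I_k)$ exhausts $\T$ up to the single point $1$. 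Countable additivity of $\mu_s$ on $\Bh_\kappa$, combined with Proposition~\ref{t9n33}b) to control the passage to closures $\varphi_N(F\cap \overline{I}_k) \in \Fh_\kappa$ (where $\mu$ and $\mu_s$ agree), then matches the two expressions after a careful bookkeeping of the boundary contribution at $1 \in \T$ arising from the half-open structure of $I_k$.

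Finally, equivariance of $\omega : M^+(\T) \twoheadrightarrow P^+_{\kappa_0}(\T)$ follows because Haar measure $\lambda$ is fixed by every generator of $S$ (for $\varphi_N^*$ one uses $\varphi_N^*\lambda(B) = \frac{1}{N}\sum_k N\lambda(B\cap I_k) = \lambda(B)$) and the total mass $\mu'(\T)$ is preserved by every generator, so $\omega(T\mu') = T\mu' - \mu'(\T)\lambda = T(\mu' - \mu'(\T)\lambda) = T\omega(\mu')$. The inclusion $P^+_{\kappa_0} \hookrightarrow P^+_\kappa$ is tautologically $S$-equivariant, completing the chain in the third diagram.
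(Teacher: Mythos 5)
The paper does not write out a proof for this corollary — it is presented as a direct consequence of Proposition~\ref{t9n33} and the defining properties of $\fs$ and the extension $\sigma \mapsto \tsigma$ — so there is no ``author's proof'' to compare with; but your argument is the natural one and is essentially correct. Reducing to the generators $[\zeta]$, $\varphi_{N^*}$, $\varphi^*_N$ and then to the positive cones is exactly right, as is invoking the uniqueness assertions of Theorems~\ref{t825} and~\ref{t829n}: once one knows (from Proposition~\ref{t9n33}b)) that these operators preserve $\Fh_\kappa$ and $\Bh_\kappa$ and send $M^\pm_\kappa(\T)$ and $\tM^\pm_\kappa(\T)$ to themselves, equivariance of $\fs$ and of $\sigma\mapsto\tsigma$ is reduced to checking equality on $\Fh_\kappa$ resp.\ on $\Bh_\kappa$, and the only non-trivial generator is $\varphi^*_N$.

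One small correction to the hard case: the sets $\varphi_N(F\cap I_k)$ and $\varphi_N(J\cap I_k)$, for $J$ ranging over the connected components of $\T\setminus F$, already cover \emph{all} of $\T$ (not ``$\T$ up to the point $1$''), since $\varphi_N|_{I_k}$ is a bijection onto $\T$. The genuine subtlety is different and is the one your next sentence already points at: $\mu_s$ is only defined on $\Bh_\kappa$, while the $\varphi_N(J\cap I_k)$ are open arcs that typically do not lie in $\Bh_\kappa$, so the exhaustion cannot be fed directly into $\mu_s$. The correct mechanism, which you do identify, is to pass to the closed set $E_k := \varphi_N(F\cap\overline{I}_k)\in\Fh_\kappa$ (where $\mu_s=\mu$), express $\T\setminus E_k$ via the decomposition~\eqref{eq:9n103} from the proof of Proposition~\ref{t9n33}, and then account separately for the point $\{1\}$ — which enters both in $E_k\setminus\varphi_N(F\cap I_k)$ and in the discrepancy between $J\cap I_k$ and $J\cap \overset{\circ}{I}_k$. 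Since $\{1\}\in\Fh_\kappa$, one has $\mu_s(\{1\})=\mu(\{1\})$, and the two boundary contributions cancel exactly, giving $\mu_s(\varphi_N(F\cap I_k)) = -\sum_J \mu(\varphi_N(J\cap I_k))$. Writing out that cancellation would complete your sketch. The treatment of $\omega$ via invariance of $\lambda$ and of total mass, and the remark on atomless versions, are both fine.
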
 

\begin{prop} \label{xxx}
 a) For $\gamma \ge 0$ the left and right $S$-actions on $\Oh^1$ respect $\Ah^1_{\gamma} , \Nh^1_{\gamma}$ and their atomless versions $\Ah^1_{\gamma , c}$ and $\Nh^1_{\gamma , c}$. In addition the following relations hold on $\Oh^1$. 
\[
 N_* N^* = \id , N^* N_* = N^{-1} \Tr_N , N^* M_* = M_* N^* \; \text{if} \; (N,M) = 1 \; .
\]
b) The map $P (\T) \to \Oh^1 , \mu \mapsto f_{\mu} = \exp (-h\mu)$ is left and right $S$-equivariant. Hence the same is true of the isomorphisms \eqref{eq:8.92}
\[
P^+_{\kappa_{\gamma}} (\T) \silo \Ah^1_{\gamma} \quad \text{and} \quad P_{\kappa_{\gamma}} (\T) \silo \Nh^1_{\gamma}
\]
and their atomless versions
\[
 P^+_{\kappa_{\gamma} , c} (\T) \silo \Ah^1_{\gamma , c} \quad \text{and} \quad P_{\kappa_{\gamma} , c} (\T) \silo \Nh^1_{\gamma , c} \; .
\]
\end{prop}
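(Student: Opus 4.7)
The plan is to prove part~b) first — it amounts to exponentiating the additive relations of Proposition~\ref{t71}~e) — and then to deduce a) by transporting $S$-invariance statements for premeasures along the Korenblum isomorphisms \eqref{eq:8.92}, which by b) become $S$-equivariant.

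For b), I would first supplement Proposition~\ref{t71}~e) with the $[\zeta]$-equivariance of $\mu \mapsto h_\mu$, which is immediate from the identity $h_{\zeta_*\mu}(z) = h_\mu(\zeta^{-1}z)$ for measures and extends to premeasures through $T_\mu = 2\pi T'_{\hat\mu\lambda}$ since rotation commutes with differentiation. Applying $\exp(-\cdot)$ then converts the additive $S$-action on holomorphic functions vanishing at $0$ (sum/average with factor $1/N$, pullback, rotation) into the multiplicative $S$-action on $\Oh^1$ defined in Section~\ref{sec:5} (product and $N$-th root, pullback, rotation); the normalization $f_\mu(0) = 1$ picks out exactly the branch specified there for $\varphi_N f$. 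This yields the $S$-equivariance of $\mu \mapsto f_\mu : P(\T) \to \Oh^1$, and by restriction to the subspaces identified in theorem~\ref{t821}, of all four Korenblum-type isomorphisms in b).

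For a), I would verify the three relations on $\Oh^1$ by direct calculation in the multiplicative setting: for instance $(N_* N^* f)(z) = (\prod_{\zeta^N = 1} f(\zeta^N z))^{1/N} = f(z)$ after branch normalization, while $(N^* N_* f)(z) = (\prod_{\zeta^N = 1} f(\zeta z))^{1/N}$ equals $N^{-1}\Tr_N f$ in the multiplicative $\Rh_\Q$-action (since $\zeta \mapsto \zeta^{-1}$ is a bijection of the $N$-th roots of unity), and coprime commutativity reduces to $\zeta \mapsto \zeta^N$ being a permutation of the $M$-th roots of unity when $(N,M) = 1$; equivalently, all three relations are exponentials of the identities in Proposition~\ref{t62} via b). The $S$-invariance of $\Ah^1_\gamma$ and $\Nh^1_\gamma$ inside $\Oh^1$ is then the image under the $S$-equivariant Korenblum isomorphism of the $S$-invariance of $P^+_{\kappa_\gamma}(\T)$ and $P_{\kappa_\gamma}(\T)$ established in Proposition~\ref{t9n33}. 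For the atomless versions I would use $\mu\{\eta\} = A(f_\mu)(\eta)$ from \eqref{eq:5.7} (extended to premeasures via theorem~\ref{t7.19}), which makes atomlessness on the function side equivalent to atomlessness on the premeasure side — the latter being preserved by $[\zeta]_*, \varphi_{N*}, \varphi_N^*$ directly from the definitions.

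The only non-mechanical step will be the extension of Herglotz-equivariance from measures to premeasures via the distribution definition $T_\mu = 2\pi T'_{\hat\mu\lambda}$; once that is dispatched, the proof is bookkeeping — keeping track that left $\varphi_N$-action on premeasures corresponds to pushforward and to the product-and-root $\varphi_N$-operation on $\Oh^1$, while right $\varphi_N$-action is pullback on both sides — and no deeper input than Korenblum's theorem~\ref{t821}, Propositions~\ref{t71}, \ref{t62}, \ref{t9n33}, and the formalism of Section~\ref{sec:5} is required.
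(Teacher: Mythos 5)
Your proof is correct, but your route to part~a) is genuinely different from the paper's. The paper disposes of the $S$-invariance of $\Ah^1_\gamma$ and $\Nh^1_\gamma$ by ``elementary estimates'': writing $h=\log f$, one checks directly that the defining bound $\RRe h(z)\le c\,|\log(1-|z|)|^\gamma$ is stable under $z\mapsto z^N$ (since $1-|z|^N\ge 1-|z|$), under $h\mapsto N^{-1}\sum_{\zeta^N=1}h(\zeta z^{1/N})$ (since $1-|z|^{1/N}\ge N^{-1}(1-|z|)$, so the replacement only costs an additive $\log N$ inside $|\log(\cdot)|$), and under rotation; this is entirely self-contained on the function side and needs nothing from the Korenblum machinery. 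You instead obtain the invariance by transporting Proposition~\ref{t9n33} through the $S$-equivariant bijection of theorem~\ref{t821}, after first establishing the equivariance of $\mu\mapsto f_\mu$ from Proposition~\ref{t71}~e). That is logically sound (your ordering avoids circularity: equivariance of $P(\T)\to\Oh^1$ first, then invariance of the image, then the restricted equivariance) and arguably more conceptual, but it makes an elementary estimate about growth classes rest on the deep theorem~\ref{t821}, which the paper does not reprove. For part~b) and the atomless clause your argument matches the paper's intent. One minor imprecision: your parenthetical that the three operator identities on $\Oh^1$ are ``equivalently'' exponentials of Proposition~\ref{t62} via b) overreaches, since $\mu\mapsto f_\mu$ is injective but far from surjective onto $\Oh^1$ (Fourier coefficients of a premeasure are $O(|\nu|)$, whereas a general element of $\Oh^1$ has no such constraint), so that deduction would only yield the identities on the image; your primary argument --- the direct verification on all of $\Oh^1$ using that $\zeta\mapsto\zeta^{-1}$ and $\zeta\mapsto\zeta^N$ (for $(N,M)=1$) permute the relevant roots of unity --- is what actually carries the weight, and it is correct.
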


\begin{proof}
 a) The first assertion follows from elementary estimates. \\
b) This follows mainly from equations \eqref{eq:7.76} and \eqref{eq:7.77}. 
\end{proof}

We close this section with a relation between ergodic and $\kappa$-thin measures.

\begin{prop} \label{t939n}
 An $\Sh$-invariant ergodic measure $\mu \in M^+ (\T)$ with $\mu (F) > 0$ for some $\kappa$-Carleson set $F$ is $\kappa$-thin i.e. $\mu \in \tM^+_{\kappa} (\T)$.
\end{prop}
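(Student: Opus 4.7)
The plan is to use ergodicity to exhibit $\mu$ as concentrated on a countable increasing union of $\kappa$-Carleson sets; the bound $\mu(F') \le c \kappa(F')$ in condition (ii) of Definition \ref{t830n} will then come for free from the finiteness of $\mu$.

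First I would form
\[
 B = \bigcup_{N \in \Sh} \varphi_N^{-1}(F) \, .
\]
Since $1 \in \Sh$, we have $F \subset B$ and hence $\mu (B) \ge \mu (F) > 0$. For any $M \in \Sh$,
\[
 \varphi_M^{-1}(B) = \bigcup_{N \in \Sh} \varphi_{NM}^{-1}(F) \subset B
\]
because $NM \in \Sh$. Since $\mu$ is $\varphi_M$-invariant we have $\mu (\varphi_M^{-1}(B)) = \mu (B)$, so this inclusion is an equality modulo $\mu$-null sets. Thus $B$ is $\Sh$-invariant modulo $\mu$, and ergodicity together with $\mu (B) > 0$ forces $\mu (\T \setminus B) = 0$.

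Next, enumerate $\Sh = \{M_1, M_2, \ldots\}$ with $M_1 = 1$ and put $F_\nu = \bigcup_{i=1}^{\nu} \varphi_{M_i}^{-1}(F)$. By Proposition \ref{t9n33} b) each $\varphi_{M_i}^{-1}(F)$ lies in $\Fh_\kappa$, and a finite union of $\kappa$-Carleson sets is again a $\kappa$-Carleson set: closedness and Lebesgue-nullity are immediate, and for $\kappa = \kappa_\gamma$ the subadditivity $\kappa_\gamma (E_1 \cup E_2) \le \kappa_\gamma (E_1) + \kappa_\gamma (E_2)$ follows from the integral formula \eqref{eq:895} combined with the pointwise inequality
\[
 |\log \min (a,b)|^\gamma = \max (|\log a|, |\log b|)^\gamma \le |\log a|^\gamma + |\log b|^\gamma
\]
applied to $a = \dist (\zeta , E_1)$, $b = \dist (\zeta , E_2)$. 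Hence $F_1 \subset F_2 \subset \cdots$ is an increasing sequence in $\Fh_\kappa$ with $\bigcup_\nu F_\nu = B$, and because $\mu (\T \setminus B) = 0$, monotone convergence gives
\[
 \mu (A) = \mu (A \cap B) = \lim_{\nu \to \infty} \mu (A \cap F_\nu)
\]
for every Borel set $A \subset \T$. This is exactly condition (i) of Definition \ref{t830n}.

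Finally, condition (ii) is automatic: any nonempty $F' \in \Fh_\kappa$ satisfies $\kappa (F') \ge \sum_I |I| = 1$ because $\kappa (x) \ge x$, so
\[
 \mu (F') \le \mu (\T) \le \mu (\T) \cdot \kappa (F') \, ,
\]
and one may take $c_\mu = \mu (\T) < \infty$. The only step requiring care is the stability of $\Fh_\kappa$ under finite unions, which I expect to be the main (if minor) obstacle in the general admissible-$\kappa$ setup; for the cases $\kappa = \kappa_\gamma$ relevant to the paper it is transparent, and everything else follows formally once the forward-contraction $\varphi_M^{-1}(B) \subset B$ has been noted.
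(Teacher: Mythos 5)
Your argument is correct and follows essentially the same route as the paper: build a countable union of preimages of $F$ (each a $\kappa$-Carleson set by Proposition~\ref{t9n33}\,b), use ergodicity to show it has full $\mu$-measure, and note that condition~ii) is free since $\kappa(F') \ge 1$ and $\mu$ is finite. The only difference is that the paper takes $\Gh = \bigcup_{s,t\in\Sh} s^{-1}(t(F))$, which is \emph{exactly} $\Sh$-invariant (so ergodicity applies directly), whereas your $B=\bigcup_N \varphi_N^{-1}(F)$ is only invariant modulo $\mu$-null sets — a gap you can close by replacing $B$ with $\bigcap_{M\in\Sh}\varphi_M^{-1}(B)$, which has the same measure and is genuinely invariant mod null.
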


\begin{proof}
 Since $\Sh$ is abelian, the Borel set $\Gh = \bigcup_{s,t \in \Sh} (s^{-1} (t (F))$ satisfies $u^{-1} \Gh = \Gh$ for all $u \in \Sh$. We have $\mu (\Gh) \ge \mu (F) > 0$ and hence $\mu (\Gh) = \mu (\T)$ since $\mu$ is $\Sh$-ergodic. The sets $s^{-1} t (F)$ are in $\Fh_{\kappa}$ by proposition \ref{t9n33}\,b). Hence condition i) in definition \ref{t830n} is satisfied. Condition ii) holds because $\mu$ is bounded and $\kappa (F) \ge 1$ for every $\kappa$-Carleson set $F \neq \emptyset$. 
\end{proof}
\section{On the image of $\Psi_{\Sh}$}
\label{sec:10}

In lemma \ref{t51} we have seen that $\Psi_{\Sh} (\alpha) = \prod_{N \in \Sh} N^* \alpha$ converges locally uniformly to a holomorphic function in $D$ for any $\alpha \in \Oh^{\times}$ with $\alpha (0) = 1$. The next result gives more information if in addition $\alpha \in H^{\infty} (D)$ or $\alpha \in \Nh$. As usual $s$ denotes the number of generators of $\Sh$. 

\begin{theorem} \label{t925n}
 a) For $\alpha \in \Oh^{\times} \cap H^{\infty} (D)$ with $\alpha (0) = 1$ we have $\Psi_{\Sh} (\alpha) \in \Ah^1_s$. For $\alpha \in \Nh^1$ we have $\Psi_{\Sh} (\alpha) \in \Nh^1_s$ and $\alpha$ is atomless if and only if $\Psi_{\Sh} (\alpha)$ is atomless.\\
b) In proposition \ref{t52}, we have:
\[
 \Psi_{\Sh} Z (\Sh , \Nh^1) \subset H^0 (\Sh , \Nh^1_s) = \Big\{ f \in \Nh^1_s \tei f (z^N)^N = \prod_{\zeta^N=1} f (\zeta z) \quad \text{for} \; N \in \Sh \Big\} \; . 
\]
The function $\alpha \in Z (\Sh , \Nh^1)$ is atomless if and only if $\Psi_{\Sh} (\alpha)$ is atomless.
\end{theorem}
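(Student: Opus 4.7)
The plan is to reduce the theorem to the growth estimate in part (a): show that $\Psi_{\Sh}(\alpha) \in \Ah^1_s$ whenever $\alpha \in H^{\infty}(D) \cap \Oh^{\times}$ with $\alpha(0) = 1$. The extension to $\alpha \in \Nh^1$ is then formal: since $\alpha \in \Nh^{\times}$ is nonvanishing, it can be written as $\alpha = g_1/g_2$ with $g_1, g_2 \in H^{\infty}(D) \cap \Oh^{\times}$ (both nonvanishing on $D$), and after rescaling by $g_i(0) \in \C^\times$ we may assume $g_i(0) = 1$; then $\Psi_{\Sh}(\alpha) = \Psi_{\Sh}(g_1)/\Psi_{\Sh}(g_2) \in \Nh^1_s$. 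Part (b) combines part (a) with Proposition \ref{t52}: for $\alpha \in Z(\Sh, \Nh^1) \subset Z(\Sh, \Oh^1)$, Proposition \ref{t52} places $\Psi_{\Sh}(\alpha)$ in $H^0(\Sh, \Oh^1)$, and part (a) refines this to $H^0(\Sh, \Nh^1_s)$.

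For the core estimate, set $M = \|\alpha\|_\infty$. Schwarz's lemma applied to $\alpha - 1$ (which vanishes at $0$ and is bounded by $M+1$) gives $|\alpha(w) - 1| \le (M+1)|w|$ throughout $D$. Write $r = |z|$ and split the sum $\log|\Psi_{\Sh}(\alpha)(z)| = \sum_{N \in \Sh} \log|\alpha(z^N)|$ at the threshold $N_0 \asymp 1/(1-r)$ defined by $r^{N_0} = 1/(2(M+1))$. For $N \in \Sh$ with $N < N_0$ the trivial bound $\log|\alpha(z^N)| \le \log M$ applies; the number of such $N$ is at most
\[
\#\{(\nu_1,\ldots,\nu_s) \in \Z_{\ge 0}^s : \textstyle\sum_i \nu_i \log N_i \le \log N_0\} = O\bigl((\log(1/(1-r)))^s\bigr)
\]
by an elementary volume count. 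For $N \ge N_0$, the Schwarz bound forces $|\alpha(z^N) - 1| \le 1/2$ and hence $|\log|\alpha(z^N)|| \le 2(M+1)r^N$; summing then requires the key tail estimate
\[
\sum_{N \in \Sh} r^N = O\bigl((\log(1/(1-r)))^s\bigr) \quad (r \to 1^-),
\]
obtained from the same counting bound via Abel summation. Combining both contributions yields $\log|\Psi_{\Sh}(\alpha)(z)| = O((\log(1/(1-|z|)))^s)$, placing $\Psi_{\Sh}(\alpha)$ in $\Ah^1_s$; holomorphy and nonvanishing of the infinite product come from Lemma \ref{t51} applied to the same Schwarz bound.

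For the atomless equivalence, for $f \in \Nh^1_s$ corresponding via \eqref{eq:8.92} to $\mu \in P_{\kappa_s}(\T)$, the proof of Theorem \ref{t7.19} shows that $A(f)(\eta) = \mu\{\eta\}$ exists and is bounded by $2\sup_\zeta|\hmu(\zeta)| < \infty$. Hence hypothesis \eqref{eq:5.8} holds automatically for $f = \Psi_{\Sh}(\alpha)$, and Proposition \ref{t53}(b) gives formula \eqref{eq:5.9}, yielding $A(\alpha) = 0 \Rightarrow A(\Psi_{\Sh}(\alpha)) = 0$. The converse follows from Proposition \ref{t53}(a) applied to $f = \Psi_{\Sh}(\alpha)$, together with the identity $\Phi_{\Sh}(\Psi_{\Sh}(\alpha)) = \alpha$ from Proposition \ref{t52}, giving $A(\alpha) = A(\Psi_{\Sh}(\alpha))\,\Phi^1_{\Sh}$. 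The main obstacle is the tail estimate $\sum_{N \in \Sh} r^N = O((\log(1/(1-r)))^s)$: although the counting bound on $\Sh$ is elementary, producing the sharp $(\log(1/(1-r)))^s$ growth rather than a $1/(1-r)$-type blowup requires a careful Abel-summation argument with the substitution $u = x\log(1/r)$ transforming $\int_0^\infty r^x A(x)\,dx$, and balancing the ``many small contributions'' regime against the ``few $O(1)$ contributions'' regime so that the constants line up.
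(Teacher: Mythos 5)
Your proof is correct and follows essentially the same strategy as the paper's: the Schwarz bound $|\alpha(w)-1|\le a|w|$, the elementary count $|\{N\in\Sh : N\le x\}|=O(\log^s x)$ fed into an Abel (partial) summation, reduction of the $\Nh^1$ case to the $H^\infty$ case by writing $\alpha=g_1/g_2$ with zero-free $g_i\in H^\infty$, and Proposition \ref{t53} together with Theorem \ref{t7.19} for the atomless equivalence. The only difference is one of packaging: the paper Abel-sums $\sum_{N\in\Sh}\log(1+a|z|^N)$ in a single pass, whereas you split at a threshold $N_0\asymp(1-|z|)^{-1}$ and bound the head by the lattice-point count and the tail by $\sum_{N\in\Sh}r^N=O(|\log(1-r)|^s)$ -- the same estimate presented in two pieces.
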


\begin{proof}
 Part b) is an immediate consequence of part a). As for a), if we have shown that $\Psi_{\Sh} (\alpha) \in \Nh^1_s$ then the assertion about atoms follows from proposition \ref{t53} together with theorem \ref{t7.19}. Note here that for every premeasure $\mu$ there is a constant $A > 0$ with $|\mu (C)| \le A$ for all arcs $C$, c.f. the remark before proposition \ref{t62}. Hence it suffices to prove the first claim of a). Writing $\alpha \in \Oh^{\times} \cap H^{\infty} (D)$ in the form $\alpha = \exp h$ with $h \in \Oh$ and $h (0) = 0$ we have to show that the function
\begin{equation}
 \label{eq:8.97}
\sum_{N \in \Sh} N^* h \quad \text{lies in} \; \Oh^-_{\kappa_s} \; .
\end{equation}
We first need a simple estimate for $\RRe h$. Writing $\alpha (z) = 1 + z r (z)$ it follows that $r \in \Oh (D)$ is bounded as well. Hence we have $|\alpha (z)| \le 1 + a|z|$ for some $a \ge 0$ and therefore
\[
 \RRe h (z) \le \log (1 + a |z|) \quad \text{for} \; z \in D \; .
\]
Hence we get
\begin{align}
 \RRe \sum_{N \in \Sh} N^* h & = \RRe \sum_{N \in \Sh} h (z^N) \notag \\
& \le \sum_{N \in \Sh} \log (1 + a |z|^N) \; . \label{eq:8.98}
\end{align}
In order to estimate this series we use the following version of summation by parts. Consider a function $\varphi$ on the integers $n \ge 1$ and a $C^1$-function $\psi$ on $[1,\infty)$. For $x \ge 1$ we set $M_{\varphi} (x) = \sum_{n \le x} \varphi (n)$. Then we have
\begin{equation} \label{eq:8.99}
 \sum_{n \le x} \varphi (n) \psi (n) = M_{\varphi} (x) \psi (x) - \int^x_1 M_{\varphi} (t) \psi' (t) \, dt \; .
\end{equation}
We apply this with 
\[
 \varphi (n) = |\{ (\nu_1 , \ldots , \nu_s) \tei \nu_1 , \ldots , \nu_s \ge 0 \; \text{and} \; N^{\nu_1}_1 \cdots N^{\nu_s}_s = n \} |
\]
and
\[
 \psi(x) = \log (1 + a |z|^x) \; .
\]
We have:
\begin{align*}
 M_{\varphi} (x) & = |\{ (\nu_1 , \ldots , \nu_s) \tei \nu_1 , \ldots , \nu_s \ge 0 \; \text{and} \; N^{\nu_1}_1 \cdots N^{\nu_s}_s \le x \} | \\
& \le a_1 (1+ \log^s x) \; \text{for} \; x \ge 1 \; .
\end{align*}
Here $a_1$ is a constant which depends only on $N_1 , \ldots , N_s$. 

Moreover $\psi (x) \le a |z|^x$ so that
\[
 \lim_{x\to \infty} M_{\varphi} (x) \psi (x) = 0 \quad \text{for every} \; z \in D \; .
\]
Hence we get the relations:
\begin{align*}
 \sum_{N \in \Sh} \log (1 + a |z|^N) & = \sum^{\infty}_{n=1} \varphi (n) \psi (n) \\
& = a \log \frac{1}{|z|} \int^{\infty}_1 M_{\varphi} (t) \frac{|z|^t}{1 + a |z|^t} \, dt \\
& \le a_1 \log \frac{1}{|z|} \int^{\infty}_1 (1+\log^s t) \frac{|z|^t}{1 + a |z|^t} \, dt \\
& = a_1 \int^{|z|}_0 \Big(1+ \log^s \Big( \frac{\log y}{\log |z|} \Big) \Big) \frac{1}{1+ay} \, dy \\
& \le a_1 \int^{|z|}_0 \Big( 1 + \log^s \Big( \frac{\log y}{\log |z|} \Big)  \Big) \, dy \; .
\end{align*}
The inequalities $0 < y \le |z| < 1$ give $\log y / \log |z| \ge 1$ and hence:
\begin{align*}
 0 \le \log \Big( \frac{\log y}{\log |z|} \Big) & = \log |\log y| - \log |\log |z|| \\
& \le |\log |\log y|| + |\log |\log |z||| \; .
\end{align*}
This implies the estimate
\[
 0 \le \log^s \Big( \frac{\log y}{\log |z|} \Big) \le 2^s |\log^s |\log y|| + 2^s |\log^s |\log |z||| \; .
\]
We have
\[
 \int^{|z|}_0 |\log^s |\log y|| \, dy \le \int^1_0 |\log^s (\log y^{-1})| \, dy = \int^{\infty}_0 |\log t|^s e^{-t} \, dt < \infty \; .
\]
This leads to the following estimate valid for $z \in D$:
\begin{equation} \label{eq:8.100}
 \sum_{N \in \Sh} \log (1 + a |z|^N) \le a_2 + a_3 |\log^s |\log |z||| \; .
\end{equation}
For $1/2 \le |z| < 1$ as in the definition \eqref{eq:8.90} of the class $\Oh^-_{\kappa_s}$ we have
\[
 |\log^s |\log |z||| \le |\log (1 - |z|)|^s\; .
\]
Together with \eqref{eq:8.98} and \eqref{eq:8.100} this shows that for $1/2 \le |z| <1$ we have
\[
 \RRe \sum_{N \in \Sh} h (z^N) \le a_4 |\log (1-|z|)|^s \; .
\]
Hence we have shown that assertion \eqref{eq:8.97} holds and therefore theorem \ref{t925n} is proved.
\end{proof}

Using the theory from the last section we can now say something about the image of $\Psi_{\Sh}$ on measures and in particular on the space $Z (\Sh , M^0 (\T))$.

\begin{cor} \label{t822}
 a) For any $\sigma \in M^0 (\T)$ we have $\Psi_{\Sh} (\sigma) \in P_{\kappa_s} (\T)$. \\
b) In proposition \ref{t63} we have:
\begin{equation} \label{eq:8.93}
 \Psi_{\Sh} Z (\Sh , M^0 (\T)) \subset \{ \mu \in P_{\kappa_s} (\T) \tei N_* \mu = \mu \quad \text{for} \; N \in \Sh \} \; .
\end{equation}
Moreover
\begin{equation*} 
 \Psi_{\Sh} Z (\Sh , M^0_c (\T)) \subset \{ \mu \in P_{\kappa_s} (\T) \tei \mu \; \text{is atomless and} \; N_* \mu = \mu \; \text{for alle} \; N \in \Sh \} \; .
\end{equation*}
\end{cor}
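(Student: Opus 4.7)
The plan is to reduce the assertion that $\Psi_{\Sh}(\sigma)$ has bounded $\kappa_s$-variation to the analytic growth estimate in theorem \ref{t925n} via Korenblum's correspondence \eqref{eq:8.92}. The key identity to establish on the function side is
\[
 f_{\Psi_{\Sh}(\sigma)} = \Psi_{\Sh}(f_{\sigma}) \, ,
\]
since once this is in hand, $f_{\sigma} \in \Nh^1$ via the isomorphism \eqref{eq:562n}, theorem \ref{t925n}\,a) gives $\Psi_{\Sh}(f_{\sigma}) \in \Nh^1_s$, and Korenblum's theorem \eqref{eq:8.92} for $\gamma = s$ then transports this back to $\Psi_{\Sh}(\sigma) \in P_{\kappa_s}(\T)$.

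To verify the identity, first I would observe that for $\sigma \in M^0(\T)$ one has $h_{\sigma}(0) = \sigma(\T) = 0$, so each function $N^* h_{\sigma}(z) = h_{\sigma}(z^N)$ satisfies an estimate $|N^* h_{\sigma}(z)| \le C_K |z|^N$ on any compact $K \subset D$. Thus the series $\sum_{N \in \Sh} N^* h_{\sigma}$ converges locally uniformly in $D$ to a holomorphic function. On the other hand the cumulative mass function $\widehat{\Psi_{\Sh}(\sigma)}$ is the uniformly convergent series $\sum_N \widehat{N^* \sigma}$ by \eqref{eq:6.68}, and substituting this into the integral representation of the Herglotz transform of a premeasure (section \ref{sec:7}) and interchanging sum and integral by uniform convergence yields
\[
 h_{\Psi_{\Sh}(\sigma)}(z) = \sum_{N \in \Sh} h_{N^* \sigma}(z) = \sum_{N \in \Sh} N^* h_{\sigma}(z) \, ,
\]
where the second equality is \eqref{eq:7.76}. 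Exponentiating and using the locally uniform convergence of $\prod_N N^* f_{\sigma}$ from lemma \ref{t51} gives the claimed identity, and hence part a).

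Part b) follows by combining part a) with results already in place. The $\Sh$-invariance of $\Psi_{\Sh}(\sigma)$ for $\sigma \in Z(\Sh , M^0(\T))$ is the content of proposition \ref{t63}, so the first inclusion in b) is immediate. For the atomless version, if $\sigma \in Z(\Sh , M^0_c(\T))$ then $f_{\sigma}$ lies in the atomless subgroup $\Nh^1_c$ by \eqref{eq:563}, hence $\Psi_{\Sh}(f_{\sigma}) = f_{\Psi_{\Sh}(\sigma)}$ is atomless by theorem \ref{t925n}\,b). By proposition \ref{xxx} the isomorphism $P_{\kappa_s}(\T) \silo \Nh^1_s$ restricts to an isomorphism on the atomless subspaces, so $\Psi_{\Sh}(\sigma) \in P_{\kappa_s,c}(\T)$.

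The main obstacle is the function-theoretic identity $f_{\Psi_{\Sh}(\sigma)} = \Psi_{\Sh}(f_{\sigma})$: since $\Psi_{\Sh}(\sigma)$ is only a premeasure, one must carefully pass from its defining series on arcs to an infinite series for its Herglotz transform, and then exchange this with the infinite product defining $\Psi_{\Sh}(f_{\sigma})$. The uniform convergence of the cumulative mass function partial sums, together with the vanishing $h_{\sigma}(0) = 0$, makes both interchanges legitimate; the remainder of the proof is then purely formal bookkeeping through the correspondences already established.
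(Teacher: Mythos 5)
Your proposal is correct and takes essentially the same route as the paper: establish the identity $f_{\Psi_{\Sh}(\sigma)} = \Psi_{\Sh}(f_{\sigma})$, apply theorem~\ref{t925n}, and transport back via Korenblum's correspondence \eqref{eq:8.92}. The only cosmetic difference is that you justify the interchange yielding $h_{\Psi_{\Sh}(\sigma)} = \sum_{N\in\Sh} N^* h_{\sigma}$ by plugging the uniformly convergent series for $\widehat{\Psi_{\Sh}(\sigma)}$ directly into the integral formula for the Herglotz transform, whereas the paper works at the level of distributions, using continuity of $D = d/d\theta$ on $\Dh'(\T)$ and then evaluating on the test function $k_z$; both rest on the same uniform convergence of \eqref{eq:6.59}.
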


\begin{rem}
For $\sigma \in M^0 (\T)$ i.e. $\hsigma \in V^0 (\T)$ we have seen in \eqref{eq:6.69}\,ff that the premeasure $\mu = \Psi_{\Sh} (\sigma)$ is given by the absolutely convergent series
\[
 \mu (C) = \sum_{N \in \Sh} (N^* \sigma) (C) \quad \text{for} \; C \in \Kh \;.
\]
It would be interesting to prove a) directly by showing that $\sum_{N \in \Sh} N^* \sigma$ converges in the Banach space $P_{\kappa_s} (\T)$. Possibly the work \cite{CK} could be helpful for this. Instead we apply Korenblum's theory in the form of theorem \ref{t821} above together with Theorem \ref{t925n}.
\end{rem}

\begin{proof}
We first verify the formula:
\begin{equation}
 \label{eq:8.95}
T_{\Psi_{\Sh} (\sigma)} = \sum_{N\in \Sh} T_{N^* (\sigma)} \quad \text{in} \; \Dh' (\T) \; .
\end{equation}
This is done as follows where $D = \frac{d}{d\theta}$:
\[
 T_{\Psi_{\Sh} (\sigma)} = 2\pi D T_{\widehat{\Psi_{\Sh} (\sigma)} \lambda} = 2\pi D T_{\Psi_{\Sh} (\hsigma) \lambda} \; .
\]
Since the series \eqref{eq:6.59} for $\Psi_{\Sh} (\hsigma)$ is uniformly convergent and $D$ is continuous on $\Dh' (\T)$ we get
\begin{align*}
 T_{\Psi_{\Sh} (\sigma)} & = 2 \pi D \sum_{N \in \Sh} T_{N^{-1} N^* (\hsigma) \lambda} = \sum_{N \in \Sh} 2\pi D T_{\widehat{N^* (\sigma)}\lambda} \\
& = \sum_{N \in \Sh} T_{N^* (\sigma)} \; .
\end{align*}
Applying \eqref{eq:8.95} to the test function $k_z$ we get
\begin{equation} 
 \label{eq:8.96}
h_{\Psi_{\Sh} (\sigma)} = T_{\Psi_{\Sh} (\sigma)} (k_z) = \sum_{N \in \Sh} h_{N^* (\sigma)} \overset{\eqref{eq:4.36}}{=} \sum_{N \in \Sh} N^* h_{\sigma} \; .
\end{equation}
Hence we have $f_{\Psi_{\Sh}(\sigma)} = \Psi_{\Sh} (f_{\sigma})$. The function $f_{\sigma}$ is in $\Nh^1$ by \eqref{eq:562n}. Using theorem \ref{t925n} a) we get $f_{\Psi_{\Sh}} (\sigma) \in \Nh^1_s$ i.e. $h_{\Psi_{\Sh}} (\sigma) \in \Oh_{\kappa_s}$. Now theorem \ref{t821} implies that $\Psi_{\Sh} (\sigma) \in P_{\kappa_s} (\T)$.
 
Part b) follows from part a) and proposition \ref{t63}.
\end{proof}

The arguments in the proof give the following compatibilities between maps from proposition \ref{t52}, corollary \ref{t54} and proposition \ref{t63}.

\begin{cor} \label{t923}
 We have the following commutative diagrams where the vertical maps send a (pre-)measure $\mu$ to $f_{\mu}$
\[
 \xymatrix{
H^0 (\Sh , P_{\kappa_s} (\T)) \ar[d]^{\wr} & Z (\Sh , M^0 (\T)) \ar@{_{(}->}[l]_-{\Psi_{\Sh}} \ar[r]^-{\overset{\pi}{\sim}} \ar[d]^{\wr} & M^0 (\T) / M^0 (\T) \Sh \ar[d]^{\wr} \\
H^0 (\Sh , \Nh^1_s) &  Z (\Sh , \Nh^1) \ar@{_{(}->}[l]_-{\Psi_{\Sh}} \ar[r]^{\overset{\pi}{\sim}} & \Nh^1 / \Nh^{1\Sh}
}
\]
and
\[
 \xymatrix{
H^0 (\Sh , P_{\kappa_s, c} (\T)) \ar[d]^{\wr} & Z (\Sh , M^0_{c} (\T)) \ar@{_{(}->}[l]_-{\Psi_{\Sh}} \ar[r]^-{\overset{\pi}{\sim}} \ar[d]^{\wr} & M^0_{c} (\T) / M^0_{c} (\T) \Sh  \ar[d]^{\wr} \\
H^0 (\Sh , \Nh^1_{s,c}) & Z (\Sh , \Nh^1_c) \ar@{_{(}->}[l]_-{\Psi_{\Sh}} \ar[r]^{\overset{\pi}{\sim}} & \Nh^1_c / \Nh^{1\Sh}_c \; .
}
\]
The indices ``$c$'' in the second diagram refer to atomless (pre-)measures resp. functions. For $M \ge 2$ coprime to all the generators $N_1 , \ldots , N_s$ of $\Sh$ the actions of $M_* , M^*$ and $\Tr_M$ on $M (\T) , P (\T)$ and $\Oh^1$ respect all the groups and maps in these diagrams.  
\end{cor}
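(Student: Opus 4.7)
The plan is to assemble the corollary from Korenblum's correspondence, the formula $f_{\Psi_{\Sh}(\sigma)} = \Psi_{\Sh}(f_\sigma)$ already extracted in the proof of corollary \ref{t822}, and the $S$-equivariance of the Herglotz transform.

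First I would identify the vertical isomorphisms. By \eqref{eq:562n} and \eqref{eq:563} the map $\mu \mapsto f_\mu = \exp(-h_\mu)$ gives isomorphisms $M^0(\T) \silo \Nh^1$ and $M^0_c(\T) \silo \Nh^1_c$; by \eqref{eq:8.92} (the multiplicative form of theorem \ref{t821}) it gives $P_{\kappa_s}(\T) \silo \Nh^1_s$, and the atomless versions correspond by theorem \ref{t7.19}. Proposition \ref{xxx} asserts that each of these bijections is $S$-equivariant, so they restrict to isomorphisms on $H^0(\Sh,\cdot)$ and $Z(\Sh,\cdot)$, carry the submodules $(\cdot)\Sh$ isomorphically onto each other, and induce the claimed vertical isomorphisms on quotients.

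For the commutativity of the left squares I would invoke the computation already carried out in the proof of corollary \ref{t822}: from the defining series $\Psi_{\Sh}(\hsigma) = \sum_{N\in\Sh} N^{-1} N^*(\hsigma)$ one derives \eqref{eq:8.95} and hence $h_{\Psi_{\Sh}(\sigma)} = \sum_{N\in\Sh} N^* h_\sigma$, which on exponentiating yields $f_{\Psi_{\Sh}(\sigma)} = \prod_{N\in\Sh} N^* f_\sigma = \Psi_{\Sh}(f_\sigma)$ in the sense of lemma \ref{t51}. This is exactly commutativity of the left-hand square in both the full and the atomless diagrams. The right squares commute by naturality: since $\mu \mapsto f_\mu$ intertwines the additive right $\Rh$-action on $M^0(\T)$ with the multiplicative right $\Rh$-action on $\Nh^1$, it carries $M^0(\T)\Sh$ isomorphically onto $\Nh^{1\Sh}$, so the induced map on quotients is precisely the image of the vertical arrow on $Z(\Sh,\cdot)$ under $\pi$.

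For the final assertion, coprimality of $M$ with $N_1,\ldots,N_s$ gives $M^* N_i^* = N_i^* M^*$ and $M_* N_i^* = N_i^* M_*$ (and likewise with $N_{i*}$) by the commutation formulas \eqref{eq:4.40}, \eqref{eq:6.63} and their evident analogues for functions in $\Oh^1$. Hence $M^*$ and $M_*$ preserve both $H^0(\Sh,\cdot)$ and $Z(\Sh,\cdot)$, descend to the quotients by $\cdot\Sh$, and commute termwise with the absolutely convergent series defining $\Psi_{\Sh}$. Writing $\Tr_M = M \cdot M^* M_*$ via \eqref{eq:6.62} transfers the same conclusion to $\Tr_M$. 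The only real subtlety—which I would flag as the principal obstacle of the statement—is knowing a priori that $\Psi_{\Sh}$ actually lands in $P_{\kappa_s}(\T)$ (resp.\ $\Nh^1_s$) rather than merely in $P(\T)$ (resp.\ $\Oh^1$); but this is precisely what corollary \ref{t822} and theorem \ref{t925n} already establish, so the present corollary becomes a formal bookkeeping consequence.
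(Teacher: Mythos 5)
Your proposal is correct and essentially reproduces, in expanded form, the argument the paper compresses into a few lines: the paper introduces the corollary with ``the arguments in the proof give the following compatibilities'' (referring to the proof of corollary~\ref{t822}, where $f_{\Psi_{\Sh}(\sigma)}=\Psi_{\Sh}(f_\sigma)$ is derived), and its formal proof only checks the last assertion via the commutation relations $\varphi_N\varphi_M=\varphi_M\varphi_N$, $\Tr_N\Tr_M=\Tr_M\Tr_N$, and $\varphi_N\Tr_M=\Tr_M\varphi_N$ for $(N,M)=1$, citing \eqref{eq:6.63}, \eqref{eq:7.78n}, \eqref{eq:7.76}. Your identification of the vertical arrows via \eqref{eq:562n}, \eqref{eq:8.92} and proposition~\ref{xxx}, your handling of $\Tr_M=M\cdot M^*M_*$ via \eqref{eq:6.62}, and your flag that the real content is $\Psi_{\Sh}$ landing in $P_{\kappa_s}(\T)$ resp.\ $\Nh^1_s$ (theorem~\ref{t925n} and corollary~\ref{t822}) all match the intended argument.
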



\begin{proof}
 It remains  to check the last assertion. It follows from the relations $\varphi_N \varphi_M = \varphi_M \varphi_N$ and $\Tr_N \Tr_M = \Tr_M \Tr_N$ in $\Rh$ for $N, M \ge 1$ and the relation $\varphi_N \Tr_M = \Tr_M \varphi_N$ if $N$ and $M$ are coprime. Also note formulas \eqref{eq:6.63}, \eqref{eq:7.78n} and \eqref{eq:7.76}.
\end{proof}

By corollary \ref{t923}, given a measure $\sigma \in M^0 (\T)$ the premeasure $\mu = \Psi_{\Sh} (\pi^{-1} (\sigma)) = \Psi_{\Sh} \Omega_{\Sh} (\sigma)$ is $\Sh$-invariant and has $\kappa_s$-bounded variation. Let $M = N_s$ and let $\Sh'$ be the monoid generated by $N_1 , \ldots , N_{s-1}$. If we start with an $M$-invariant measure $\sigma$ then the premeasure $\mu = \Psi_{\Sh'} \Omega_{\Sh'} (\sigma)$ is still $\Sh$-invariant and now even of $\kappa_{s-1}$-bounded variation. In fact, we even have the following assertion: 

\begin{prop}
 \label{t925}
If $s \ge 1$, then for $M = N_s$ and $\Sh'$ as above and for $\sigma \in M^0 (\T)$ with $M_* \sigma = \sigma$ we have
\[
 \Psi_{\Sh'} (\Omega_{\Sh'} (\sigma)) = \Psi_{\Sh} (\Omega_{\Sh} (\sigma)) \; .
\]
Correspondingly, if a function $u \in \Nh^1$ satisfies the functional equation
\[
 u (z^M)^M = \prod_{\zeta^M = 1} u (\zeta z)
\]
then we have $\Psi_{\Sh'} \Omega_{\Sh'} (u) = \Psi_{\Sh} \Omega_{\Sh} (u)$.\\
\end{prop}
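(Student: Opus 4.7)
The plan is to set $\tau = \Omega_{\Sh'}(\sigma) \in M^0(\T)$ and exploit the factorization $\Omega_\Sh = (1-e_M)\Omega_{\Sh'}$, which is available because $e_1,\ldots,e_s$ are pairwise commuting idempotents in $\Rh_\Q$. Under this factorization $\Omega_\Sh(\sigma) = \tau - e_M\tau$, and the identity to be proved reduces to the equality of premeasures $\Psi_\Sh(\tau - e_M\tau) = \Psi_{\Sh'}(\tau)$, both sides being defined through the absolutely convergent arc-series produced by corollary \ref{t822}.

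The key preparatory step is to verify that $M_*\tau = \tau$. Since $M = N_s$ is coprime to each $N_i$ for $i < s$, the map $\zeta \mapsto \zeta^M$ permutes the $N_i$-th roots of unity; together with the braiding relation $\varphi_M [\zeta] = [\zeta^M]\varphi_M$ this yields $M_* e_i = e_i M_*$ on measures, a special case of \eqref{eq:4.40}. Hence $M_*$ commutes with $\Omega_{\Sh'}$, and the hypothesis $M_*\sigma = \sigma$ gives $M_*\tau = \Omega_{\Sh'}(M_*\sigma) = \tau$. Combined with formula \eqref{eq:4.38}, which reads $e_M = M^*M_*$ as an operator identity on measures, this produces $e_M\tau = M^*M_*\tau = M^*\tau$, so that $\Omega_\Sh(\sigma) = \tau - M^*\tau$.

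With these preparations the equality of the two premeasures follows from a telescoping computation on every arc $C \subset \T$:
\[
 \Psi_\Sh(\Omega_\Sh\sigma)(C) = \sum_{N\in\Sh}(N^*\tau)(C) - \sum_{N\in\Sh}((NM)^*\tau)(C).
\]
Both series converge absolutely, so rearrangement is legal. The map $N\mapsto NM$ is a bijection of $\Sh$ onto $\{N'\in\Sh : N_s \mid N'\}$, whose complement in $\Sh$ is precisely $\Sh'$. The difference therefore collapses to $\sum_{N\in\Sh'}(N^*\tau)(C) = \Psi_{\Sh'}(\tau)(C) = \Psi_{\Sh'}(\Omega_{\Sh'}\sigma)(C)$, establishing the measure-level assertion.

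The function-level claim for $u\in\Nh^1$ will then be automatic. Writing $u = f_\sigma = \exp(-h_\sigma)$ for the unique $\sigma \in M^0(\T)$ furnished by the isomorphism $M^0(\T) \silo \Nh^1$, the assumed multiplicative functional equation translates by proposition \ref{t31} into $M_*\sigma = \sigma$, and the operators $\Omega_\bullet$, $\Psi_\bullet$ on $\Nh^1$ correspond to their additive counterparts on $M^0(\T)$ under $\exp(-h_\bullet)$. The only substantive part of the argument is the commutation $M_*\Omega_{\Sh'} = \Omega_{\Sh'} M_*$ that yields $M_*\tau = \tau$; the remainder is bookkeeping of absolutely convergent series, already licensed by section \ref{sec:6}.
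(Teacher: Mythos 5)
Your proof is correct, and it travels a genuinely different route from the paper's. The paper reduces the measure statement to the function statement via corollary \ref{t923}, then works multiplicatively in $\Oh^1$: it uses the hypothesis in the form $uX_s = ue_s$ to rewrite $u\Omega_{\Sh}$ as $u\Omega_{\Sh'}(1-X_s)$, and evaluates $(1-X_s)\Psi_{\Sh}$ through the formal identity $(1-X_s)\Psi_{\Sh} = \bigl(1 - \lim_{\nu} X_s^{\nu}\bigr)\Psi_{\Sh'}$, justifying the limit by the pointwise fact that $(u\Omega_{\Sh'})(z^{N_s^{\nu}}) \to (u\Omega_{\Sh'})(0) = 1$. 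You instead stay on the measure side, establish $M_*\tau = \tau$ and hence $e_M\tau = M^*\tau$, so that $\Omega_{\Sh}\sigma = \tau - M^*\tau$, and then telescope the absolutely convergent arc-series $\sum_{N\in\Sh}(N^*\tau)(C) - \sum_{N\in\Sh}((NM)^*\tau)(C)$ using the observation that $N \mapsto NM$ is a bijection of $\Sh$ onto $\Sh\setminus\Sh'$. The algebraic kernel is the same in both arguments (namely $(1-X_s)\Psi_{\Sh} = \Psi_{\Sh'}$ applied to something fixed by $M_*$), but your rearrangement of an absolutely convergent series is more elementary than the paper's formal-limit computation, and you reverse the paper's direction of reduction, deriving the function-level claim from the measure-level one via the equivariance of $\mu\mapsto f_\mu$. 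Two small cosmetic points: the absolute convergence on arcs that licenses the rearrangement comes from the discussion around \eqref{eq:6.59}--\eqref{eq:6.69} rather than from corollary \ref{t822} itself (which you do acknowledge by also pointing to section \ref{sec:6}); and the commutation $M_*\Omega_{\Sh'} = \Omega_{\Sh'}M_*$ is available even more cheaply than via \eqref{eq:4.40}, simply because $\Omega_{\Sh'}$ acts on the right while $M_*$ acts on the left in the $\Rh$-bimodule $M^0(\T)$.
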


\begin{proof}
 By corollary \ref{t923} it suffices to prove the second assertion. By assumption we have $MM^* (u) = \Tr_M (u)$ i.e. $u X_s = ue_s$. Hence we get:
\begin{align*}
 \Psi_{\Sh} (\Omega_{\Sh} (u)) & = u \prod^s_{i=1} (1-e_i) \Psi_{\Sh} = u (1 - e_s) \Omega_{\Sh'} \Psi_{\Sh}  \\
& = u (1-X_s) \Omega_{\Sh'} \Psi_{\Sh} = u \Omega_{\Sh'} (1-X_s) \Psi_{\Sh} \; . 
\end{align*}
We have formally:
\begin{align*}
 (1-X_s) \Psi_{\Sh} & = (1-X_s) \sum_{\nu_1 , \ldots , \nu_s \ge 0} X^{\nu_1}_1 \cdots X^{\nu_s}_s  \\
& = (1-X_s) \sum^{\infty}_{\nu=0} X^{\nu}_s \Psi_{\Sh'} \\
& = \Big( 1 - \lim_{\nu\to \infty} X^{\nu}_s \Big) \Psi_{\Sh'} \; .
\end{align*}
The function $u \Omega_{\Sh'}$ is equal to $1$ at $z = 0$. Hence we have
\[
 \lim_{\nu\to\infty} (u \Omega_{\Sh'}) X^{\nu}_s = \lim_{\nu\to\infty} (u \Omega_{\Sh'}) (z^{N^{\nu}_s}) = (u \Omega_{\Sh'} ) (0) = 1 \; .
\]
Thus we get the following equation with pointwise limits:
\begin{align*}
 \Psi_{\Sh} (\Omega_{\Sh} (u)) & = \lim_{\nu \to \infty} (u \Omega_{\Sh'}) (1 - X^{\nu}_s) \Psi_{\Sh'} = \lim_{\nu\to\infty} \frac{u \Omega_{\Sh'}}{u \Omega_{\Sh'} X^{\nu}_s} \Psi_{\Sh'}\\
& = u \Omega_{\Sh'} \Psi_{\Sh'} = \Psi_{\Sh'} (\Omega_{\Sh'} (u)) \; .
\end{align*}
\end{proof}

We will now combine the preceeding maps from measures to $\Sh$-invariant premeasures with the passage to thin measures in corollary \ref{t938n}.

For every $\kappa$ the map
\[
 \fs : P_{\kappa} (\T) \longrightarrow M_{\kappa} (\T) \silo \tM_{\kappa} (\T) = K_0 (\tM^+_{\kappa} (\T))
\]
is left- and right $\Sh$-equivariant. In particular it maps the subspace $H^0 (\Sh , P_{\kappa} (\T))$ of left $\Sh$-invariant premeasures to the space
\[
 H^0 (\Sh , \tM_{\kappa} (\T)) = \{ \mu \in \tM_{\kappa} (\T) \tei N_* \mu =\mu \; \text{for} \; N \in \Sh \}
\]
of left $\Sh$-invariant $\kappa$-thin real ``measures''. By definition, $ \mu = [\mu_1 , \mu_2] \in \tM_{\kappa} (\T)$ with $\mu_1 , \mu_2$ in $M^+_{\kappa} (\T)$ satisfies $N_* \mu = \mu$ if and only if $N_* \mu_1 + \mu_2 = \mu_1 + N_* \mu_2$. If $\mu_1 (\T)$ or $\mu_2 (\T)$ is finite then by proposition \ref{t33n} the positive measures $\mu_{\pm}$ in the Jordan decomposition of the signed measure $\mu_1 - \mu_2 = \mu_+ - \mu_-$ are both $N$-invariant: $N_* \mu_{\pm} = \mu_{\pm}$. 

The inverse of each isomorphism $\pi$ in corollary \ref{t923} is given by left (or right) multiplication with $\Omega_{\Sh} \in R_{\Q}$ c.f.~proposition~\ref{t52}. We thus consider the composition
\[
 \Lambda_{\Sh} : M^0 (\T) / M^0 (\T) \Sh \overset{\Omega_{\Sh}}{\silo} Z (\Sh , M^0 (\T)) \overset{\Psi_{\Sh}}{\hookrightarrow} H^0 (\Sh , P_{\kappa_s} (\T)) \xrightarrow{\fs} H^0 (\Sh , \tM_{\kappa_s} (\T))\, .
\]
We will also view $\Lambda_{\Sh}$ as a map defined on $M^0 (\T)$. For $s = 0$ i.e. $\Sh = \{ 1 \}$ we set $M^0 (\T) \Sh = 0$ and the map $\Lambda_{\Sh}$ becomes the following by proposition \ref{t35n}
\[
\Lambda_{\Sh} = \fs : M^0 (\T) \longrightarrow \tM_{\kappa_0} (\T) = M (\T)_{\sing} , \mu \longmapsto \mu_{\sing} \; .
\]
Restricted to the image $M^0_{c} (\T) / M^0_{c} (\T) \Sh$ of $M^0_{c} (\T)$ in $M^0 (\T) / M^0 (\T) \Sh$ the map $\Lambda_{\Sh}$ is the composition:
\[
 \Lambda_{\Sh} : M^0_{c} (\T) / M^0_{c} (\T) \Sh \overset{\Omega_{\Sh}}{\silo} Z (\Sh , M^0_{c.} (\T)) \overset{\Psi_{\Sh}}{\hookrightarrow} H^0 (\Sh , P_{\kappa_s, c} (\T)) \xrightarrow{\fs} H^0 (\Sh , \tM_{\kappa_s, c} (\T)) \; .
\]
In particular, $\Lambda_{\Sh}$ maps atomless measures to atomless ``measures''. Denoting by $\Phi_{\Sh}$ the map of right multiplication by $\Phi_{\Sh} \in \Rh_{\Q}$ we obtain the following commutative diagram:
\[
 \xymatrix{
M^0 (\T) / M^0 (\T) \Sh \ar@{_{(}->}[d] \ar@<1.5ex>[r]^{\overset{\Omega_{\Sh}}{\sim}} & Z (\Sh , M^0 (\T)) \ar@<1.5ex>[l]_{\pi} \ar@{_{(}->}[d]  \ar[r]^{\Psi_{\Sh}} & H^0 (\Sh , P_{\kappa_s} (\T)) \ar[d]^{\Phi_{\Sh}} \ar[r]^{\fs} & H^0 (\Sh , \tM_{\kappa_s} (\T)) \ar^{\Phi_{\Sh}}[d]\\
P_{\kappa_s} (\T) / P_{\kappa_s} (\T) \Sh \ar[r]^{\overset{\Omega_{\Sh}}{\sim}} & Z (\Sh , P_{\kappa_s } (\T)) \ar@{=}[r] & Z (\Sh , P_{\kappa_s} (\T)) \ar[r]^{\fs} & Z (\Sh , \tM_{\kappa_s} (\T)) 
}
\]
The middle square is commutative since by proposition \ref{t63} we have $\Phi_{\Sh} \verk \Psi_{\Sh} = \id$. By proposition \ref{t52} the maps $\Omega_{\Sh}$ are isomorphisms and hence the obvious injectivity of the second vertical arrow implies that the first one is injective as well. By $\fs$  we also denote the composition
\[
\fs : M^0 (\T) \hookrightarrow P_{\kappa_s } (\T) \xrightarrow{\fs} \tM_{\kappa_s} (\T) 
\]
and its restriction
\[
 \fs : Z (\Sh , M^0 (\T)) \longrightarrow Z (\Sh , \tM_{\kappa_s} (\T)) \; .
\]
Recall the definition \ref{t37n} of the space $M^{\kappa_s} (\T)$. 

\begin{theorem} \label{t47nnn}
 1) For $s \ge 1$ consider the map $\Lambda_{\Sh}$ defined above
\[
 \Lambda_{\Sh} : M^0 (\T) / M^0 (\T) \Sh \longrightarrow H^0 (\Sh , \tM_{\kappa_s} (\T)) \; .
\]
We have
\[
 \Phi_{\Sh} \verk \Lambda_{\Sh} \circ \pi = \fs : Z (\Sh , M^0 (\T)) \longrightarrow Z (\Sh , \tM_{\kappa_s} (\T)) \; .
\]
In particular, $\Lambda_{\Sh}$ is injective on the image
\[
 M^{\kappa_s} (\T) / M^{\kappa_s} (\T) \Sh \; \text{of} \; M^{\kappa_s} (\T) \; \text{in} \; M^0 (\T) / M^0 (\T) \Sh \; .
\]
Corresponding statements hold on the spaces of atomless measures.

2) If $s \ge 1$ set $M = N_s$ and let $\Sh' \subset \Sh$ be the submonoid generated by $N_1 , \ldots , N_{s-1}$. For $\sigma \in M^0 (\T)$ with $M_* \sigma = \sigma$ we have
\[
 \Lambda_{\Sh'} (\sigma) \in H^0 (\Sh , \tM_{\kappa_{s-1}} (\T)) \; .
\]
Under the restriction map of proposition \ref{t31n}
\[
\res_{s-1,s} : \tM_{\kappa_{s-1}} (\T) \longrightarrow \tM_{\kappa_s} (\T)
\]
we have
\[
 \Lambda_{\Sh} (\sigma) = \res_{s-1,s} (\Lambda_{\Sh'} (\sigma)) \; .
\]
3) Every $\Sh$-invariant positive ergodic measure $\mu$ that is non-zero on some $\kappa_s$-Carleson set is in the image of $\Lambda_{\Sh}$. 
\end{theorem}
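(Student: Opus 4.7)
The plan is to exhibit an explicit preimage of $\mu$ under $\Lambda_{\Sh}$. After rescaling, assume $\mu(\T) = 1$ and set $\nu := \mu - \lambda \in M^0(\T)$; since $N_* \lambda = \lambda$ and $N_* \mu = \mu$ for every $N \in \Sh$, one has $\nu \in H^0(\Sh, M^0(\T))$. Proposition~\ref{t939n} applied to $\mu$ yields $\mu \in \tM^+_{\kappa_s}(\T)$, so $\mu$ is its own $\kappa_s$-thin extension; it therefore suffices to show that the class of $\nu$ in $M^0(\T)/M^0(\T)\Sh$ maps under $\Lambda_{\Sh}$ to $\mu$ in $\tM^+_{\kappa_s}(\T)$.

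Next, I would form $\sigma := \nu\,\Phi_{\Sh} = \prod^s_{i=1}(1 - N^*_i)\nu$, a finite signed measure in $M^0(\T)$. Since the $\Sh$-invariance of $\nu$ means $\nu X_i = \nu e_i$ (right-module description \eqref{eq:559n}), identity \eqref{eq:5.4} gives $\sigma e_i = \nu(e_i - X_i)\prod_{j \ne i}(1 - X_j) = 0$ for each $i$, so $\sigma \in Z(\Sh, M^0(\T))$. Moreover, by the remark after proposition~\ref{t52}, $\Sh$-invariance of $\nu$ implies $\sigma = \nu\,\Phi_{\Sh} = \nu\,\Omega_{\Sh}$, so $\sigma$ is the image of $[\nu]$ under the isomorphism $\Omega_{\Sh}$ of proposition~\ref{t63}.

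The central step is to verify $\Psi_{\Sh}(\sigma) = \nu$ in $P_{\kappa_s}(\T)$, which I would do on Fourier coefficients. Using~\eqref{eq:4.39} and the $\Sh$-invariance relations~\eqref{eq:3.8}, a direct inclusion--exclusion computation yields
\[
c_n(\sigma) = \sum_{I \subset \{1,\ldots,s\}} (-1)^{|I|}\, c_n(\nu)\,\mathbf{1}_{\prod_{i \in I} N_i \,\mid\, n} = c_n(\nu) \cdot \mathbf{1}_{N_i \,\nmid\, n \; \text{for all} \; i}\,.
\]
Applying~\eqref{eq:8.95} gives $c_n(\Psi_{\Sh}(\sigma)) = \sum_{N \in \Sh,\, N \mid n} c_{n/N}(\sigma)$; decomposing $n = N_0 m$ with $N_0 \in \Sh$ maximal and $\gcd(m, N_1 \cdots N_s) = 1$, the only surviving term corresponds to $N = N_0$, contributing $c_m(\sigma) = c_m(\nu) = c_n(\nu)$ by $\Sh$-invariance. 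As premeasures are determined by their associated distributions (section~\ref{sec:7}), this forces $\Psi_{\Sh}(\sigma) = \nu$.

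Finally, apply $\fs$ and identify $\fs(\nu)$ with $\mu$. By remark~\ref{t826} applied to the probability measure $\mu$, $\fs(\nu) = \fs(\omega(\mu))$ corresponds to $\mu\big|_{\Bh_{\kappa_s}}$ in $M^+_{\kappa_s}(\T)$, and the isomorphism $M^+_{\kappa_s}(\T) \xrightarrow{\sim} \tM^+_{\kappa_s}(\T)$ of theorem~\ref{t829n} sends this to the unique $\kappa_s$-thin extension of $\mu\big|_{\Bh_{\kappa_s}}$, which is $\mu$ itself since $\mu$ is already $\kappa_s$-thin. Thus $\Lambda_{\Sh}([\nu]) = \fs(\Psi_{\Sh}(\sigma)) = \fs(\nu) = \mu$, as required. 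The principal obstacle is the Fourier-coefficient bookkeeping in the third step, which makes the formal identity $\Phi_{\Sh}\Psi_{\Sh} = 1$ rigorous at the level of premeasures and thin measures; once this is in hand, everything else is a direct application of the machinery already built up.
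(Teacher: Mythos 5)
Your proposal addresses only part 3) of the theorem; parts 1) and 2) are not touched, so it cannot stand alone as a proof of the full statement. In the paper, 1) is a diagram chase on the commutative square displayed just above the theorem, and 2) is where the real work lives, namely proposition~\ref{t925}; both would still need to be supplied.

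For part 3) your argument is correct but takes a genuinely different, and in fact more self-contained, route. The paper applies part 2) iteratively to $\sigma = \mu - \mu(\T)\lambda$ so as to reduce $\Lambda_{\Sh}(\sigma)$ to $\res_{0,s}(\Lambda_{\{1\}}(\sigma)) = \res_{0,s}(\mu_{\sing})$ and then invokes proposition~\ref{t939n}; your version computes $\Lambda_{\Sh}([\nu])$ directly by verifying $\Psi_{\Sh}(\Omega_{\Sh}(\nu)) = \nu$ for $\nu = \mu - \lambda$, then applies remark~\ref{t826} and proposition~\ref{t939n}. This bypasses part 2), and hence proposition~\ref{t925}, entirely, which is a real economy if one only wants 3). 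Two remarks. First, the Fourier-coefficient verification of $\Psi_{\Sh}(\Omega_{\Sh}(\nu)) = \nu$ is heavier than needed: for $\nu \in H^0(\Sh, M^0(\T))$ one has $\nu\Omega_{\Sh} = \nu\Phi_{\Sh}$ (last line of proposition~\ref{t63}), and $\Psi_{\Sh} \circ \Phi_{\Sh} = \id$ on $P(\T)$ is also part of proposition~\ref{t63}, so the identity follows formally; your computation is an honest but redundant unwinding of it. Second, since the $N_i$ need not be prime, the condition ``$\gcd(m, N_1\cdots N_s)=1$'' in your decomposition $n = N_0 m$ should read ``$N_i \nmid m$ for all $i$''; the unique $N_0 \in \Sh$ with $N_0 \mid n$ and $N_i \nmid n/N_0$ for all $i$ is $N_0 = \prod_i N_i^{a_i}$ where $a_i$ is maximal with $N_i^{a_i} \mid n$, and with that correction your inclusion--exclusion bookkeeping goes through as intended.
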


\begin{proof}
1) The equation $\Phi_{\Sh} \verk \Lambda_{\Sh} \verk \pi = \fs$ follows from the above diagram. Proposition \ref{t15n} shows that the left vertical map is an isomorphism in the following diagram
\[
 \xymatrix{
Z (\Sh , M^{\kappa_s} (\T)) \ar@{^{(}->}[r]  \ar[d]^{\wr}_{\pi} & Z (\Sh , M^0 (\T)) \ar[d]^{\wr \pi} \\
M^{\kappa_s} (\T) / M^{\kappa_s} (\T) \Sh \ar[r]^i & M^0 (\T) / M^0 (\T) \Sh
}
\]
Hence the map $i$ is injective. This means that the image of $M^{\kappa_s} (\T)$ in $M^0 (\T) / M^0 (\T) \Sh$ can be identified with $M^{\kappa_s} (\T) / M^{\kappa_s} (\T) \Sh$. The map $\Lambda_{\Sh}$ is injective on this subspace since the restriction of $\fs : M^0 (\T) \to \tM_{\kappa_s} (\T)$ to $M^{\kappa_s} (\T)$ is injective. 

Part 2) follows from proposition \ref{t925} since the following diagram commutes:
\[
 \xymatrix{
P_{\kappa_{s-1}} (\T) \ar[r]^{\fs} \ar@{_{(}->}[d] & M_{\kappa_{s-1}} (\T) \ar[r]^{\sim} \ar[d]^{\res_{s-1,s}} & \tM_{\kappa_{s-1}} (\T) \ar[d]^{\res_{s-1,s}}\\
P_{\kappa_s} (\T) \ar[r]^{\fs} & M_{\kappa_s} (\T) \ar[r]^{\sim} & \tM_{\kappa_s} (\T) \; .
}
\]
As for 3), applying 2) iteratively to $\sigma = \mu - \mu (\T) \lambda$ we get
\[
 \Lambda_{\Sh} (\sigma) = \res_{0,s} (\Lambda_{\{ 1 \} } (\sigma) ) = \res_{0,s} (\sigma_{\sing}) = \res_{0,s} (\mu_{\sing})
\]
where $\res_{0,s}$ is the map
\[
 \res_{0,s} : \tM_{\kappa_0} (\T) = M (\T)_{\sing} \xrightarrow{\fs} M_{\kappa_s} (\T) \cong \tM_{\kappa_s} (\T) \; .
\]
According to proposition \ref{t939n} the measure $\mu$ is $\kappa_s$-thin and hence we have $\mu = \mu_{\sing}$. The measure $\Lambda_{\Sh} (\sigma) = \res_{s,0} (\mu)$ is the unique extension of $\mu \, |_{\Bh_{\kappa_s}}$ to a $\kappa_s$-thin measure. Since $\mu$ itself is such an extension, we have $\Lambda_{\Sh} (\sigma) = \mu$. 
\end{proof}

\begin{rems}
{\bf a} According to corollary \ref{t923} the first part $\Psi_{\Sh} \verk \Omega_{\Sh}$ of the map $\Lambda_{\Sh}$ can be interpreted as a map on functions
\[
 \Psi_{\Sh} \verk \Omega_{\Sh} : \Nh^1 / \Nh^{1\Sh} \longrightarrow H^0 (\Sh , \Nh^1_s) \; .
\]
However I do not know a function theoretic construction of the passage to $\kappa_s$-thin ``measures'' via $\fs$.

{\bf b} It is known that there is an abundance of $\varphi_3$-invariant, even ergodic atomless probability measures $\nu$ on $\T$ for which the $3$-adic Cantor set $E \subset \T$ has full measure. We can also assume that $[\zeta]_* \sigma \neq \sigma$ for $\zeta = -1$. For $\sigma = \nu - \nu (\T) \lambda \in M^0 (\T)$ we then have $\Omega_{\Sh'} (\sigma) \neq 0$ i.e. $\sigma \notin M^0 (\T) \Sh'$ for $\Sh'$ the semigroup generated by $N_1 = 2$. Let $\Sh$ be generated by $N_1 = 2$ and $N_2 = 3$. Part 2) of theorem \ref{t47nnn} implies that $\mu = \Lambda_{\Sh'} (\sigma)$ is an $\Sh$-invariant atomless $\kappa_1$-thin real ``measure'' on $\T$. It is non-zero by part 1) since $\sigma \in M^{\kappa_1} (\T)$ by Example \ref{t826n} and since $\sigma \notin M^0 (\T) \Sh'$. Incidentally, $\sigma \in M^{\kappa_{\gamma}} (\T)$ for every $\gamma \ge 0$. 
\end{rems}

For $\mu \in M^+ (\T)$ write $\dim_H \mu$ for the infimum over the Hausdorff dimensions of all Borel sets of full measure for $\mu$. Works of Billingsley and Rudolph give information about $\dim_H \mu$ for certain $\Sh$-invariant measures if $s \ge 2$. 

\begin{theorem} \label{t1052}
 Let $\mu \in M^+ (\T)$ be $\Sh$-invariant and ergodic with respect to $\varphi_N$ where $N = N_1$. Assume that $s \ge 2$ and that $\mu$ is not a scalar multiple of Lebesgue measure. Then $\dim_H \mu = 0$.
\end{theorem}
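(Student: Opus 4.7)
The plan is to reduce to an application of two classical results in ergodic theory: Billingsley's formula for the Hausdorff dimension of an ergodic invariant measure of $\varphi_N$ on $\T$, and the Rudolph--Johnson rigidity theorem for $\times p,\,\times q$ invariant measures.

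First I would normalize. The case $\mu = 0$ is trivial, and since $\mu$ is not a scalar multiple of $\lambda$ we have $\mu \neq 0$, so I may rescale to assume $\mu$ is a probability measure; this preserves $\Sh$-invariance, $\varphi_{N_1}$-ergodicity, the Hausdorff dimension, and the hypothesis of not being Lebesgue. It then suffices, by Billingsley's theorem, to prove that the Kolmogorov--Sinai entropy $h_{\varphi_{N_1}}(\mu)$ vanishes. Billingsley's theorem asserts that for any $\varphi_N$-invariant ergodic probability measure $\nu$ on $\T$,
\[
\dim_H \nu \;=\; \frac{h_{\varphi_N}(\nu)}{\log N},
\]
the proof going via the Shannon--McMillan--Breiman theorem applied to the generating $N$-adic partition, combined with the standard pointwise dimension argument.

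To establish $h_{\varphi_{N_1}}(\mu) = 0$ I would invoke the Rudolph--Johnson theorem: if $p,q \ge 2$ are multiplicatively independent integers and $\nu$ is a Borel probability measure on $\T$ that is both $\varphi_p$- and $\varphi_q$-invariant, $\varphi_p$-ergodic, and satisfies $h_{\varphi_p}(\nu) > 0$, then $\nu = \lambda$. Since $s \ge 2$, the monoid $\Sh$ contains two coprime generators $N_1, N_2 \ge 2$; any such pair is automatically multiplicatively independent, because a relation $N_1^a = N_2^b$ with $a,b \ge 1$ would force a common prime divisor, contradicting coprimality. Applying the theorem with $(p,q) = (N_1, N_2)$: all hypotheses are met by $\mu$, and if we had $h_{\varphi_{N_1}}(\mu) > 0$ the conclusion $\mu = \lambda$ would follow, contradicting the assumption that $\mu$ is not a scalar multiple of $\lambda$. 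Hence $h_{\varphi_{N_1}}(\mu) = 0$, and Billingsley's formula gives $\dim_H \mu = 0$.

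The hard part is not internal to the argument above but lies in the invocation of the Rudolph--Johnson theorem itself, which is a substantial result from measurable dynamics. Rudolph's original proof assumes the stronger hypothesis that $\nu$ is ergodic for the joint $\Z_{\ge 0}^2$-action generated by $\varphi_p$ and $\varphi_q$; A.\,S.\,A.\,Johnson's subsequent work removes that assumption and requires only ergodicity under one of the two maps, which is exactly the situation at hand. The reduction via Billingsley's formula is, by contrast, completely routine.
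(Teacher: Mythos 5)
Your argument is correct and follows essentially the same route as the paper: reduce via Billingsley's formula $\dim_H\mu = h_\mu(\varphi_{N_1})/\log N_1$ to showing the entropy vanishes, then invoke the $\times p, \times q$ rigidity theorem to rule out positive entropy. One factual correction to your aside about Rudolph versus Johnson: ergodicity with respect to $\varphi_{N_1}$ alone is a \emph{stronger} hypothesis than ergodicity for the joint $\Z_{\ge 0}^2$-action (any jointly invariant set is in particular $\varphi_{N_1}$-invariant), so the direction of implication is reversed from what you wrote, and Rudolph's original theorem already applies here without needing Johnson's extension; Johnson's contribution was to relax the coprimality assumption on $p,q$ to multiplicative independence, which is irrelevant in the present setting since the $N_i$ are pairwise coprime by hypothesis. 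The paper's remark after the proof makes precisely this point: only the ``easy'' special case of Rudolph's theorem, in which $\mu$ is already ergodic under a single generator, is needed. Finally, the paper disposes of the case where $\mu$ has atoms at the outset (by ergodicity such a $\mu$ is carried by a countable set, hence of dimension zero) before appealing to Billingsley; your proof omits this but it is harmless since the formula holds trivially there with both sides equal to zero.
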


\begin{proof}
 If $\mu$ has an atom then by ergodicity it is purely atomic and hence supported on a set of Hausdorff dimension zero. Hence we may assume that $\mu$ is atomless. Since $\mu$ is $\varphi_N$-ergodic by assumption, we have the formula:
\[
 \dim_H \mu = \frac{h_{\mu} (\varphi_N)}{\log N} \; .
\]
This is a result of Billingsly \cite{Bi}, see also \cite{GP} \S\,2. If $h_{\mu} (\varphi_N) > 0$ Rudolph's theorem \cite{R} implies that $\mu$ is a scalar multiple of Lebesgue measure. Since we excluded this case it follows that $\dim_H \mu = 0$.
\end{proof}

Note that we used only the special case of Rudolph's theorem where $\mu$ is already ergodic with respect to one of the generators of the semigroup. In this case his result is easier to prove using Fourier theory.

It is not known whether a non-atomic measure as in the theorem exists at all. If it does it must be carried by a ``small'' Borel set. Hence the following question seems reasonable:

{\it Question.} Given a probability measure $\mu \ge 0$ on $\T$ with $\dim_H \mu = 0$, is $\mu$ non-zero on some $\kappa_{\gamma}$-Carleson set for some $\gamma > 0$?

Part of theorem \ref{t33} can be generalized to functions in the classes $\Nh_{\gamma}$.

\begin{theorem} \label{t926}
 For $0 \le \gamma < s$ let $f \in \Nh_{\gamma}$ be a function which satisfies the functional equations
\[
 f (z^N)^N = \prod_{\zeta^N = 1} f (\zeta z) \quad \text{for all} \; N \in \Sh \; .
\]
If $f$ has a zero in $D$ then $f$ is the zero function.
\end{theorem}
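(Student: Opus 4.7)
The plan is to follow the strategy of the proof of Theorem~\ref{t33}, but to replace the Blaschke condition by Seip's zero-set estimate (Theorem~\ref{t22}) and, crucially, to exploit all the generators $N_1,\ldots,N_s$ of $\Sh$ simultaneously. Assume for contradiction that $f \neq 0$ has some zero in $D$. First I would dispose of the possibility of a zero at the origin: comparing orders of vanishing at $z = 0$ on both sides of the functional equation for some $N \ge 2$ in $\Sh$ forces $N^2 \ord_0 f = N \ord_0 f$, hence $\ord_0 f = 0$. So we may fix $a \in D \setminus \{0\}$ with $f(a)=0$. Iterating the functional equation exactly as in the proof of Theorem~\ref{t33}, for every $N \in \Sh$ with $N \ge 2$ we obtain
\[
N \ord_a f \;=\; \sum_{z^N = a} \ord_z f,
\]
so every $N$-th root of $a$ is a zero of $f$ (with total multiplicity $N\ord_a f$), lying on the circle $|z| = |a|^{1/N}$.

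The key new input compared with Theorem~\ref{t33} is that for distinct $N, N' \in \Sh$ the zero sets $\{z : z^N = a\}$ and $\{z : z^{N'} = a\}$ are disjoint inside $D \setminus \{0\}$: any common element $z$ would satisfy $z^{N-N'} = 1$, forcing $z$ to be a root of unity, which is impossible because $|z| < 1$. Granting this disjointness, I would next estimate the lower bound in Seip's inequality. Setting $L(r) := \log |a|^{-1}/\log r^{-1}$, the condition $|a|^{1/N} \le r$ is equivalent to $N \le L(r)$, so
\[
\sum_{|z| \le r}(1 - |z|) \ord_z f \;\ge\; \ord_a f \sum_{\substack{N \in \Sh \\ N \le L(r)}} N\bigl(1 - |a|^{1/N}\bigr).
\]
Since $N(1 - |a|^{1/N}) \to \log|a|^{-1} > 0$ as $N \to \infty$, the summand is bounded below by a positive constant for large $N$. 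A lattice-point count in the simplex $\{(\nu_1,\ldots,\nu_s) \in \Z_{\ge 0}^s : \sum_i \nu_i \log N_i \le \log L\}$ shows $|\{N \in \Sh : N \le L\}| \sim (\log L)^s$ as $L \to \infty$, and because $\log L(r) \sim |\log(1-r)|$ as $r \to 1^-$ this yields a constant $c>0$ and some $r_0 < 1$ with
\[
\sum_{|z| \le r}(1-|z|)\ord_z f \;\ge\; c\,|\log(1-r)|^s \quad \text{for all } r_0 \le r < 1.
\]

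To conclude, I would invoke Seip's Theorem~\ref{t22}: writing $f = g_1/g_2$ with $g_1, g_2 \in \Ah_\gamma$ and $g_2$ zero-free on $D$, the zero set of $f$ (with multiplicities) coincides with that of $g_1$, and Theorem~\ref{t22} applied to $g_1$ gives constants $c_1, c_2 \ge 0$ with
\[
\sum_{|z| \le r}(1-|z|)\ord_z f \;\le\; c_1 |\log(1-r)|^\gamma + c_2 \quad \text{for } 1/2 \le r < 1.
\]
The hypothesis $\gamma < s$ makes these two bounds incompatible for $r$ close enough to $1$, completing the contradiction. I expect the main obstacle to be the counting estimate $|\{N \in \Sh : N \le L\}| \sim (\log L)^s$ combined with its interplay with the factors $N(1-|a|^{1/N})$; this is a standard lattice-point/Tauberian calculation, but some care is needed to extract a uniform lower bound in $r$. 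The cleanest route is probably to sum only over $N$ in a range $N_0 \le N \le L(r)$ with $N_0$ fixed so that $N(1-|a|^{1/N}) \ge \halb \log|a|^{-1}$ for $N \ge N_0$, and to use a sharp simplex-volume estimate with error term. The disjointness observation, although elementary, is the genuinely new conceptual input relative to the $s = 1$ case of Theorem~\ref{t33}.
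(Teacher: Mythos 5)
Your proposal is correct and takes essentially the same approach as the paper: dispose of a zero at the origin via the functional equation, use the iterated relation $N\ord_a f=\sum_{z^N=a}\ord_z f$ over all $N\in\Sh$, exploit disjointness of these root sets to get a lower bound $\gtrsim|\log(1-r)|^s$ for the weighted zero counting function, and contradict Seip's estimate (Theorem~\ref{t22}). Two of the points you flagged as potentially needing care are in fact easy, and the paper handles them more directly: the factor $N(1-|a|^{1/N})$ is actually nondecreasing in $N$, so one can simply take its infimum over all $N\ge1$ (which is $1-|a|>0$) rather than discarding small $N$, and only the crude lower bound $|\{N\in\Sh: N\le x\}|\ge s^{-s}\log^s_{N_1}x$ is needed, not a sharp simplex-volume asymptotic with error term; your explicit remarks on disjointness of the root sets and on passing to $g_1$ in the factorization $f=g_1/g_2$ when applying Theorem~\ref{t22} are implicit in the paper's version.
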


\begin{proof}
 From the proof of theorem \ref{t33} we know that $f (0) = 0$ implies that $f$ is the zero function. Assume that $f (a) = 0$ for some $0 \neq a \in D$ and that $f$ is not identically zero. Using relation \eqref{eq:3.11} for $N \in \Sh$ we get the following estimate for all $0< r < 1$
\begin{align*}
 \sum_{|z| \le r} (1 - |z|) \ord_z f & \ge \sum_{N \in \Sh} \sum_{|z| \le r \atop z^N = a} (1 - |z|) \ord_z f  \\
& = \sum_{N \in \Sh} (1 - |a|^{1/N}) \sum_{|z| \le r \atop z^N = a} \ord_z f \\
& \overset{\eqref{eq:3.11}}{=} \ord_a f \sum_{N \in \Sh \atop |a| \le r^N} N (1 - |a|^{1/N}) \\
& \ge C_1 | \{ N \in \Sh \tei |a| \le r^N \}| \; .
\end{align*}
Here $C_1$ is the positive constant
\[
 C_1 = \ord_a f \inf_{N \ge 1} N (1 - |a|^{1/N}) \; .
\]
We may assume that $N_1 \ge N_i$ for $1 \le i \le s$. For $x \ge 1$ and $0 \le \nu_i \le s^{-1} \log_{N_1} x$ we then have
\[
 N^{\nu_1}_1 \cdots N^{\nu_s}_s \le N^{\nu_1 + \ldots + \nu_s}_1 \le N^{\log_{N_1} x}_1 = x \; .
\]
It follows that
\begin{align*}
 |\{ N \in \Sh \tei N \le x \} | & = | \{ ( \nu_1 , \ldots , \nu_s) \tei \nu_1 , \ldots , \nu_s \ge 0 \; \text{and} \; N^{\nu_1}_1 \cdots N^{\nu_s}_s \le x \} |  \\
& \ge s^{-s} \log^s_{N_1} x \; .
\end{align*}
Hence for $|a| \le r < 1$ and $e^{-1} < r$ we have
\begin{align*}
 |\{ N \in \Sh \tei |a| \le r^N \} | & \ge s^{-s} \log^s_{N_1} \Big( \frac{\log |a|}{\log r} \Big) \\
& = s^{-s} ( \log_{N_1} |\log |a|| - \log_{N_1} |\log r|)^s  \\
& = s^{-s} (\log_{N_1} |\log |a|| + |\log_{N_1} |\log r||)^s \; .
\end{align*}
Hence there is a positive constant $C_2$ such that for $r < 1$ close enough to $1$ we have
\[
 \sum_{|z| \le r} (1 - |z|) \ord_z f \ge C_2 |\log (1-r)|^s \; .
\]
Comparing this estimate with the one in theorem \ref{t22} we see that our assumptions $f (a) = 0$ and $f \neq 0$ imply that $s \le \gamma$.
\end{proof}

In theorem \ref{t33} we have seen that there are neither non-constant functions with zeroes nor non-constant outer functions in $\Nh = \Nh_0$ which satisfy the functional equation \eqref{eq:1.1}. The first assertion has just been generalized to the classes $\Nh_{\gamma}$ for $\gamma \ge 0$ in theorem \ref{t926}. Replacing ``outer'' by ``cyclic'' c.f. end of \S\,8, I think the following generalization of the second assertion is true:

\begin{conj}
 \label{t1054}
For $s \ge 1$ and $0 \le \gamma < s$ let $g \in \Ah_{\gamma}$ be cyclic and satisfy the functional equations:
\[
 g (z^N)^N = \prod_{\zeta^N = 1} g (\zeta z) \quad \text{for all} \; N \in \Sh \; .
\]
Then $g$ is constant.
\end{conj}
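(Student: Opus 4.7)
The plan is to reduce the conjecture in two preparatory steps to a single rigidity question for $\Sh$-invariant premeasures, which is the genuine obstacle.

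\emph{Reduction.} Since $g$ is cyclic in $\Ah_\gamma$ we have $g \neq 0$, and since $g \in \Ah_\gamma \subset \Nh_\gamma$ satisfies the functional equations for every $N \in \Sh$ with $\gamma < s$, theorem~\ref{t926} forces $g$ to be zero-free on $D$. Dividing by $g(0)$ (which preserves cyclicity and the equations) one may assume $g \in \Ah^1_\gamma$. Korenblum's correspondence \eqref{eq:8.92} then writes $g = f_\mu = \exp(-h_\mu)$ for a unique $\mu \in P^+_{\kappa_\gamma}(\T)$, and the $S$-equivariance of $\mu \mapsto f_\mu$ from proposition~\ref{xxx}\,b) together with proposition~\ref{t71}\,c) translates the functional equations for $g$ into $N_*\mu = \mu$ for all $N \in \Sh$. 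Cyclicity of $g$ in $\Ah_\gamma$ is equivalent to $\mu_s = 0$: for $\gamma = 1$ this is Korenblum's theorem~\ref{t828}, and in general this is conjecture~\ref{t41a}. Hence the task reduces to proving that every $\Sh$-invariant $\mu \in P^+_{\kappa_\gamma}(\T)$ with $\mu_s = 0$ and $\gamma < s$ must vanish identically, since $\mu = 0$ gives $h_\mu = 0$ and hence $g = 1$.

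\emph{The main obstacle.} For $\gamma = 0$ this rigidity is already known: the bijection $\omega$ of \eqref{eq:8.89n} identifies $\mu$ with $\mu' - \mu'(\T)\lambda$ for some $\mu' \in M^+(\T)$, and $\mu_s = 0$ forces $\mu'$ to be absolutely continuous, so $\mu' = g_0 \lambda$ with $g_0 \in L^1(\T)$; then $\Sh$-invariance combined with the Riemann--Lebesgue lemma (as in the remark after corollary~\ref{t37}) forces $g_0$ to be constant and hence $\mu = 0$. The difficulty for $\gamma > 0$ is precisely that an element $\mu \in P^+_{\kappa_\gamma}(\T)$ with $\mu_s = 0$ need not correspond to an absolutely continuous measure on $\T$: Korenblum's theory allows a much richer ``regular'' part of the premeasure when $\gamma > 0$, invisible to the singular-measure invariant. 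Consequently the naive Fourier argument above does not transfer.

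\emph{Two possible attacks.} One route is to combine theorem~\ref{t47nnn} with the Furstenberg--Rudolph rigidity invoked in theorem~\ref{t1052}: since $\Fh_{\kappa_s} \subset \Fh_{\kappa_\gamma}$ by proposition~\ref{t26n}, vanishing of $\mu_s$ at level $\gamma$ implies vanishing at level $s$, so in the language of $\kappa_s$-thin measures the class of $\mu$ is trivial; one would then try to upgrade $\mu$ to a genuine $\Sh$-invariant measure, decompose it into ergodic components, and invoke \cite{R} together with theorem~\ref{t1052} to rule out non-trivial components when $s \ge 2$. The other is a quantitative iteration of the functional equation $N h_\mu(z^N) = \sum_{\zeta^N=1} h_\mu(\zeta z)$ across all generators of $\Sh$, combined with the lower bound $\RRe h_\mu(z) \ge -c|\log(1-|z|)|^\gamma$ coming from $\mu \in P^+_{\kappa_\gamma}(\T)$: the orbit-counting function $|\{M \in \Sh : M \le x\}|$ grows like a constant times $|\log x|^s$, and when $\gamma < s$ this multiplicity should strictly outweigh the allowed $\gamma$-th logarithmic growth, forcing $h_\mu \equiv 0$. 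This parallels the proof of theorem~\ref{t926}, where exactly the same discrepancy between $s$ and $\gamma$ logarithmic powers (there between orbit counting and Seip's Blaschke-type bound from theorem~\ref{t22}) produced the contradiction. Making either route rigorous—particularly the passage from a $\kappa_\gamma$-regular premeasure to a genuine measure, or the control of the full orbit sum rather than just a single zero orbit—is the serious challenge that keeps the statement at the level of a conjecture.
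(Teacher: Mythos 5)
The statement in question is Conjecture~\ref{t1054}; the paper does not prove it for $0<\gamma<s$, and you correctly do not claim to either --- your text is a reduction plus two speculative routes, which is exactly the status of the problem in the paper. Your reduction matches the paper's remarks in substance, with small differences of route. You derive zero-freeness of $g$ from theorem~\ref{t926}, which is valid under the hypothesis $\gamma<s$, but the paper's remark (c) uses the more elementary and $\gamma$-independent fact that a zero of $g$ at $\rho\in D$ forces $g\Ah_\gamma\subset\emm$, the closed maximal ideal at $\rho$, contradicting cyclicity outright. For $\gamma=0$ you go through $\mu_s=0\Rightarrow\mu'$ absolutely continuous $\Rightarrow$ (Riemann--Lebesgue, as in the remark after corollary~\ref{t37}) $\mu'$ a multiple of $\lambda$; the paper instead notes that cyclic elements of $\Ah_0=H^\infty(D)$ are units of a unital Banach algebra, hence outer, and cites theorem~\ref{t33}. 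Both give the same $\gamma=0$ conclusion. Your identification of the real obstacle --- that for $\gamma>0$ vanishing $\kappa_\gamma$-singular measure no longer forces the premeasure to come from an absolutely continuous measure, so the Fourier argument fails --- is exactly right, and your two suggested attacks (upgrading to a genuine measure and invoking Rudolph/Furstenberg rigidity; or a quantitative iteration along $\Sh$-orbits to overpower the $|\log(1-|z|)|^\gamma$ bound, in parallel to the proof of theorem~\ref{t926}) are aligned with the hints the paper gives, including the analogy to theorem~\ref{t926} that the introduction singles out as the motivation for the conjecture. One point worth adding that you did not mention: the hypothesis $\gamma<s$ is sharp, as remark (b) shows by exhibiting (via proposition~\ref{a31}) a non-constant cyclic unit $g(z)=\prod_{\nu\ge 0}\exp(2z^{N^\nu})$ of $\Ah_1$ satisfying the functional equations for $\Sh=\langle N\rangle$, i.e.\ $\gamma=s=1$.
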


\begin{rems}
 {\bf a} In $\Ah_0$ the cyclic elements $g$ are units because $\Ah_0 = H^{\infty} (D)$ is a unital Banach algebra. Hence $g$ is outer and thus it follows from theorem \ref{t33} that the conjecture is true for $\gamma = 0$.

{\bf b} The condition $\gamma < s$ in the conjecture is necessary: According to proposition \ref{a31} below, for $N \ge 2$ the function
\[
 g (z) = \prod^{\infty}_{\nu=0} \exp (2 z^{N^{\nu}})
\]
is a unit in $\Ah_1 = \Ah^{-\infty}$ hence cyclic and it satisfies the multiplicative functional equation \eqref{eq:1.1}. Hence for $\gamma = 1 = s$ the assertion in conjecture \ref{t1054} no longer holds. Note that according to example \ref{t513} the function $g$ is not in the Nevanlinna class. 

{\bf c} For $\rho \in D$ the evaluation map $\Ah_{\gamma} \to \C$ sending $g$ to $g (\rho)$ is continuous. Its kernel is a closed maximal ideal $\emm$. If $g \in \Ah_{\gamma}$ has a zero at $\rho$ then $g \Ah_{\gamma} \subset \emm$ and hence $g$ cannot be cyclic. It is therefore enough to prove the conjecture for elements $g \in \Ah^1_{\gamma}$ which are cyclic in $\Ah_{\gamma}$. For $\gamma = 1$ these are the functions with $\sigma_g = 0$ by theorem \ref{t37n}. For general $\gamma > 0$ this should also be true by conjecture \ref{t828}. 
\end{rems}

The following assertion is more of a program: 

\begin{theorem} \label{t1055}
 Fix an integer $s \ge 1$ and a real number $0 < \gamma < s$. Assume that the characterization of cyclic elements in $\Ah_{\gamma}$ given in conjecture \ref{t41a} is true. (This is satisfied for $\gamma = 1 < s$ by theorem \ref{t828}.) Assume also that conjecture \ref{t1054} holds for functions $g$ which are in addition singular inner. Let $\mu \neq 0$ be a finite positive singular measure on $\T$ which is $\Sh$-invariant and ergodic. Then $\mu$ is $\kappa_{\gamma}$-thin. 
\end{theorem}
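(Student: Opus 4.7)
The plan is to argue by contradiction: suppose $\mu$ is not $\kappa_\gamma$-thin. Since $\mu$ is $\Sh$-invariant and ergodic, the contrapositive of Proposition \ref{t939n} forces $\mu(F) = 0$ for every $\kappa_\gamma$-Carleson set $F$. I will translate this vanishing into a cyclicity statement for an associated singular inner function and then invoke the assumed case of conjecture \ref{t1054}.

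Concretely, let $g = f_\mu = \exp(-h_\mu)$. Since $\mu$ is a non-zero positive singular measure, $g$ is a singular inner function with $g(0) = e^{-\mu(\T)} \in (0,1)$, and Proposition \ref{t31} tells us $g$ satisfies the functional equation \eqref{eq:1.1} for every $N \in \Sh$. Normalize by setting $f = g/g(0) = e^{\mu(\T)} f_\mu$; this lies in $\Ah^1_0 \subset \Ah^1_\gamma$ and corresponds under the bijection \eqref{eq:8.92} to the premeasure $\omega(\mu) = \mu - \mu(\T)\lambda \in P^+_{\kappa_0}(\T)$. By remark \ref{t826}, the associated $\kappa_\gamma$-singular measure is $\sigma_f = \mu|_{\Bh_{\kappa_\gamma}}$, which vanishes by the assumption above. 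The assumed conjecture \ref{t41a} therefore declares $f$ cyclic in $\Ah_\gamma$, and since multiplication by the nonzero scalar $g(0)$ is a homeomorphism of $\Ah_\gamma$, the function $g$ itself is cyclic in $\Ah_\gamma$.

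Now $g$ is a singular inner function, cyclic in $\Ah_\gamma$, and satisfies \eqref{eq:1.1} for every $N \in \Sh$, where $0 < \gamma < s$. The assumed case of conjecture \ref{t1054} for singular inner functions then forces $g$ to be constant. A constant inner function has modulus one, yet $g(0) = e^{-\mu(\T)}$, so $\mu(\T) = 0$ and hence $\mu = 0$, contradicting the hypothesis $\mu \neq 0$.

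The argument is essentially bookkeeping once the two conjectures and the dichotomy of Proposition \ref{t939n} are granted; the only points requiring care are identifying $\sigma_f$ with $\mu|_{\Bh_{\kappa_\gamma}}$ through the premeasure $\omega(\mu)$ rather than directly through $\mu$ (so that the constant factor $e^{\mu(\T)}$ does not cause confusion), and checking that the normalization $f = g/g(0)$ preserves both cyclicity and \eqref{eq:1.1}. The true obstacle is not in the proof itself but in the fact that its conclusion is entirely contingent on two unresolved conjectures.
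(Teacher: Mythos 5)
Your proposal is correct and follows the same route as the paper's proof: both identify $g = f_\mu$ as the associated singular inner function, normalize to $f = g/g(0)$, compute $\sigma_f = \mu|_{\Bh_{\kappa_\gamma}}$ via remark \ref{t826}, and combine conjectures \ref{t41a} and \ref{t1054} with proposition \ref{t939n}. The only cosmetic difference is that you organize the argument as a direct contradiction from the assumption that $\mu$ is not $\kappa_\gamma$-thin (invoking the contrapositive of proposition \ref{t939n} up front), whereas the paper first proves $\sigma_f \neq 0$ by contradiction and then applies proposition \ref{t939n} forward — logically identical.
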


\begin{rems}
 Thus we would almost get the assumption of 3) in theorem \ref{t47nnn} which would ensure that $\mu$ lies in the image of $\Lambda_{\Sh}$. If conjecture \ref{t1054} holds for $\gamma = s$ as well if in addition $g$ is singular inner, then we would get exactly the assumption of 3), and hence $\mu$ would lie in the image of $\Lambda_{\Sh}$. 
\end{rems}

\begin{proof}
 Let $g = \exp (-h_{\mu})$ be the singular inner function corresponding to $\mu$. Set $f = g / g (0)$ viewed as an element of $\Ah^1_{\gamma}$. Then we have $\sigma_f = \mu \, |_{\Bh_{\kappa_{\gamma}}}$. If $\sigma_f$ vanishes then conjecture \ref{t41a} implies that $f$ and hence $g$ is cyclic in $\Ah_{\gamma}$. Since $\mu$ is $\Sh$-invariant, the singular inner function $g$ therefore satisfies all assumptions in conjecture \ref{t1054}. Thus $g$ would have to be constant in contradiction to $\mu$ being singular and non-zero. Hence we have $\sigma_f \neq 0$ i.e. there is a $\kappa_{\gamma}$-Carleson set $F$ with $\mu (F) > 0$. Now proposition \ref{t939n} implies that $\mu$ is $\kappa_{\gamma}$-thin.
\end{proof}

Using theorem \ref{t828}, we formulate a special case of the theorem as a corollary:

\begin{cor} \label{t1056}
 Let $N ,M \ge 2$ be coprime integers. Assume that any singular inner function $g$ which is cyclic in $\Ah_1 = \Ah^{-\infty}$ and satisfies the multiplicative functional equations \eqref{eq:1.1} for $N$ and $M$ is constant. Let $\mu \neq 0$ be a finite positive singular measure on $\T$ with $N_* \mu = \mu = M_* \mu$ which is $\Sh = \langle N,M \rangle$-ergodic. Then $\mu$ is $\kappa_1$-thin. 
\end{cor}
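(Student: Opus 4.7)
The plan is to specialize the strategy of Theorem \ref{t1055} to the pair $\Sh = \langle N,M\rangle$, which corresponds to $s=2$ and $\gamma=1<s$. Here the conjectural characterization of cyclicity used in Theorem \ref{t1055} is not conjectural at all: for $\gamma=1$ it is Korenblum's Theorem \ref{t828}. So only the piece of conjecture \ref{t1054} for singular inner functions is needed, and that is precisely the hypothesis we are allowed to assume.

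Concretely, I would start from the given singular measure $\mu\in M^+(\T)_{\sing}$, $\mu\neq 0$, and form its Herglotz transform $h_\mu$; then $g = \exp(-h_\mu)$ is a singular inner function with $g(0)>0$. Normalizing, set $f = g/g(0)\in\Ah^1_1$. By Remark \ref{t826} the $\kappa_1$-singular measure attached to $f$ is $\sigma_f = \mu|_{\Bh_{\kappa_1}}$. Next I would record that $f$ (equivalently $g$, since they differ by a positive scalar and \eqref{eq:1.1} is homogeneous of degree $N$ in $f$ and of degree $N$ on the right) satisfies the multiplicative functional equation \eqref{eq:1.1} for both $N$ and $M$: this is just Proposition \ref{t31} applied to each of the two invariance hypotheses $N_*\mu = \mu$ and $M_*\mu = \mu$.

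The heart of the argument is a contradiction. Suppose $\sigma_f = 0$. Then Theorem \ref{t828} says $f$ is cyclic in $\Ah_1$; since $g$ is a non-zero scalar multiple of $f$, it too is cyclic in $\Ah_1$. Now $g$ is a singular inner function that is cyclic in $\Ah_1$ and satisfies the functional equation \eqref{eq:1.1} for both $N$ and $M$, so by the standing hypothesis of the corollary $g$ must be constant. But a constant $g = \exp(-h_\mu)$ forces $h_\mu$ constant, and then Theorem \ref{t21} (or formula \eqref{eq:2.4}) identifies $\mu$ as a non-negative multiple of Lebesgue measure $\lambda$, contradicting both $\mu\neq 0$ and $\mu\perp\lambda$. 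Hence $\sigma_f\neq 0$, i.e.\ $\mu(F) = \sigma_f(F) > 0$ for some $\kappa_1$-Carleson set $F$.

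To finish, I would invoke Proposition \ref{t939n}: the measure $\mu$ is $\Sh$-invariant, ergodic, and non-zero on the $\kappa_1$-Carleson set $F$, so $\mu\in\tM^+_{\kappa_1}(\T)$, which is exactly the assertion that $\mu$ is $\kappa_1$-thin. There is no real obstacle; the only thing to keep track of is the passage between $g$ and $f = g/g(0)$, both for the functional equation and for cyclicity, and both transfer trivially because $g(0)$ is a non-zero scalar.
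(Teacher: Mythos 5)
Your proof is correct and is exactly the paper's argument: the paper states this corollary as a special case of Theorem \ref{t1055} (with $\gamma=1$, $s=2$, Theorem \ref{t828} discharging conjecture \ref{t41a}), and your write-up reproduces the proof of Theorem \ref{t1055} in this instance — pass to $g=\exp(-h_\mu)$, normalize to $f=g/g(0)$, argue by contradiction via cyclicity that $\sigma_f\neq 0$, and conclude with Proposition \ref{t939n}.
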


\section*{Appendix: Premeasures and Witt vectors}

This section contains some elementary formal relations between premeasures on the circle and the ring of big Witt vectors of $\C$. We also show that for every prime number $p$, the Artin--Hasse exponential $E_p$ can be viewed as a $\varphi_p$-invariant premeasure on $\T$ of $\kappa_1$-bounded variation which is not a measure.

The real valued distributions on $\T$ form a commutative $\R$-algebra under the convolution product
\[
 T_1 * T_2 = \Sigma_* (T_1 \otimes T_2) \; .
\]
Here $\Sigma : \T \times \T \to \T$ is the sum in the abelian group $\T$, c.f. \cite{Sch}. The measures $M (\T)$ form a subalgebra of $\Dh' (\T)$. The premeasures $P (\T)$ are not a subalgebra of $\Dh' (\T)$ but they form an $M (\T)$-submodule. More precisely, for $\mu \in P (\T)$ and $\nu \in M (\T)$ the convolution $\mu * \nu$ is a premeasure given by the cumulative mass function
\begin{equation} \label{eq:100}
 \widehat{\mu * \nu} (\theta) = \int_{\T} \hmu (\theta - t) \, d\nu (t) - \int_{\T} \hmu (-t) \, d\nu (t) \; .
\end{equation}
This is seen as follows. Let $\varphi \in \Dh (\T)$ be a test function. We have:
\begin{align*}
 \langle T_{\mu} * T_{\nu} , \varphi \rangle & = \langle T_{\mu} \otimes T_{\nu} , \varphi (x+t) \rangle \\
& = \langle T_{\mu} , \int_{\T} \varphi (x+t) \, d\nu (t) \rangle \\
& = 2 \pi \langle D T_{\hmu \lambda} , \int_{\T} \varphi (x+t) \, d\nu (t) \rangle \\
& = - 2 \pi \int_{\T} \int_{\T} \varphi' (x+t) \hmu (x) \, d\lambda (x) \, d\nu (t) \\
& = - 2\pi \int_{\T} \int_{\T} \varphi' (\theta) \hmu (\theta - t) \, d\lambda (\theta) \, d\nu (t) \\
& = 2\pi \langle DT_{q\lambda} , \varphi\rangle = 2 \pi \langle DT_{(q -q (0))\lambda} , \varphi \rangle
\end{align*}
Here we have set
\[
 q (\theta) = \int_{\T} \hmu (\theta - t) \, d\nu (t) \; .
\]
By Lebesgue's dominated convergence theorem, we have $q (\theta) - q (0) \in V_{\pre} (\T)$. Hence the distribution $\mu * \nu \equiv T_{\mu} * T_{\nu}$ is the distribution attached to the premeasure with cumulative mass function $q (\theta) - q (0)$ as claimed in \eqref{eq:100}. 

For a commutative ring $\Lambda$ with unit, the (big) Witt vector ring $W (\Lambda)$ c.f. \cite{H}~III~17.2 has underlying set
\[
W (\Lambda) = 1 + T\Lambda [[T]] \; .
\]
The addition $\oplus$ in $W (\Lambda)$ is given by multiplication of formal power series. Thus $1$ is the zero element. In general the multiplication $\odot$ in $W (\Lambda)$ is less easy to describe. For $\Q$-algebras $\Lambda$ the situation is simple though. In this case the ``ghost map''
\[
 \gamma : W (\Lambda) \longrightarrow z \Lambda [[z]] \; , \; \gamma (P) = -z P' / P
\]
is an isomorphism of abelian groups and the multiplication in $W (\Lambda)$ is defined so that $\gamma$ becomes a ring isomorphism if $z \Lambda [[z]]$ is given the Hadamard product $*$ of coefficientwise multiplication:
\[
 \sum^{\infty}_{\nu =1} a_{\nu} z^{\nu} * \sum^{\infty}_{\nu =1} b_{\nu} z^{\nu} = \sum^{\infty}_{\nu = 1} a_{\nu} b_{\nu} z^{\nu} \; .
\]
In this way one gets universal polynomials for the coefficients of the product $P \odot Q$ in $W (\Lambda)$ for $P, Q \in W (\Lambda)$. It is a remarkable fact that these polynomials have coefficients in $\Z$ and define a ring structure on $W (\Lambda)$ for any ring $\Lambda$. In general the ghost map is still a ring homomorphism but it is no longer an isomorphism if $\Lambda$ is not a $\Q$-algebra. The Teichm\"uller character is the multiplicative map
\[
 [ \, ] : \Lambda \longrightarrow W (\Lambda) \quad \text{mapping $a$ to} \; [a] = 1-aT \; .
\]
Note that $[a] \odot P = P (az)$ in $W (\Lambda)$. We set $\Tr_N = \sum^{\oplus}_{\zeta^N=1} [\zeta]$ viewed as a sum of multiplication operators on  $W (\Lambda)$. 

For $N \ge 1$ the Frobenius endomorphism $F_N$ is the ring endomorphism of $W (\Lambda)$ defined by the formula
\[
 F_N (P) (z^N) = \prod_{\zeta^N = 1} P (\zeta z) = \Tr_N (P) (z) \; .
\]
The Verschiebung $V_N$ is the additive endomorphism of $W (\Lambda)$ given by
\[
 V_N (P) (z) = P (z^N) \; .
\]
The following relations are standard:
\begin{gather}
 F_N \verk F_M = F_{NM} = F_M \verk F_N \; , \; V_N \verk V_M = V_{NM} = V_M \verk V_N \label{eq:101} \\
F_N \verk V_N = N \; , \; V_N \circ F_N = \Tr_N \label{eq:102} \\
F_N \verk V_M = V_M \verk F_N \quad \text{if $N$ is prime to $M$.} \label{eq:103}
\end{gather}

\begin{prop}
 \label{t23}
The map $w : \Dh' (\T) \to W (\C)$ defined by $w (T) = \exp (-\pi i G_T)$ is a ring homomorphism. For $\zeta \in S^1$ we have $w (\delta_{\zeta}) = [\zeta^{-1}]$. Under restriction of $w$ to $M (\T)$ and $P (\T)$ the following endomorphisms correspond:

\begin{tabular}{cp{3cm}c}
 on $M (\T)$ resp. $P (\T)$ & & on $W (\C)$ \\
$[\zeta^{-1}]_*$ & & $[\zeta] \odot \_$ \\
$N_*$ & & $F_N$ \\
$N N^*$ & & $V_N$ 
\end{tabular}
\end{prop}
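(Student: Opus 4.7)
My plan is to reduce essentially everything to Fourier coefficients via the ghost map. Since $\C$ is a $\Q$-algebra, $\gamma : W (\C) \silo z\C [[z]]$ is a ring isomorphism from $(W (\C) , \oplus , \odot)$ onto $z\C [[z]]$ with Hadamard product, so it suffices to compute $\gamma \verk w$ and to verify all the assertions coordinate-wise. Using formula \eqref{eq:7.80}, i.e.\ $G_T (z) = (\pi i)^{-1} \sum_{\nu \ge 1} \nu^{-1} c_{\nu} (T) z^{\nu}$, logarithmic differentiation of $w (T) = \exp (-\pi i G_T)$ immediately gives
\[
\gamma (w (T)) = - z \, w (T)' / w (T) = \pi i z G'_T (z) = \sum^{\infty}_{\nu = 1} c_{\nu} (T) z^{\nu} \; ,
\]
so $\gamma \verk w$ is just ``forget the zeroth Fourier coefficient''.

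From here the ring-homomorphism property is essentially automatic. Additivity is immediate from $G_{T_1 + T_2} = G_{T_1} + G_{T_2}$. Multiplicativity reduces to the classical identity $c_{\nu} (T_1 * T_2) = c_{\nu} (T_1) c_{\nu} (T_2)$, which follows from $T_1 * T_2 = \Sigma_* (T_1 \otimes T_2)$ by testing against $\eta \mapsto \eta^{-\nu}$ and using $(\eta_1 \eta_2)^{-\nu} = \eta^{-\nu}_1 \eta^{-\nu}_2$; under $\gamma$ this is exactly the Hadamard product, i.e.\ $\odot$ on $W (\C)$. The unit check: $c_{\nu} (\delta_1) = 1$ for all $\nu$, so $\gamma (w (\delta_1)) = z / (1-z) = \gamma (1 - z) = \gamma ([1])$, hence $w (\delta_1) = [1]$. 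The same computation with $c_{\nu} (\delta_{\zeta}) = \zeta^{-\nu}$ gives $\gamma (w (\delta_{\zeta})) = \zeta^{-1} z / (1 - \zeta^{-1} z) = \gamma ([\zeta^{-1}])$, which is the second assertion of the proposition.

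For the three correspondences of operations I will match Fourier coefficients on the $\Dh' (\T)$-side with ghost coordinates on the $W (\C)$-side. Since $[\zeta^{-1}]_* \mu = \delta_{\zeta^{-1}} * \mu$ (immediate from the definition of the pushforward), the rotation correspondence is a formal consequence of multiplicativity of $w$ combined with the already established $w (\delta_{\zeta^{-1}}) = [\zeta]$. For the Frobenius, differentiating the defining relation $F_N (P) (z^N) = \prod_{\zeta^N = 1} P (\zeta z)$ logarithmically yields $\gamma (F_N P) (z) = \sum_{\nu \ge 1} w_{\nu N} (P) z^{\nu}$ if $\gamma (P) = \sum_n w_n (P) z^n$, which matches $\gamma (w (N_* \mu)) = \sum c_{\nu N} (\mu) z^{\nu}$ via \eqref{eq:3.7}. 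For the Verschiebung, the definition $V_N (P) (z) = P (z^N)$ gives $\gamma (V_N P) (z) = N \gamma (P) (z^N)$, matching $\gamma (w (N N^* \mu))$ because $c_{\nu} (N^* \mu) = c_{\nu / N} (\mu)$ if $N \tei \nu$ and $0$ otherwise by \eqref{eq:4.39}.

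No step is conceptually hard; the main obstacle is really just bookkeeping. The one point requiring a brief justification is the extension of the formulas from $M (\T)$ to $P (\T)$: for a premeasure $\mu$ the polynomial bound $c_{\nu} (\mu) = O (|\nu|)$ (from the fact that $T_{\mu}$ has order $\le 1$) ensures $G_{\mu} \in \Oh (D)$, in fact in $H^2$ by proposition \ref{t71}\,b), so $w (\mu) \in 1 + z\C [[z]]$ is a bona fide element of $W (\C)$. The Fourier-coefficient identities used above for $N^* , N_*$ and $[\zeta^{-1}]_*$ on premeasures follow from proposition \ref{t71}\,e) (especially \eqref{eq:7.76} and \eqref{eq:7.78n}) by reading off Taylor coefficients, so no separate argument beyond the one for measures is needed.
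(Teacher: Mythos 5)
Your proof is correct. The core idea coincides with the paper's: compute $(\gamma\verk w)(T) = \sum_{\nu\ge 1} c_\nu(T)\,z^\nu$ and exploit that the ghost map $\gamma$ turns $\odot$ into the Hadamard product. The paper uses this for the multiplicativity of $w$ exactly as you do, and it computes $G_{\delta_\zeta}(z) = -(\pi i)^{-1}(\log(\zeta-z)-\log\zeta)$ directly rather than reading off $c_\nu(\delta_\zeta)=\zeta^{-\nu}$ in ghost coordinates, though these are equivalent. Where you genuinely diverge is in the three operator correspondences: the paper derives each from the functional equations for $h_\mu$ and $G_\mu$ (namely $h_{[\zeta^{-1}]_*\mu}(z)=h_\mu(\zeta z)$, $h_{N_*\mu}(z^N)=N^{-1}\Tr_N(h_\mu)$, and $h_{N^*\mu}=N^*h_\mu$, with their premeasure analogues \eqref{eq:7.76}--\eqref{eq:7.78n}), whereas you logarithmically differentiate the defining formulas for $F_N$ and $V_N$ and match everything on Fourier coefficients via \eqref{eq:3.7}, \eqref{eq:4.39}. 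Your observation that $[\zeta^{-1}]_*\mu = \delta_{\zeta^{-1}}*\mu$, so the rotation correspondence drops out of multiplicativity for free, is a pleasant shortcut the paper does not use. The final remark that the premeasure case goes through because $T_\mu$ has order $\le 1$ and proposition \ref{t71}\,e) supplies the same coefficient identities is exactly what is needed to justify the extension from $M(\T)$ to $P(\T)$. Both routes establish the same identities at the same level of detail; yours is arguably more uniform in staying entirely in ghost coordinates.
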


\begin{proof}
 By definition of addition in $W (\C)$ as multiplication of power series it is clear that the map $w$ is additive. Its composition with the ghost map $\gamma$ is given by
\[
 (\gamma w) (T) = \sum^{\infty}_{\nu = 1} c_{\nu} (T) z^{\nu} \; .
\]
Since $c_{\nu} (T_1  * T_2) = c_{\nu} (T_1) c_{\nu} (T_2)$ it follows that $\gamma w$ is multiplicative and hence also $w$. We have
\[
 G_{\delta_{\zeta}} (z) = - \frac{1}{\pi i} (\log (\zeta - z) - \log \zeta)
\]
and hence
\[
 w (\delta_{\zeta}) = \exp (-\pi i G_{\delta_{\zeta}} (z)) = 1 - \zeta^{-1} z = [\zeta^{-1}] \; .
\]
The relation $h_{[\zeta^{-1}]_* \mu} (z) = h_{\mu} (\zeta z)$ implies that we have
\[
 w ([\zeta^{-1}]_* \mu) = w (\mu) (\zeta z) = [\zeta] \odot w (\mu) \; .
\]
The relation
\[
 h_{N_* \mu} (z^N) = \frac{1}{N} \Tr_N (h_{\mu})
\]
for measures \eqref{eq:3.1} and premeasures \eqref{eq:7.77} implies the formula $G_{N_* \mu} (z^N) = \Tr_N G_{\mu}$ and hence the relation
\[
 w (N_* \mu) (z^N) = \prod_{\zeta^N = 1} w (\mu) (\zeta z) = F_N (w (\mu)) (z^N) \; .
\]
This implies $w (N_* \mu) = F_N (w (\mu))$. The formula $h_{N^* \mu} = N^* h_{\mu}$ for measures \eqref{eq:4.36} and premeasures \eqref{eq:7.76} implies that we have $G_{N N^* \mu} = G_{\mu} (z^N)$ and hence
\[
 w (NN^* \mu) = w (\mu) (z^N) = V_N (w (\mu)) \; .
\]
\end{proof}

Thus the formulas $N_* N^* \mu = \mu$ and $NN^* N_* \mu = \Tr_N (\mu)$ that we have used repeatedly and also the commutation of $N_*$ and $M^*$ for $(N,M) = 1$ correspond to the standard relations \eqref{eq:102} and \eqref{eq:103} between Frobenius and Verschiebung. Since a premeasure $\mu$ is uniquely determined by its Fourier coefficients $c_{\nu} (\mu)$ for $\nu \ge 1$, the map
\[
 w : P (\T) \hookrightarrow W (\C)
\]
is injective. We end this section with an example. For every prime number $p$ the Artin--Hasse exponential is the element
\[
 E_p = \exp \Big( \sum^{\infty}_{\nu =0} \frac{z^{p^{\nu}}}{p^{\nu}} \Big)
\]
of $W (\Q) \subset W (\C)$. It has the remarkable property that it is $p$-integral, i.e. $E_p \in W (\Z_{(p)})$, where $\Z_{(p)} = \{ m / n \in \Q \tei p \nmid n \}$. Moreover it is an idempotent i.e. $E_p \odot E_p = E_p$. We can ask whether $E_p$ or more generally the element
\[
 E_N = \exp \Big( \sum^{\infty}_{\nu=0} \frac{z^{N^{\nu}}}{N^{\nu}} \Big)
\]
for $N \ge 2$ is a premeasure. This is the case. 

\begin{prop} \label{a31}
 For any $N \ge 2$ there is a unique premeasure $\mu \in P (\T)$ with $w (\mu) = E_N$. We have $\mu \in P^+_{\kappa_1} (\T) \cap P^-_{\kappa_1} (\T)$ and $N_* \mu = \mu$ and $\mu * \mu = \mu$. The premeasure $\mu$ is atomless and it is not a (signed) measure. Its cummulative mass function and Herglotz transform are:
\[
 \hmu (\theta) = - \frac{1}{\pi} \sum^{\infty}_{\nu =0} N^{-\nu} \sin N^{\nu} \theta \quad \text{and} \quad h_{\mu} (z) = - 2 \sum^{\infty}_{\nu=0} z^{N^{\nu}} \; .
\]
The function
\[
 f_{\mu} = \exp (-h_{\mu}) = \prod^{\infty}_{\nu=0} \exp (2 z^{N^{\nu}})
\]
is a unit in $\Ah_1$ which is not in the Nevanlinna class $\Nh$. The $\kappa_1$-singular measure $\mu_s = \sigma_{f_{\mu}}$ vanishes. 
\end{prop}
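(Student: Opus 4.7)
My plan is to derive everything from the defining identity $w(\mu) = E_N$ and pull properties back from the Witt-ring side using Proposition~\ref{t23}. First, $w(\mu) = E_N$ forces $-\pi i G_\mu(z) = \log E_N(z) = \sum_{\nu \ge 0} z^{N^\nu}/N^\nu$, whence $G_\mu(z) = (i/\pi)\sum_{\nu \ge 0} z^{N^\nu}/N^\nu$. The relation $h_\mu(z) = 2\pi i z G_\mu'(z)$ then yields $h_\mu(z) = -2\sum_{\nu \ge 0} z^{N^\nu}$, and the formula $\hmu = \Re \tilde G_\mu + \mu_0$ of Proposition~\ref{t71}(b), combined with the normalization $\hmu(1) = 0$ (which forces $\mu_0 = 0$), gives $\hmu(\theta) = -\pi^{-1}\sum_{\nu \ge 0} N^{-\nu}\sin(N^\nu \theta)$. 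This series converges uniformly on $\R$ by the Weierstrass M-test, so $\hmu$ is continuous on $\T$ with $\hmu(1) = 0$; thus $\hmu \in V_{\pre,c}(\T)$ corresponds to an atomless premeasure $\mu \in P_c(\T)$. Uniqueness is the injectivity of $w : P(\T) \hookrightarrow W(\C)$ noted right after Proposition~\ref{t23}.

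Next, to show $\mu \in P^+_{\kappa_1}(\T) \cap P^-_{\kappa_1}(\T)$, I would invoke Korenblum's bijection (Theorem~\ref{t821}): the condition is $\pm h_\mu \in \Oh^-_{\kappa_1}$, i.e.\ $|\Re h_\mu(z)| \le c|\log(1-|z|)|$ for $|z|$ close to $1$. Since $|h_\mu(z)| \le 2\sum_\nu r^{N^\nu}$ with $r = |z|$, this reduces to the lacunary estimate $\sum_\nu r^{N^\nu} = O(|\log(1-r)|)$ as $r \to 1$ (split at $\nu \sim \log_N(1/(1-r))$; below the threshold each term is $\le 1$, above the tail is geometrically summable). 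The same computation, which is the $s=1$ specialization of the argument in the proof of Theorem~\ref{t925n}, simultaneously shows that $f_\mu = \exp(-h_\mu) = \prod_\nu \exp(2 z^{N^\nu})$ and its reciprocal satisfy $|f_\mu^{\pm 1}(z)| \le (1-|z|)^{-c}$, so $f_\mu$ is a unit in $\Ah_1$. For $N$-invariance I transport via Proposition~\ref{t23} to the identity $F_N(E_N) = E_N$; this is a short direct calculation, since $\sum_{\zeta^N=1}\zeta^{N^\nu}$ equals $0$ for $\nu = 0$ and $N$ for $\nu \ge 1$, so $\prod_{\zeta^N=1} E_N(\zeta z) = \exp(\sum_{\nu \ge 1} z^{N^\nu}/N^{\nu-1}) = E_N(z^N)$. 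The idempotency $\mu * \mu = \mu$ similarly transports to $E_N \odot E_N = E_N$, which reduces to a computation on ghost coordinates via the $\gamma$-description in the proof of Proposition~\ref{t23}.

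For the remaining assertions: atomlessness is immediate from the continuity of $\hmu$ (or alternatively from Theorem~\ref{t7.19} combined with the estimate $(1-r)|h_\mu(r\zeta)| = O((1-r)|\log(1-r)|) \to 0$). That $\mu$ fails to be a signed measure follows from Zygmund's classical theorem on lacunary trigonometric series: the Fourier coefficients $c_{\pm N^\nu}(\mu) = -1$ form a lacunary sequence not in $\ell^2$, which is incompatible with being the Fourier--Stieltjes coefficients of a finite Borel measure. Since any $f \in \Nh^1$ corresponds via~\eqref{eq:562n} to an element of $M^0(\T)$, it follows in particular that $f_\mu \notin \Nh$. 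Finally, because $f_\mu$ is a unit in $\Ah_1$, the ideal $f_\mu \Ah_1 = \Ah_1$ is dense, so $f_\mu$ is cyclic, and Theorem~\ref{t828} gives $\sigma_{f_\mu} = 0$, i.e.\ $\mu_s = 0$.

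The main obstacle is less any single step than the bookkeeping between the three parallel languages (premeasure, Herglotz transform, Witt vector): each assertion must be translated to the language where it is most natural, while keeping careful track of the sign and normalization conventions used in defining $w$, $G_\mu$ and the ghost map~$\gamma$. In particular, verifying $F_N(E_N) = E_N$ and $E_N \odot E_N = E_N$ with the precise normalizations fixed in Proposition~\ref{t23} is the decisive calculation; once it is in place the proposition reduces to a coordinated application of Propositions~\ref{t23}, \ref{t71} together with Theorems~\ref{t821} and \ref{t828}.
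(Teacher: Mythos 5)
Your proof is essentially correct but takes a noticeably different route in two places. Where the paper builds $\mu$ \emph{forward} from the specific measure $\sigma = -\pi^{-1}\cos\theta\,d\theta \in Z(\Sh, M^0(\T))$ and then invokes the $\Psi_{\Sh}$-formalism (Corollary~\ref{t822} for $\kappa_1$-bounded variation, Proposition~\ref{t63} for $N$-invariance), you reverse-engineer $G_\mu$, $h_\mu$, $\hmu$ directly from $w(\mu)=E_N$ and then verify $N$-invariance on the Witt side via the one-line identity $F_N(E_N)=E_N$. Both are fine; your construction is more self-contained, but note that defining $\mu$ by the explicit $\hmu$ and then claiming $w(\mu)=E_N$ still requires the (easy but not-quite-automatic) converse check that this $\hmu$ reproduces $G_\mu = (i/\pi)\sum z^{N^\nu}/N^\nu$ via Proposition~\ref{t71}(a)---the paper does this explicitly and you have elided it. The second genuine divergence is the ``not a signed measure'' step: you invoke Zygmund's theorem that a lacunary Fourier--Stieltjes series must have $\ell^2$ coefficients (correct, and the Fejér-kernel reduction to the $L^1$ case handles the measure-valued version), whereas the paper stays internal, deducing it from $f_\mu \notin \Nh$ via Example~\ref{t514} and also offering a purely measure-theoretic alternative using $N_*\mu = \mu$ and $\Phi_\Sh(\mu)$. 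Your argument is shorter but imports an external classical result; the paper's is longer but keeps the exposition self-contained. The remaining pieces---the lacunary estimate $\sum r^{N^\nu} = O(|\log(1-r)|)$ establishing $f_\mu^{\pm 1} \in \Ah^1_1$ hence $\mu \in P^+_{\kappa_1}\cap P^-_{\kappa_1}$, atomlessness from continuity of $\hmu$, and $\mu_s = 0$ from cyclicity via Theorem~\ref{t828}---coincide with the paper's proof. One small caution: you defer the verification $E_N \odot E_N = E_N$ to ``a computation on ghost coordinates''; with the ghost map $\gamma(P) = -zP'/P$ and Teichm\"uller $[a]=1-aT$ as fixed in the appendix, $\gamma(E_N) = -\sum z^{N^\nu}$ has entries $-1$, so the Hadamard square gives $+1$; you should make sure you resolve this sign carefully before asserting the idempotency, since as stated the ghost-coordinate check does not come out trivially.
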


\begin{proof}
 The uniqueness is clear since $\omega$ is injective on $P (\T)$. Consider the atomless signed measure $\sigma \in M^0 (\T)$ defined by $\sigma = - \pi^{-1} \cos \theta \, d\theta$. We have $\hsigma (\theta) = - \pi^{-1} \sin \theta$. Let $\Sh = \{ N^{\nu} \tei \nu \ge 0 \}$ be the monoid generated by $N$. By corollary \ref{t822} a) we know that $\mu := \Psi_{\Sh} (\sigma) \in P_{\kappa_1} (\T)$. By definition
\[
 \hmu = \sum^{\infty}_{\nu =0} N^{-\nu} (N^{-\nu})^* \hsigma 
\]
or more explicitly
\begin{equation}
 \label{eq:104}
\hmu (\theta) = - \frac{1}{\pi} \sum^{\infty}_{\nu=0}  N^{-\nu} \sin N^{\nu} \theta
\end{equation}
This is a continuous function on $\T$ and hence $\mu$ is atomless. By equation \eqref{eq:6.65} we have for $\eta \in \T$
\[
 \widehat{N_* \sigma} (\eta^N) =  \frac{1}{2\pi i} \sum_{\zeta^N = 1} (\zeta - \overline{\zeta}) - \frac{1}{2\pi i} \sum_{\zeta^N = 1} (\zeta \eta - \overline{\zeta \eta}) = 0 \; .
\]
Hence we have $N_* \sigma = 0$ and therefore $\sigma \in Z (\Sh , M^0 (\T))$. Now proposition \ref{t63} gives $\mu \in \Psi_{\Sh} (Z (\Sh , M^0 (\T))) \subset H^0 (\Sh , P (\T))$, i.e. $N_* \mu = \mu$.

We have
\[
 \mu_0 := \int_{\T} \hmu d\lambda = 0
\]
and hence formulas \eqref{eq:7.72} and \eqref{eq:104} give the equations
\begin{align*}
 G_{\mu} & = h_{\hmu \lambda} = 2 \sum^{\infty}_{n=1} c_n (\hmu \lambda) z^n \\
& = - \frac{1}{\pi i} \sum^{\infty}_{\nu=0} N^{-\nu} z^{N^{\nu}} \; .
\end{align*}
Thus we have
\[
 w (\mu) = \exp (-\pi i G_{\mu}) = E_N \; .
\]
The map $w : \Dh' (\T) \to W (\C)$ is injective on real distributions $T$ with $c_0 (T) = 0$. Since $\mu$ and $\mu * \mu$ are such distributions the equation
\[
 w (\mu * \mu) = E_n \odot E_N = E_N = w (\mu)
\]
implies that $\mu * \mu = \mu$. 

The Herglotz transform of $\mu$ is given by
\[
 h_{\mu} = 2 \pi i z G'_{\mu} = - 2 \sum^{\infty}_{\nu=0} z^{N^{\nu}} \; .
\]
It follows that we have
\[
 f_{\mu} = \exp (-h_{\mu}) = \prod^{\infty}_{\nu=0} \alpha (z^{N^{\nu}})
\]
where $\alpha = \Phi_{\Sh} (f_{\mu}) = f_{\mu} / N^* f_{\mu} = \exp (2z)$. Since $\mu \in P_{\kappa_1} (\T)$ the function $f_{\mu}$ is in $\Nh^1_1$ by \eqref{eq:8.92}. However theorem \ref{t925n} a) easily gives a more precise assertion: Since $\alpha (z) = \exp (2z)$ and its inverse are in $H^{\infty} (D)$, it follows that both $f_{\mu}$ and $f^{-1}_{\mu}$ are in $\Ah^1_1$. Thus $f_{\mu}$ is a unit in $\Ah_1$ and by \eqref{eq:8.92} we have $\mu \in P^+_{\kappa_1} (\T) \cap P^-_{\kappa_1} (\T)$. Theorem \ref{t828} further implies that the $\kappa_1$-singular measure $\mu_s = \sigma_f$ vanishes. By example \ref{t514} the function $f_{\mu}$ is not in $\Nh$. Hence $\mu$ cannot be a (signed) measure. Purely measure theoretically this can be seen as follows: If $\mu$ were a measure, then by $N$-invariance we would have $\mu = c \lambda + \mu_s$ where $c \in \R$ is a constant and $\mu_s$ is a singular measure. This would imply
\begin{align*} 
 \sigma & = \Phi_{\Sh} (\mu) = \mu - N^* \mu \overset{\eqref{eq:6.62}}{=} \mu - N^{-1} \Tr_N (\mu)\\
& = \mu_s - N^{-1} \Tr_N (\mu_s) \; .
\end{align*}
Thus $\sigma = -\pi^{-1} \cos \theta \, d\theta$ would have to be singular which is not the case. 
\end{proof}

\begin{rem}
Similar assertions hold for the elements
\[
 E_{\Sh} = \exp \Big( \sum_{N \in \Sh} \frac{z^N}{N} \Big) \in W (\Q)
\]
where $\Sh$ is the semigroup generated by $N_1 , \ldots , N_s \ge 2$.
\end{rem}

\end{document}